\newenvironment{proof}[1][Proof.]{\begin{trivlist}
\item[\hskip \labelsep {\bfseries #1}]}{$\blacksquare$ \end{trivlist}}
  \newtheorem{theorem}{Theorem}[section]
\newtheorem{lemma}[theorem]{Lemma}
\newtheorem{corollary}[theorem]{Corollary}
\newtheorem{definition}[theorem]{Definition}
\newcommand{\Aut}{\mathrm{\mathop{Aut}}}
\def\la{{\langle}}
\def\ra{{\rangle}}
\def\a{\alpha}
\def\b{\beta}
\def\e{\varepsilon}
\renewcommand{\leq}{\leqslant}
\renewcommand{\geq}{\geqslant}
\newcommand{\F}{\mathbb F}
\newcommand{\res}[1]{#1^{[p]}}
\newcommand{\ad}{{\sf ad}\,}
\begin{document}

\title{Classification of 5-dimensional restricted Lie algebras over perfect fields, I}

\author{\sc Iren Darijani \ and \ Hamid Usefi
	\thanks{The research was supported in part  by NSERC of Canada, grant \# RGPIN 418201.}
  \\ \\
{Department of Mathematics and Statistics}\\
{ Memorial University of Newfoundland} \\
{ St. John's, NL,
Canada, A1C 5S7
} \\\\
{\it id6010\@@mun.ca \quad\quad usefi\@@mun.ca}
}

\date{\today}

  \maketitle

%
%

\chapter{Introduction}
Let $L$ be a Lie algebra over a field $\mathbb F$ of positive characteristic $p$. Recall that $L$ is called \textit{restricted} if $L$ affords a $p$-map that satisfies the following conditions for all $x, y\in L $ and $\lambda\in\mathbb F$ 
\begin{enumerate}
	\item $(\lambda x)^{[p]}=\lambda^p x^{[p]}$;
	\item $(\ad x)^p=\ad (x^{[p]})$;
	\item $(x+y)^{[p]}=x^{[p]}+y^{[p]}+\sum_{j=1}^{p-1} s_j(x,y),$\\
	where $js_j(x,y)$ is the coefficient of $t^{j-1}$ in $(\ad (t x+y))^{p-1}(x)$, $t$ an indeterminate. 
\end{enumerate}
Recall that a restricted Lie algebra $L$ is called $p$-nilpotent, if there exists an integer $n$ such that
$x^{[p]^n}=0$, for all $x\in L$. The purpose of this paper is to classify all $p$-nilpotent restricted Lie algebras of dimension 5 over perfect fields of characteristic $p\geq 5$. It follows from the Engel Theorem that if $L$ is finite-dimensional and $p$-nilpotent then $L$ is nilpotent. Our work builds upon the recent work of Schneider and  Usefi \cite{SU}  on the  classification of $p$-nilpotent restricted Lie algebras of dimension up to 4 over perfect fields of characteristic $p$. Our method is different than what is used in \cite{SU} as we describe below.
The analogous classification for small dimensional nilpotent Lie algebras has a long history.
The classification of all nilpotent Lie algebras of  dimension up to five over any  field  has been known for a long time. However, in dimension 6, the characterization depends on the underlying field. In 1958  Morozov \cite{morozov} gave a classification of nilpotent Lie algebras of dimension 6 over a filed of characteristic zero, see also   \cite{BK, nielsen, gong}  for a classification over other fields. 
These classifications, however, differ and it was not easy to compare them until recently that de Graaf \cite{dg} gave a complete classification over any field of characteristic other than 2.
de Graaf's approach  can be  verified computationally and was later revised and extended to characteristic 2
in \cite{CGS}. The classification in dimensions more than 6 is still in progress, see for example \cite{See, Ro}.

We now describe the method used in \cite{SU}  and explain why this method is not applicable in dimension 5. Note that in order to define a $p$-map on $L$, it is enough to define it on a basis of $L$ and then extend it linearly using property (3).
Let $\varphi_1,\ \varphi_2:L\rightarrow  L$ be two $p$-maps on $L$.
Then the restricted Lie algebras $(L,\varphi_1)$ and 
$(L,\varphi_2)$ are isomorphic if and only if there exists
$A\in\Aut(L)$ such that 
$$
A(\varphi_1(x))=\varphi_2( A(x))\quad\mbox{holds for all}\quad x\in L.
$$
Hence, $\varphi_1$ and $\varphi_2$ define isomorphic restricted Lie algebras 
if and only if  there exists 
$A\in\Aut(L)$ such that 
$A\varphi_1A^{-1}=\varphi_2$; that is, they are conjugate under
the automorphism group of $L$.
In this case we say that the $p$-maps $\varphi_1$ and $\varphi_2$ are
{\em equivalent}.
This defines a left action of $\Aut(L)$ on the set of $p$-maps and the
isomorphism classes of restricted Lie algebras correspond to 
the $\Aut(L)$-orbits under this action. The main task  using this approach would be then to find the 
$\Aut(L)$-orbits. This is exactly what the authors did in \cite{SU} to determine 
all  $p$-nilpotent restricted Lie algebras of dimension up to 4 over perfect fields.
However, this task  becomes computationally infeasible to carry out in dimension 5.

The  method we use to classify $p$-nilpotent restricted Lie algebras of dimension 5 
is the analogue of Skjelbred-Sund method \cite{SS} for classifying nilpotent Lie algebras.  
We describe this method below.
Let  $L$ be a restricted Lie algebra and  $M$ a vector space. We view $M$  as a trivial $L$-module and  define  $Z^2(L,M)$, the space of 2-cocycles, and the second cohomology group $H^2(L,M)$ of $L$ with coefficients in $M$ in  Section  \ref{def}.
Let   $\theta=(\phi , \omega) \in Z^2(L,M)$.  We  set  $L_{\theta}=L\oplus M$ as a vector space and  define the Lie bracket and $p$-map  on $L_{\theta}$ by: 
\begin{align*}
[(x_1+m_1),(x_2+m_2)]=[x_1,x_2]+\phi (x_1,x_2), \quad
\res{(x+m)}=\res x+\omega (x).
\end{align*} 
We prove in Theorem \ref{Ltheta} that $L_{\theta}$ with the given bracket and $p$-map is a restricted Lie algebra. 
Now let $K$ be a $p$-nilpotent restricted Lie algebra. Then its center $Z(K)$ is nonzero and there exists $x\in Z(K)$ such that $\res x= 0$. Let $M$ be the one dimensional restricted ideal of $K$ spanned by $x$, and set $L=K/M$. Let $\pi :K\rightarrow L$ be the projection map. We have the exact sequence  of restricted Lie algebras:
$$ 0\rightarrow M\rightarrow K\rightarrow L\rightarrow 0.$$
Choose an injective linear map $\sigma :L\rightarrow K$ such that $\pi \sigma =1_L$. 
Define $\phi : L\times L\rightarrow M$ by $\phi (x_1,x_2)=[\sigma (x_1),\sigma (x_2)]-\sigma ([x_1,x_2])$ and $\omega: L\rightarrow M$ by $\omega (x)=\res {\sigma (x)} -\sigma (\res x)$. We show in Theorem \ref{K=L-theta} that $\theta=(\phi,\omega)\in Z^2(L,M)$ and $K\cong L_{\theta}$.
Therefore, any $p$-nilpotent restricted Lie algebra $K$ of dimension $n$ can be  constructed as a central extensions of  a $p$-nilpotent restricted Lie algebras of dimension $n-1$. 

Now, the group  $\Aut_p(L)$ of restricted automorphisms of $L$ acts on $H^2(L,M)$ in a natural way. 
Let $A \in \Aut_p(L)$ and $\theta = (\phi ,\omega) \in Z^2(L,M)$.  We define $A\theta =(A\phi ,A\omega)$, where $A\phi (x,y)=\phi (A(x),A(y))$ and $A\omega (x)=\omega (A(x))$. 
Let  $\theta_1, \theta_2 \in Z^2(L,M)$.
It follows from Theorem \ref{Aut-orbits}   that  if $\theta_1$ and $\theta_2$ are in the same $\Aut_p(L)$-orbit then exists an isomorphism $f: L_{\theta_1}\to  L_{\theta_2}$ such that $f(M)=M$.
Therefore, we use the action of $\Aut_p(L)$  to reduce the number of isomorphic restricted Lie algebras.

There are nine nilpotent Lie algebras of dimension 5 listed in Theorem \ref{5-dim}. Let $K$ be a nilpotent Lie algebra of dimension 5. Since the $p$-maps are $p$-nilpotent, there exists a central  element $x\in K$ such that $x^{[p]}=0$. Then we let $L=K/\la x\ra$ and find all 1-dimensional central  extensions of $L$ that lead to $K$. That is, we choose those $\theta = (\phi ,\omega) \in Z^2(L,\F  )$ such that $L_{\theta}$ is isomorphic as a Lie algebra to $K$. Then we list all possible $p$-maps that are obtained via different choices of $\theta$ and $x$. We still need to detect and remove the isomorphic algebras from this list. Finally, we shall prove that the remaining algebras in the list are pairwise non-isomorphic.

If $p\geq 5$, then every $p$-map on $K$ is semilinear and it is easier to construct a basis for $H^2(L,\F)$ as described in Section \ref{basis for the second cohomology}. However, for the cases that $p=2$ or $p=3$, the $p$-maps are no longer semilinear and we have to come up with a different way of finding a basis for $H^2(L,\F)$. This difference in turn changes the computations and results in possibly different  restricted Lie algebra  structures on $K$. We shall treat the cases of $p=2, 3$ in a subsequent paper.

A compact version of this work  is available at \cite{DU}. We thank Csaba Schneider and Behrang Noohi for valuable discussions and comments.

\tableofcontents

\chapter{Preliminaries}	
\section {Restricted Lie algebras}

Throughout this paper $\F$ denotes  a perfect filed of characteristic $p\geq 5$ although most of the material presented in this section is valid over any $p>0$. For basic notation we refer to \cite{SF}.

\subsection{Restricted Lie algebras}\label{def}

\begin{definition}
	\label{def:maindef}
	A restricted Lie algebra of characteristic $p>0$ is a Lie algebra $L
	$ of characteristic $p$ together with a map $L
	\to L
	$, denoted by $x\mapsto x^{[p]}$, that satisfies
	\begin{itemize}
		\item  $(\lambda x)^{[p]}=\lambda^px^{[p]}$, 
		\item $(x+y)^{[p]}=x^{[p]}+y^{[p]}+\sum_{j=1}^{p-1} s_j(x,y)$\\
		where $js_j(x,y)$ is the coefficient of $t^{j-1}$ in $(\ad (t x+y))^{p-1}(x)$, $t$ an indeterminate,
		\item $[x, y^{[p]}]=[x,\underbrace{y,\dots,y}_{p}],$
	\end{itemize}
	for all $x,y\in L
	$ and all $\lambda\in\F$. 
\end{definition}
The map $x\mapsto x^{[p]}$ is referred to as the $p$-map.  
We remark that the second property is equivalent to$$ (x+y)^{[p]}=x^{[p]}+y^{[p]}+\displaystyle \sum_{\stackrel{x_j=x\ {\rm or}\  y}{\scriptscriptstyle x_1=x,x_2=y}} \frac{1}{\#x}[x_1,x_2,\dots,x_p],$$
where $\#x$ denotes the number of $x$'s among the $x_j$. Note that long commutators are left-tapped, that is
$$[x_1,\ldots,x_k,x_{k+1}]=[[x_1,\ldots,x_k],x_{k+1}].$$

For a subset $S$ of $L$, we denote by $\la S\ra_p$ the restricted subalgebra of $L$ generated by $S$ and by $\la S\ra_{\F}$ the subspace spanned by $S$.   Also, we denote by $S^{[p]^n}$ the restricted  subalgebra generated by all $x^{[p]^n}$, where $x\in S$. 
Recall that $S$ is called \emph{$p$-nilpotent}  if there exists an integer $n$ such that $S^{[p]^n}=0$.

If $L$ is a Lie algebra over $\mathbb F$ and $M$ is vector space, a  $q$-dimensional cochain of $L
$ with coefficients in $M$ is a skew-symmetric, $q$-linear map on $L
$ taking values in $M$. We denote the space of $q$-dimensional cochains of a  Lie algebra $L$ with coefficients in $M$ by $C_{\text{cl}}^q(L,M)$. So, we have
\[ 
C_{\text{cl}}^q(L,M)=Hom_{\mathbb F}(\Lambda^q L,M). 
\] 
The coboundary map $\delta^q:C_{\text{cl}}^q(L,M)\to C_{\text{cl}}^{q+1}(L,M)$ is defined by
\begin{eqnarray*}
	(\delta^q\phi)(\ell_1,\dots,\ell_{q+1})&=&\sum_{1\le s<t\le q+1}(-1)^{s+t-1}\phi([\ell_s,\ell_t],\ell_1,\dots,\widehat {\ell_s},\dots,\widehat {\ell_t},\dots,\ell_{q+1}), 
\end{eqnarray*}
where the symbol $\widehat {\ell_s}$ indicates that this term is to be omitted.

\begin{definition}\label{star-prop}
	Let $L$ be a restricted Lie algebra over $\F$ and $M$ a vector space. If $\phi \in C_{\text{cl}}^2(L,M)$ and $\omega : L \rightarrow M$ a function, we say $\omega$ has the $\star$-property with respect to $\phi $ if for every  $x,y \in L$ and $\lambda \in \F$, we have
	\begin{enumerate}
		\item $\omega (\lambda x)=\lambda ^p \omega (x)$
		\item 
		$\omega (x+y) = \omega (x) + \omega (y) +\displaystyle \sum_{\stackrel{x_j=x\ {\rm or}\  y}{\scriptscriptstyle x_1=x,x_2=y}} \frac{1}{\#x}\phi([x_1,x_2,\dots,x_{p-1}],x_p)$,  where  $\# x$  is the number of $x$.
	\end{enumerate}
\end{definition} 

Now, we define the space of 2-dimensional \textit{cochains} of a restricted Lie algebra $L$ with coefficients in $M$ as the subspace spanned by all $(\phi,\omega)$ such that $\phi$ is skew-symmetric and $\omega:L\to M$ has the $\star$-property with respect to $\phi$. We denote this vector space by $C^2(L,M)$.
Evidently if $\omega$ and $\omega^\prime$ have the $\star$-property with respect to  $\phi$ and $\phi^\prime$ respectively, then $\omega+\omega^\prime$ has the $\star$-property with respect to  $\phi+\phi^\prime$, and hence $C^2(L,M)$ is a vector space over $\mathbb F$ by point wise addition in each coordinate.  We have adopted  Definition  \ref{star-prop} from \cite{E}. However, definition of $\star$-property  in the whole generality given in \cite{E} is ambiguous.

\begin{lemma}
	Let $M$ be a vector space and $\psi: L \rightarrow M$ a linear map. Then $\tilde{\psi}: L \rightarrow M$ defined by  $\tilde{\psi }(x)=\psi (\res x )$ has the $\star$-property with respect to $\delta^1\psi$.
\end{lemma}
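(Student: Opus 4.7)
The plan is to verify the two defining conditions of the $\star$-property directly from the definitions, exploiting the linearity of $\psi$ and the axioms of the $p$-operator.

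First I would unpack $\delta^1\psi$: specializing the coboundary formula to $q=1$ gives $(\delta^1\psi)(l_1,l_2) = \psi([l_1,l_2])$, a single term with sign $(-1)^{1+2-1}=+1$. This identification is the bridge between the two sides of the claim.

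For condition (1), I would simply chain the axioms: $\tilde\psi(\lambda x) = \psi(\res{(\lambda x)}) = \psi(\lambda^p \res x) = \lambda^p \psi(\res x) = \lambda^p\tilde\psi(x)$, where the second equality uses axiom (1) of the $p$-map and the third uses $\F$-linearity of $\psi$. For condition (2), I would apply $\psi$ to the additivity formula for the $p$-map in the form reproduced just after Definition~\ref{def:maindef}, namely
\[
\res{(x+y)} = \res x + \res y + \sum_{\substack{x_j = x \text{ or } y \\ x_1 = x,\, x_2 = y}} \frac{1}{\#x}[x_1,x_2,\dots,x_p].
\]
Since $\psi$ is linear, the left-hand side becomes $\tilde\psi(x+y)$ and the first two terms on the right become $\tilde\psi(x)+\tilde\psi(y)$. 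For each remaining summand, the left-tapped convention $[x_1,\dots,x_p] = [[x_1,\dots,x_{p-1}],x_p]$ together with the computation of $\delta^1\psi$ above gives $\psi([x_1,\dots,x_p]) = (\delta^1\psi)([x_1,\dots,x_{p-1}],x_p)$, which is exactly the summand appearing in the $\star$-property for $\tilde\psi$ with respect to $\delta^1\psi$.

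There is no serious obstacle here; the whole proof is a matter of matching symbols, and the only thing to be careful about is the sign and indexing of $\delta^1\psi$ and the left-tapped bracket convention used in the additivity formula. Once these are aligned, conditions (1) and (2) both fall out in one line each.
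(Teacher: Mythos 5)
Your proposal is correct and follows essentially the same route as the paper: apply $\psi$ to the additivity axiom for the $p$-map, use linearity, and identify $\psi([x_1,\dots,x_p])=(\delta^1\psi)([x_1,\dots,x_{p-1}],x_p)$ via the left-tapped bracket convention. You are in fact slightly more complete than the paper, which checks only condition (2) and leaves the semilinearity condition (1) implicit.
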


\begin{proof} Since $\psi[x_1,\dots,x_p]=(\delta^1\psi)([x_1,\dots,x_{p-1}],x_p)$,
we have
\begin{align*}
&\tilde\psi(x+y)=\psi(\res{(x+y)})=\psi(\res x)+\psi(\res y)+\psi(\displaystyle \sum_{\stackrel{x_j=x\ {\rm or}\  y}{\scriptscriptstyle x_1=x,x_2=y}} \frac{1}{\#x}[x_1,x_2,\dots,x_p])\\
&=\psi(x^{[p]})+\psi(y^{[p]}) 
 + \sum_{\stackrel{x_j=x\ {\rm or}\ y}{\scriptscriptstyle x_1=x,x_2=y}} \frac{1}{\#(x)}(\delta^1\psi)([x_1,\dots,x_{p-1}],x_{p}),  \nonumber
\end{align*} for every $x,y \in L$.
\end{proof}

Let $Z^2(L,M)$ be the set  consisting of all $(\phi, \omega) \in C^2(L,M)$ such that 
$\delta^2 \phi=0$ and  
$$
\phi (x,\res y)=\phi([x,\underbrace{y,\ldots,y}_{p-1}],y),
$$
for all $x, y\in L$. The elements of $Z^2(L,M)$ are called \textit{cocycles}. 
Also, let $B^2(L,M)$ be the set consisting of all $(\phi, \omega) \in C^2(L,M)$ such that 
there exists  $\psi \in C^1_{cl}(L,M)$ satisfying  $\delta^1\psi=\phi$ and  $\tilde \psi=\omega$.
The elements of $B^2(L,M)$ are called \textit{coboundaries}. 
It is easy to see that $Z^2(L,M)$ and $B^2(L,M)$ are subspaces of $C^2(L,M)$.

\begin{theorem}
 $B^2(L,M)\subseteq Z^2(L,M),$ so that, the quotient $$H^2(L,M)=Z^2(L,M) /B^2(L,M)$$ is well-defined. 
\end{theorem}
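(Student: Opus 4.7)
The plan is to unpack the definition: given $(\phi,\omega)\in B^2(L,M)$, fix a witnessing linear map $\psi \in C^1_{cl}(L,M)$ with $\delta^1\psi = \phi$ and $\tilde\psi = \omega$, and then verify all four conditions that membership in $Z^2(L,M)$ requires, namely (a) $(\phi,\omega)\in C^2(L,M)$ to begin with, (b) $\delta^2\phi = 0$, and (c) the restricted cocycle relation $\phi(x,\res y) = \phi([x,\underbrace{y,\ldots,y}_{p-1}],y)$.

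First I would note that $\phi = \delta^1\psi$ is automatically skew-symmetric, since by the definition of $\delta^1$ one has $(\delta^1\psi)(l_1,l_2) = \psi([l_1,l_2])$, and the Lie bracket itself is skew-symmetric. Combined with the preceding lemma, which shows precisely that $\tilde\psi$ has the $\star$-property with respect to $\delta^1\psi$, this gives $(\phi,\omega)\in C^2(L,M)$. For $\delta^2\phi = 0$ I would invoke the standard identity $\delta^{q+1}\circ\delta^q = 0$ of the Chevalley--Eilenberg cochain complex; since $\phi = \delta^1\psi$, the image $\delta^2\phi = \delta^2\delta^1\psi$ vanishes. (If one insists on a direct check, it is a short expansion of the explicit coboundary formula using the Jacobi identity, but there is nothing novel here.)

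The only condition with genuine content is the restricted cocycle relation, and this is where the $p$-map axiom of $L$ enters. Using $\phi = \delta^1\psi$ together with $(\delta^1\psi)(a,b) = \psi([a,b])$, I compute
\begin{align*}
\phi(x,\res y) &= \psi([x,\res y]) = \psi([x,\underbrace{y,\ldots,y}_{p}])\\
&= \psi\bigl([[x,\underbrace{y,\ldots,y}_{p-1}],y]\bigr) = \phi([x,\underbrace{y,\ldots,y}_{p-1}],y),
\end{align*}
where the second equality is the third bullet of Definition~\ref{def:maindef} and the third equality is the left-tapped commutator convention stated just after that definition.

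I do not foresee a real obstacle: all four required properties reduce either to the previously proved lemma, to the standard $\delta^2\delta^1 = 0$ identity, or to a one-line manipulation exploiting the axiom $[x,\res y] = [x,y,\ldots,y]$. The main thing to keep straight is the bookkeeping: making sure that the $\star$-property for $\omega=\tilde\psi$ is cited from the lemma rather than reproved, and that the skew-symmetry of $\phi$ is observed before invoking $\delta^2\phi = 0$ so that one is genuinely working inside $C^2(L,M)$.
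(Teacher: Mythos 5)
Your proposal is correct and follows essentially the same route as the paper: the paper also verifies $\delta^2\delta^1\psi=0$ (by a direct expansion using the Jacobi identity, which you note as the alternative to citing $\delta^2\circ\delta^1=0$) and establishes the restricted cocycle relation by writing $\phi(x,\res y)-\phi([x,\underbrace{y,\ldots,y}_{p-1}],y)=\psi([x,\res y]-[x,\underbrace{y,\ldots,y}_{p}])=0$ via the $p$-map axiom. Your additional check that $(\phi,\omega)\in C^2(L,M)$ is harmless but already built into the definition of $B^2(L,M)$, so nothing further is needed.
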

\begin{proof} Let $(\delta^1\psi,\tilde \psi)\in B^2(L,M)$. First, We claim that
$\delta^2\delta^1\psi=0$. Indeed, for all $x,y,z\in L,$ we have
\begin{align*}
\delta^2\delta^1\psi(x,y,z)=&\delta^1\psi([x,y],z)+\delta^1\psi([y,z],x)+
\delta^1\psi([z,x],y)\\
=&\psi([[x,y],z]])+\psi([[y,z],x]])+\psi([[z,x],y]])\\
=&\psi([[x,y],z]]+[[y,z],x]]+[[z,x],y]]),\\
\end{align*}
which is equal to zero by jaccobi identity. Next, we claim that $$(\delta^1\psi)(x,\res y)=(\delta^1\psi)([x,\underbrace {y,...,y}_{p-1}],y),$$ for all $x,y\in L$. Indeed, for all $x,y \in L$, we have
\begin{align*}
(\delta^1\psi)(x,\res y)-(\delta^1\psi)([x,\underbrace {y,\ldots,y}_{p-1}],y)
=&\psi[x,\res y]-\psi ([x,\underbrace {y,\ldots,y}_{p}])\\
=&\psi([x,\res y]-[x,\underbrace {y,\ldots,y}_{p}])\\
=&0.
\end{align*}
The proof is complete.
\end{proof}

We call $H^2(L,M)$ the \textit{second cohomology group} of $L$ with coefficients in $M$. Let $\theta =(\phi, \omega) \in Z^2(L,M)$. Then we denote by
$[\theta]$ the image of $\theta$ in $H^2(L,M)$.
\begin{definition}
	A restricted Lie algebra $M$ is called strongly abelian if $[M,M]=0$ and $\res M=0$.
\end{definition}

\section{Constructing $p$-nilpotent restricted Lie algebras}
Let  $L$ be a restricted Lie algebra, $M$ a vector space and $\theta=(\phi , \omega)\in Z^2(L,M)$. We construct a restricted extension of $L$ by $M$ as follows.
\begin{lemma}\label{Ltheta} Let $L_{\theta}=L\oplus M$ as a vector space and  define the Lie bracket and $p$-map  on $L_{\theta}$ by: 
\begin{align*}
[(x_1+m_1),(x_2+m_2)]=[x_1,x_2]+\phi (x_1,x_2), \quad
\res{(x+m)}=\res x+\omega (x).
\end{align*} 
Then $L_{\theta}$ with the given bracket and $p$-map is a resticted Lie algebra.
\end{lemma}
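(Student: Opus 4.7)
The plan is to verify, axiom by axiom, each requirement of a restricted Lie algebra for $L_\theta$, using the three pieces of data encoded in $\theta \in Z^2(L,M)$: the skew-symmetry of $\phi$ together with the $\star$-property of $\omega$ (giving $(\phi,\omega)\in C^2(L,M)$), the 2-cocycle condition $\delta^2\phi=0$, and the compatibility $\phi(x,\res y)=\phi([x,\underbrace{y,\ldots,y}_{p-1}],y)$. The overarching simplification is that $M$ sits centrally in $L_\theta$ and is annihilated by the $p$-map, so every computation reduces to a parallel computation in $L$ plus a tail in $M$ controlled by $\phi$ or $\omega$.

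First I would show $L_\theta$ is a Lie algebra. Skew-symmetry of the bracket is immediate from the skew-symmetry of $[\cdot,\cdot]$ in $L$ and of $\phi$. For the Jacobi identity, I expand the cyclic sum on elements $u_i=x_i+m_i$; since $M$ is central the $M$-coordinates drop out, the $L$-component is Jacobi in $L$, and the $M$-component is precisely $(\delta^2\phi)(x_1,x_2,x_3)$, which vanishes because $\theta\in Z^2(L,M)$.

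Next I would verify the three clauses of Definition 2.0.1 for the $p$-map. Scalar homogeneity $(\lambda u)^{[p]}=\lambda^p u^{[p]}$ is immediate from $\omega(\lambda x)=\lambda^p\omega(x)$, which is part (1) of the $\star$-property. For additivity, I would use the equivalent commutator expression for the $s_j$ given after Definition 2.0.1. Because $M$ is central, every iterated $p$-fold bracket $[u_1,\ldots,u_p]$ in $L_\theta$ depends only on the $L$-parts of the $u_i$, and picks up a single $\phi$-correction at the outermost bracket. Summing these corrections and comparing with $\omega(x+y)=\res{(x+y)}+\omega(x+y)$ pattern in $L_\theta$, clause (2) of the $\star$-property delivers exactly the needed identity. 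Finally, for $[u,\res v]=[u,\underbrace{v,\ldots,v}_{p}]$ with $u=x_1+m_1$, $v=x_2+m_2$, the left-hand side equals $[x_1,\res {x_2}]+\phi(x_1,\res{x_2})$ and the right-hand side equals $[x_1,\underbrace{x_2,\ldots,x_2}_{p}]+\phi([x_1,\underbrace{x_2,\ldots,x_2}_{p-1}],x_2)$; equality of the $L$-parts is restrictedness of $L$, and equality of the $M$-parts is the displayed compatibility condition built into $Z^2(L,M)$.

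The main obstacle I expect is the additive formula: unwinding the equivalent sum-of-commutators form of $s_j(x,y)$ in $L_\theta$ and confirming that the $\phi$-tails from all $p$-fold brackets aggregate to exactly the $\phi$-sum prescribed by clause (2) of the $\star$-property. This is essentially bookkeeping, but it is the one step where centrality of $M$ in $L_\theta$ must be used carefully: once one observes that $\phi$ only enters at the outermost bracket of each iterated commutator (inner brackets live entirely in $L$ modulo central terms that bracket trivially), the match with the $\star$-property is forced term by term.
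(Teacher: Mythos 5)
Your proposal is correct and follows essentially the same route as the paper: it checks the axioms directly, with the key step being the observation that an iterated bracket in $L_{\theta}$ equals the corresponding bracket in $L$ plus a single $\phi$-correction at the outermost level (the paper proves exactly this formula by induction and then matches it against the $\star$-property, the cocycle condition $\delta^2\phi=0$, and the compatibility $\phi(x,\res y)=\phi([x,\underbrace{y,\ldots,y}_{p-1}],y)$, just as you do).
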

\begin{proof} 
The bracket is clearly bilinear and skew symmetric and it is well known that the jaccobi identity is equivalent to $\delta^2\phi=0$. We claim that $L$ is restricted with the given $p$-map. Let $x_1,\ldots, x_{k+1}\in L$, $m_1,\ldots m_{k+1}\in M$. Note that by induction we have 
\begin{align}\label{comm}
[x_1+m_1,x_2+m_2,\ldots,x_{k+1}+m_{k+1}]=[x_1,\ldots, x_{k+1}]+\phi([x_1,\ldots,x_k],x_{k+1}).
\end{align}
Now, we have
\begin{align*}
[x_1+m_1,\res {(x_2+m_2)}]=&[x_1+m_1,\res x_2+\omega (x)]\\
=&[x_1,\res x_2]+\phi(x_1,\res x_2).
\end{align*} 
On the other hand,
\begin{align*}
[x_1+m_1,\underbrace{x_2+m_2,\ldots,x_2+m_2}_{p}]=[x_1,\underbrace
{x_2,\ldots,x_2}_{p}]+\phi([x_1,\underbrace{x_2,\ldots,x_2}_{p-1}],x_2),
\end{align*} by equation \eqref{comm}.
We have
$$\phi(x_1,\res x_2)=\phi([x_1,\underbrace{x_2,\ldots,x_2}_{p-1}],x_2),$$
also we have
$$[x_1,\res x_2]=[x_1,\underbrace{x_2,\ldots,x_2}_{p}].$$
Therefore,
$$[x_1+m_1,\res {(x_2+m_2)}]=[x_1+m_1,\underbrace{x_2+m_2,\ldots,x_2+m_2}_{p}].$$
Next, we have
\begin{align*}
\res{(\lambda(x+m))}=\res{(\lambda x+\lambda m)}=\res{(\lambda x)}+\omega(\lambda x)=&\lambda^p\res x+\lambda^p\omega(x)\\
=&\lambda^p(\res x+\omega(x))\\
=&\lambda^p\res{(x+m)}.
\end{align*}
Finally, we have
\begin{align*}
\res{((x_1+m_1)+(x_2+m_2))}=&\res{((x_1+x_2)+(m_1+m_2))}\\
=&\res{(x_1+x_2)}+\omega(x_1+x_2)\\
=&x_1^{[p]}+x_2^{[p]}+\displaystyle \sum_{\stackrel{x_{l_j}=x_1\ {\rm or}\  x_2}{\scriptscriptstyle x_{l_1}=x_1,x_{l_2}=x_2}}
\frac{1}{\#x_1}[x_{l_1},x_{l_2},\dots,x_{l_{p}}]\\
&+\omega (x_1) + \omega (x_2) +\displaystyle \sum_{\stackrel{x_{l_j}=x_1\ {\rm or}\  x_2}{\scriptscriptstyle x_{l_1}=x_1,x_{l_2}=x_2}}\frac{1}{\# x_1} \phi ([x_{l_1},x_{l_2},\ldots,x_{l_{p-1}}],x_{l_{p}}).  
\end{align*}
On the other hand,
\begin{align*}
&\res{(x_1+m_1)}+\res{(x_2+m_2)}+\displaystyle \sum_{\stackrel {l_j=1\ {\rm or}\  2}{\scriptscriptstyle l_1=1, l_2=2}}
\frac{1}{\#(x_1+m_1)}[x_{l_1}+m_{l_1},x_{l_2}+m_{l_2},\dots,x_{l_p}+m_{l_p}]\\
&=\res x_1+\omega(x_1)+\res x_2+\omega(x_2)+\displaystyle \sum_{\stackrel{x_{l_j}=x_1\ {\rm or}\  x_2}{\scriptscriptstyle x_{l_1}=x_1,x_{l_2}=x_2}}
\frac{1}{\#x_1}[x_{l_1},x_{l_2},\dots,x_{l_{p}}]\\
&+\displaystyle \sum_{\stackrel{x_{l_j}=x_1\ {\rm or}\  x_2}{\scriptscriptstyle x_{l_1}=x_1,x_{l_2}=x_2}}\frac{1}{\# x_1} \phi ([x_{l_1},x_{l_2},\ldots,x_{l_{p-1}}],x_{l_{p}}),\\
\end{align*} by equation \eqref{comm}.
Therefore,
\begin{align*}
 &\res{((x_1+m_1)+(x_2+m_2))}=\res{(x_1+m_1)}+\res{(x_2+m_2)}\\
 &+\displaystyle \sum_{\stackrel {l_j=1\ {\rm or}\  2}{\scriptscriptstyle l_1=1, l_2=2}}
\frac{1}{\#x_1+m_1}[x_{l_1}+m_{l_1},x_{l_2}+m_{l_2},\dots,x_{l_p}+m_{l_p}].
\end{align*}
The proof is complete.
\end{proof}
Now let $K$ be a restricted  Lie algebra, and suppose that its center $Z(K),$ is nonzero. Let $0\neq M\subseteq Z(K)$ such that $\res M=0$, and set $L=K/M$. Let $\pi :K\rightarrow L$ be the projection map. We have the exact sequence 
$$ 0\rightarrow M\rightarrow K\rightarrow L\rightarrow 0.$$
 Choose an injective linear map $\sigma :L\rightarrow K$ such that $\pi \sigma =1_L$. Note that we can easily show that $\pi([\sigma (x_i),\sigma (x_j)]-\sigma ([x_i,x_j]))=0,$ for every $x_i,x_j \in L$ and $\pi(\res {\sigma (x)} -\sigma (\res x))=0$, for every $x\in L$. Therefore, 
 \[ [\sigma (x_i),\sigma (x_j)]-\sigma ([x_i,x_j]), \res {\sigma (x)} -\sigma (\res x) \in M, 
 \]
 for every $x_i,x_j,x\in L$.
  Now, we define $\phi : L\times L\rightarrow M$ by $\phi (x_i,x_j)=[\sigma (x_i),\sigma (x_j)]-\sigma ([x_i,x_j])$ and $\omega: L\rightarrow M$ by $\omega (x)=\res {\sigma (x)} -\sigma (\res x)$.
 With these notation, we have:
 \begin{lemma}\label{K=L-theta}
 Let $\theta=(\phi,\omega)$. Then $\theta \in Z^2(L,M)$ and $K\cong L_{\theta}$.
 \end{lemma}
  \begin{proof} It is easy to see that $\phi$ is a bilinear and skew-symmetric form on $L$. We claim that $\omega$ has the $\star$-property with respect to $\phi$. Indeed:
\begin{align*}
\omega(x_1+x_2)=&\res{(\sigma(x_1+x_2))}-\sigma(\res{(x_1+x_2)})\\
=&\res{(\sigma(x_1)+\sigma(x_2))}-\sigma(\res x_1+\res x_2+\displaystyle \sum_{\stackrel{x_{l_j}=x_1\ {\rm or}\  x_2}{\scriptscriptstyle x_{l_1}=x_1,x_{l_2}=x_2}}
\frac{1}{\#x_1}[x_{l_1},x_{l_2},\dots,x_{l_{p}}])\\
=&\res{\sigma(x_1)}+\res{\sigma(x_2)}+\displaystyle \sum_{\stackrel{l_j=1\ {\rm or}\  2}{\scriptscriptstyle l_1=1,l_2=2}}
\frac{1}{\#\sigma(x_1)}[\sigma(x_{l_1}),\sigma(x_{l_2}),\dots,\sigma(x_{l_{p})}])\\
-&\sigma(\res x_1)-\sigma(\res x_2)-\displaystyle \sum_{\stackrel{x_{l_j}=x_1\ {\rm or}\  x_2}{\scriptscriptstyle x_{l_1}=x_1,x_{l_2}=x_2}}
\frac{1}{\#x_1}\sigma([x_{l_1},x_{l_2},\dots,x_{l_{p}}])\\
=&\omega(x_1)+\omega(x_2)+\displaystyle \sum_{\stackrel{l_j=1\ {\rm or}\  2}{\scriptscriptstyle l_1=1,l_2=2}}
\frac{1}{\#x_1}[\sigma(x_{l_1}),\sigma(x_{l_2}),\dots,\sigma(x_{l_{p})}])\\
-&\displaystyle \sum_{\stackrel{x_{l_j}=x_1\ {\rm or}\  x_2}{\scriptscriptstyle x_{l_1}=x_1,x_{l_2}=x_2}}
\frac{1}{\#x_1}\sigma([x_{l_1},x_{l_2},\dots,x_{l_{p}}]).\\
\end{align*}
Since $\phi(x,y)\in M \subseteq Z(L)$, we have
\begin{align*}
&\sigma([x_{l_1},x_{l_2},\dots,x_{l_{p}}])\\
&=[\sigma([x_{l_1},x_{l_2},\dots,x_{l_{p-1}}]),\sigma(x_{l_p})]-\phi([x_{l_1},x_{l_2},\dots,x_{l_{p-1}}],x_{l_p})\\
&=[[\sigma([x_{l_1},x_{l_2},\dots,x_{l_{p-2}}]),\sigma(x_{l_{p-1}})]-\phi([x_{l_1},x_{l_2},\dots,x_{l_{p-2}}],x_{l_{p-1}}),\sigma(x_{l_p})]\\
&-\phi([x_{l_1},x_{l_2},\dots,x_{l_{p-1}}],x_{l_p})\\
&=[[\sigma([x_{l_1},x_{l_2},\dots,x_{l_{p-2}}]),\sigma(x_{l_{p-1}})],\sigma(x_{l_p})]-\phi([x_{l_1},x_{l_2},\dots,x_{l_{p-1}}],x_{l_p}).
\end{align*}
If we repeat this procedure, we obtain that 
\begin{align*}
\sigma([x_{l_1},x_{l_2},\dots,x_{l_{p}}])=[\sigma(x_{l_1}),\sigma(x_{l_2}),\dots,\sigma(x_{l_{p})}])-\phi([x_{l_1},x_{l_2},\dots,x_{l_{p-1}}],x_{l_p}).
\end{align*}
Therefore, we have
\begin{align*}
\omega(x_1+x_2)=\omega(x_1)+\omega(x_2)+\displaystyle \sum_{\stackrel{x_{l_j}=x_1\ {\rm or}\  x_2}{\scriptscriptstyle x_{l_1}=x_1,x_{l_2}=x_2}}\frac{1}{\# x_1} \phi ([x_{l_1},x_{l_2},\ldots,x_{l_{p-1}}],x_{l_{p}}).
\end{align*}
Next, we claim that $\delta^2\phi=0$. Indeed, for all $x,y,z\in L$, we have
\begin{align*}
(\delta^2\phi)(x,y,z)=&\phi([x,y],z)+\phi([y,z],x)+\phi([z,x],y)\\
=&[\sigma([x,y]),\sigma(z)]-\sigma([[x,y],z])+[\sigma([y,z]),\sigma(x)]-\sigma([[y,z],x])\\
+&[\sigma([z,x]),\sigma(y)]-\sigma([[z,x],y])\\
=&[\sigma([x,y]),\sigma(z)]+[\sigma([y,z]),\sigma(x)]+[\sigma([z,x]),\sigma(y)]\\
-&\sigma([[x,y],z]+[[y,z],x]+[[z,x],y]).
\end{align*}
By jaccobi identity, $[x,y,z]+[y,z,x]+[z,x,y]=0$. Therefore,
\begin{align*}
&\delta^2\phi(x,y,z)\\
&=[\sigma([x,y]),\sigma(z)]+[\sigma([y,z]),\sigma(x)]+[\sigma([z,x]),\sigma(y)]\\
&=[[\sigma(x),\sigma(y)]-\phi(x,y),\sigma(z)]+[[\sigma(y),\sigma(z)]-\phi(y,z),\sigma(x)]\\
&+[[\sigma(z),\sigma(x)]-\phi(z,x),\sigma(y)]\\
&=[[\sigma(x),\sigma(y)],\sigma(z)]-[\phi(x,y),\sigma(z)]+[[\sigma(y),\sigma(z)],\sigma(x)]-[\phi(y,z),\sigma(x)]\\
&+[[\sigma(z),\sigma(x)],\sigma(y)]-[\phi(z,x),\sigma(y)]\\
&=[[\sigma(x),\sigma(y)],\sigma(z)]+[[\sigma(y),\sigma(z)],\sigma(x)]+[[\sigma(z),\sigma(x)],\sigma(y)],
\end{align*}
which is equal to zero by jaccobi identity.

Finally, we claim that $\phi(x,\res y)=\phi([x,\underbrace{y,\ldots,y}_{p-1}],y)$, for all $x,y\in L$. Indeed:
\begin{align*}
&\phi(x,\res y)-\phi([x,\underbrace{y,\ldots,y}_{p-1}],y)\\
&=[\sigma(x),\sigma(\res y)]-\sigma([x,\res y])-[\sigma([x,\underbrace{y,\ldots,y}_{p-1}]),\sigma(y)]+\sigma([x,\underbrace{y,\ldots,y}_{p}])\\
&=[\sigma(x),\sigma(\res y)]-[\sigma([x,\underbrace{y,\ldots,y}_{p-1}]),\sigma(y)]+\sigma([x,\underbrace{y,\ldots,y}_{p}]-[x,\res y])\\
&=[\sigma(x),\sigma(\res y)]-[\sigma([x,\underbrace{y,\ldots,y}_{p-1}]),\sigma(y)].
\end{align*}
We have
\begin{align*}
[\sigma([x,\underbrace{y,\ldots,y}_{p-1}]),\sigma(y)]=&[[\sigma([x,\underbrace{y,\ldots,y}_{p-2}]),\sigma(y)]-\phi([x,\underbrace{y,\ldots,y}_{p-2}],y),\sigma(y)]\\
=&[[\sigma([x,\underbrace{y,\ldots,y}_{p-2}]),\sigma(y)],\sigma(y)].
\end{align*}
If we repeat this procedure, we obtain that
\begin{align*}
[\sigma([x,\underbrace{y,\ldots,y}_{p-1}]),\sigma(y)]&=[\sigma(x),\underbrace{\sigma(y),\ldots,\sigma(y)}_{p}]\\
&=[\sigma(x),\res{(\sigma(y))}].
\end{align*}
Therefore,
\begin{align*}
&\phi(x,\res y)-\phi([x,\underbrace{y,\ldots,y}_{p-1}],y)\\
&=[\sigma(x),\sigma(\res y)]-[\sigma(x),\res{(\sigma(y))}]\\
&=[\sigma(x),\res{\sigma(y)}-\omega(x)]-[\sigma(x),\res{(\sigma(y))}]\\
&=[\sigma(x),\res{\sigma(y)}]-[\sigma(x),\res{(\sigma(y))}]\\
&=0.
\end{align*}

 Finally, we show that $K\cong L_{\theta}$. 
 Let $x\in K$. Then there exist unique $y\in L$ and $z\in M$ such that $x=\sigma (y)+z$. Indeed, we can take $z=x-\sigma(\pi(x))$ and $y=\pi(x)$. Since, $\pi(z)=0$, we have $z\in M$. Define $f:K\rightarrow L_{\theta}$ by $f(x)=y+z$. Then $f$ is an isomorphism. Indeed:

$$f(\res x)=f(\res{(\sigma (y)+z)})=f\bigg(\res{\sigma (y)} +\res z+\sum_{\substack{g_j=\sigma (y) \text{ or } z \\ g_1 =\sigma (y),\,  g_2=z}} \frac {1}{\#(\sigma (y))}[g_1,\ldots,g_p]\bigg).$$
Since $z\in M$, we have 
\begin{align*}
\res z=0, \text{ and } \sum_{\substack{g_j=\sigma (y) \text{ or } z \\ g_1 =\sigma (y),\, g_2=z}} \frac {1}{\#(\sigma (y))}[g_1,\ldots, g_p]=0.
\end{align*} 
Thus,
\begin{align*}
f(\res x)=f(\res{\sigma (y)})=f(\sigma (\res y)+\omega (y))=\res y +\omega (y)=\res y+\res{\sigma (y)}-\sigma (\res y).
\end{align*} 
On the other hand, we have
 \begin{align*}
 \res {f(x)}=\res {(y+z)}=\res y+\omega (y)=\res y+\res {\sigma (y)}-\sigma (\res y).
 \end{align*} 
Thus, $f(\res x)=\res {f(x)}$.

Similarly we can show that $f([x,y])=[f(x),f(y)]$.
\end{proof}

We conclude that any $p$-nilpotent restricted Lie algebra $K$ of dimension $n$  can be constructed from a restricted Lie algebra of lower dimension.

\begin{lemma}
Let $\theta=(\phi,\omega) \in Z^2(L,M)$ and $\eta=(\nu,\xi) \in B^2(L,M)$. Then we have $L_{\theta}\cong L_{\theta+\eta}$.
\end{lemma}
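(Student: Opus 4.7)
The plan is to exhibit an explicit linear isomorphism $f: L_\theta \to L_{\theta+\eta}$ and then verify it respects the bracket and the $p$-map. Since $\eta = (\nu, \xi) \in B^2(L,M)$, by definition there exists a linear map $\psi \in C^1_{cl}(L,M)$ such that $\delta^1 \psi = \nu$ and $\tilde\psi = \xi$, i.e.\ $\nu(x_1,x_2) = \psi([x_1,x_2])$ and $\xi(x) = \psi(x^{[p]})$ for all $x, x_1, x_2 \in L$. These two identities will be exactly what is needed to check that the map works.

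I would define $f: L_\theta \to L_{\theta+\eta}$ by
\[
f(x + m) = x + m + \psi(x), \qquad x \in L,\ m \in M,
\]
which is clearly a linear bijection (its inverse is $x + m \mapsto x + m - \psi(x)$), and sends $M$ to $M$ identically since $\psi$ vanishes on $M = 0 + M$ formally (as $\psi$ is defined on $L$). Next I would check that $f$ preserves the bracket by direct computation: on the one hand,
\[
[f(x_1+m_1), f(x_2+m_2)]_{\theta+\eta} = [x_1,x_2] + (\phi + \nu)(x_1,x_2),
\]
since terms involving $m_i$ or $\psi(x_i) \in M$ drop out by centrality of $M$. On the other hand,
\[
f([x_1+m_1, x_2+m_2]_\theta) = f([x_1,x_2] + \phi(x_1,x_2)) = [x_1,x_2] + \phi(x_1,x_2) + \psi([x_1,x_2]),
\]
and these agree because $\nu(x_1,x_2) = \psi([x_1,x_2])$.

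Preservation of the $p$-map is verified analogously. Using the expansion rules in $L_{\theta+\eta}$ together with $\res m = 0$ for $m \in M$ (and that sums over mixed iterated brackets containing an element of $M$ vanish), one obtains
\[
\res{(f(x+m))}_{\theta+\eta} = \res{(x + m + \psi(x))}_{\theta+\eta} = \res x + (\omega + \xi)(x),
\]
while
\[
f\bigl(\res{(x+m)}_\theta\bigr) = f(\res x + \omega(x)) = \res x + \omega(x) + \psi(\res x).
\]
These match because $\xi(x) = \psi(\res x)$. The only mildly delicate point is bookkeeping the disappearance of the $M$-valued terms in the $p$-map expansion; the cleanest way is to invoke equation (\ref{comm}) from the proof of Lemma \ref{Ltheta}, noting that any iterated bracket with at least one factor in $M$ is zero. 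Nothing else should present a real obstacle; the entire argument is essentially the observation that $B^2(L,M)$ was defined precisely so that coboundaries produce trivial extensions in this sense.
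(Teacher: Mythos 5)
Your proposal is correct and follows essentially the same route as the paper: the same map $f(x+m)=x+m+\psi(x)$ (the paper writes it as $f(x)=x+\psi(x)$ for $x\in L$, $f(m)=m$ for $m\in M$), with the same verification that the bracket and $p$-map are preserved using $\nu=\delta^1\psi$ and $\xi=\tilde\psi$. The only cosmetic difference is that the "delicate bookkeeping" you mention is unnecessary, since the $p$-map on $L_{\theta+\eta}$ is by definition $\res{(x+m')}=\res x+(\omega+\xi)(x)$ for any $m'\in M$, so no expansion argument is required.
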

\begin{proof}
We have $\eta=(\nu,\xi) \in B^2(L,M)$. Therefore, there exits $\psi \in C^1_{\text{cl}}(L,M)$ such that $\delta^1(\psi)=\nu$ and 
$\tilde{\psi} =\xi$. Note that $f: L_{\theta}\to L_{\theta+\eta}$ such that $f(x)=x+\psi(x)$ for $x\in L$ and $f(m)=m$ for $m\in M$
is an isomorphism. Indeed, let $x_1,x_2\in L$ and $m_1,m_2 \in M$. Then we have
\begin{align*}
f([x_1+m_1,x_2+m_2])=f([x_1,x_2]+\phi(x_1,x_2))=[x_1,x_2]+\psi([x_1,x_2])+\phi(x_1,x_2).
\end{align*}
On the other hand, we have 
\begin{align*}
&[f(x_1+m_1),f(x_2+m_2)]=[x_1+\psi(x_1)+m_1,x_2+\psi(x_2)+m_2]=[x_1,x_2]+(\phi+\nu)(x_1,x_2)=\\
&[x_1,x_2]+\psi([x_1,x_2])+\phi(x_1,x_2).
\end{align*}
Therefore, $f([x_1+m_1,x_2+m_2])=[f(x_1+m_1),f(x_2+m_2)]$. Also, we have
\begin{align*}
f(\res {(x_1+m_1)})=f(\res x_1+\omega(x_1))=\res x_1+\psi(\res x_1)+\omega(x_1).
\end{align*}
On the other hane, we have
\begin{align*}
\res {f(x_1+m_1)}=\res {(x_1+\psi(x_1)+m_1)}=\res x_1+(\omega+\xi)(x_1)=\res x_1+\psi(\res x_1)+\omega(x_1).
\end{align*}
Therefore, $f(\res {(x_1+m_1)})=\res {f(x_1+m_1)}$.
\end{proof}

\section{Strategy}

In this section we describe our strategy to find a (possibly redundant but complete) list of 
all possible $p$-nilpotent $p$-maps on a given nilpotent Lie algebra $K$. Since the $p$-maps are 
$p$-nilpotent, there exists $z\in Z(K)$ such that $z^{[p]}=0$. We let $M=\la z\ra_p$ and  $L=K/M$. We know from Theorem \ref{K=L-theta} that $K\cong L_{\theta}$, for some 
$\theta\in Z^2(L, M)$. Hence, we can construct every restricted Lie algebra of dimension $n$ from a  restricted Lie algebra of dimension $n-1$ via a cocycle. 

For our purpose,   we need the list of all 5-dimensional nilpotent Lie algebras and the list of all $p$-nilpotent restricted Lie algebras of dimension 4. For    reader's convenience, we include these lists below.

\begin{theorem}[\cite{dg}]\label{5-dim}
	The isomorphism class of all nilpotent Lie algebras of dimension 5 over an arbitrary field is as follows:
	\begin{itemize}
		\item $L_{5,1}=\text{abelian}$;
		\item $L_{5,2}=\langle x_1,\ldots,x_5\mid [x_1,x_2]=x_3 \rangle$;
		\item $L_{5,3}=\langle x_1,\ldots,x_5\mid [x_1,x_2]=x_3, [x_1,x_3]=x_4\rangle$;
		\item $L_{5,4}=\langle x_1,\ldots,x_5\mid [x_1,x_2]=x_5, [x_3,x_4]=x_5 \rangle$;
		\item $L_{5,5}=\langle x_1,\ldots,x_5\mid [x_1,x_2]=x_3, [x_1,x_3]=x_5, [x_2,x_4]=x_5\rangle$;
		\item $L_{5,6}=\langle x_1,\ldots,x_5\mid [x_1,x_2]=x_3, [x_1,x_3]=x_4, [x_1,x_4]=x_5, [x_2,x_3]=x_5\rangle$;
		\item $L_{5,7} = \langle x_1, \ldots, x_5\mid  [x_1,x_2]=x_3 , [x_1,x_3]=x_4 , [x_1,x_4]=x_5\rangle$;
		\item $L_{5,8}=\langle x_1,\ldots,x_5\mid [x_1,x_2]=x_4, [x_1,x_3]=x_5\rangle$;
		\item $L_{5,9} = \langle x_1, \ldots, x_5\mid  [x_1,x_2]=x_3 , [x_1,x_3]=x_4 , [x_2,x_3]=x_5\rangle$.
	\end{itemize}
\end{theorem}

\begin{theorem}[\cite{SU}]\label{4-dim}
	Let $L$ be a nilpotent Lie algebra of dimension 
	$4$ over a perfect field $\F$ of characteristic $p\geq 5$. Then the equivalence classes of 
	the $[p]$-maps on $L$ are as follows.
	\begin{itemize}		
		\item[(a)] If $L=\left<x_1,x_2,x_3,x_4\right>$, then
		\begin{enumerate}
			\item Trivial $p$-map;
			\item $\res x_1= x_2$;
			\item $\res x_1=x_2$, $\res x_3=x_4$;
			\item $\res x_1=x_2$, $\res x_2=x_3$;
			\item $\res x_1=x_2$, $\res x_2=x_3$, $\res x_3=x_4$.
		\end{enumerate}
		\item[(b)] If $L=\left<x_1,x_2,x_3,x_4\mid [x_1,x_2]=x_3\right>$, then
		\begin{enumerate}
			\item Trivial $p$-map;
			\item $\res x_1=x_3$;
			\item $\res x_1=x_4$;
			\item $\res x_1=x_3$, $\res x_2=x_4$;
			\item $\res x_3=x_4$;
			\item $\res x_3=x_4$, $\res x_2=x_3$;
			\item $\res x_4=x_3$;
			\item $\res x_4=x_3$, $\res x_2=x_4$.
		\end{enumerate}
		\item[(c)] If  $L=\left<x_1,x_2,x_3,x_4\mid [x_1,x_2]=x_3,	[x_1,x_3]=x_4\right>$, then	
		\begin{enumerate}
			\item Trivial $p$-map;
			\item $\res x_1=x_4$;
			\item $\res x_3=x_4$;
			\item $\res x_2=\xi x_4$,
			where  $\xi \in\F^*$ and  $\xi_1$ and $\xi_2$ represent isomorphic algebras if and only
			if $\xi_1\xi_2^{-1}$ is a square in $\F$.
		\end{enumerate}
	\end{itemize}
\end{theorem}

Next, we need to determine when $L_{\theta_1}\cong L_{\theta_2}$, for  $\theta_1, \theta_2\in Z^2(L, M)$. 
We will do this in Theorem   \ref{Aut-orbits} which involves an action of $\Aut_p(L)$ on $ H^2(L, M)$ that we need to address first. Here, $\Aut_p(L)$ denotes the group of restricted automorphisms of $L$. The group of automorphisms of $L$ where $L$ is just considered as a Lie algebra is denoted by $\Aut(L)$.

So, let $L$ is a restricted Lie algebra and $M$  a strongly abelian restricted Lie algebra which is considered as a trivial L-module.  Let $A \in \Aut_p(L)$ and $\theta = (\phi ,\omega) \in Z^2(L,M)$.  We define $A\theta =(A\phi ,A\omega)$, where $A\phi (x,y)=\phi (A(x),A(y))$ and $A\omega (x)=\omega (A(x))$. With these definitions we have:

\begin{lemma}\label{Aut-action}
Let $A\in \Aut_p(L)$ and $\theta\in Z^2(L,M)$. Then $A\theta\in Z^2(L,M)$. Furthermore, if
$\theta\in B^2(L,M)$ then  $A\theta\in B^2(L,M)$.
\end{lemma}
\begin{proof}
 It is easy to see that $A\phi$ is skew symmetric and bilinear map. Since $\delta^2\phi=0$, we have
$$
\delta^2A\phi (x,y)=\delta^2\phi (A(x),A(y))=0,   
$$
which implies that $\delta^2A\phi =0$. Also, $A\omega$ satisfies the $\star$-property with respect to $A\phi$.  To see this, let  $x,x_1,x_2 \in L$ and $\lambda \in \F$. Then we have,  

$$A\omega (\lambda x)=\omega (A(\lambda x))=\omega (\lambda A(x))=\lambda^p \omega (A(x))=\lambda^pA\omega (x),$$
and
\begin{align*}
A\omega (x_1 +x_2)&=\omega (A(x_1 +x_2))=\omega (Ax_1+Ax_2) \\
&=\omega (Ax_1)+\omega (Ax_2)+ \sum_ {Ax_j =Ax_1 or Ax_2 } \frac {1}{\# (Ax_1)}  \phi ([Ax_1,\ldots,Ax_{p-1}],Ax_p)\\
&=\omega (Ax_1)+\omega (Ax_2) +\sum_{x_j=x_1 or x_2} \frac{1}{\# (x_1)} \phi (A[x_1,\ldots,x_{p-1}],Ax_p)\\
&=\omega (Ax_1)+\omega (Ax_2) +\sum_{x_j=x_1 or x_2} \frac{1}{\#(x_1)} A\phi ([x_1,\ldots,x_{p-1}],x_p)\\
&=A\omega (x_1)+A\omega (x_2)+\sum_{x_j=x_1 or x_2} \frac{1}{\#(x_1)} A\phi ([x_1,\ldots,x_{p-1}],x_p).\\
\end{align*}

We also have
\begin{align*}
 &A\phi (x,\res y)-A\phi ([x,\underbrace{y,\ldots,y}_{p-1} ],y)\\
&=\phi (A(x),A(\res y))- \phi (A([x,\underbrace{y,\ldots,y}_{p-1} ]),A(y))\\
&=\phi (A(x),\res {A(y)})- \phi (([A(x),\underbrace{A(y),\ldots,A(y)}_{p-1} ]),A(y))\\
&=0.
\end{align*}
Now, we show that $\Aut_p(L)$ preserves $B^2(L,M)$. Let $(\delta \psi, \tilde \psi)\in B^2(L,M)$, where $\psi:L\to M$ is linear. We have $A(\delta \psi, \tilde \psi)=(A\delta \psi,A\tilde \psi)$. Then
\begin{align*}
&A\delta \psi(x,y)=\delta\psi(Ax,Ay)=\psi([Ax,Ay])=\psi(A[x,y])=A\psi([x,y])=\delta(A\psi)(x,y), \text{ and } \\
&A\tilde \psi(x)=\tilde\psi (Ax)=\psi(\res {A(x)})=\psi(A(\res x))=A\psi(\res x)=\overset{\sim}{(A\psi)}(x).
\end{align*}So $A(\delta \psi, \tilde \psi)=(\delta(A\psi),\overset{\sim}{(A\psi)})$. 
\end{proof}

It follows from Lemma \ref{Aut-action} that $\Aut_p(L)$ acts on $H^2(L,M)$.
Now, let $e_1,\ldots ,e_s$ be a basis of $M$ and set  $\theta=(\phi ,\omega )\in Z^2(L,M)$. Then 
\begin{align*}
\phi (x,y)=\sum_{i=1}^{s} \phi_i (x,y) e_i,\text{ and }
\omega(x)=\sum_{i=1}^{s} \omega_i(x) e_i.
\end{align*}
\begin{lemma} For every $i$, we have $(\phi_i, \omega_i)\in Z^2(L,\F)$.
\end{lemma}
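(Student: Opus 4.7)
The plan is to show that each of the defining conditions for $Z^2(L,M)$ decomposes coordinate-wise with respect to the basis $e_1,\dots,e_s$ of $M$, so that every condition imposed on $(\phi,\omega)$ forces the same condition on each pair $(\phi_i,\omega_i)$ viewed as taking values in $\mathbb F\cdot e_i\cong\mathbb F$.

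First I would verify that each $(\phi_i,\omega_i)$ lies in $C^2(L,\mathbb F)$. Skew-symmetry and bilinearity of $\phi_i$ are immediate from
$$\sum_i \phi_i(y,x)e_i=\phi(y,x)=-\phi(x,y)=-\sum_i\phi_i(x,y)e_i,$$
together with the uniqueness of expansion in the basis $\{e_i\}$. For the $\star$-property of $\omega_i$ with respect to $\phi_i$, I would expand the identity
$$\omega(x+y)=\omega(x)+\omega(y)+\sum_{\stackrel{x_j=x\ {\rm or}\ y}{\scriptscriptstyle x_1=x,x_2=y}}\frac{1}{\#x}\phi([x_1,\dots,x_{p-1}],x_p)$$
and the identity $\omega(\lambda x)=\lambda^p\omega(x)$ in the basis $\{e_i\}$. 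Since $M$ is regarded as a trivial $L$-module and the formula above is $\mathbb F$-linear in its $M$-valued terms, coefficient-matching in $e_i$ yields exactly the $\star$-property for $\omega_i$ with respect to $\phi_i$.

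Next I would check the two cocycle conditions. The coboundary identity $(\delta^2\phi)(x,y,z)=0$ reads
$$\sum_i\bigl[\phi_i([x,y],z)+\phi_i([y,z],x)+\phi_i([z,x],y)\bigr]e_i=0,$$
so by linear independence of the $e_i$ each summand vanishes, giving $\delta^2\phi_i=0$. Similarly,
$$\sum_i\bigl[\phi_i(x,y^{[p]})-\phi_i([x,\underbrace{y,\dots,y}_{p-1}],y)\bigr]e_i=\phi(x,y^{[p]})-\phi([x,\underbrace{y,\dots,y}_{p-1}],y)=0$$
forces $\phi_i(x,y^{[p]})=\phi_i([x,\underbrace{y,\dots,y}_{p-1}],y)$.

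The argument is essentially bookkeeping: the whole proof rests on the uniqueness of the expansion of an element of $M$ in the basis $e_1,\dots,e_s$, and there is no genuine obstacle beyond carefully writing down each defining identity of $Z^2(L,M)$ and reading off its $e_i$-component. Since $M$ is a trivial module, no twisting by the $L$-action appears to complicate the coordinate projection.
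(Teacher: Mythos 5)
Your proof is correct and follows essentially the same route as the paper: expand each defining condition on $(\phi,\omega)$ in the basis $e_1,\dots,e_s$ and use uniqueness of coordinates to read off skew-symmetry and bilinearity of $\phi_i$, the $\star$-property of $\omega_i$ with respect to $\phi_i$, $\delta^2\phi_i=0$, and $\phi_i(x,\res y)=\phi_i([x,\underbrace{y,\dots,y}_{p-1}],y)$.
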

\begin{proof} Since $\phi$ is skew symmetric and bilinear form, so is every $\phi_i$. We claim that $\delta^2 \phi_i =0$. Indeed, 
$0=\delta^2\phi(x,y) = \sum_{i=1}^{s}\delta^2\phi_i(x,y)e_i$, implying that  $\delta^2\phi_i(x,y)=0$.

 Now we show that $\omega_i$ has $\star$-property with respect $\phi_i$. First note that 
$\omega (\lambda x)=\lambda ^p\omega (x)$. Thus,   $\sum_{i=1}^{s} \omega_i (\lambda x) e_i =\lambda^p \sum_{i=1}^{s} \omega_i(x)e_i$ and so  $ \omega_i(\lambda x)=\lambda^p \omega_i(x)$. Furthermore, we have
\begin{align*}
\omega (x+y) &= \omega (x)+\omega (y)+ \sum_{\substack{x_{j}=x or y\\ x_1=x , x_2=y}} \frac {1}{\# (x)} \phi ([x_1,\ldots,x_{p-1}],x_p).
\end{align*}
Therefore,
\begin{align*}
&\sum_{i=1}^{s} \omega_i(x+y)e_i\\
&=\sum_{i=1}^{s} \omega_i(x)e_i +\sum_{i=1}^{s}\omega_i(y)e_i  +\sum_{\substack{x_j =x or y \\x_1=x , x_2=y}} \frac {1}{\# (x)} \sum_{i=1}^s \phi_i ([x_1,\ldots,x_{p-1}],x_p)e_i,
\end{align*}
which implies that
$$ \omega_i(x+y)=\omega_i(x)+\omega_i(y)+\sum_{\substack {{x_j}=x or y \\ x_1=x , x_2=y }}\frac {1}{\#(x)} \phi_i([x_1,\ldots,x_{p-1}],x_p).
$$
Finally, we have
\begin{align*}
0&=\phi (x,\res y)-\phi ([x,\underbrace{y,\ldots,y}_\text{p-1}],y)\\
&=\sum_{i=1}^s \phi_i(x,\res y)e_i- \sum_{i=1}^s\phi_i([x,\underbrace{y,\ldots,y}_\text{p-1}],y)e_i.
\end{align*}
Therefore 
$$
\phi_i(x,\res y)- \phi_i([x,\underbrace{y,...,y}_\text{p-1}],y)=0.
$$
The proof is complete.
\end{proof}

 Let $\theta_1 =(\phi,\omega), \theta_2=(\psi,\eta)\in Z^2(L,M)$ where 
 $$\phi(x,y)=\sum_{i=1}^s \phi_{i}(x,y) e_i,$$

  $$\psi(x,y)=\sum_{i=1}^s \psi_i(x,y) e_i,$$ and $\omega (x)=\sum_{i=1}^s \omega_{i}(x)e_i$ and $\eta (x)=\sum_{i=1}^s \eta_i(x)e_i$. With these notation we have:

\begin{lemma}\label{Aut-orbits}
There exists an  isomorphism $f: L_{\theta_1}\to  L_{\theta_2}$ with  $f(M)=M$  if and only if there is an $A\in Aut_p(L)$ such that the images of $(A\psi_i,A\eta_i)$'s in $H^2(L,\F)$ span the same subspace of $H^2(L,\F)$ as the images of $(\phi_{i},\omega_{i})$'s.
\end{lemma}
 \begin{proof}
Let $f : L_{\theta_1} \rightarrow L_{\theta_2}$ be an isomorphism such that  $f (M)=M$. Then  $f$ induces an isomorphism of $L_{\theta_1}/M$ to $L_{\theta_2}/M$, that is an automorphism of $L$. Denote this automorphism by $A$. Let $L$ be spanned by $x_1,\ldots, x_n$. Then  $f(x_i)=A(x_i)+v_i$, for some $v_i \in M$. Furthermore,  $f(e_i)=\sum_{j=1}^s a_{ji} e_{j}$, and $v_i=\sum_{l=1}^s \beta_{i\ell}e_{\ell}$. Also write $[x_i,x_j]_L=\sum_{k=1}^n c_{ij}^k x_k$, and $\res x_i=\sum_{k=1}^n b_{ik}x_k$.  
We claim that 
\begin{align}\label{claim1}
\psi_{l}(A(x_i),A(x_j))=\sum_{k=1}^s a_{\ell k}\phi_{k}(x_i,x_j) + \sum_{k=1}^n c_{ij}^k \beta _{k\ell}.
 \end{align}
 To prove the claim, we note that  $f ([x_i,x_j]_{L_{\theta_1}} )=[f(x_i),f(x_j)]_{L_{\theta_2}}$. Then 
 \begin{align*}
 f([x_i,x_j]_{L_{\theta_1}} )=f([x_i,x_j]_L+\phi(x_i,x_j))=f([x_i,x_j]_L)+f(\phi (x_i,x_j)),
 \end{align*}
 where
 \begin{align*}
f([x_i,x_j]_L)=f(\sum_{k=1}^n c_{ij}^k x_k)= \sum_{k=1}^n c_{ij}^kf(x_k)&=\sum_{k=1}^n c_{ij}^k(A(x_k)+v_k)\\
&=\sum_{k=1}^n c_{ij}^kA(x_k)+\sum_{k=1}^nc_{ij}^k v_k,
\end{align*}
and
\begin{align*}
 f(\phi (x_i,x_j))=f(\sum_{k=1}^s \phi_{k}(x_i,x_j) e_k)&=\sum_{k=1}^s \phi_{k}(x_i,x_j)f(e_k)\\
 &=\sum_{k=1}^s \phi_{k}(x_i,x_j)\sum_{l=1}^sa_{\ell k}e_{\ell}\\
 &=\sum_{l=1}^s\sum_{k=1}^s a_{\ell k}\phi_{k}(x_i,x_j)e_{\ell}.
 \end{align*}
 Therefore, we have
 \begin{align*}
  f([x_i,x_j]_{L_{\theta_1}} )&=\sum_{k=1}^n c_{ij}^kA(x_k)+\sum_{k=1}^nc_{ij}^k v_k+\sum_{l=1}^s\sum_{k=1}^s a_{\ell k}\phi_{k}(x_i,x_j)e_{\ell}\\
  &=\sum_{k=1}^n c_{ij}^kA(x_k)+\sum_{l=1}^s\sum_{k=1}^nc_{ij}^k\beta_{k \ell}e_{\ell}+\sum_{l=1}^s\sum_{k=1}^s a_{\ell k}\phi_{k}(x_i,x_j)e_{\ell}.
  \end{align*}
 On the other hand, we have
 \begin{align*}
 [f(x_i),f(x_j)]_{L_{\theta_2}}&=[A(x_i)+v_i,A(x_j)+v_j]_{L_{\theta_2}}\\
 &=[A(x_i),A(x_j)]_L+\psi(A(x_i),A(x_j)),
 \end{align*}
 where
 \begin{align*}
[A(x_i),A(x_j)]_L=A([x_i,x_j]_L)=A(\sum_{k=1}^nc_{ij}^kx_k)= \sum_{k=1}^nc_{ij}^kA(x_k),
\end{align*}
and
\begin{align*}
\psi(A(x_i),A(x_j))=\sum_{l=1}^s\psi_{\ell}(A(x_i),A(x_j))e_{\ell}.
\end{align*}
 Therefore, we have
 \begin{align*}
 [f(x_i),f(x_j)]_{L_{\theta_2}}=\sum_{k=1}^nc_{ij}^kA(x_k)+\sum_{l=1}^s\psi_{\ell}(A(x_i),A(x_j))e_{\ell}.
 \end{align*} 
 As a result, we have
 \begin{align*}
 \sum_{l=1}^s\psi_{\ell}(A(x_i),A(x_j))e_{\ell}=\sum_{l=1}^s\sum_{k=1}^nc_{ij}^k\beta_{k \ell}e_{\ell}+\sum_{l=1}^s\sum_{k=1}^s a_{\ell k}\phi_{k}(x_i,x_j)e_{\ell},
 \end{align*}
which implies that
\begin{align*}
\psi_{\ell}(A(x_i),A(x_j))=\sum_{k=1}^nc_{ij}^k\beta_{k \ell}+\sum_{k=1}^s a_{\ell k}\phi_{k}(x_i,x_j),
\end{align*}
 for $1\leq \ell \leq s$. This proves the claim. Next we claim that
 \begin{align}\label{claim2}
\eta_{\ell}(A(x_i))=\sum_{k=1}^n b_{ik}\beta_{k \ell}+  \sum_{k=1}^s a_{\ell k} \omega_{k}(x_i),
\end{align}
 for every $1\leq \ell \leq s$. To prove the  claim, we note that  $f(\res x_i)=\res {f(x_i)}$. So, 
\begin{align*}
f(\res x_i)=f(\omega(x_i) + \res x_i)=f(\omega(x_i))+f(\res x_i).
\end{align*}
Now we have
\begin{align*}
f(\omega (x_i))=f(\sum_{k=1}^s \omega_{k}(x_i)e_k)=\sum_{k=1}^s \omega_{k}(x_i)f(e_k)&=\sum_{k=1}^s \omega_{k}(x_i) \sum_{\ell=1}^s a_{\ell k}e_{\ell}\\
&=\sum_{\ell =1}^s \sum_{k=1}^s a_{\ell k} \omega_{k}(x_i)e_{\ell}, 
\end{align*}
and
\begin{align*}
f(\res x_i)=f(\sum_{k=1}^n b_{ik} x_k)=\sum_{k=1}^n b_{ik}f(x_k)=\sum_{k=1}^n b_{ik}(A(x_k)+v_k)= \sum_{k=1}^n b_{ik} A(x_k) + \sum_{k=1}^n b_{ik}v_k.
\end{align*}
Therefore, we have
\begin{align*}
f(\res x_i)&=\sum_{k=1}^n b_{ik}A(x_k) +\sum_{k=1}^n b_{ik}v_k + \sum _{\ell=1}^s \sum_{k=1}^s a_{\ell k} \omega_{k}(x_i)e_{\ell}\\
&=\sum_{k=1}^n b_{ik}A(x_k) +\sum_{\ell=1}^s\sum_{k=1}^n b_{ik}\beta_{k \ell}e_{\ell} + \sum _{\ell=1}^s \sum_{k=1}^s a_{\ell k} \omega_{k}(x_i)e_{\ell}.
\end{align*}
On the other hand, we have
\begin{align*}
\res {f(x_i)}=\res {(A(x_i)+v_i)}=\eta (A(x_i))+\res {A(x_i)},
\end{align*}
where
\begin{align*}
&\eta(A(x_i))=\sum_{\ell=1}^s \eta_{\ell}(A(x_i))e_{\ell}, \text{ and }\\
&\res {A(x_i)}=A(\res x_i)=A(\sum_{k=1}^n b_{ik}x_k)=\sum_{k=1}^n b_{ik} A(x_k).
\end{align*}
Therefore, we have
\begin{align*}
\res {f(x_i)}=\sum_{\ell=1}^s \eta_{\ell}(A(x_i))e_{\ell}+\sum_{k=1}^n b_{ik} A(x_k).
\end{align*}
As a result,
\begin{align*}
\sum_{\ell=1}^s \eta_{\ell}(A(x_i))e_{\ell}=\sum_{\ell=1}^s\sum_{k=1}^n b_{ik}\beta_{k \ell}e_{\ell} + \sum _{\ell=1}^s \sum_{k=1}^s a_{\ell k} \omega_{k}(x_i)e_{\ell},
\end{align*}
which implies that
\begin{align*}
\eta_{\ell}(A(x_i))=\sum_{k=1}^n b_{ik}\beta_{k \ell}+  \sum_{k=1}^s a_{\ell k} \omega_{k}(x_i),
\end{align*}
 for $1\leq \ell \leq s$. This proves the second claim.
 
Now define the linear function $f_{\ell} :L\rightarrow \F$ by $f_{\ell}(x_k)=\beta_{k \ell}$. Then 
\begin{align*}
 &(\delta^1 f_{\ell})(x_i,x_j)=f_{\ell}([x_i,x_j])=\sum_{k=1 }^n c_{ij}^k \beta_{k \ell}, \text{ and }\\
 &\tilde {f_{\ell}} (x_i)=f_{\ell}(\res x_i)=f_{\ell}(\sum_{k=1 }^n b_{ik}x_k)=\sum_{k=1 }^n b_{ik}\beta_{k \ell}.
 \end{align*}
Hence, $(\delta^1 f_{\ell}, \tilde {f_{\ell}} )\in B^2(L,\F)$. We deduce, by Equations \eqref{claim1} and \eqref{claim2}, that the subspace spanned by all the  $(A\psi_i, A\eta_i)$'s is the same as  the subspace spanned by all the $(\phi_{i},\omega_{i})$'s  modulo $B^2(L,\F)$.

Conversely, suppose that the image of $(A\psi_i,A\eta_i)$'s span the same subspace of $H^2(L,\F)$ as the the image of $(\phi_{i},\omega_{i})$'s. Then there are linear functions $f_{\ell} :L\rightarrow \F$ and $a_{\ell k} \in \F$ so that 
\begin{align*}
&A\psi_{\ell}(x_i,x_j)=\sum_{k=1}^s a_{\ell k}\phi_{k}(x_i,x_j) + f_{\ell}([x_i,x_j]), \text{ and }\\
&A\eta_{\ell}(x_i)=\sum_{k=1}^s a_{\ell k} \omega_{k}(x_i) +\tilde {f_{\ell}}(x_i).
\end{align*}
If we set $\beta_{k \ell}=f_{\ell}(x_k)$, then we see that Equations \eqref{claim1} and \eqref{claim2} hold. This means that, if we define  $f : L_{\theta_1} \rightarrow L_{\theta_2}$ by $f(x_i)=A(x_i) +\sum_{\ell} \beta _{il}e_{\ell}$ and  $f(e_i) =\sum_j a_{ji}e_j$, then $f$ is an isomorphism.
\end{proof}

Even though Lemma \ref{Aut-orbits} provides us with a criteria to know when 
$L_{\theta_1}\cong L_{\theta_2}$, it is hard to apply Lemma \ref{Aut-orbits}  in practice. Instead, we consider the following convention:

\begin{definition}
Let $\theta \in Z^2(L,M)$. We define  $\bar \theta=\{A\theta \mid A\in \Aut_p(L)\}$. We call $\bar \theta$ an $\Aut_p(L)$-orbit and $\theta$ an $\Aut_p(L)$-orbit representative. We say $\theta_1$ and $\theta_2$ are in the same $\Aut_p(L)$-orbit if there exists $A\in \Aut_p(L)$ such that $A\theta_1=\theta_2$.
\end{definition}

Clearly, if $\theta_1$ and $\theta_2$ are in the same $\Aut_p(L)$-orbit then by Theorem 
\ref{Aut-orbits}, $L_{\theta_1}\cong L_{\theta_2}$. However, the converse is not true, see for instance Lemma \ref{first-example-orbit}. 
Nevertheless, it is practical to find all $\Aut_p(L)$-orbit representatives and this way we get a list  of all possible extensions of $L$ by $M$.  We shall then  eliminate all redundancies. 

\section{Finding a basis for $H^2(L,\F)$}\label{basis for the second cohomology}

In this section, we describe how to find a basis for $H^2(L,\F)$, where 
$L$ is  a $p$-nilpotent 4-dimensional restricted Lie algebra and $p\geq 5$. Let   $x_1,\ldots,x_4$ be a basis of $L$ and $(\phi,\omega)\in C^2(L,\F)$.  We have $\phi=\sum_{1\leq i<j\leq 4} c_{ij}\Delta _{ij}$, where $\Delta_{ij}$ is the skew symmetric matrix with $(i,j)$-entry equal to $1$, $(j,i)$-entry equal to $-1$ and all  other entries equal to  $0$. Also, $\omega$ has the $\star$-property with respect to $\phi$:
\begin{align}\label{con-2}
\omega (x+y) = \omega (x) + \omega (y) +\displaystyle \sum_{\substack{x_j=x \text{ or } y} } \frac{1}{\# x} \phi ([x,y, x_1,\ldots,x_{p-3}],x_{p-2}),
\end{align}
for all $x,y\in L$. Since $p\geq 5$ and $L$ is  nilpotent of class at most 3, Equation \eqref{con-2} reduces to
$\omega (x+y) = \omega (x) + \omega (y)$, for all $x,y\in L$. This means that   $\omega$ is semilinear.

Since  $\omega$ is determined by its evaluation on $x_1,\ldots, x_4$, we can write   $\omega=\alpha f_1+\beta f_2+\gamma f_3 + \delta f_4$, where $f_i(x_j)=\delta_{i,j}$, and $\alpha, \beta, \gamma, \delta\in  \F$.
We deduce that the set 
\begin{align}\label{basis-C2}
\{(\Delta_{ij},0),(0,f_k)\mid 1\leq i<j\leq 4, 1\leq k\leq 4\}
\end{align}
forms a basis for $C^2(L,\F)$. Now suppose that $(\phi,\omega)\in Z^2(L,\F)$.  Then we must have 
$\delta^2\phi=0$ and  

\begin{align}
\phi (x,\res y)=\phi([x,\underbrace{y,\ldots,y}_{p-1}],y), \label{con-1}
\end{align}
for all $x,y\in L$. Since $p\geq 5$ and  $L$ is nilpotent of class at most 3, Equation \eqref{con-1} reduces to  $\phi (x,\res y)=0$.
The two conditions $\delta^2\phi=0$ and $\phi (x,\res y)=0$ impose certain   constraints on the  $c_{ij}$'s.
This way, we    obtain a subset $S$ of  the set given in \eqref{basis-C2} that serves as a basis  for $Z^2(L,\F)$.

Furthermore, if  $(\phi,\omega)\in B^2(L,\F)$ then we must  have  $\phi=\delta^1\psi$
and $\omega=\tilde \psi$, for some linear map  $\psi:L\to \F$. These latter conditions impose further restrictions on   the  $c_{ij}$'s and $\alpha_k$'s. This way, we obtain a basis for $B^2(L,\F)$ in which every basis element is expressed as a linear combination of elements of $S$. 
So every element of this basis of $B^2(L,\F)$ serves as a  dependence relation between the elements of $S$ in $Z^2(L,\F)/B^2(L,\F)$. As a basis for $H^2(L,\F)$ we take the elements $[s]$, with $s\in S$,  modulo these dependence relations.

\begin{lemma}\label{invariants}
Let $f: L \to H$ be an isomorphism of restricted Lie algebras. Then  $f$ induces an isomorphism between $L/\la L^{[p]}\ra_p$ and $H/\la H^{[p]}\ra_p$.
\end{lemma}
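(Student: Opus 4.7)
The plan is essentially to observe that everything in sight is preserved by $f$ and that the subspace $\la L^{[p]}\ra_p$ is defined purely in terms of operations that $f$ respects. Since $f$ is a morphism of restricted Lie algebras, $f(x^{[p]})=f(x)^{[p]}$ for every $x\in L$, so as sets
\begin{align*}
f(L^{[p]})=\{f(x)^{[p]}:x\in L\}=\{y^{[p]}:y\in H\}=H^{[p]},
\end{align*}
using surjectivity of $f$ in the second equality.

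Next I would argue that $f$ maps $\la L^{[p]}\ra_p$ onto $\la H^{[p]}\ra_p$. Interpreting $\la X\ra_p$ as the smallest $p$-closed subspace (subspace closed under the $p$-map) containing $X$, the image $f(\la L^{[p]}\ra_p)$ is again a $p$-closed subspace of $H$ (because $f$ is $\F$-linear and commutes with the $p$-map), and it contains $f(L^{[p]})=H^{[p]}$, hence it contains $\la H^{[p]}\ra_p$. Applying the same argument to $f^{-1}$, which is also a morphism of restricted Lie algebras, yields the reverse inclusion, so
\begin{align*}
f(\la L^{[p]}\ra_p)=\la H^{[p]}\ra_p.
\end{align*}

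Finally, I would use the universal property of quotient spaces: since $f$ is a linear isomorphism sending $\la L^{[p]}\ra_p$ bijectively onto $\la H^{[p]}\ra_p$, the rule $\bar f(x+\la L^{[p]}\ra_p)=f(x)+\la H^{[p]}\ra_p$ gives a well-defined linear isomorphism $\bar f: L/\la L^{[p]}\ra_p\to H/\la H^{[p]}\ra_p$. There is no serious obstacle here; the only point requiring a moment of care is confirming that $\la L^{[p]}\ra_p$ is characterised by operations $f$ respects, and once that is in place the induced map on quotients is automatic.
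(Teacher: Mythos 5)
Your proposal is correct and is essentially the paper's argument: the paper composes $f$ with the canonical projection $H\to H/\la H^{[p]}\ra_p$ and identifies the kernel as $\la L^{[p]}\ra_p$, which amounts to exactly your observation that $f$ carries $\la L^{[p]}\ra_p$ onto $\la H^{[p]}\ra_p$ before passing to quotients. The only cosmetic point is to note that the induced map $\bar f$ is an isomorphism of restricted Lie algebras (not merely of vector spaces), which is automatic since $f$ preserves the bracket and the $p$-map.
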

\begin{proof}
 Let $\phi : H\to H/\la H^{[p]}\ra_p$ be the canonical homomorphism. Then $\psi=\phi f :L\to  H/\la H^{[p]}\ra_p$ is a surjective homomorphism of restricted Lie algeras with $\ker \psi=\la L^{[p]}\ra_p$. Therefore,  $L/\la L^{[p]}\ra_p$ and $H/\la H^{[p]}\ra_p$ are isomorphic.
\end{proof}

\chapter{Restriction maps on the abelian Lie algebra}

Let $x_1, \ldots, x_5$ be a basis of the abelian Lie algebra  $K_1=L_{5,1}$ of dimension 5.  Since we want to determine $p$-nilpotent
maps on $L_{5,1}$, without loss of generality, we assume that $\res x_5=0$.
Let $L=L_{5,1}/\la x_5\ra$. Since $\dim L=4$, by Theorem \ref{4-dim}, there are five restricted Lie algebra structures on $L$ given by the following $p$-maps:   
\begin{enumerate}
\item[I.1] Trivial $p$-map;
\item[I.2]$\res x_1=x_2$;
\item[I.3]$\res x_1 =x_2 , \res x_3=x_4$;
\item[I.4]$\res x_1 =x_2 , \res x_2=x_3$;
\item[I.5]$\res x_1 =x_2 , \res x_2=x_3, \res x_3=x_4$.
\end{enumerate}

\section{Extensions of ($L_{5,1}/\la x_5\ra$, trivial $p$-map)}

We have $\res x_i\in \la x_5\ra$, for all $1\leq i\leq 4$. Hence $\res x_i=\alpha_i x_5$,  for some $\alpha_i\in \F$ and for all $1\leq i\leq 4$. If $\alpha_i \neq 0$, then without loss of generality, we assume $\alpha_1\neq 0$. Now, we rescale $x_1$ such that $\res x_1=x_5$. Now, if $\alpha_j \neq 0$ then we rescale $x_j$ such that $\res x_j=x_5$. Finally, if $\res x_j=x_5$, then we replace $x_j$ with $x_j-x_1$ such that $\res x_j=0$.
Therefore, the possible $p$-maps are as follows:
\begin{align*}
&K_1^1=\la x_1, \ldots, x_5\ra;\\
&K_1^2=\la x_1,\ldots, x_5\mid \res x_1=x_5\ra.
\end{align*}

\section{Extensions of ($L_{5,1}/\la x_5\ra, \res x_1=x_2 $)}

In this case we have $\res x_1-x_2 \in \la x_5\ra$ and $\res x_2, \res x_3,\res x_4 \in \la x_5\ra$. Hence, $\res x_1=x_2+\alpha_1 x_5$, $\res x_2=\alpha_2 x_5$, $\res x_3=\alpha_3 x_5$, and $\res x_4=\alpha_4 x_5$, for some $\alpha_1, \alpha_2, \alpha_3, \alpha_4 \in \F$. We replace $x_2$ with $x_2+\alpha_1 x_5$ to obtain that $\res x_1=x_2$. We consider two cases:

\textbf {Case 1.} $\alpha_2=0$: First, if $\alpha_3=\alpha_4= 0$ then we have the following $p$-map:
$$K_1^3=\la x_1,\ldots, x_5\mid \res x_1=x_2\ra.$$
Next, if one of $\alpha_3, \alpha_4$ is zero and one is nonzero, without loss of generality, we assume that $\alpha_4=0$. Now, we rescale $x_3$ so that $\res x_3=x_5$. Therefore, we have the following $p$-map:
$$K_1^4=\la x_1,\ldots,x_5\mid \res x_1=x_2, \res x_3=x_5\ra.$$
Finally, if both $\alpha_3$ and $\alpha_4$ are non-zero then we rescale $x_3$ and $x_4$ so that $\res x_3=x_5$ and $\res x_4=x_5$. Now, we replace $x_4$ with $x_4-x_3$ so that $\res x_4=0$. Therefore, we obtain the same $p$-map as the previous one.

\textbf {Case 2.} $\alpha_2\neq 0$: First, if $\alpha_3=\alpha_4= 0$ then in $L_{5,1}$ we replace $x_5$ with $\alpha_2x_5$ so that $\res x_2=x_5$. Therefore, we have the following $p$-map: 
$$K_1^5=\la x_1,\ldots,x_5\mid \res x_1=x_2, \res x_2=x_5\ra.$$
Next, if one of $\alpha_3, \alpha_4$ is zero and one is nonzero, without loss of generality, we assume that $\alpha_4=0$. Now, in $L_{5,1}$ we replace $x_3$ with $(\alpha_2/\alpha_3)^{1/p}x_3$ and $x_5$ with $\alpha_2 x_5$ so that $\res x_2=x_5$ and $\res x_3=x_5$. Then, we replace $x_3$ with $x_3-x_2$ so that $\res x_3=0$. Therefore, we obtain the same $p$-map as $K_1^5$.

Finally, if both $\alpha_3$ and $\alpha_4$ are non-zero then in $L_{5,1}$ we replace  $x_3$ with $(\alpha_2/\alpha_3)^{1/p}x_3$,
$x_4$ with $(\alpha_2/\alpha_4)^{1/p}x_4$ and $x_5$ with $\alpha_2 x_5$ so that $\res x_2=x_5$, $\res x_3=x_5$ and $\res x_4=x_5$. Now, we replace $x_3$ with $x_3-x_2$ and $x_4$ with $x_4-x_2$ so that $\res x_3=0$ and $\res x_4=0$.
Therefore, we obtain the same $p$-map as $K_1^5$.

\section{Extensions of ($L_{5,1}/\la x_5\ra, \res x_1 =x_2 , \res x_3=x_4$)}

In this case we have $\res x_1-x_2 \in \la x_5\ra$, $\res x_3-x_4 \in \la x_5\ra$, and $\res x_2,\res x_4 \in \la x_5\ra$.
Hence, $\res x_1=x_2+\alpha_1 x_5$, $\res x_3=x_4+\alpha_3 x_5$, $\res x_2=\alpha_2 x_5$, and $\res x_4=\alpha_4 x_5$ 
for some $\alpha_1, \alpha_2, \alpha_3, \alpha_4 \in \F$. We replace $x_2$ with $x_2+\alpha_1 x_5$ so that $\res x_1=x_2$ and $x_4$ with $x_4+\alpha_3 x_5$ so that $\res x_3=x_4$.

 First, if $\alpha_2= \alpha_4= 0$ then we have the following $p$-map: 
$$K_1^6=\la x_1,\ldots,x_5\mid \res x_1=x_2, \res x_3=x_4\ra.$$
Next, if $\alpha_2\neq 0$ and $\alpha_4=0$ then in $L_{5,1}$ we replace $x_5$ with $\alpha_2 x_5$ so that $\res x_2=x_5$.
Therefore, we have the following $p$-map:
$$K_1^7=\la x_1,\ldots,x_5\mid \res x_1=x_2, \res x_3=x_4, \res x_2=x_5\ra.$$
Next, if $\alpha_2= 0$ and $\alpha_4\neq 0$ then in $L_{5,1}$ we replace $x_5$ with $\alpha_4 x_5$ so that $\res x_4=x_5$.
Therefore, we have the following $p$-map:
$$K_1^8=\la x_1,\ldots,x_5\mid \res x_1=x_2, \res x_3=x_4, \res x_4=x_5\ra.$$
Finally, if $\alpha_2\neq 0$ and $\alpha_4\neq 0$ then in $L_{5,1}$ we replace $x_3$ with $((\alpha_2/\alpha_4)^{1/p})^{1/p}x_3$, $x_4$ with $(\alpha_2/\alpha_4)^{1/p}x_4$ and $x_5$ with $\alpha_2x_5$ so that $\res x_2=x_5$ and $\res x_4=x_5$. Therefore, we have the following $p$-map:
$$K_1^9=\la x_1,\ldots,x_5\mid \res x_1=x_2, \res x_2=x_5,\res x_3=x_4, \res x_4=x_5\ra.$$

\section{Extensions of ($L_{5,1}/\la x_5\ra, \res x_1 =x_2 , \res x_2=x_3$)}
In this case we have $\res x_1-x_2 \in \la x_5\ra$, $\res x_2-x_3 \in \la x_5\ra$, and $\res x_3,\res x_4 \in \la x_5\ra$.
Hence, $\res x_1=x_2+\alpha_1 x_5$, $\res x_2=x_3+\alpha_2 x_5$, $\res x_3=\alpha_3 x_5$, and $\res x_4=\alpha_4 x_5$ 
for some $\alpha_1, \alpha_2, \alpha_3, \alpha_4 \in \F$. We replace $x_2$ with $x_2+\alpha_1 x_5$ so that $\res x_1=x_2$ and $x_3$ with $x_3+\alpha_2 x_5$ so that $\res x_2=x_3$.

 First, if $\alpha_3=\alpha_4= 0$ then we have the following $p$-map: 
$$K_1^{10}=\la x_1,\ldots,x_5\mid \res x_1=x_2, \res x_2=x_3\ra.$$
Next, if $\alpha_3\neq 0$ and $\alpha_4=0$ then in $L_{5,1}$ we replace $x_5$ with $\alpha_3 x_5$ so that $\res x_3=x_5$.
Therefore, we have the following $p$-map:
$$K_1^{11}=\la x_1,\ldots,x_5\mid\res x_1=x_2, \res x_2=x_3, \res x_3=x_5\ra.$$
Next, if $\alpha_3= 0$ and $\alpha_4\neq 0$ then in $L_{5,1}$ we replace $x_5$ with $\alpha_4 x_5$ so that $\res x_4=x_5$.
Therefore, we have the following $p$-map:
$$K_1^{12}=\la x_1,\ldots,x_5\mid \res x_1=x_2, \res x_2=x_3, \res x_4=x_5\ra.$$
Finally, if $\alpha_3\neq 0$ and $\alpha_4\neq 0$ then in $L_{5,1}$ we replace $x_4$ with $(\alpha_3/\alpha_4)^{1/p}x_4$ and $x_5$ with $\alpha_3x_5$ so that $\res x_3=x_5$ and $\res x_4=x_5$. Now, we replace $x_4$ with $x_4-x_3$ so that $\res x_4$=0. Therefore, we have the following $p$-map:
$$K_1^{13}=\la x_1,\ldots,x_5 \mid \res x_1=x_2, \res x_2=x_3,\res x_3=x_5\ra.$$

\section{Extensions of ($L_{5,1}/\la x_5\ra, \res x_1 =x_2 , \res x_2=x_3, \res x_3=x_4$)}
In this case we have $\res x_1-x_2 \in \la x_5\ra$, $\res x_2-x_3 \in \la x_5\ra$, $\res x_3-x_4 \in \la x_5\ra$ and $\res x_4 \in \la x_5\ra$.
Hence, $\res x_1=x_2+\alpha_1 x_5$, $\res x_2=x_3+\alpha_2 x_5$, $\res x_3=x_4+\alpha_3 x_5$, and $\res x_4=\alpha_4 x_5$ 
for some $\alpha_1, \alpha_2, \alpha_3, \alpha_4 \in \F$. We replace $x_2$ with $x_2+\alpha_1 x_5$ so that $\res x_1=x_2$, $x_3$ with $x_3+\alpha_2 x_5$ so that $\res x_2=x_3$ and $x_4$ with $x_4+\alpha_3x_5$ so that $\res x_3=x_4$.
First, if $\alpha_4=0$ then we have the following $p$-map:
$$K_1^{14}=\la x_1,\ldots,x_5 \mid \res x_1=x_2, \res x_2=x_3,\res x_3=x_4\ra.$$
Next, if $\alpha_4 \neq 0$  then in $L_{5,1}$ we replace $x_5$ with $\alpha_4 x_5$ so that $\res x_4=x_5$.
Therefore, we have the following $p$-map:
$$K_1^{15}=\la x_1,\ldots,x_5\mid \res x_1=x_2, \res x_2=x_3, \res x_3=x_4, \res x_4=x_5\ra.$$

Therefore, the list of all restricted Lie algebra structures on $L_{5,1}$ are as follows:
\begin{align*}
&K_1^1=\la x_1,\ldots,x_5\ra;\\
&K_1^2=\la x_1,\ldots,x_5\mid \res x_1=x_5\ra;\\
&K_1^3=\la x_1,\ldots,x_5 \mid \res x_1=x_2\ra;\\
&K_1^4=\la x_1,\ldots,x_5 \mid \res x_1=x_2, \res x_3=x_5\ra;\\
&K_1^5=\la x_1,\ldots,x_5 \mid \res x_1=x_2, \res x_2=x_5\ra;\\
&K_1^6=\la x_1,\ldots,x_5 \mid \res x_1=x_2, \res x_3=x_4, \res x_2=x_5\ra;\\
&K_1^7=\la x_1,\ldots,x_5 \mid \res x_1=x_2, \res x_3=x_4\ra;\\
&K_1^8=\la x_1,\ldots,x_5 \mid \res x_1=x_2, \res x_3=x_4, \res x_4=x_5\ra;\\
&K_1^9=\la x_1,\ldots,x_5 \mid \res x_1=x_2, \res x_2=x_5,\res x_3=x_4, \res x_4=x_5\ra;\\
&K_1^{10}=\la x_1,\ldots,x_5 \mid \res x_1=x_2, \res x_2=x_3\ra;\\
&K_1^{11}=\la x_1,\ldots,x_5 \mid \res x_1=x_2, \res x_2=x_3, \res x_3=x_5\ra;\\
&K_1^{12}=\la x_1,\ldots,x_5 \mid \res x_1=x_2, \res x_2=x_3, \res x_4=x_5\ra;\\
&K_1^{13}=\la x_1,\ldots,x_5 \mid \res x_1=x_2, \res x_2=x_3,\res x_3=x_5\ra;\\
&K_1^{14}=\la x_1,\ldots,x_5 \mid \res x_1=x_2, \res x_2=x_3,\res x_3=x_4\ra;\\
&K_1^{15}=\la x_1,\ldots,x_5 \mid \res x_1=x_2, \res x_2=x_3, \res x_3=x_4, \res x_4=x_5\ra.
\end{align*}
Note that $K_1^2\cong K_1^3$, $K_1^7\cong K_1^4$, $K_1^{10}\cong K_1^{5}$, $K_1^{14}\cong K_1^{11}$, and $K_1^6\cong K_1^8\cong K_1^{12}$. Furthermore, $K_1^{13}$ is identical to $K_1^{11}$. 

\begin{theorem}\label{L51}
 The list of all  restricted Lie algebra structures on $L_{5,1}$, up to isomorphism, is as follows:
\begin{align*}
& L_{5,1}^1=\la x_1, \dots, x_5\ra\\
&L_{5,1}^2=\la x_1, \dots ,x_5 \mid \res x_1=x_5\ra\\
&L_{5,1}^3=\la x_1, \dots, x_5 \mid \res x_1=x_2, \res x_3 =x_5\ra\\
&L_{5,1}^4=\la x_1, \dots, x_5 \mid \res x_1 =x_2, \res x_2 =x_5\ra\\
&L_{5,1}^5=\la x_1 ,\dots, x_5 \mid \res x_1=x_2, \res x_3=x_4, \res x_2=x_5\ra\\
&L_{5,1}^6=\la x_1 ,\dots, x_5 \mid \res x_1=x_2, \res x_2 =x_5, \res x_3=x_4, \res x_4=x_5\ra\\
&L_{5,1}^7=\la x_1, \dots, x_5 \mid \res x_1=x_2, \res x_2=x_3, \res x_3=x_5\ra\\
&L_{5,1}^8=\la x_1, \dots, x_5 \mid \res x_1=x_2, \res x_2 =x_3, \res x_3=x_4, \res x_4=x_5\ra
\end{align*}
\end{theorem}

It is clear from  the table below that the restricted Lie algebras given in Theorem  \ref{L51} are pairwise non-isomorphic.

\begin{table}[htbp]
\begin{tabular}{l | l | l | l}\hline 
L &$\dim L^{[p]}$ & $\dim L^{[p]^2}$ & $\dim L^{[p]^3}$ \\ \hline
$L_{5,1}^1=\la x_1, \dots, x_5\ra$ &0&0&0\\
$L_{5,1}^2=\la x_1, \dots ,x_5 \mid \res x_1=x_5\ra$ &1&0&0\\
$L_{5,1}^3=\la x_1, \dots, x_5 \mid \res x_1=x_2, \res x_3 =x_5\ra$ &2&0&0\\
$L_{5,1}^4=\la x_1, \dots, x_5 \mid \res x_1 =x_2, \res x_2 =x_5\ra$ &2&1&0\\
$L_{5,1}^5=\la x_1 ,\dots, x_5 \mid \res x_1=x_2, \res x_3=x_4, \res x_2=x_5\ra$ &3&1&0\\
$L_{5,1}^6=\la x_1 ,\dots, x_5 \mid \res x_1=x_2, \res x_2 =x_5, \res x_3=x_4, \res x_4=x_5\ra$ &3&2&0\\
$L_{5,1}^7=\la x_1, \dots, x_5 \mid \res x_1=x_2, \res x_2=x_3, \res x_3=x_5\ra$ &3&2&1\\
$L_{5,1}^8=\la x_1, \dots, x_5 \mid \res x_1=x_2, \res x_2 =x_3, \res x_3=x_4, \res x_4=x_5\ra$ &4&3&2
\end{tabular}
\end{table}

\chapter{Restriction maps on $L_{5,2}$}
Let 
$$
K_2=L_{5,2}=\langle x_1,\ldots,x_5\mid [x_1,x_2]=x_3 \rangle_{\F}.
$$  
Note that   $Z(L_{5,2})=\la x_3,x_4,x_5\ra_{\F}$ and the group $\Aut(L_{5,2})$ consists of invertible matrices of the form 
\[\begin{pmatrix} a_{11} & a_{12} & 0 & 0 & 0 \\
a_{21} & a_{22} & 0& 0 & 0\\
a_{31} & a_{32} &r & a_{34} & a_{35} \\
a_{41} & a_{42} & 0& a_{44}
& a_{45} \\
a_{51} & a_{52} & 0& a_{54}
& a_{55}
\end{pmatrix},\]
where $r=a_{11}a_{22}-a_{12}a_{21}$. 
There exists an element $\alpha x_3+\beta x_4+\gamma x_5 \in Z(L_{5,2})$ such that 
 $\res {(\alpha x_3+\beta x_4+\gamma x_5)}=0,$ for some $\alpha ,\beta, \gamma \in \F$. If $\gamma \neq 0$, then consider 
 $$ H_1=\langle x_1',\ldots,x_5'\mid [x_1',x_2']=x_3'\rangle,$$
 where $x_1'=x_1, x_2'=x_2, x_3'=x_3, x_4'=x_4, x_5'=\alpha x_3+\beta x_4+\gamma x_5$. 
 Let $\phi :K_2\to H_1$ such that $x_1\mapsto x_1'$,  $x_2\mapsto x_2'$,  $x_3\mapsto x_3'$,  $x_4\mapsto x_4'$, and  $x_5\mapsto x_5'$. It is easy to see that $\phi$ is an isomorphism. Therefore, in this case we can suppose that $\res x_5=0$. Next, if $\gamma=0$ and $\beta \neq 0$  
 then consider 
 $$ H_2=\langle x_1',\ldots,x_5'\mid [x_1',x_2']=x_3'\rangle,$$
 where $x_1'=x_1, x_2'=x_2, x_3'=x_3, x_4'=\alpha x_3+\beta x_4, x_5'= x_5$.  Let $\phi :K_2\to H_2$ such that $x_1\mapsto x_1'$,  $x_2\mapsto x_2'$,  $x_3\mapsto x_3'$,  $x_4\mapsto x_4'$, and  $x_5\mapsto x_5'$. It is easy to see that $\phi$ is an isomorphism. Therefore, in this case we can suppose that $\res x_4=0$. Finally, if $\gamma=\beta=0$ and $\alpha \neq 0$, then consider
$$ H_3=\langle x_1',\ldots,x_5'\mid [x_1',x_2']=x_3'\rangle,$$
where $x_1'=x_1, x_2'=x_2, x_3'=\alpha x_3, x_4'=x_4, x_5'= x_5$. Let $\phi :K_2\to H_3$ such that $x_1\mapsto x_1'$,  $x_2\mapsto x_2'$,  $x_3\mapsto x_3'$,  $x_4\mapsto x_4'$, and  $x_5\mapsto x_5'$. It is easy to see that $\phi$ is an isomorphism. Therefore, in this case we can suppose that $\res x_3=0$.
 Hence we have three cases:
 \begin{enumerate}
\item[I.]  $\res x_3=0$; 
\item[II.] $\res x_5=0$;\
\item[III.] $\res x_4=0$.
\end{enumerate}

Note that cases II and III yield the same restricted Lie algebras because given a restricted Lie algebra structure on $L_{5,2}$ for which $\res x_5=0$, the automorphism of $L_{5,2}$ obtained by  switching $x_4$ and $x_5$ gives rise to 
a restricted Lie algebra structure on $L_{5,2}$ for which $\res x_4=0$.

\section{Extensions of  $L=\frac {L_{5,2}}{\la x_3\ra}$}\label{L52-x3}
In this section we find all non-isomorphic   $p$-maps   on $L_{5,2}$ such that $\res x_3=0$.  We let
$$L=\frac {L_{5,2}}{\la x_3\ra}\cong L_{4,1},$$
where $L_{4,1}=\la x_1,x_2,x_4,x_5\ra$. Note that we denote the image of $x_i$ in $L$ by $x_i$ again.  We rename the 
$x_i$'s so that  $y_1=x_1$, $y_2=x_2$, $y_3=x_4$,  $y_4=x_5$, and $y_5=x_3$ and at the end we will switch them. Therefore, we have $L=L_{4,1}=\la y_1,y_2,y_3,y_4\ra$.  The group $\Aut(L)$ consists of invertible matrices of the form 
\[\begin{pmatrix} a_{11}  & a_{12} & a_{13}&a_{14} \\
a_{21} & a_{22} & a_{23}&a_{24}\\
a_{31} & a_{32} & a_{33} & a_{34}\\
a_{41} & a_{42} & a_{43} & a_{44}
\end{pmatrix}.\]

\begin{lemma}\label{LemmaL52}
Let $K=L_{5,2}$ and $[p]:K\to K$ be a $p$-map on $K$ such that $\res x_3=0$ and let $L=\frac{K}{M}$ where $M=\la x_3\ra_{\F}$. Then $K\cong L_{\theta}$ where $\theta=(\Delta_{12},\omega)\in Z^2(L,\F)$.
\end{lemma}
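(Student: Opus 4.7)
The plan is to apply Lemma \ref{Lemma 1} directly, with the natural linear section. First I would note that $M = \la x_3\ra_{\F}$ is a one-dimensional ideal contained in $Z(L_{5,2}) = \la x_3, x_4, x_5\ra_{\F}$, and by hypothesis $\res M = 0$. So the hypotheses of Lemma \ref{Lemma 1} are satisfied. Since $\dim M = 1$, identifying $M$ with $\F$ via $x_3 \leftrightarrow 1$ turns any cocycle valued in $M$ into one valued in $\F$, which yields $\theta \in Z^2(L,\F)$ rather than merely $Z^2(L,M)$.

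Next I would choose the linear section $\sigma : L \to K$ given on the relabeled basis by $y_1 \mapsto x_1$, $y_2 \mapsto x_2$, $y_3 \mapsto x_4$, $y_4 \mapsto x_5$ (so that $\pi\sigma = 1_L$). By Lemma \ref{Lemma 1},
\[
\phi(y_i, y_j) \;=\; [\sigma(y_i), \sigma(y_j)]_K - \sigma([y_i, y_j]_L).
\]
Since $L \cong L_{4,1}$ is abelian, the second term vanishes identically and $\phi$ is determined entirely by the brackets in $K$ among $x_1, x_2, x_4, x_5$. The only nonzero bracket among these is $[x_1, x_2] = x_3$, so $\phi(y_1, y_2) = x_3$ and $\phi(y_i, y_j) = 0$ for every other index pair.

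Under the identification $x_3 \leftrightarrow 1$, this says precisely that $\phi$ is represented by the skew-symmetric matrix $\Delta_{12}$ in the basis $y_1, \ldots, y_4$. Taking the associated $\omega(y) = \res{\sigma(y)} - \sigma(\res y)$ as prescribed by Lemma \ref{Lemma 1}, we obtain $\theta = (\Delta_{12}, \omega) \in Z^2(L, \F)$ and $K \cong L_\theta$ by the same lemma.

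The argument is essentially a direct unpacking of Lemma \ref{Lemma 1} for a very specific section, so I do not expect a real obstacle; the only point requiring care is bookkeeping with the relabeling $y_3 = x_4$, $y_4 = x_5$, which ensures that the index positions $(1,2)$ in $\Delta_{12}$ genuinely correspond to $x_1, x_2$, the unique pair of basis elements whose bracket is nonzero.
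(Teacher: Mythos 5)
Your proposal is correct and follows essentially the same route as the paper: both apply Lemma \ref{Lemma 1} with the obvious section $\sigma$ sending each basis element of $L$ to its representative in $K$, observe that the only nonvanishing value of $\phi(y_i,y_j)=[\sigma(y_i),\sigma(y_j)]-\sigma([y_i,y_j])$ comes from $[x_1,x_2]=x_3$, and conclude $\phi=\Delta_{12}$, so $K\cong L_\theta$ with $\theta=(\Delta_{12},\omega)$. Your explicit remarks about identifying $M$ with $\F$ and the relabeling $y_3=x_4$, $y_4=x_5$ are just the bookkeeping the paper carries out implicitly.
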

\begin{proof}
 Let $\pi :K\rightarrow L$ be the projection map. We have the exact sequence 
$$ 0\rightarrow M\rightarrow K\rightarrow L\rightarrow 0.$$
Let $\sigma :L\rightarrow K$ be a linear map given by $x_i\mapsto x_i$, for all $i$. Then $\sigma$ is an injective linear map and $\pi \sigma =1_L$. Now, we define $\phi : L\times L\rightarrow M$ by $\phi (x_i,x_j)=[\sigma (x_i),\sigma (x_j)]-\sigma ([x_i,x_j])$, for all $i,j$ and $\omega: L\rightarrow M$ by $\omega (x)=\res {\sigma (x)} -\sigma (\res x)$. Note that
\begin{align*}
&\phi(x_1,x_2)=[\sigma(x_1),\sigma(x_2)]-\sigma([x_1,x_2])=[x_1,x_2]=x_3;\\
&\phi(x_1,x_3)=[\sigma(x_1),\sigma(x_3)]-\sigma([x_1,x_3])=[x_1,x_3]=0.
\end{align*}
Similarly, we can show that $\phi(x_1,x_4)=\phi(x_2,x_3)=\phi(x_2,x_4)=\phi(x_3,x_4)=0$. Therefore, $\phi=\Delta_{12}$.
Now, by Lemma \ref{K=L-theta}, we have $\theta=(\Delta_{12},\omega)\in Z^2(L,\F)$ and $K\cong L_{\theta}$.
\end{proof}

Note that by Theorem \ref{4-dim}, there are five non-isomorphic restricted Lie algebra structures on $L$ given by the following $p$-maps:
\begin{enumerate}
\item[I.1] Trivial $p$-map;
\item[I.2]$\res y_1=y_2$;
\item[I.3]$\res y_1 =y_2 , \res y_3=y_4$;
\item[I.4]$\res y_1 =y_2 , \res y_2=y_3$;
\item[I.5]$\res y_1 =y_2 , \res y_2=y_3, \res y_3=y_4$.
\end{enumerate}

We make $L$ into a restricted Lie algebra by equipping it with each of the above $p$-maps. Then,  in  each  case,  we  find all possible orbit representatives of the form $(\Delta_{12},\omega)$ under the action of $\Aut_p(L)$ on $H^2(L,\F)$. By  Lemma \ref{LemmaL52}, we do get all possible $p$-maps on $K_2$ with the property that $x_3^{[p]}=0$.

 Let us consider the case I.2 where $L$ is a restricted Lie algebra with the $p$-map $\res y_1 = y_2$.
Let $[(\phi,\omega)]\in H^2(L,\F)$. Then we must have $\phi (x,\res y)=0$, for all $x,y\in L$,  where $\phi =a\Delta_{12}+b\Delta_{13}+c\Delta_{14}+d\Delta_{23}+e\Delta_{24}+f\Delta_{34}$, for some $a,b,c,d,e,f \in \F$. Since $\res L=\la y_2\ra$, we get $\phi (x,y_2)=0$ for all $x\in L$. Therefore, $\phi (y_1,y_2)=0$ which implies that $a=0$.  Since $\phi =\Delta_{12}$ gives us $L_{5,2}$, we deduce by Lemma \ref{LemmaL52} that $L_{5,2}$ cannot be constructed in this case. Similarly, we can show that in cases I.3, I.4, or I.5 we also get $a=0$. This means that if we equip $L$ with any of the $p$-maps in the cases I.2, I.3, I.4, or I.5, then 
$L_{5,2}\ncong L_{\theta}$, for every  $\theta \in H^2(L,\F)$. 
In the following subsections,  we consider the remaining cases.

\subsection{Extensions of ($L$, trivial $p$-map)}\label{Lx3}
We have 
$$
C^2(L,\F)=\la (\Delta_{12},0),(\Delta_{13},0),(\Delta_{14},0),(\Delta_{23},0),(\Delta_{24},0),(\Delta_{34},0),(0,f_1),
(0,f_2),(0,f_3),(0,f_4)\ra_{\F}.
$$
 First, we find a basis for $Z^2(L,\F)$. Let $(\phi,\omega)\in Z^2(L,\F)$. Then we must have $\delta^2\phi =0$ and $\phi(x,\res y)=0$, for all $x,y \in L$. Note that since $L$ is an abelian Lie algebra and the $p$-map is trivial, $\delta^2\phi =0$ and $\phi(x,\res y)=0$, for all $x,y \in L$. Therefore, a basis for $Z^2(L,\F)$ is as follows
$$
 (\Delta_{12},0),(\Delta_{13},0),(\Delta_{14},0),(\Delta_{23},0),(\Delta_{24},0),(\Delta_{34},0),(0,f_1),
(0,f_2),(0,f_3),(0,f_4).
$$
Next, we find a basis for $B^2(L,\F)$. Let $(\phi,\omega)=(a\Delta _{12}+b\Delta_{13}+c\Delta_{14}+d\Delta_{23}+e\Delta_{24}+f\Delta_{34},\alpha f_1+\beta f_2+ \gamma f_3+\delta f_4)\in B^2(L,\F)$. Then there exists a linear map $\psi:L\to \F$ such that $\delta^1\psi(x,y)=\phi(x,y)$ and $\tilde \psi(x)=\omega(x)$, for all $x,y \in L$. So, we have
\begin{align*}
\phi(y_1,y_2)=\delta^1\psi(y_1,y_2)=\psi([y_1,y_2])=0.
\end{align*}
But $\phi(y_1,y_2)=a$. So we deduce that $a=0$.
Similarly, we can show that $b=c=d=e=f=0$. Also, we have
\begin{align*}
\omega(y_1)&=\tilde \psi(y_1)=\psi(\res y_1)=0.
\end{align*}
Hence, $\alpha=0$.
Similarly, we can show that $\beta=\gamma=\delta=0$. Therefore, $(\phi,\omega)=0$ and hence $B^2(L,\F)=0$. We deduce that  a basis for $H^2(L,\F)$ is as follows
$$
[(\Delta_{12},0)],[(\Delta_{13},0)],[(\Delta_{14},0)],[(\Delta_{23},0)],[(\Delta_{24},0)],[(\Delta_{34},0)],[(0,f_1)],
[(0,f_2)],[(0,f_3)],[(0,f_4)].
$$

Let $[(\phi,\omega)] \in H^2(L,\F)$. Then, we have $\phi = a\Delta_{12} + b\Delta_{13}+c\Delta_{14}+d\Delta_{23}+e\Delta_{24}+f\Delta_{34}$, for some $a,b,c,d,e,f \in \F$. Suppose that $A\phi = a'\Delta_{12}+b'\Delta_{13}+c'\Delta_{14}+ d'\Delta_{23}+e'\Delta_{24}+f'\Delta_{34}$, for some $a',b',c',d',e',f' \in \F$. Then
\begin{align*}
A\phi(y_1,y_2)&=\phi(Ay_1,Ay_2)\\
&=\phi(a_{11}y_1+a_{21}y_2+a_{31}y_3+a_{41}y_4,a_{12}y_1+a_{22}y_2+a_{32}y_3+a_{42}y_4)\\
&=(a_{11}a_{22}-a_{12}a_{21})a+( a_{11}a_{32}-a_{31}a_{12})b+(a_{11}a_{42}-a_{41}a_{12})c\\
&+(a_{21}a_{32}-a_{31}a_{22})d+(a_{21}a_{42}-a_{41}a_{22})e+(a_{31}a_{42}-a_{41}a_{32})f, \text{ and }\\
\end{align*}
\begin{align*}
A\phi(y_1,y_3)&=\phi(Ay_1,Ay_3)\\
&=\phi(a_{11}y_1+a_{21}y_2+a_{31}y_3+a_{41}y_4,a_{13}y_1+a_{23}y_2+a_{33}y_3+a_{43}y_4)\\
&=(a_{11}a_{23}-a_{21}a_{13})a+(a_{11}a_{33}-a_{31}a_{13})b+(a_{11}a_{43}-a_{13}a_{41})c\\
&+( a_{21}a_{33}-a_{31}a_{23})d+(a_{21}a_{43}-a_{23}a_{41})e+(a_{31}a_{43}-a_{33}a_{41})f, \text{ and }
\end{align*}
\begin{align*}
A\phi(y_1,y_4)&=\phi(Ay_1,Ay_4)\\
&=\phi(a_{11}y_1+a_{21}y_2+a_{31}y_3+a_{41}y_4,a_{14}y_1+a_{24}y_2+a_{34}y_3+a_{44}y_4)\\
&=(a_{11}a_{24}-a_{14}a_{21})a+(a_{11}a_{34}-a_{14}a_{31})b+(a_{11}a_{44}-a_{14}a_{41})c\\
&+(a_{21}a_{34}-a_{24}a_{31})d+(a_{21}a_{44}-a_{24}a_{41})e+(a_{31}a_{44}-a_{34}a_{41})f, \text{ and }
\end{align*}
\begin{align*}
A\phi(y_2,y_3)&=\phi(Ay_2,Ay_3)\\
&=\phi(a_{12}y_1+a_{22}y_2+a_{32}y_3+a_{42}y_4,a_{13}y_1+a_{23}y_2+a_{33}y_3+a_{43}y_4)\\
&=(a_{12}a_{23}-a_{22}a_{13})a+(a_{12}a_{33}-a_{32}a_{13})b+(a_{12}a_{43}-a_{13}a_{42})c\\
&+(a_{22}a_{33}-a_{32}a_{23})d+(a_{22}a_{43}-a_{23}a_{42})e+(a_{32}a_{43}-a_{33}a_{42})f, \text{ and }
\end{align*}
\begin{align*}
A\phi(y_2,y_4)&=\phi(Ay_2,Ay_4)\\
&=\phi(a_{12}y_1+a_{22}y_2+a_{32}y_3+a_{42}y_4,a_{14}y_1+a_{24}y_2+a_{34}y_3+a_{44}y_4)\\
&=(a_{12}a_{24}-a_{14}a_{22})a+(a_{12}a_{34}-a_{14}a_{32})b+(a_{12}a_{44}-a_{14}a_{42})c\\
&+(a_{22}a_{34}-a_{24}a_{32})d+(a_{22}a_{44}-a_{24}a_{42})e+(a_{32}a_{44}-a_{34}a_{42})f, \text{ and }
\end{align*}
\begin{align*}
A\phi(y_3,y_4)&=\phi(Ay_3,Ay_4)\\
&=\phi(a_{13}y_1+a_{23}y_2+a_{33}y_3+a_{43}y_4,a_{14}y_1+a_{24}y_2+a_{34}y_3+a_{44}y_4)\\
&=(a_{13}a_{24}-a_{14}a_{23})a+(a_{13}a_{34}-a_{14}a_{33})b+(a_{13}a_{44}-a_{14}a_{43})c\\
&+(a_{23}a_{34}-a_{24}a_{33})d+(a_{23}a_{44}-a_{24}a_{43})e+(a_{33}a_{44}-a_{34}a_{43})f.
\end{align*}
 Therefore, the action of $\Aut_p(L)$ on the set of $\phi$'s in the matrix form is as follows:

{\tiny
\begin{align*}
& \begin{pmatrix}
a' \\ b'\\c'\\d'\\e'\\f'
\end{pmatrix}{=}\\
& \begin{pmatrix}
a_{11}a_{22}-a_{12}a_{21} & a_{11}a_{32}-a_{31}a_{12}&a_{11}a_{42}-a_{41}a_{12}&a_{21}a_{32}-a_{31}a_{22}&a_{21}a_{42}-a_{41}a_{22}&a_{31}a_{42}-a_{41}a_{32} \\
a_{11}a_{23}-a_{21}a_{13}&a_{11}a_{33}-a_{31}a_{13}&a_{11}a_{43}-a_{13}a_{41}& a_{21}a_{33}-a_{31}a_{23}&a_{21}a_{43}-a_{23}a_{41}&a_{31}a_{43}-a_{33}a_{41}\\
a_{11}a_{24}-a_{14}a_{21}&a_{11}a_{34}-a_{14}a_{31}&a_{11}a_{44}-a_{14}a_{41}&a_{21}a_{34}-a_{24}a_{31}&a_{21}a_{44}-a_{24}a_{41}&a_{31}a_{44}-a_{34}a_{41}\\
a_{12}a_{23}-a_{22}a_{13}&a_{12}a_{33}-a_{32}a_{13}&a_{12}a_{43}-a_{13}a_{42}&a_{22}a_{33}-a_{32}a_{23}&a_{22}a_{43}-a_{23}a_{42}&a_{32}a_{43}-a_{33}a_{42}\\
a_{12}a_{24}-a_{14}a_{22}&a_{12}a_{34}-a_{14}a_{32}&a_{12}a_{44}-a_{14}a_{42}&a_{22}a_{34}-a_{24}a_{32}&a_{22}a_{44}-a_{24}a_{42}&a_{32}a_{44}-a_{34}a_{42}\\
a_{13}a_{24}-a_{14}a_{23}&a_{13}a_{34}-a_{14}a_{33}&a_{13}a_{44}-a_{14}a_{43}&a_{23}a_{34}-a_{24}a_{33}&a_{23}a_{44}-a_{24}a_{43}&a_{33}a_{44}-a_{34}a_{43}
\end{pmatrix}
\begin{pmatrix}
a \\ b\\c\\d\\e\\f
\end{pmatrix}.
\end{align*}
}
The orbit with representative 
$\begin{pmatrix}
1\\ 0\\0\\0\\0\\0
\end{pmatrix}$ of this action  gives us $L_{5,2}$.

Also, we have $\omega=\alpha f_1+\beta f_2+\gamma f_3+\delta f_4$,  for some $\alpha ,\beta ,\gamma ,\delta \in \F$. Suppose that $A\omega = \alpha' f_1+\beta' f_2+\gamma' f_3+\delta' f_4$,  for some $\alpha' ,\beta' ,\gamma' ,\delta' \in \F$. Then we can verify that the action of $\Aut_p(L)$ on the set of $\omega$'s in the matrix form is as follows:
\begin{align*}
\begin{pmatrix}
\alpha' \\ 
\beta'\\
\gamma'\\
\delta'
\end{pmatrix}=
\begin{pmatrix}
a_{11}^p& a_{21}^p&a_{31}^p&a_{41}^p\\
a_{12}^p& a_{22}^p& a_{32}^p&a_{42}^p\\
a_{13}^p&a_{23}^p&a_{33}^p&a_{43}^p\\
a_{14}^p&a_{24}^p&a_{34}^p&a_{44}^p
\end{pmatrix}
\begin{pmatrix}
\alpha\\ 
\beta \\
\gamma \\
\delta
\end{pmatrix}.
\end{align*}
Now we find the representatives of the orbits of the action of $\Aut (L)$ on the set of $\omega$'s such that 
the orbit represented by
 $\begin{pmatrix}
1\\ 0\\0\\0\\0\\0
\end{pmatrix}$
 is preserved under the action of $\Aut (L)$ on the set of $\phi$'s.

Let $\nu = \begin{pmatrix}
\alpha\\ 
\beta \\
\gamma \\
\delta
\end{pmatrix} \in \F^4$. If $\nu= \begin{pmatrix}
0\\ 
0 \\
0 \\
0
\end{pmatrix}$, then $\{\nu \}$ is clearly an $\Aut_p(L)$-orbit. Let $\nu \neq 0$ and suppose that $\delta \neq 0$. Then
{\footnotesize
\begin{align*}
&\bigg[\begin{pmatrix}
1&0& (-\beta/\delta)^{1/p}&0&(\alpha/\delta)^{1/p}&0\\
0& 1&(-\gamma /\delta)^{1/p}&0&0&(\alpha/\delta )^{1/p}\\
0&0&1&0&0&0\\
0&0&0&1&(-\gamma/\delta)^{1/p}&(\beta/\delta)^{1/p}\\
0&0&0&0&1&0\\
0&0&0&0&0&1
\end{pmatrix},
\begin{pmatrix}
1& 0&0&-\alpha/\delta\\
0& 1&0&-\beta/\delta\\
0&0&1&-\gamma/\delta\\
0&0&0&1
\end{pmatrix}\bigg]
\bigg[\begin{pmatrix}
1 \\ 0\\0\\0\\0\\0
\end{pmatrix},
\begin{pmatrix}
\alpha\\ 
\beta \\
\gamma \\
\delta
\end{pmatrix}\bigg]=
\bigg[\begin{pmatrix}
1\\0\\0\\0\\0\\0
\end{pmatrix},
\begin{pmatrix}
0 \\ 
0\\
0\\
\delta
\end{pmatrix}\bigg],
\end{align*}
\begin{align*}
&\bigg[\begin{pmatrix}
1&0& 0&0&0&0\\
0& 1&0&0&0&0\\
0&0&(1/\delta)^{1/p}&0&0&0\\
0&0&0&1&0&0\\
0&0&0&0&(1/\delta)^{1/p}&0\\
0&0&0&0&0&(1/\delta)^{1/p}
\end{pmatrix},
\begin{pmatrix}
1& 0&0&0\\
0& 1&0&0\\
0&0&1&0\\
0&0&0&1/\delta
\end{pmatrix}\bigg]
\bigg[\begin{pmatrix}
1 \\ 0\\0\\0\\0\\0
\end{pmatrix},
\begin{pmatrix}
0\\ 
0 \\
0\\
\delta
\end{pmatrix}\bigg]=
\bigg[\begin{pmatrix}
1\\0\\0\\0\\0\\0
\end{pmatrix},
\begin{pmatrix}
0 \\ 
0\\
0\\
1
\end{pmatrix}\bigg].
\end{align*}
}
Next, if $\delta= 0$ , but $\gamma \neq 0$, then
{\small
\begin{align*}
& \bigg[\begin{pmatrix}
1&(-\beta/\gamma)^{1/p}&0&(\alpha/\gamma)^{1/p}&0&0\\
0& 1&0&0&0&0\\
0&0&1&0&0&(-\alpha/\gamma)^{1/p}\\
0&0&0&1&0&0\\
0&0&0&0&1&(-\beta/\gamma)^{1/p}\\
0&0&0&0&0&1
\end{pmatrix},
\begin{pmatrix}
1& 0&-\frac{\alpha}{\gamma}&0\\
0& 1&-\frac{\beta}{\gamma}&0\\
0&0&1&0\\
0&0&0&1
\end{pmatrix}\bigg]
\bigg[\begin{pmatrix}
1 \\ 0\\0\\0\\0\\0
\end{pmatrix},
\begin{pmatrix}
\alpha\\ 
\beta \\
\gamma \\
0
\end{pmatrix}\bigg]
{=}
\bigg[\begin{pmatrix}
1\\0\\0\\0\\0\\0
\end{pmatrix},
\begin{pmatrix}
0 \\ 
0\\
\gamma\\
0
\end{pmatrix}\bigg],\\
& \bigg[\begin{pmatrix}
1&0& 0&0&0&0\\
0& (1/\gamma)^{1/p}&0&0&0&0\\
0&0&1&0&0&0\\
0&0&0&(1/\gamma)^{1/p}&0&0\\
0&0&0&0&1&0\\
0&0&0&0&0&(1/\gamma)^{1/p}
\end{pmatrix},
\begin{pmatrix}
1& 0&0&0\\
0& 1&0&0\\
0&0&1/\gamma&0\\
0&0&0&1
\end{pmatrix}\bigg]
\bigg[\begin{pmatrix}
1 \\ 0\\0\\0\\0\\0
\end{pmatrix},
\begin{pmatrix}
0\\ 
0 \\
\gamma\\
0
\end{pmatrix}\bigg]=
\bigg[\begin{pmatrix}
1\\0\\0\\0\\0\\0
\end{pmatrix},
\begin{pmatrix}
0 \\ 
0\\
1\\
0
\end{pmatrix}\bigg].
\end{align*}
}
Next, if $\delta =\gamma =0$, but $\beta \neq 0$, then
{\small
\begin{align*}
& \bigg[\begin{pmatrix}
1&0& 0&0&0&0\\
0& 1&0&(-\alpha/\beta)^{1/p}&0&0\\
0&0&1&0&(-\alpha/\beta)^{1/p}&0\\
0&0&0&1&0&0\\
0&0&0&0&1&0\\
0&0&0&0&0&1
\end{pmatrix},
\begin{pmatrix}
1& -\alpha/\beta&0&0\\
0& 1&0&0\\
0&0&1&0\\
0&0&0&1
\end{pmatrix}\bigg]
\bigg[\begin{pmatrix}
1 \\ 0\\0\\0\\0\\0
\end{pmatrix},
\begin{pmatrix}
\alpha\\ 
\beta\\
0\\
0
\end{pmatrix}\bigg]
{=}
\bigg[\begin{pmatrix}
1\\0\\0\\0\\0\\0
\end{pmatrix},
\begin{pmatrix}
0 \\ 
\beta\\
0\\
0
\end{pmatrix}\bigg],\\
& \bigg[\begin{pmatrix}
1&0& 0&0&0&0\\
0& \beta^{1/p}&0&0&0&0\\
0&0&\beta^{1/p}&0&0&0\\
0&0&0&(1/\beta)^{1/p}&0&0\\
0&0&0&0&(1/\beta)^{1/p}&0\\
0&0&0&0&0&1
\end{pmatrix},
\begin{pmatrix}
\beta& 0&0&0\\
0& 1/\beta&0&0\\
0&0&1&0\\
0&0&0&1
\end{pmatrix}\bigg]
\bigg[\begin{pmatrix}
1 \\ 0\\0\\0\\0\\0
\end{pmatrix},
\begin{pmatrix}
0\\ 
\beta \\
0\\
0
\end{pmatrix}\bigg]=
\bigg[\begin{pmatrix}
1\\0\\0\\0\\0\\0
\end{pmatrix},
\begin{pmatrix}
0 \\ 
1\\
0\\
0
\end{pmatrix}\bigg].
\end{align*}
}
Finally, if $\delta=\gamma =\beta =0$, but $\alpha \neq 0$, then
\begin{align*}
\bigg[\begin{pmatrix}
1&0& 0&0&0&0\\
0& \alpha^{-1/p}&0&0&0&0\\
0&0&\alpha^{-1/p}&0&0&0\\
0&0&0&\alpha^{1/p}&0&0\\
0&0&0&0&\alpha^{1/p}&0\\
0&0&0&0&0&1
\end{pmatrix},
\begin{pmatrix}
1/\alpha& 0&0&0\\
0& \alpha&0&0\\
0&0&1&0\\
0&0&0&1
\end{pmatrix}\bigg]
\bigg[\begin{pmatrix}
1 \\ 0\\0\\0\\0\\0
\end{pmatrix},
\begin{pmatrix}
\alpha\\ 
0 \\
0\\
0
\end{pmatrix}\bigg]=
\bigg[\begin{pmatrix}
1\\0\\0\\0\\0\\0
\end{pmatrix},
\begin{pmatrix}
1 \\ 
0\\
0\\
0
\end{pmatrix}\bigg].
\end{align*}
Thus the following elements are $\Aut (L)$-orbit representatives:
\begin{align*}
\begin{pmatrix}
0\\ 0 \\ 0\\0
\end{pmatrix},
\begin{pmatrix}
1\\ 0 \\ 0\\0
\end{pmatrix},
\begin{pmatrix}
0\\ 1 \\ 0\\0
\end{pmatrix},
\begin{pmatrix}
0\\ 0 \\ 1\\0
\end{pmatrix},
\begin{pmatrix}
0\\ 0 \\ 0\\1
\end{pmatrix}.
\end{align*}
Now, we find the restricted Lie algebra structure corresponding to the orbit representative 
$\begin{pmatrix}
1\\ 0 \\ 0\\0
\end{pmatrix}$. We have 
$\begin{pmatrix}
1\\ 0 \\ 0\\0
\end{pmatrix}=f_1$ and hence $\omega=f_1y_5$. First, by Lemma \ref{Ltheta} we get
\begin{align*}
&\res y_1=\res y_1+\omega(y_1)=y_5,\\
&\res y_2=\res y_2+\omega(y_2)=0,\\
&\res y_3=\res y_3+\omega(y_3)=0,\\
&\res y_4=\res y_4+\omega(y_4)=0.
\end{align*}
Therefore, the corresponding restricted Lie algebra structure is as follows:
$$K_2^2=\la y_1,\ldots,y_5 \mid [y_1,y_2]=y_5, \res y_1=y_5\ra.$$ Next, we use the substitutions 
 $y_1=x_1$, $y_2=x_2$, $y_3=x_4$,  $y_4=x_5$, and $y_5=x_3$.  Hence, we have 
 $$
 K_2^{2}=\langle x_1,\ldots,x_5\mid [x_1,x_2]=x_3,
\res x_1= x_3 \rangle.
$$

Similarly, we  obtain the restricted Lie algebra structures corresponding to the other orbits.
Therefore, the corresponding restricted Lie algebra structures are as follows:
\begin{align*}
&K_2^{1}=\langle x_1,\ldots,x_5\mid [x_1,x_2]=x_3 \rangle;\\
&K_2^{2}=\langle x_1,\ldots,x_5\mid [x_1,x_2]=x_3,
\res x_1= x_3 \rangle;\\
&K_2^{3}=\langle x_1,\ldots,x_5\mid [x_1,x_2]=x_3,
\res x_2= x_3 \rangle;\\
&K_2^{4}=\langle x_1,\ldots,x_5\mid [x_1,x_2]=x_3,
\res x_4= x_3 \rangle;\\
&K_2^{5}=\langle x_1,\ldots,x_5\mid [x_1,x_2]=x_3,
\res x_5= x_3 \rangle.
\end{align*}

\section{Extensions of  $L=\frac {L_{5,2}}{\la x_5\ra}$}\label{L_{5,2} mod x5} 
In this section we find all non-isomorphic   $p$-maps   on $L_{5,2}$ such that $\res x_5=0$.  We let
$$
L=\frac {L_{5,2}}{\la x_5\ra}\cong  L_{4,2},
$$ where $L_{4,2}=\la x_1,x_2,x_3,x_4 \mid [x_1,x_2]=x_3\ra$.
The group $\Aut(L)$ consists of invertible matrices of the form 
$$\begin{pmatrix}
 a_{11}  & a_{12} & 0 & 0 \\
a_{21} & a_{22} & 0 & 0\\
a_{31} & a_{32} & r & a_{34} \\
a_{41} & a_{42} & 0& a_{44}
\end{pmatrix},$$
where $r = a_{11}a_{22}-a_{12}a_{21}\neq 0$.
\begin{lemma}\label{LemmaL522}
Let $K=L_{5,2}$ and $[p]:K\to K$ be a $p$-map on $K$ such that $\res x_5=0$ and let $L=\frac{K}{M}$ where $M=\la x_5\ra_{\F}$. Then $K\cong L_{\theta}$ where $\theta=(0,\omega)\in Z^2(L,\F)$.
\end{lemma}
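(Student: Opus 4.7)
The plan is to mirror the strategy of Lemma \ref{LemmaL52}, but with the key observation that this time the generator $x_5$ we are quotienting by does not appear as a nontrivial bracket in $K$, which will force the 2-cocycle part $\phi$ to vanish.

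First, I would set up the same machinery as in Lemma \ref{Lemma 1}: let $\pi:K\to L$ be the projection, and define the section $\sigma:L\to K$ by $\sigma(x_i)=x_i$ for $i=1,2,3,4$ (abusing notation to identify $x_i\in L$ with its lift in $K$). This $\sigma$ is linear and injective with $\pi\sigma=1_L$, so it fits the hypotheses of Lemma \ref{Lemma 1}. Then $K\cong L_\theta$ where $\theta=(\phi,\omega)\in Z^2(L,M)$ is given by
\[
\phi(x_i,x_j)=[\sigma(x_i),\sigma(x_j)]-\sigma([x_i,x_j]),\qquad \omega(x)=\res{\sigma(x)}-\sigma(\res x).
\]

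Next I would compute $\phi$ directly. The only nontrivial bracket in $L_{5,2}$ (and therefore in $L$) is $[x_1,x_2]=x_3$, and since $\sigma(x_3)=x_3$ in $K$ as well, we have
\[
\phi(x_1,x_2)=[x_1,x_2]_K-\sigma(x_3)=x_3-x_3=0.
\]
For every other pair $\{x_i,x_j\}\subseteq\{x_1,x_2,x_3,x_4\}$, both $[x_i,x_j]_K$ and $[x_i,x_j]_L$ are zero, so $\phi(x_i,x_j)=0$. By bilinearity and skew-symmetry, $\phi\equiv 0$.

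Finally, since $\phi=0$, we get $\theta=(0,\omega)\in Z^2(L,\F)$ by Lemma \ref{Lemma 1}, and $K\cong L_\theta$. There is no real obstacle here — the whole content of the lemma is that because $M=\la x_5\ra$ does not intersect the image of the Lie bracket on $K$, the cocycle $\phi$ automatically vanishes for the natural section $\sigma$; the nontriviality of $K$ relative to $L$ is carried entirely by the $p$-map component $\omega$. This contrasts sharply with Lemma \ref{LemmaL52}, where $M=\la x_3\ra$ does meet the image of the bracket, forcing $\phi=\Delta_{12}$.
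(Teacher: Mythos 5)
Your proof is correct and follows essentially the same route as the paper: the same section $\sigma(x_i)=x_i$ for $1\leq i\leq 4$, the same computation showing $\phi(x_1,x_2)=x_3-\sigma(x_3)=0$ and $\phi(x_i,x_j)=0$ for the remaining pairs, and the same appeal to Lemma \ref{Lemma 1} to conclude $\theta=(0,\omega)$ and $K\cong L_\theta$. The closing remark contrasting this with Lemma \ref{LemmaL52} is accurate but not needed for the argument.
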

\begin{proof}
 Let $\pi :K\rightarrow L$ be the projection map. We have the exact sequence 
$$ 0\rightarrow M\rightarrow K\rightarrow L\rightarrow 0.$$
Let $\sigma :L\rightarrow K$ such that $x_i\mapsto x_i$, $1\leq i\leq 4$. Then $\sigma$ is an injective linear map and $\pi \sigma =1_L$. Now, we define $\phi : L\times L\rightarrow M$ by $\phi (x_i,x_j)=[\sigma (x_i),\sigma (x_j)]-\sigma ([x_i,x_j])$, $1\leq i,j\leq 4$. and $\omega: L\rightarrow M$ by $\omega (x)=\res {\sigma (x)} -\sigma (\res x)$. Note that
\begin{align*}
&\phi(x_1,x_2)=[\sigma(x_1),\sigma(x_2)]-\sigma([x_1,x_2])=[x_1,x_2]-\sigma(x_3)=0;\\
&\phi(x_1,x_3)=[\sigma(x_1),\sigma(x_3)]-\sigma([x_1,x_3])=0.
\end{align*}
Similarly, we can show that $\phi(x_1,x_4)=\phi(x_2,x_3)=\phi(x_2,x_4)=\phi(x_3,x_4)=0$. Therefore, $\phi=0$.
Now, by Lemma \ref{K=L-theta}, we have $\theta=(0,\omega)\in Z^2(L,\F)$ and $K\cong L_{\theta}$.
\end{proof}

Note that by Theorem \ref{4-dim}, there are eight non-isomorphic restricted Lie algebra structures on $L$ given by the following $p$-maps:
\begin{enumerate}
\item[II.1]Trivial $p$-map;
\item[II.2]$\res x_1=x_3;$
\item[II.3]$\res x_1 =x_4;$
\item[II.4]$ \res x_1=x_3, \res x_2=x_4;$
\item[II.5]$\res x_3=x_4;$
\item[II.6]$\res x_3=x_4, \res x_2=x_3;$
\item[II.7]$\res x_4=x_3;$
\item[II.8]$\res x_4=x_3, \res x_2=x_4.$
\end{enumerate}

In the following subsections, we make $L$ into a restricted Lie algebra by equipping it with each of the above $p$-maps. Then,  in  each  case,  we  find all possible orbit representatives of the form $(0,\omega)$ under the action of $\Aut_p(L)$ on $H^2(L,\F)$. By  Lemma \ref{LemmaL522}, we do get all possible $p$-maps on $K_2$ with the property that $x_5^{[p]}=0$.

\subsection{Extensions of ($L$, trivial $p$-map)}\label{L_{5,2} mod x5 trivial pmap} 

 First, we find a basis for $Z^2(L,\F)$. Let $(\phi,\omega)=(a\Delta _{12}+b\Delta_{13}+c\Delta_{14}+d\Delta_{23}+e\Delta_{24}+f\Delta_{34},\alpha f_1+\beta f_2+ \gamma f_3+\delta f_4)\in Z^2(L,\F)$. Then we must have $\delta^2\phi(x,y,z) =0$ and $\phi(x,\res y)=0$, for all $x,y,z \in L$. Therefore,
\begin{align*}
0=(\delta^2\phi)(x_1,x_2,x_4)&=\phi([x_1,x_2],x_4)+\phi([x_2,x_4],x_1)+\phi([x_4,x_1],x_2)=\phi(x_3,x_4).
\end{align*}
Thus, we get $f=0$.
Since the $p$-map is trivial, $\phi(x,\res y)=\phi(x,0)=0$, for all $x,y\in L$. Therefore, a basis for $Z^2(L,\F)$ is as follows:
$$
 (\Delta_{12},0),(\Delta_{13},0),(\Delta_{14},0),(\Delta_{23},0),(\Delta_{24},0),(0,f_1),
(0,f_2),(0,f_3),(0,f_4).
$$
Next, we find a basis for $B^2(L,\F)$. Let $(\phi,\omega)\in B^2(L,\F)$. Since $B^2(L,\F)\subseteq Z^2(L,\F)$, we have $(\phi,\omega)=(a\Delta _{12}+b\Delta_{13}+c\Delta_{14}+d\Delta_{23}+e\Delta_{24},\alpha f_1+\beta f_2+ \gamma f_3+\delta f_4)$. So, there exists a linear  map $\psi:L\to \F$ such that $\delta^1\psi(x,y)=\phi(x,y)$ and $\tilde \psi(x)=\omega(x)$, for all $x,y \in L$. So, we have
\begin{align*}
b=\phi(x_1,x_3)=\delta^1\psi(x_1,x_3)=\psi([x_1,x_3])=0.
\end{align*}
Similarly, we can show that $c=d=e=0$. Also, we have
\begin{align*}
\alpha=\omega(x_1)=\tilde \psi(x_1)=\psi(\res x_1)=0.
\end{align*}
Similarly, we can show that $\beta=\gamma=\delta=0$. Therefore, $(\phi,\omega)=(a\Delta_{12},0)$ and hence
 $B^2(L,\F)=\la(\Delta_{12},0)\ra_{\F}$. We deduce that a basis for $H^2(L,\F)$ is as follows:
$$
[(\Delta_{13},0)],[(\Delta_{14},0)],[(\Delta_{23},0)],[(\Delta_{24},0)],[(0,f_1)],
[(0,f_2)],[(0,f_3)],[(0,f_4)].
$$

Let $[\theta]=[(\phi,\omega)]\in H^2(L,\F)$. Since we want $L_{\theta}$ and  $L_{5,2}$ to be isomorphic as  Lie algebras, we should have $\phi=0$. Since $0$ is preserved under $\Aut_p(L)$, it is enough to find $\Aut_p(L)$-representatives of the $\omega$'s.

Let $\omega=\alpha f_1+\beta f_2+\gamma f_3+\delta f_4$,  for some $\alpha ,\beta ,\gamma ,\delta \in \F$. 
Then, $A\omega = \alpha' f_1+\beta' f_2+\gamma' f_3+\delta' f_4$, for some $\alpha',\beta',\gamma',\delta' \in \F$. We can verify that the action of $\Aut_p(L)$ on the set of $\omega$'s in the matrix form is as follows: 
\begin{equation*}
\begin{pmatrix}
\alpha' \\ 
\beta'\\
\gamma'\\
\delta'
\end{pmatrix}=
\begin{pmatrix}
a_{11}^p& a_{21}^p&a_{31}^p&a_{41}^p\\
a_{12}^p& a_{22}^p& a_{32}^p&a_{42}^p\\
0&0&r ^p&0\\
0&0&a_{34}^p&a_{44}^p
\end{pmatrix}
\begin{pmatrix}
\alpha\\ 
\beta \\
\gamma \\
\delta
\end{pmatrix}.
\end{equation*}


Now we find the representatives of the orbits of the action of $\Aut (L)$ on the set of $\omega$'s.
Let $\nu = \begin{pmatrix}
\alpha\\ 
\beta \\
\gamma \\
\delta
\end{pmatrix} \in \F^4$. If $\nu= \begin{pmatrix}
0\\ 
0 \\
0 \\
0
\end{pmatrix}$, then $\{\nu \}$ is clearly an  $\Aut_p(L)$-orbit.  Let $\nu \neq 0$. Suppose that $\gamma \neq 0$. Then 
\begin{align*}
&\begin{pmatrix}
1& 0&-\alpha /\gamma&0\\
0& 1& -\beta /\gamma&0\\
0&0&1&0\\
0&0&-\delta/\gamma&1
\end{pmatrix}
\begin{pmatrix}
\alpha\\ 
\beta \\
\gamma \\
\delta
\end{pmatrix}=
\begin{pmatrix}
0 \\ 
0\\
\gamma\\
0
\end{pmatrix}, \text{ and }\\
&\begin{pmatrix}
1/\gamma& 0&0&0\\
0& 1& 0&0\\
0&0&1/\gamma&0\\
0&0&0&1
\end{pmatrix}
\begin{pmatrix}
0\\ 
0 \\
\gamma \\
0
\end{pmatrix}=
\begin{pmatrix}
0 \\ 
0\\
1\\
0
\end{pmatrix}.
\end{align*}
Next, if $\gamma =0$, but $\delta\neq 0$, then
\begin{align*}
&\begin{pmatrix}
1& 0&0&-\alpha/\delta\\
0& 1& 0&-\beta/\delta\\
0&0&1&0\\
0&0&0&1
\end{pmatrix}
\begin{pmatrix}
\alpha\\ 
\beta \\
0\\
\delta
\end{pmatrix}=
\begin{pmatrix}
0 \\ 
0\\
0\\
\delta
\end{pmatrix}, \text{ and }\\
&\begin{pmatrix}
1& 0&0&0\\
0& 1& 0&0\\
0&0&1&0\\
0&0&0&1/\delta
\end{pmatrix}
\begin{pmatrix}
0\\ 
0 \\
0 \\
\delta
\end{pmatrix}=
\begin{pmatrix}
0 \\ 
0\\
0\\
1
\end{pmatrix}.
\end{align*}
Next, if $\gamma =\delta = 0$, but $\beta \neq 0$, then
\begin{align*}
&\begin{pmatrix}
1& -\alpha/\beta&0&0\\
0& 1& 0&0\\
0&0&1&0\\
0&0&0&1
\end{pmatrix}
\begin{pmatrix}
\alpha\\ 
\beta \\
0 \\
0
\end{pmatrix}=
\begin{pmatrix}
0 \\ 
\beta\\
0\\
0
\end{pmatrix}, \text{ and }\\
&\begin{pmatrix}
1& 0&0&0\\
0& 1/\beta& 0&0\\
0&0&1/\beta&0\\
0&0&0&1
\end{pmatrix}
\begin{pmatrix}
0\\ 
\beta \\
0 \\
0
\end{pmatrix}=
\begin{pmatrix}
0 \\ 
1\\
0\\
0
\end{pmatrix}.
\end{align*}
Finally, if $\gamma =\delta =\beta =0$, but $\alpha \neq 0$, then
\begin{align*}
\begin{pmatrix}
1/\alpha& 0&0&0\\
0& 1& 0&0\\
0&0&1/\alpha&0\\
0&0&0&1
\end{pmatrix}
\begin{pmatrix}
\alpha\\ 
0 \\
0 \\
0
\end{pmatrix}=
\begin{pmatrix}
1 \\ 
0\\
0\\
0
\end{pmatrix}.
\end{align*}
Thus the following elements are $\Aut (L)$-orbit representatives:
\begin{align*}
\begin{pmatrix}
0\\ 
0\\
0\\
0
\end{pmatrix},
\begin{pmatrix}
1\\ 
0\\
0\\
0
\end{pmatrix},
\begin{pmatrix}
0 \\ 
1\\
0\\
0
\end{pmatrix},
\begin{pmatrix}
0 \\ 
0\\
1\\
0
\end{pmatrix},
\begin{pmatrix}
0 \\ 
0\\
0\\
1
\end{pmatrix}.
\end{align*}
Therefore, the corresponding restricted Lie algebra structures are as follows:
\begin{align*}
&K_2^{6}=\langle x_1,\ldots,x_5\mid [x_1,x_2]=x_3 \rangle;\\
&K_2^{7}=\langle x_1,\ldots,x_5\mid [x_1,x_2]=x_3,
\res x_1= x_5 \rangle;\\
&K_2^{8}=\langle x_1,\ldots,x_5\mid [x_1,x_2]=x_3,
\res x_2= x_5 \rangle;\\
&K_2^{9}=\langle x_1,\ldots,x_5\mid [x_1,x_2]=x_3,
\res x_3= x_5 \rangle;\\
&K_2^{10}=\langle x_1,\ldots,x_5\mid [x_1,x_2]=x_3,
\res x_4= x_5 \rangle.
\end{align*}

\subsection{Extensions of $(L, \res x_1 = x_3)$}\label{Lx5}
 First, we find a basis for $Z^2(L,\F)$. Let $(\phi,\omega)=(a\Delta _{12}+b\Delta_{13}+c\Delta_{14}+d\Delta_{23}+e\Delta_{24}+f\Delta_{34},\alpha f_1+\beta f_2+ \gamma f_3+\delta f_4)\in Z^2(L,\F)$. Then we must have $\delta^2\phi(x,y,z) =0$ and $\phi(x,\res y)=0$, for all $x,y,z \in L$. Therefore, we have
\begin{align*}
0=(\delta^2\phi)(x_1,x_2,x_4)&=\phi([x_1,x_2],x_4)+\phi([x_2,x_4],x_1)+\phi([x_4,x_1],x_2)=\phi(x_3,x_4)=f.
\end{align*}
Also, we have $\phi(x,\res y)=0$. Therefore, $\phi(x,x_3)=0$, for all $x\in L$ and hence $\phi(x_1,x_3)=\phi(x_2,x_3)=\phi(x_4,x_3)=0$ which implies that $b=d=f=0$. Therefore, $Z^2(L,\F)$
has a basis consisting of:
$$
 (\Delta_{12},0),(\Delta_{14},0),(\Delta_{24},0),(0,f_1),
(0,f_2),(0,f_3),(0,f_4).
$$
Next, we find a basis for $B^2(L,\F)$. Let $(\phi,\omega)\in B^2(L,\F)$. Since $B^2(L,\F)\subseteq Z^2(L,\F)$, we have $(\phi,\omega)=(a\Delta _{12}+c\Delta_{14}+e\Delta_{24},\alpha f_1+\beta f_2+ \gamma f_3+\delta f_4)$. Note that there exists a linear  map $\psi:L\to \F$ such that $\delta^1\psi(x,y)=\phi(x,y)$ and $\tilde \psi(x)=\omega(x)$, for all $x,y \in L$. So, we have
\begin{align*}
a=\phi(x_1,x_2)&=\delta^1\psi(x_1,x_2)=\psi([x_1,x_2])=\phi(x_1,x_2)=\psi(x_3), \text{ and }\\
c=\phi(x_1,x_4)&=\delta^1\psi(x_1,x_4)=\psi([x_1,x_4])=0.
\end{align*}
Similarly, we can show that $e=0$. Also, we have
\begin{align*}
\alpha=\omega(x_1)&=\tilde \psi(x_1)=\psi(\res x_1)=\psi(x_3), \text{ and }\\
\beta=\omega(x_2)&=\tilde \psi(x_2)=\psi(\res x_2)=0.
\end{align*}
Similarly, we can show that $\gamma=\delta=0$. Note that $\psi(x_3)=a=\alpha$.  Therefore, $(\phi,\omega)=(a\Delta_{12},af_1)$ and hence $B^2(L,\F)=\la(\Delta_{12},f_1)\ra_{\F}$. Note that
$$
\bigg\{[(\Delta_{12},0)],[(\Delta_{14},0)],[(\Delta_{24},0)],[(0,f_1)],
[(0,f_2)],[(0,f_3)],[(0,f_4)]\bigg\}
$$
spans $H^2(L,\F)$.
Since $[(\Delta_{12},0)]+[(0,f_1)]=[(\Delta_{12},f_1)]=[0]$, then $[(0,f_1)]$ is an scalar multiple of $[(\Delta_{12},0)]$
in $H^2(L,\F)$. Note that $\dim H^2=\dim Z^2-\dim B^2=6$. Therefore, 
$$
\bigg \{ [(\Delta_{12},0)],[(\Delta_{14},0)],[(\Delta_{24},0)],[(0,f_2)],[(0,f_3)],[(0,f_4)]\bigg\}
$$
 forms a basis for $H^2(L,\F)$.
 
 Note that the group $\Aut_p(L)$ consists of invertible matrices of the form 
$$\begin{pmatrix}
 a_{11}  & a_{12} & 0 & 0 \\
a_{21} & a_{22} & 0 & 0\\
a_{31} & a_{32} & r & a_{34} \\
a_{41} & a_{42} & 0& a_{44}
\end{pmatrix},$$
where $r = a_{11}a_{22}-a_{12}a_{21}\neq 0$, $r=a_{11}^p$ and $a_{12}=0$.

Let $[\theta]=[(\phi,\omega)]\in H^2(L,\F)$. As in Section \ref{L_{5,2} mod x5 trivial pmap},  it is enough to find $\Aut_p(L)$-representatives of the $\omega$'s.
We can verify that the action of $\Aut_p(L)$ on the $\omega$'s  in the matrix form is as follows:
\begin{align*}
\begin{pmatrix}
\beta'\\
\gamma'\\
\delta'
\end{pmatrix}=
\begin{pmatrix}
a_{22}^p& a_{32}^p&a_{42}^p\\
0&r ^p&0\\
0&a_{34}^p&a_{44}^p
\end{pmatrix}
\begin{pmatrix}
\beta \\
\gamma \\
\delta
\end{pmatrix}.
\end{align*}
Note that $\omega=\beta f_2+\gamma f_3+\delta f_4$. Since $\omega(x_1)=0$, we have $A\omega(x_1)=0$ which implies that
$a_{21}^p\beta+a_{31}^p\gamma+a_{41}^p\delta=0$.

Now we find the representatives of the orbits of this action. Note that we need to have $a_{21}^p\beta+a_{31}^p\gamma+a_{41}^p\delta=0$.
Let $\nu = \begin{pmatrix} 
\beta \\
\gamma \\
\delta
\end{pmatrix} \in \F^3$. If $\nu= \begin{pmatrix}
0 \\
0 \\
0
\end{pmatrix}$, then $\{\nu \}$ is clearly an  $\Aut_p(L)$-orbit.  Let $\nu \neq 0$. Suppose that $\gamma \neq 0$. Then 
\begin{align*}
&\begin{pmatrix}
1& -\beta /\gamma&0\\
0&1&0\\
0&-\delta/\gamma&1
\end{pmatrix}
\begin{pmatrix}
\beta \\
\gamma \\
\delta
\end{pmatrix}=
\begin{pmatrix}
0\\
\gamma\\
0
\end{pmatrix}, \text{ and }\\
&\begin{pmatrix}
 (1/\gamma)^\frac {p-1}{p}& 0&0\\
0&1/\gamma&0\\
0&0&1
\end{pmatrix}
\begin{pmatrix} 
0 \\
\gamma \\
0
\end{pmatrix}=
\begin{pmatrix}
0\\
1\\
0
\end{pmatrix}.
\end{align*}
Next, if $\gamma =0$, but $\delta\neq 0$, then
\begin{align*}
&\begin{pmatrix}
1& 0&-\beta/\delta\\
0&1&0\\
0&0&1
\end{pmatrix}
\begin{pmatrix}
\beta \\
0\\
\delta
\end{pmatrix}=
\begin{pmatrix}
0\\
0\\
\delta
\end{pmatrix}, \text{ and }\\
&\begin{pmatrix}
1& 0&0\\
0&1&0\\
0&0&1/\delta
\end{pmatrix}
\begin{pmatrix}
0 \\
0 \\
\delta
\end{pmatrix}=
\begin{pmatrix}
0\\
0\\
1
\end{pmatrix}.
\end{align*}
Next, if $\gamma =\delta = 0$, but $\beta \neq 0$, then we have
$
\begin{pmatrix}
\beta \\
0 \\
0
\end{pmatrix}$.
Thus the following elements are $\Aut_p(L)$-orbit representatives:
\begin{align*}
\begin{pmatrix}
0 \\ 
0\\
0
\end{pmatrix},
\begin{pmatrix}
\beta\\ 
0\\
0
\end{pmatrix},
\begin{pmatrix}
0\\ 
1\\
0
\end{pmatrix},
\begin{pmatrix}
0\\ 
0\\
1
\end{pmatrix}.
\end{align*}

Therefore, the corresponding restricted Lie algebra structures are as follows:
\begin{align*}
&K_2^{11}=\langle x_1,\ldots,x_5\mid [x_1,x_2]=x_3,
\res x_1= x_3 \rangle;\\
&K_2^{12}(\beta)=\langle x_1,\ldots,x_5\mid [x_1,x_2]=x_3,
\res x_1= x_3, \res x_2=\beta x_5 \rangle;\\
&K_2^{13}=\langle x_1,\ldots,x_5\mid [x_1,x_2]=x_3,
\res x_1= x_3, \res x_3=x_5 \rangle;\\
&K_2^{14}=\langle x_1,\ldots,x_5\mid [x_1,x_2]=x_3,
\res x_1= x_3, \res x_4=x_5 \rangle
\end{align*} where $\beta \in \F^*$.

\begin{lemma}\label{first-example-orbit}
We have $K_2^{12}(\beta_1)\cong K_2^{12}(1)$, for every $\beta\in \F^*$.
\end{lemma}
\begin{proof}
The following linear map yields the desired automorphism:
\begin{align*}
\begin{pmatrix}
\beta^{1/p} & 0 & 0 & 0& 0\\
 0 & \beta^{(p-1)/p} & 0 & 0& 0\\
 0 & 0 & \beta &  0& 0\\
 0 & 0 & 0& 1 & 0\\
 0 & 0 & 0& 0 &\beta^{p-2} 
\end{pmatrix}
\end{align*}

\end{proof}

\subsection{Extensions of $(L, \res x_1 = x_4)$}
 First, we find a basis for $Z^2(L,\F)$. Let $(\phi,\omega)=(a\Delta _{12}+b\Delta_{13}+c\Delta_{14}+d\Delta_{23}+e\Delta_{24}+f\Delta_{34},\alpha f_1+\beta f_2+ \gamma f_3+\delta f_4)\in Z^2(L,\F)$. Then we must have $\delta^2\phi(x,y,z) =0$ and $\phi(x,\res y)=0$, for all $x,y,z \in L$. Therefore, we have
\begin{align*}
0=(\delta^2\phi)(x_1,x_2,x_4)&=\phi([x_1,x_2],x_4)+\phi([x_2,x_4],x_1)+\phi([x_4,x_1],x_2)=\phi(x_3,x_4)=f.
\end{align*}
Also, we have $\phi(x,\res y)=0$. Therefore, $\phi(x,x_4)=0$, for all $x\in L$ and hence $\phi(x_1,x_4)=\phi(x_2,x_4)=\phi(x_3,x_4)=0$ which implies that $c=e=f=0$. Therefore, $Z^2(L,\F)$ has a basis consisting of:
$$ (\Delta_{12},0),(\Delta_{13},0),(\Delta_{23},0),(0,f_1),
(0,f_2),(0,f_3),(0,f_4).$$
Next, we find a basis for $B^2(L,\F)$. Let $(\phi,\omega)\in B^2(L,\F)$. Since $B^2(L,\F)\subseteq Z^2(L,\F)$, we have $(\phi,\omega)=(a\Delta _{12}+b\Delta_{13}+d\Delta_{23},\alpha f_1+\beta f_2+ \gamma f_3+\delta f_4)$. Note that there exists a linear map $\psi:L\to \F$  such that $\delta^1\psi(x,y)=\phi(x,y)$ and $\tilde \psi(x)=\omega(x)$, for all $x,y \in L$. So, we have
\begin{align*}
&a=\phi(x_1,x_2)=\delta^1\psi(x_1,x_2)=\psi([x_1,x_2])=\psi(x_3), \text{ and }\\
&b=\phi(x_1,x_3)=\delta^1\psi(x_1,x_3)=\psi([x_1,x_3])=0.
\end{align*}
Similarly, we can show that $d=0$. Also, we have
\begin{align*}
&\alpha=\omega(x_1)=\tilde \psi(x_1)=\psi(\res x_1)=\psi(x_4), \text{ and }\\
&\beta=\omega(x_2)=\tilde \psi(x_2)=\psi(\res x_2)=0.
\end{align*}
Similarly, we can show that $\gamma=\delta=0$. Therefore, $(\phi,\omega)=(a\Delta_{12},\alpha f_1)$ and hence $B^2(L,\F)=\la(\Delta_{12},0),(0,f_1)\ra_{\F}$. We deduce that a basis for $H^2(L,\F)$ is as follows:
$$[(\Delta_{13},0)],[(\Delta_{23},0)],[(0,f_2)],[(0,f_3)],[(0,f_4)].$$
 Note that the group $\Aut_p(L)$ in this case consists of invertible matrices of the form 
$$\begin{pmatrix}
 a_{11}  & a_{12} & 0 & 0 \\
a_{21} & a_{22} & 0 & 0\\
a_{31} & a_{32} & r & a_{34} \\
a_{41} & a_{42} & 0& a_{44}
\end{pmatrix},$$
where $r = a_{11}a_{22}-a_{12}a_{21}\neq 0$, $a_{44}=a_{11}^p$ and $a_{12}=a_{34}=0$.

Let $[\theta]=[(\phi,\omega)]\in H^2(L,\F)$. As in Section  \ref{L_{5,2} mod x5 trivial pmap},  it is enough to find $\Aut_p(L)$-representatives of the $\omega$'s. We can verify the action of $\Aut_p(L)$ on the $\omega$'s in the matrix form is as follows:
\begin{align*}
\begin{pmatrix}
\beta'\\
\gamma'\\
\delta'
\end{pmatrix}=
\begin{pmatrix}
a_{22}^p& a_{32}^p&a_{42}^p\\
0&r ^p&0\\
0&0&a_{11}^{p^2}
\end{pmatrix}
\begin{pmatrix}
\beta \\
\gamma \\
\delta
\end{pmatrix}.
\end{align*}

Now we find the representatives of the orbits of this action. 
Let $\nu = \begin{pmatrix} 
\beta \\
\gamma \\
\delta
\end{pmatrix} \in \F^3$. If $\nu= \begin{pmatrix}
0 \\
0 \\
0
\end{pmatrix}$, then $\{\nu \}$ is clearly an  $\Aut_p(L)$-orbit.  Let $\nu \neq 0$. Suppose that $\delta \neq 0$. Then 
\begin{align*}
&\begin{pmatrix}
1&0& -\beta /\delta\\
0&1&0\\
0&0&1
\end{pmatrix}
\begin{pmatrix}
\beta \\
\gamma \\
\delta
\end{pmatrix}=
\begin{pmatrix}
0\\
\gamma\\
\delta
\end{pmatrix}, \text{ and }\\
&\begin{pmatrix}
 (1/\delta)^\frac{-1}{p^2}& 0&0\\
0&1&0\\
0&0&1/\delta
\end{pmatrix}
\begin{pmatrix} 
0 \\
\gamma \\
\delta
\end{pmatrix}=
\begin{pmatrix}
0\\
\gamma\\
1
\end{pmatrix}.
\end{align*}
If $\gamma \neq 0$, then
\begin{align*}
&\begin{pmatrix}
1/\gamma& 0&0\\
0&1/\gamma&0\\
0&0&1
\end{pmatrix}
\begin{pmatrix}
0 \\
\gamma\\
1
\end{pmatrix}=
\begin{pmatrix}
0\\
1\\
1
\end{pmatrix}.
\end{align*}
If $\gamma=0$ then we have
$\begin{pmatrix}
0\\
0\\
1
\end{pmatrix}.$
Next, if $\delta = 0$, but $\gamma \neq 0$, then
\begin{align*}
\begin{pmatrix}
1& -\beta/\delta&0\\
0&1&0\\
0&0&1
\end{pmatrix}
\begin{pmatrix}
\beta \\
\gamma\\
0
\end{pmatrix}=
\begin{pmatrix}
0\\
\gamma\\
0
\end{pmatrix}, \text { and }\\
\begin{pmatrix}
1/\gamma&0&0\\
0&1/\gamma&0\\
0&0&1
\end{pmatrix}
\begin{pmatrix}
0 \\
\gamma\\
0
\end{pmatrix}=
\begin{pmatrix}
0\\
1\\
0
\end{pmatrix}.
\end{align*}
Finally, if $\delta=\gamma=0$ then we have
\begin{align*}
\begin{pmatrix}
1/\beta&0&0\\
0&1/\beta&0\\
0&0&1
\end{pmatrix}
\begin{pmatrix}
\beta\\
0\\
0
\end{pmatrix}=
\begin{pmatrix}
1\\
0\\
0
\end{pmatrix}.
\end{align*}

Thus the following elements are $\Aut_p(L)$-orbit representatives:
\begin{align*}
\begin{pmatrix}
0 \\ 
0\\
0
\end{pmatrix},
\begin{pmatrix}
1\\ 
0\\
0
\end{pmatrix},
\begin{pmatrix}
0\\ 
1\\
0
\end{pmatrix},
\begin{pmatrix}
0\\ 
0\\
1
\end{pmatrix},
\begin{pmatrix}
0\\ 
1\\
1
\end{pmatrix}.
\end{align*}
Therefore, the corresponding restricted Lie algebra structures are as follows:
\begin{align*}
&K_2^{15}=\langle x_1,\ldots,x_5\mid [x_1,x_2]=x_3,
\res x_1= x_4 \rangle;\\
&K_2^{16}=\langle x_1,\ldots,x_5\mid [x_1,x_2]=x_3,
\res x_1= x_4, \res x_2=x_5 \rangle;\\
&K_2^{17}=\langle x_1,\ldots,x_5\mid [x_1,x_2]=x_3,
\res x_1= x_4, \res x_3=x_5 \rangle;\\
&K_2^{18}=\langle x_1,\ldots,x_5\mid [x_1,x_2]=x_3,
\res x_1= x_4, \res x_4=x_5 \rangle;\\
&K_2^{19}=\langle x_1,\ldots,x_5\mid [x_1,x_2]=x_3,
\res x_1= x_4,\res x_3=x_5, \res x_4=x_5 \rangle.
\end{align*}

\subsection{Extensions of $(L,  \res x_1 = x_3, \res x_2 =x_4)$}
 First, we find a basis for $Z^2(L,\F)$. Let $(\phi,\omega)=(a\Delta _{12}+b\Delta_{13}+c\Delta_{14}+d\Delta_{23}+e\Delta_{24}+f\Delta_{34},\alpha f_1+\beta f_2+ \gamma f_3+\delta f_4)\in Z^2(L,\F)$. Then we must have $\delta^2\phi(x,y,z) =0$ and $\phi(x,\res y)=0$, for all $x,y,z \in L$. Therefore, we have
\begin{align*}
0=(\delta^2\phi)(x_1,x_2,x_4)&=\phi([x_1,x_2],x_4)+\phi([x_2,x_4],x_1)+\phi([x_4,x_1],x_2)=\phi(x_3,x_4)=f.
\end{align*}
Also, we have $\phi(x,\res y)=0$. Therefore, $\phi(x,x_3)=0$ and $\phi(x,x_4)=0$, for all $x\in L$ and hence $\phi(x_1,x_3)=\phi(x_2,x_3)=\phi(x_1,x_4)=\phi(x_2,x_4)=\phi(x_3,x_4)=0$ which implies that $b=c=d=e=f=0$. Therefore, $Z^2(L,\F)$ has a basis consisting of:
$$(\Delta_{12},0),(0,f_1),
(0,f_2),(0,f_3),(0,f_4).$$
Next, we find a basis for $B^2(L,\F)$. Let $(\phi,\omega)\in B^2(L,\F)$. Since $B^2(L,\F)\subseteq Z^2(L,\F)$, we have $(\phi,\omega)=(a\Delta _{12},\alpha f_1+\beta f_2+ \gamma f_3+\delta f_4)$. Note that there exists a linear map $\psi:L\to \F$ such that $\delta^1\psi(x,y)=\phi(x,y)$ and $\tilde \psi(x)=\omega(x)$, for all $x,y \in L$. So, we have
\begin{align*}
&a=\phi(x_1,x_2)=\delta^1\psi(x_1,x_2)=\psi([x_1,x_2])=\psi(x_3).
\end{align*}
Also, we have
\begin{align*}
&\alpha=\omega(x_1)=\tilde \psi(x_1)=\psi(\res x_1)=\psi(x_3), \text{ and }\\
&\beta=\omega(x_2)=\tilde \psi(x_2)=\psi(\res x_2)=\psi(x_4), \text{ and }\\
&\gamma=\omega(x_3)=\tilde \psi(x_3)=\psi(\res x_3)=0.
\end{align*}
Similarly, we can show that $\delta=0$. Note that $\psi(x_3)=a=\alpha$.  Therefore, $(\phi,\omega)=(a\Delta_{12},af_1+\beta f_2)$ and hence $B^2(L,\F)=\la(\Delta_{12},f_1),(0,f_2)\ra_{\F}$. Note that  
$$\bigg\{[(\Delta_{12},0)],[(0,f_1)],
[(0,f_2)],[(0,f_3)],[(0,f_4)]\bigg\}$$ spans $H^2(L,\F)$.
Since $[(\Delta_{12},0)]+[(0,f_1)]=[(\Delta_{12},f_1)]=[0]$, then $[(0,f_1)]$ is an scalar multiple of $[(\Delta_{12},0)]$
in $H^2(L,\F)$. Note that $\dim H^2=\dim Z^2-\dim B^2=3$. Therefore, $$\bigg \{ [(\Delta_{12},0)],[(0,f_3)],[(0,f_4)]\bigg\}$$ forms a basis for $H^2(L,\F)$.

 Note that the group $\Aut_p(L)$ in this case consists of invertible matrices of the form 
$$\begin{pmatrix}
 a_{11}  & a_{12} & 0 & 0 \\
a_{21} & a_{22} & 0 & 0\\
a_{31} & a_{32} & r & a_{34} \\
a_{41} & a_{42} & 0& a_{44}
\end{pmatrix},$$
where $r = a_{11}a_{22}-a_{12}a_{21}=a_{11}^p$ and $a_{21}=0$, $a_{34}=a_{12}^p$, and $a_{44}=a_{22}^p$.

Let $[\theta]=[(\phi,\omega)]\in H^2(L,\F)$. As in Section  \ref{L_{5,2} mod x5 trivial pmap},  it is enough to find $\Aut_p(L)$-representatives of the $\omega$'s. We can verify that the action of $\Aut_p(L)$ on the $\omega$'s in the matrix form is as follows:
\begin{align*}
\begin{pmatrix}
\gamma'\\
\delta'
\end{pmatrix}=
\begin{pmatrix}
a_{11}^{p^2}&0\\
a_{12}^{p^2}&a_{22}^{p^2}
\end{pmatrix}
\begin{pmatrix}
\gamma \\
\delta
\end{pmatrix}.
\end{align*}

Now we find the representatives of the orbits of this action. Note that we need to have $A\omega(x_1)=0$  which implies that
$a_{31}^p\gamma+a_{41}^p\delta=0$.
Let $\nu = \begin{pmatrix} 
\gamma \\
\delta
\end{pmatrix} \in \F^2$. If $\nu= \begin{pmatrix}
0 \\
0
\end{pmatrix}$, then $\{\nu \}$ is clearly an  $\Aut_p(L)$-orbit.  Let $\nu \neq 0$. Suppose that $\gamma \neq 0$. Then 
\begin{align*}
&\begin{pmatrix}
1&0\\
-\delta/\gamma&1
\end{pmatrix}
\begin{pmatrix}
\gamma \\
\delta
\end{pmatrix}=
\begin{pmatrix}
\gamma\\
0
\end{pmatrix}, \text{ and }\\
&\begin{pmatrix}
1/\gamma&0\\
0&(1/\gamma)^{p-1}
\end{pmatrix}
\begin{pmatrix} 
\gamma \\
0
\end{pmatrix}=
\begin{pmatrix}
1\\
0
\end{pmatrix}.
\end{align*}
Next, if $\gamma =0$, but $\delta\neq 0$, then we have
$\begin{pmatrix}
0 \\
\delta
\end{pmatrix}.$
Thus the following elements are $\Aut_p(L)$-orbit representatives:
\begin{align*}
\begin{pmatrix}
0 \\ 
0
\end{pmatrix},
\begin{pmatrix}
1\\ 
0
\end{pmatrix},
\begin{pmatrix}
0\\ 
\delta
\end{pmatrix}.
\end{align*}
Therefore, the corresponding restricted Lie algebra structures are as follows:
\begin{align*}
&K_2^{20}=\langle x_1,\ldots,x_5\mid [x_1,x_2]=x_3,
\res x_1= x_3, \res x_2=x_4 \rangle;\\
&K_2^{21}=\langle x_1,\ldots,x_5\mid [x_1,x_2]=x_3,
\res x_1= x_3, \res x_2=x_4,  \res x_3=x_5 \rangle;\\
&K_2^{22}(\delta)=\langle x_1,\ldots,x_5\mid [x_1,x_2]=x_3,
\res x_1= x_3, \res x_2=x_4,  \res x_4=\delta x_5 \rangle
\end{align*}
where $\delta \in \F^*$.

\begin{lemma}
We have $K_2^{22}(\delta)\cong K_2^{22}(1)$, for every $\delta\in \F^*$.
\end{lemma}
\begin{proof}
The following linear map yields the desired automorphism:
\begin{align*}
\begin{pmatrix}
\delta^{1/p^2} & 0 & 0 & 0& 0\\
 0 & \delta^{(p-1)/p^2} & 0 & 0& 0\\
 0 & 0 & \delta^{1/p}  &  0& 0\\
 0 & 0 & 0& \delta^{(p-1)/p}  & 0\\
 0 & 0 & 0& 0 &\delta^{p-2} 
\end{pmatrix}
\end{align*}

\end{proof}

\subsection{Extensions of $(L, \res x_3 = x_4)$}
 First, we find a basis for $Z^2(L,\F)$. Let $(\phi,\omega)=(a\Delta _{12}+b\Delta_{13}+c\Delta_{14}+d\Delta_{23}+e\Delta_{24}+f\Delta_{34},\alpha f_1+\beta f_2+ \gamma f_3+\delta f_4)\in Z^2(L,\F)$. Then we must have $\delta^2\phi(x,y,z) =0$ and $\phi(x,\res y)=0$, for all $x,y,z \in L$. Therefore, we have
\begin{align*}
0=(\delta^2\phi)(x_1,x_2,x_4)&=\phi([x_1,x_2],x_4)+\phi([x_2,x_4],x_1)+\phi([x_4,x_1],x_2)=\phi(x_3,x_4)=f.
\end{align*}
Also, we have $\phi(x,\res y)=0$. Therefore, $\phi(x,x_4)=0$, for all $x\in L$ and hence $\phi(x_1,x_4)=\phi(x_2,x_4)=\phi(x_3,x_4)=0$ which implies that $c=e=f=0$. Therefore, $Z^2(L,\F)$ has a basis consisting of:
$$(\Delta_{12},0),(\Delta_{13},0),(\Delta_{23},0),(0,f_1),
(0,f_2),(0,f_3),(0,f_4).$$
Next, we find a basis for $B^2(L,\F)$. Let $(\phi,\omega)\in B^2(L,\F)$. Since $B^2(L,\F)\subseteq Z^2(L,\F)$, we have $(\phi,\omega)=(a\Delta _{12}+b\Delta_{13}+d\Delta_{23},\alpha f_1+\beta f_2+ \gamma f_3+\delta f_4)$. Note that there exists a linear map $\psi:L\to \F$ such that $\delta^1\psi(x,y)=\phi(x,y)$ and $\tilde \psi(x)=\omega(x)$, for all $x,y \in L$. So, we have
\begin{align*}
&a=\phi(x_1,x_2)=\delta^1\psi(x_1,x_2)=\psi([x_1,x_2])=\psi(x_3), \text{ and }\\
&b=\phi(x_1,x_3)=\delta^1\psi(x_1,x_3)=\psi([x_1,x_3])=0.\\
\end{align*}
Similarly, we can show that $d=0$. Also, we have
\begin{align*}
&\gamma=\omega(x_3)=\tilde \psi(x_3)=\psi(\res x_3)=\psi(x_4), \text{ and }\\
&\alpha=\omega(x_1)=\tilde \psi(x_1)=\psi(\res x_1)=0.
\end{align*}
Similarly, we can show that $\beta=\delta=0$. Therefore, $(\phi,\omega)=(a\Delta_{12},\gamma f_3)$ and hence $B^2(L,\F)=\la(\Delta_{12},0),(0,f_3)\ra_{\F}$. We deduce that a basis for $H^2(L,\F)$ is as follows:
$$[(\Delta_{13},0)],[(\Delta_{23},0)],[(0,f_1)],[(0,f_2)],[(0,f_4)].$$
Note that the group $\Aut_p(L)$ in this case consists of invertible matrices of the form 
$$\begin{pmatrix}
 a_{11}  & a_{12} & 0 & 0 \\
a_{21} & a_{22} & 0 & 0\\
a_{31} & a_{32} & r & a_{34} \\
a_{41} & a_{42} & 0& a_{44}
\end{pmatrix},$$
where $r = a_{11}a_{22}-a_{12}a_{21}$ and $a_{31}=a_{32}=a_{34}=0$, and $a_{44}=r^p$.

Let $[\theta]=[(\phi,\omega)]\in H^2(L,\F)$. As in Section  \ref{L_{5,2} mod x5 trivial pmap},  it is enough to find $\Aut_p(L)$-representatives of the $\omega$'s. We can verify that the action of $\Aut_p(L)$ on the $\omega$'s in the matrix form is as follows:
\begin{align*}
\begin{pmatrix}
\alpha' \\ 
\beta'\\
\delta'
\end{pmatrix}=
\begin{pmatrix}
a_{11}^p& a_{21}^p&a_{41}^p\\
a_{12}^p& a_{22}^p&a_{42}^p\\
0&0&r^{p^2}
\end{pmatrix}
\begin{pmatrix}
\alpha\\ 
\beta \\
\delta
\end{pmatrix}.
\end{align*}

Now we find the representatives of the orbits of this action.
Let $\nu = \begin{pmatrix} 
\alpha \\
\beta \\
\delta
\end{pmatrix} \in \F^3$. If $\nu= \begin{pmatrix}
0 \\
0 \\
0
\end{pmatrix}$, then $\{\nu \}$ is clearly an  $\Aut_p(L)$-orbit.  Let $\nu \neq 0$. Suppose that $\delta \neq 0$. Then 
\begin{align*}
&\begin{pmatrix}
1& 0&-\alpha/\delta\\
0&1&-\beta/\delta\\
0&0&1
\end{pmatrix}
\begin{pmatrix}
\alpha \\
\beta \\
\delta
\end{pmatrix}=
\begin{pmatrix}
0\\
0\\
\delta
\end{pmatrix}, \text{ and }\\
&\begin{pmatrix}
 (1/\delta)^{1/p}& 0&0\\
0&1&0\\
0&0&1/\delta
\end{pmatrix}
\begin{pmatrix} 
0 \\
0\\
\delta
\end{pmatrix}=
\begin{pmatrix}
0\\
0\\
1
\end{pmatrix}.
\end{align*}
Next, if $\delta =0$, but $\beta\neq 0$, then
\begin{align*}
&\begin{pmatrix}
1& -\alpha/\beta&0\\
0&1&0\\
0&0&1
\end{pmatrix}
\begin{pmatrix}
\alpha \\
\beta\\
0
\end{pmatrix}=
\begin{pmatrix}
0\\
\beta\\
0
\end{pmatrix}, \text{ and }\\
&\begin{pmatrix}
1& 0&0\\
0&1/\beta&0\\
0&0&(1/\beta)^p
\end{pmatrix}
\begin{pmatrix}
0 \\
\beta \\
0
\end{pmatrix}=
\begin{pmatrix}
0\\
1\\
0
\end{pmatrix}.
\end{align*}
Next, if $\delta =\beta= 0$, but $\alpha \neq 0$, then
\begin{align*}
\begin{pmatrix}
1/\alpha& 0&0\\
0&1&0\\
0&0&(1/\alpha)^p
\end{pmatrix}
\begin{pmatrix}
\alpha \\
0 \\
0
\end{pmatrix}=
\begin{pmatrix}
1\\
0\\
0
\end{pmatrix}.
\end{align*}
Thus the following elements are $\Aut_p(L)$-orbit representatives:
\begin{align*}
\begin{pmatrix}
0 \\ 
0\\
0
\end{pmatrix},
\begin{pmatrix}
1\\ 
0\\
0
\end{pmatrix},
\begin{pmatrix}
0\\ 
1\\
0
\end{pmatrix},
\begin{pmatrix}
0\\ 
0\\
1
\end{pmatrix}.
\end{align*}
Therefore, the corresponding restricted Lie algebra structures are as follows:
\begin{align*}
&K_2^{23}=\langle x_1,\ldots,x_5\mid [x_1,x_2]=x_3,
\res x_3= x_4 \rangle;\\
&K_2^{24}=\langle x_1,\ldots,x_5\mid [x_1,x_2]=x_3,
\res x_1= x_5, \res x_3=x_4 \rangle;\\
&K_2^{25}=\langle x_1,\ldots,x_5\mid [x_1,x_2]=x_3,
\res x_2= x_5, \res x_3=x_4 \rangle;\\
&K_2^{26}=\langle x_1,\ldots,x_5\mid [x_1,x_2]=x_3,
\res x_3= x_4, \res x_4=x_5 \rangle.
\end{align*}
\subsection{Extensions of $(L, \res x_2 = x_3, \res x_3 =x_4)$}
 First, we find a basis for $Z^2(L,\F)$. Let $(\phi,\omega)=(a\Delta _{12}+b\Delta_{13}+c\Delta_{14}+d\Delta_{23}+e\Delta_{24}+f\Delta_{34},\alpha f_1+\beta f_2+ \gamma f_3+\delta f_4)\in Z^2(L,\F)$. Then we must have $\delta^2\phi(x,y,z) =0$ and $\phi(x,\res y)=0$, for all $x,y,z \in L$. Therefore, we have
\begin{align*}
0=(\delta^2\phi)(x_1,x_2,x_4)&=\phi([x_1,x_2],x_4)+\phi([x_2,x_4],x_1)+\phi([x_4,x_1],x_2)=\phi(x_3,x_4)=f.
\end{align*}
Also, we have $\phi(x,\res y)=0$. Therefore, $\phi(x,x_3)=0$ and $\phi(x,x_4)=0$, for all $x\in L$ and hence $\phi(x_1,x_3)=\phi(x_2,x_3)=\phi(x_1,x_4)=\phi(x_2,x_4)=\phi(x_3,x_4)=0$ which implies that $b=c=d=e=f=0$. Therefore, $Z^2(L,\F)$ has a basis consisting of:
$$(\Delta_{12},0),(0,f_1),
(0,f_2),(0,f_3),(0,f_4).$$
Next, we find a basis for $B^2(L,\F)$. Let $(\phi,\omega)\in B^2(L,\F)$. Since $B^2(L,\F)\subseteq Z^2(L,\F)$, we have $(\phi,\omega)=(a\Delta _{12},\alpha f_1+\beta f_2+ \gamma f_3+\delta f_4)$. Note that there exists a linear map $\psi:L\to \F$ such that $\delta^1\psi(x,y)=\phi(x,y)$ and $\tilde \psi(x)=\omega(x)$, for all $x,y \in L$. So, we have
\begin{align*}
a=\phi(x_1,x_2)=\delta^1\psi(x_1,x_2)=\psi([x_1,x_2])=\psi(x_3).
\end{align*}
Also, we have
\begin{align*}
&\alpha=\omega(x_1)=\tilde \psi(x_1)=\psi(\res x_1)=0, \text{ and }\\
&\beta=\omega(x_2)=\tilde \psi(x_2)=\psi(\res x_2)=\psi(x_3), \text{ and }\\
&\gamma=\omega(x_3)=\tilde \psi(x_3)=\psi(\res x_3)=\psi(x_4), \text{ and }\\
&\delta=\omega(x_4)=\tilde \psi(x_4)=\psi(\res x_4)=0.
\end{align*}
Note that $\psi(x_3)=a=\beta$.  Hence, $(\phi,\omega)=(a\Delta_{12},af_2+\gamma f_3)$ and  $B^2(L,\F)=\la(\Delta_{12},f_2),(0,f_3)\ra_{\F}$. Note that  
$$\bigg\{[(\Delta_{12},0)],[(0,f_1)],
[(0,f_2)],[(0,f_4)]\bigg\}$$ spans $H^2(L,\F)$.
Since $[(\Delta_{12},0)]+[(0,f_2)]=[(\Delta_{12},f_2)]=[0]$, then $[(0,f_2)]$ is an scalar multiple of $[(\Delta_{12},0)]$
in $H^2(L,\F)$. Note that $\dim H^2=\dim Z^2-\dim B^2=3$. Therefore, $$\bigg \{ [(\Delta_{12},0)],[(0,f_1)],[(0,f_4)]\bigg\}$$ forms a basis for $H^2(L,\F)$.

Note that the group $\Aut_p(L)$ in this case consists of invertible matrices of the form 
$$\begin{pmatrix}
 a_{11}  & a_{12} & 0 & 0 \\
a_{21} & a_{22} & 0 & 0\\
a_{31} & a_{32} & r & a_{34} \\
a_{41} & a_{42} & 0& a_{44}
\end{pmatrix},$$
where $r = a_{11}a_{22}-a_{12}a_{21}=a_{22}^p$ and $a_{21}=a_{31}=a_{32}=a_{34}=0$, $a_{44}=r^p$.

Let $[\theta]=[(\phi,\omega)]\in H^2(L,\F)$. As in Section  \ref{L_{5,2} mod x5 trivial pmap},  it is enough to find $\Aut_p(L)$-representatives of the $\omega$'s. We can verify that the action of $\Aut_p(L)$ on the $\omega$'s in the matrix form is as follows:
\begin{align*}
\begin{pmatrix}
\alpha' \\ 
\delta'
\end{pmatrix}=
\begin{pmatrix}
a_{11}^p&a_{41}^p\\
0&r^{p^2}
\end{pmatrix}
\begin{pmatrix}
\alpha\\ 
\delta
\end{pmatrix}.
\end{align*}

Now we find the representatives of the orbits of this action. Note that we need to have $A\omega(x_2)=0$ which implies that $a_{21}^p\alpha+a_{42}^p\delta=0$.

Let $\nu = \begin{pmatrix} 
\alpha \\
\delta
\end{pmatrix} \in \F^2$. If $\nu= \begin{pmatrix}
0 \\
0
\end{pmatrix}$, then $\{\nu \}$ is clearly an  $\Aut_p(L)$-orbit.  Let $\nu \neq 0$. Suppose that $\delta \neq 0$. Then 
\begin{align*}
&\begin{pmatrix}
1&-\alpha/\delta\\
0&1
\end{pmatrix}
\begin{pmatrix}
\alpha \\
\delta
\end{pmatrix}=
\begin{pmatrix}
0\\
\delta
\end{pmatrix}, \text{ and }\\
&\begin{pmatrix}
(1/\delta)^\frac{p-1}{p^2}&0\\
0&1/\delta
\end{pmatrix}
\begin{pmatrix} 
0\\
\delta
\end{pmatrix}=
\begin{pmatrix}
0\\
1
\end{pmatrix}.
\end{align*}
Next, if $\delta =0$, but $\alpha\neq 0$, then we have
$\begin{pmatrix}
\alpha \\
0
\end{pmatrix}.$
Thus the following elements are $\Aut_p(L)$-orbit representatives:
\begin{align*}
\begin{pmatrix}
0 \\ 
0
\end{pmatrix},
\begin{pmatrix}
\alpha\\ 
0
\end{pmatrix},
\begin{pmatrix}
0\\ 
1
\end{pmatrix}.
\end{align*}
Therefore, the corresponding restricted Lie algebra structures are as follows:
\begin{align*}
&K_2^{27}=\langle x_1,\ldots,x_5\mid [x_1,x_2]=x_3,
\res x_2= x_3, \res x_3=x_4 \rangle;\\
&K_2^{28}(\alpha)=\langle x_1,\ldots,x_5\mid [x_1,x_2]=x_3,
\res x_1=\alpha x_5, \res x_2=x_3,  \res x_3=x_4 \rangle;\\
&K_2^{29}=\langle x_1,\ldots,x_5\mid [x_1,x_2]=x_3,
\res x_2= x_3, \res x_3=x_4,  \res x_4=x_5 \rangle
\end{align*}
where $\alpha \in \F^*$.

\begin{lemma}
We have $K_2^{28}(\alpha)\cong K_2^{28}(1)$, for every $\alpha\in \F^*$.
\end{lemma}
\begin{proof}
The following linear map yields the desired automorphism:
\begin{align*}
\begin{pmatrix}
\alpha^{(p-1)/p} & 0 & 0 & 0& 0\\
 0 & \alpha^{1/p}& 0 & 0& 0\\
 0 & 0 & \alpha &  0& 0\\
 0 & 0 & 0& \alpha^{p} & 0\\
 0 & 0 & 0& 0 &\alpha^{p-2} 
\end{pmatrix}
\end{align*}

\end{proof}

\subsection{Extensions of  $(L,  \res x_4 = x_3)$}
 First, we find a basis for $Z^2(L,\F)$. Let $(\phi,\omega)=(a\Delta _{12}+b\Delta_{13}+c\Delta_{14}+d\Delta_{23}+e\Delta_{24}+f\Delta_{34},\alpha f_1+\beta f_2+ \gamma f_3+\delta f_4)\in Z^2(L,\F)$. Then we must have $\delta^2\phi(x,y,z) =0$ and $\phi(x,\res y)=0$, for all $x,y,z \in L$. Therefore, we have
\begin{align*}
0=(\delta^2\phi)(x_1,x_2,x_4)&=\phi([x_1,x_2],x_4)+\phi([x_2,x_4],x_1)+\phi([x_4,x_1],x_2)=\phi(x_3,x_4)=f.
\end{align*}
Also, we have $\phi(x,\res y)=0$. Therefore, $\phi(x,x_3)=0$, for all $x\in L$ and hence $\phi(x_1,x_3)=\phi(x_2,x_3)=\phi(x_4,x_3)=0$ which implies that $b=d=f=0$. Therefore, $Z^2(L,\F)$ has a basis consisting of:
$$(\Delta_{12},0),(\Delta_{14},0),(\Delta_{24},0),(0,f_1),
(0,f_2),(0,f_3),(0,f_4).$$
Next, we find a basis for $B^2(L,\F)$. Let $(\phi,\omega)\in B^2(L,\F)$. Since $B^2(L,\F)\subseteq Z^2(L,\F)$, we have $(\phi,\omega)=(a\Delta _{12}+c\Delta_{14}+e\Delta_{24},\alpha f_1+\beta f_2+ \gamma f_3+\delta f_4)$. Note that there exists a linear map $\psi:L\to \F$ such that $\delta^1\psi(x,y)=\phi(x,y)$ and $\tilde \psi(x)=\omega(x)$, for all $x,y \in L$. So, we have
\begin{align*}
&a=\phi(x_1,x_2)=\delta^1\psi(x_1,x_2)=\psi([x_1,x_2])=\psi(x_3), \text{ and }\\
&c=\phi(x_1,x_4)=\delta^1\psi(x_1,x_4)=\psi([x_1,x_4])=0.
\end{align*}
Similarly, we can show that $e=0$. Also, we have
\begin{align*}
&\delta=\omega(x_4)=\tilde \psi(x_4)=\psi(\res x_4)=\psi(x_3), \text{ and }\\
&\alpha=\omega(x_1)=\tilde \psi(x_1)=\psi(\res x_1)=0.
\end{align*}
Similarly, we can show that $\beta=\gamma=0$. Note that $\psi(x_3)=a=\delta$.  Therefore, $(\phi,\omega)=(a\Delta_{12},af_4)$ and hence $B^2(L,\F)=\la(\Delta_{12},f_4)\ra_{\F}$. Note that
$$\bigg\{[(\Delta_{12},0)],[(\Delta_{14},0)],[(\Delta_{24},0)],[(0,f_1)],
[(0,f_2)],[(0,f_3)],[(0,f_4)]\bigg\}$$ spans $H^2(L,\F)$. Since $[(\Delta_{12},0)]+[(0,f_4)]=[(\Delta_{12},f_4)]=[0]$, then $[(0,f_4)]$ is an scalar multiple of $[(\Delta_{12},0)]$
in $H^2(L,\F)$. Note that $\dim H^2=\dim Z^2-\dim B^2=6$. Therefore, $$\bigg \{ [(\Delta_{12},0)],[(\Delta_{14},0)],[(\Delta_{24},0)],[(0,f_1)],[(0,f_2)],[(0,f_3)]\bigg\}$$ forms a basis for $H^2(L,\F)$.

Note that the group $\Aut_p(L)$ in this case consists of invertible matrices of the form 
$$\begin{pmatrix}
 a_{11}  & a_{12} & 0 & 0 \\
a_{21} & a_{22} & 0 & 0\\
a_{31} & a_{32} & r & a_{34} \\
a_{41} & a_{42} & 0& a_{44}
\end{pmatrix},$$
where $r = a_{11}a_{22}-a_{12}a_{21}=a_{44}^p$ and $a_{41}=a_{42}=0$.

Let $[\theta]=[(\phi,\omega)]\in H^2(L,\F)$. As in Section  \ref{L_{5,2} mod x5 trivial pmap},  it is enough to find $\Aut_p(L)$-representatives of the $\omega$'s. We can verify that the action of $\Aut_p(L)$ on the $\omega$'s in the matrix form is as follows:
\begin{align*}
\begin{pmatrix}
\alpha' \\ 
\beta'\\
\gamma'
\end{pmatrix}=
\begin{pmatrix}
a_{11}^p& a_{21}^p&a_{31}^p\\
a_{12}^p& a_{22}^p& a_{32}^p\\
0&0&r ^p\\
\end{pmatrix}
\begin{pmatrix}
\alpha\\ 
\beta \\
\gamma 
\end{pmatrix}.
\end{align*}

Now we find the representatives of the orbits of this action. Note that we need to have $A\omega(x_4)=0$ which implies that $a_{34}^p\gamma=0$.

Let $\nu = \begin{pmatrix}
\alpha\\ 
\beta \\
\gamma 
\end{pmatrix} \in \F^3$. If $\nu= \begin{pmatrix}
0\\ 
0 \\
0
\end{pmatrix}$, then $\{\nu \}$ is clearly an  $\Aut_p(L)$-orbit.  Let $\nu \neq 0$. Suppose that $\gamma \neq 0$. Then 
\begin{align*}
&\begin{pmatrix}
1& 0&-\alpha /\gamma\\
0& 1& -\beta /\gamma\\
0&0&1\\
\end{pmatrix}
\begin{pmatrix}
\alpha\\ 
\beta \\
\gamma 
\end{pmatrix}=
\begin{pmatrix}
0 \\ 
0\\
\gamma
\end{pmatrix}, \text{ and }\\
&\begin{pmatrix}
1/\gamma& 0&0\\
0& 1& 0\\
0&0&1/\gamma
\end{pmatrix}
\begin{pmatrix}
0\\ 
0 \\
\gamma
\end{pmatrix}=
\begin{pmatrix}
0 \\ 
0\\
1
\end{pmatrix}.
\end{align*}
Next, if $\gamma = 0$, but $\beta \neq 0$, then
\begin{align*}
&\begin{pmatrix}
1& -\alpha/\beta&0\\
0& 1& 0\\
0&0&1
\end{pmatrix}
\begin{pmatrix}
\alpha\\ 
\beta \\
0 
\end{pmatrix}=
\begin{pmatrix}
0 \\ 
\beta\\
0
\end{pmatrix}, \text{ and }\\
&\begin{pmatrix}
1& 0&0\\
0& 1/\beta& 0\\
0&0&1/\beta
\end{pmatrix}
\begin{pmatrix}
0\\ 
\beta \\
0 
\end{pmatrix}=
\begin{pmatrix}
0 \\ 
1\\
0
\end{pmatrix}.
\end{align*}
Finally, if $\gamma=\beta =0$, but $\alpha \neq 0$, then
\begin{align*}
\begin{pmatrix}
1/\alpha& 0&0\\
0& 1& 0\\
0&0&1/\alpha
\end{pmatrix}
\begin{pmatrix}
\alpha\\ 
0 \\
0 
\end{pmatrix}=
\begin{pmatrix}
1 \\ 
0\\
0
\end{pmatrix}.
\end{align*}
Thus the following elements are $\Aut (L)$-orbit representatives:
\begin{align*}
\begin{pmatrix}
0 \\ 
0\\
0
\end{pmatrix},
\begin{pmatrix}
1\\ 
0\\
0
\end{pmatrix},
\begin{pmatrix}
0\\ 
1\\
0
\end{pmatrix},
\begin{pmatrix}
0\\ 
0\\
1
\end{pmatrix}.
\end{align*}
Therefore, the corresponding restricted Lie algebra structures are as follows:
\begin{align*}
&K_2^{30}=\langle x_1,\ldots,x_5\mid [x_1,x_2]=x_3,
\res x_4= x_3 \rangle;\\
&K_2^{31}=\langle x_1,\ldots,x_5\mid [x_1,x_2]=x_3,
\res x_1= x_5, \res x_4=x_3 \rangle;\\
&K_2^{32}=\langle x_1,\ldots,x_5\mid [x_1,x_2]=x_3,
\res x_2= x_5, \res x_4=x_3 \rangle;\\
&K_2^{33}=\langle x_1,\ldots,x_5\mid [x_1,x_2]=x_3,
\res x_3= x_5, \res x_4=x_3 \rangle.
\end{align*}

\subsection{Extensions of  $(L, \res x_2 = x_4, \res x_4 =x_3)$}
 First, we find a basis for $Z^2(L,\F)$. Let $(\phi,\omega)=(a\Delta _{12}+b\Delta_{13}+c\Delta_{14}+d\Delta_{23}+e\Delta_{24}+f\Delta_{34},\alpha f_1+\beta f_2+ \gamma f_3+\delta f_4)\in Z^2(L,\F)$. Then we must have $\delta^2\phi(x,y,z) =0$ and $\phi(x,\res y)=0$, for all $x,y,z \in L$. Therefore, we have
\begin{align*}
0=(\delta^2\phi)(x_1,x_2,x_4)&=\phi([x_1,x_2],x_4)+\phi([x_2,x_4],x_1)+\phi([x_4,x_1],x_2)=\phi(x_3,x_4)=f.
\end{align*}
Also, we have $\phi(x,\res y)=0$. Therefore, $\phi(x,x_3)=0$ and $\phi(x,x_4)=0$, for all $x\in L$ and hence $\phi(x_1,x_3)=\phi(x_2,x_3)=\phi(x_1,x_4)=\phi(x_2,x_4)=\phi(x_3,x_4)=0$ which implies that $b=c=d=e=f=0$. Therefore,
$Z^2(L,\F)$ has a basis consisting of:
$$(\Delta_{12},0),(0,f_1),
(0,f_2),(0,f_3),(0,f_4).$$
Next, we find a basis for $B^2(L,\F)$. Let $(\phi,\omega)\in B^2(L,\F)$. Since $B^2(L,\F)\subseteq Z^2(L,\F)$, we have $(\phi,\omega)=(a\Delta _{12},\alpha f_1+\beta f_2+ \gamma f_3+\delta f_4)$. Note that there exists a linear map $\psi:L\to \F$ such that $\delta^1\psi(x,y)=\phi(x,y)$ and $\tilde \psi(x)=\omega(x)$, for all $x,y \in L$. So, we have
\begin{align*}
a=\phi(x_1,x_2)=\delta^1\psi(x_1,x_2)=\psi([x_1,x_2])=\psi(x_3).
\end{align*}
Also, we have
\begin{align*}
&\alpha=\omega(x_1)=\tilde \psi(x_1)=\psi(\res x_1)=0, \text{ and }\\
&\beta=\omega(x_2)=\tilde \psi(x_2)=\psi(\res x_2)=\psi(x_4), \text{ and }\\
&\gamma=\omega(x_3)=\tilde \psi(x_3)=\psi(\res x_3)=0, \text{ and }\\
&\delta=\omega(x_4)=\tilde \psi(x_4)=\psi(\res x_4)=\psi(x_3).
\end{align*}
Note that $\psi(x_3)=a=\delta$.  Hence, $(\phi,\omega)=(a\Delta_{12},\beta f_2+a f_4)$ and  $B^2(L,\F)=\la(\Delta_{12},f_4),(0,f_2)\ra_{\F}$. Note that  
$$
\bigg\{[(\Delta_{12},0)],[(0,f_1)],[(0,f_3)],[(0,f_4)]\bigg\}
$$ 
spans $H^2(L,\F)$.
Since $[(\Delta_{12},0)]+[(0,f_4)]=[(\Delta_{12},f_4)]=[0]$, then $[(0,f_4)]$ is an scalar multiple of $[(\Delta_{12},0)]$
in $H^2(L,\F)$. Note that $\dim H^2=\dim Z^2-\dim B^2=3$. Therefore, $$\bigg \{ [(\Delta_{12},0)],[(0,f_1)],[(0,f_3)]\bigg\}$$ forms a basis for $H^2(L,\F)$.

Note that the group $\Aut_p(L)$ in this case consists of invertible matrices of the form 
$$\begin{pmatrix}
 a_{11}  & a_{12} & 0 & 0 \\
a_{21} & a_{22} & 0 & 0\\
a_{31} & a_{32} & r & a_{34} \\
a_{41} & a_{42} & 0& a_{44}
\end{pmatrix},$$
where $r = a_{11}a_{22}-a_{12}a_{21}=a_{44}^p$ and $a_{21}=a_{41}=0$, $a_{34}=a_{42}^p$, and $a_{44}=a_{22}^p$.

Let $[\theta]=[(\phi,\omega)]\in H^2(L,\F)$. As in Section  \ref{L_{5,2} mod x5 trivial pmap},  it is enough to find $\Aut_p(L)$-representatives of the $\omega$'s. We can verify that the action of $\Aut_p(L)$ on the $\omega$'s in the matrix form is as follows:
\begin{align*}
\begin{pmatrix}
\alpha' \\ 
\gamma'
\end{pmatrix}=
\begin{pmatrix}
a_{11}^p&a_{31}^p\\
0&r ^p\\
\end{pmatrix}
\begin{pmatrix}
\alpha\\ 
\gamma 
\end{pmatrix}.
\end{align*}

Now we find the representatives of the orbits of this action. Note that we need to have $A\omega(x_4)=0$ which implies that $a_{34}^p\gamma=0$.

Let $\nu = \begin{pmatrix} 
\alpha \\
\gamma
\end{pmatrix} \in \F^2$. If $\nu= \begin{pmatrix}
0 \\
0
\end{pmatrix}$, then $\{\nu \}$ is clearly an  $\Aut_p(L)$-orbit.  Let $\nu \neq 0$. Suppose that $\gamma \neq 0$. Then 
\begin{align*}
&\begin{pmatrix}
1&-\alpha/\gamma\\
0&1
\end{pmatrix}
\begin{pmatrix}
\alpha \\
\gamma
\end{pmatrix}=
\begin{pmatrix}
0\\
\gamma
\end{pmatrix}, \text{ and }\\
&\begin{pmatrix}
(1/\gamma)^\frac{p^2-1}{p^2}&0\\
0&1/\gamma
\end{pmatrix}
\begin{pmatrix} 
0\\
\gamma
\end{pmatrix}=
\begin{pmatrix}
0\\
1
\end{pmatrix}.
\end{align*}
Next, if $\gamma=0$, but $\alpha\neq 0$, then we have 
$
\begin{pmatrix}
\alpha \\
0
\end{pmatrix}$.
Thus the following elements are $\Aut_p(L)$-orbit representatives:
\begin{align*}
\begin{pmatrix}
0 \\ 
0
\end{pmatrix},
\begin{pmatrix}
\alpha\\ 
0
\end{pmatrix},
\begin{pmatrix}
0\\ 
1
\end{pmatrix}.
\end{align*}
Therefore, the corresponding restricted Lie algebra structures are as follows:
\begin{align*}
&K_2^{34}=\langle x_1,\ldots,x_5\mid [x_1,x_2]=x_3,
\res x_2= x_4, \res x_4=x_3 \rangle;\\
&K_2^{35}(\alpha)=\langle x_1,\ldots,x_5\mid [x_1,x_2]=x_3,
\res x_1=\alpha x_5, \res x_2=x_4,  \res x_4=x_3 \rangle;\\
&K_2^{36}=\langle x_1,\ldots,x_5\mid [x_1,x_2]=x_3,
\res x_2= x_4, \res x_3=x_5,  \res x_4=x_3 \rangle
\end{align*}
where $\alpha \in \F^*$.

\begin{lemma}
We have $K_2^{35}(\alpha)\cong K_2^{35}(1)$, for every $\alpha\in \F^*$.
\end{lemma}
\begin{proof}
The following linear map yields the desired automorphism:
\begin{align*}
\begin{pmatrix}
\alpha^{(p^2-1)/p} & 0 & 0 & 0& 0\\
 0 & \alpha^{1/p}& 0 & 0& 0\\
 0 & 0 & \alpha^p &  0& 0\\
 0 & 0 & 0& \alpha & 0\\
 0 & 0 & 0& 0 &\alpha^{p^2-2} 
\end{pmatrix}
\end{align*}

\end{proof}

\section{Detecting Isomorphisms}\label{iso-L52}
We can easily see that some of the algebras given above are identical.  The following is the list of all irredundant restricted Lie algebra structures on $L_{5,2}$ and yet, as we shall see below, we prove that some of them are isomorphic.
\begin{align*}
&K_2^{1}=\langle x_1,\ldots,x_5\mid [x_1,x_2]=x_3 \rangle;\\
&K_2^{2}=\langle x_1,\ldots,x_5\mid [x_1,x_2]=x_3,
\res x_1= x_3 \rangle;\\
&K_2^{3}=\langle x_1,\ldots,x_5\mid [x_1,x_2]=x_3,
\res x_2= x_3 \rangle;\\
&K_2^{4}=\langle x_1,\ldots,x_5\mid [x_1,x_2]=x_3,
\res x_4= x_3 \rangle;\\
&K_2^{5}=\langle x_1,\ldots,x_5\mid [x_1,x_2]=x_3,
\res x_5= x_3 \rangle;\\
&K_2^{7}=\langle x_1,\ldots,x_5\mid [x_1,x_2]=x_3,
\res x_1= x_5 \rangle;\\
&K_2^{8}=\langle x_1,\ldots,x_5\mid [x_1,x_2]=x_3,
\res x_2= x_5 \rangle;\\
&K_2^{9}=\langle x_1,\ldots,x_5\mid [x_1,x_2]=x_3,
\res x_3= x_5 \rangle;\\
&K_2^{10}=\langle x_1,\ldots,x_5\mid [x_1,x_2]=x_3,
\res x_4= x_5 \rangle;\\
&K_2^{12}=\langle x_1,\ldots,x_5\mid [x_1,x_2]=x_3,
\res x_1= x_3, \res x_2= x_5 \rangle;\\
&K_2^{13}=\langle x_1,\ldots,x_5\mid [x_1,x_2]=x_3,
\res x_1= x_3, \res x_3=x_5 \rangle;\\
&K_2^{14}=\langle x_1,\ldots,x_5\mid [x_1,x_2]=x_3,
\res x_1= x_3, \res x_4=x_5 \rangle;\\
&K_2^{15}=\langle x_1,\ldots,x_5\mid [x_1,x_2]=x_3,
\res x_1= x_4 \rangle;\\
&K_2^{16}=\langle x_1,\ldots,x_5\mid [x_1,x_2]=x_3,
\res x_1= x_4, \res x_2=x_5 \rangle;\\
&K_2^{17}=\langle x_1,\ldots,x_5\mid [x_1,x_2]=x_3,
\res x_1= x_4, \res x_3=x_5 \rangle;\\
&K_2^{18}=\langle x_1,\ldots,x_5\mid [x_1,x_2]=x_3,
\res x_1= x_4, \res x_4=x_5 \rangle;\\
&K_2^{19}=\langle x_1,\ldots,x_5\mid [x_1,x_2]=x_3,
\res x_1= x_4,\res x_3=x_5, \res x_4=x_5 \rangle;\\
&K_2^{20}=\langle x_1,\ldots,x_5\mid [x_1,x_2]=x_3,
\res x_1= x_3, \res x_2=x_4 \rangle;\\
&K_2^{21}=\langle x_1,\ldots,x_5\mid [x_1,x_2]=x_3,
\res x_1= x_3, \res x_2=x_4,  \res x_3=x_5 \rangle;\\
&K_2^{22}=\langle x_1,\ldots,x_5\mid [x_1,x_2]=x_3,
\res x_1= x_3, \res x_2=x_4,  \res x_4= x_5 \rangle;\\
&K_2^{23}=\langle x_1,\ldots,x_5\mid [x_1,x_2]=x_3,
\res x_3= x_4 \rangle;\\
&K_2^{24}=\langle x_1,\ldots,x_5\mid [x_1,x_2]=x_3,
\res x_1= x_5, \res x_3=x_4 \rangle;\\
&K_2^{25}=\langle x_1,\ldots,x_5\mid [x_1,x_2]=x_3,
\res x_2= x_5, \res x_3=x_4 \rangle;\\
&K_2^{26}=\langle x_1,\ldots,x_5\mid [x_1,x_2]=x_3,
\res x_3= x_4, \res x_4=x_5 \rangle;\\
&K_2^{27}=\langle x_1,\ldots,x_5\mid [x_1,x_2]=x_3,
\res x_2= x_3, \res x_3=x_4 \rangle;\\
&K_2^{28}=\langle x_1,\ldots,x_5\mid [x_1,x_2]=x_3,
\res x_1= x_5, \res x_2=x_3,  \res x_3=x_4 \rangle;\\
&K_2^{29}=\langle x_1,\ldots,x_5\mid [x_1,x_2]=x_3,
\res x_2= x_3, \res x_3=x_4,  \res x_4=x_5 \rangle;\\
&K_2^{31}=\langle x_1,\ldots,x_5\mid [x_1,x_2]=x_3,
\res x_1= x_5, \res x_4=x_3 \rangle;\\
&K_2^{32}=\langle x_1,\ldots,x_5\mid [x_1,x_2]=x_3,
\res x_2= x_5, \res x_4=x_3 \rangle;\\
&K_2^{33}=\langle x_1,\ldots,x_5\mid [x_1,x_2]=x_3,
\res x_3= x_5, \res x_4=x_3 \rangle;\\
&K_2^{34}=\langle x_1,\ldots,x_5\mid [x_1,x_2]=x_3,
\res x_2= x_4, \res x_4=x_3 \rangle;\\
&K_2^{35}=\langle x_1,\ldots,x_5\mid [x_1,x_2]=x_3,
\res x_1= x_5, \res x_2=x_4,  \res x_4=x_3 \rangle;\\
&K_2^{36}=\langle x_1,\ldots,x_5\mid [x_1,x_2]=x_3,
\res x_2= x_4, \res x_3=x_5,  \res x_4=x_3 \rangle.
\end{align*}

Recall that he group $\Aut(L_{5,2})$ consists of invertible matrices of the form 
\[\begin{pmatrix} a_{11} & a_{12} & 0 & 0 & 0 \\
a_{21} & a_{22} & 0& 0 & 0\\
a_{31} & a_{32} & a_{11}a_{22}-a_{12}a_{21} & a_{34} & a_{35} \\
a_{41} & a_{42} & 0& a_{44}
& a_{45} \\
a_{51} & a_{52} & 0& a_{54}
& a_{55}
\end{pmatrix},\]
where $a_{11}a_{22}-a_{12}a_{21} \neq0$.

We have $K_2^{25} \cong K_2^{17}, \quad K_2^{27} \cong K_2^{13},\quad  K_2^{28} \cong K_2^{21}$ by the following automorphism of $L_{5,2}$:
$$\begin{pmatrix} 0& -1 & 0 & 0 & 0 \\
-1 &0& 0& 0&0\\
0 & 0& -1 & 0 & 0 \\
0& 0 & 0& 0
&-1 \\
0 & 0 & 0 & -1
& 0
\end{pmatrix}
$$

Consider the following automorphism of $L_{5,2}$:
$$\begin{pmatrix} 0& -1 & 0 & 0 & 0 \\
-1 &0& 0& 0&0\\
0 & 0& -1 & 0 & 0 \\
0& 0 & 0& -1
&0 \\
0 & 0 & 0 & 0
& -1
\end{pmatrix}.$$
Therefore,
$$K_2^{3} \cong K_2^{2}, \quad K_2^{8} \cong K_2^{7}, \quad K_2^{32} \cong K_2^{31}.$$
Finally, using  the following automorphism of $L_{5,2}$ that only switches $x_4$ and $x_5$

$$
\begin{pmatrix}
 1& 0 & 0 & 0 & 0 \\
0&1& 0& 0&0\\
0 & 0& 1 & 0 & 0 \\
0& 0 & 0& 0
&1 \\
0 & 0 & 0 & 1
& 0
\end{pmatrix},
$$
we get 
$$K_2^{5} \cong K_2^{4}, \quad K_2^{15} \cong K_2^{7}, \quad K_2^{20} \cong K_2^{12}, \quad K_2^{24} \cong K_2^{17}.$$

\begin{theorem}\label{thm L52}
The list of all restricted Lie algebra structures on $L_{5,2}$, up to isomorphism, is as follows:
\begin{align*}
&L_{5,2}^{1}=\langle x_1,\ldots,x_5\mid [x_1,x_2]=x_3 \rangle;\\
&L_{5,2}^{2}=\langle x_1,\ldots,x_5\mid [x_1,x_2]=x_3,
\res x_1= x_3 \rangle;\\
&L_{5,2}^{3}=\langle x_1,\ldots,x_5\mid [x_1,x_2]=x_3,
\res x_4= x_3 \rangle;\\
&L_{5,2}^{4}=\langle x_1,\ldots,x_5\mid [x_1,x_2]=x_3,
\res x_1= x_5 \rangle;\\
&L_{5,2}^{5}=\langle x_1,\ldots,x_5\mid [x_1,x_2]=x_3,
\res x_3= x_5 \rangle;\\
&L_{5,2}^{6}=\langle x_1,\ldots,x_5\mid [x_1,x_2]=x_3,
\res x_4= x_5 \rangle;\\
&L_{5,2}^{7}=\langle x_1,\ldots,x_5\mid [x_1,x_2]=x_3,
\res x_1= x_3, \res x_2= x_5 \rangle;\\
&L_{5,2}^{8}=\langle x_1,\ldots,x_5\mid [x_1,x_2]=x_3,
\res x_1= x_3, \res x_3=x_5 \rangle;\\
&L_{5,2}^{9}=\langle x_1,\ldots,x_5\mid [x_1,x_2]=x_3,
\res x_1= x_3, \res x_4=x_5 \rangle;\\
&L_{5,2}^{10}=\langle x_1,\ldots,x_5\mid [x_1,x_2]=x_3,
\res x_1= x_4, \res x_2=x_5 \rangle;\\
&L_{5,2}^{11}=\langle x_1,\ldots,x_5\mid [x_1,x_2]=x_3,
\res x_1= x_4, \res x_3=x_5 \rangle;\\
&L_{5,2}^{12}=\langle x_1,\ldots,x_5\mid [x_1,x_2]=x_3,
\res x_1= x_4, \res x_4=x_5 \rangle;\\
&L_{5,2}^{13}=\langle x_1,\ldots,x_5\mid [x_1,x_2]=x_3,
\res x_1=  x_5, \res x_2=x_3,  \res x_3=x_4 \rangle;\\
&L_{5,2}^{14}=\langle x_1,\ldots,x_5\mid [x_1,x_2]=x_3,
\res x_1= x_3, \res x_2=x_4,  \res x_4= x_5 \rangle;\\
&L_{5,2}^{15}=\langle x_1,\ldots,x_5\mid [x_1,x_2]=x_3,
\res x_3= x_4, \res x_4=x_5 \rangle;\\
&L_{5,2}^{16}=\langle x_1,\ldots,x_5\mid [x_1,x_2]=x_3,
\res x_2= x_3, \res x_3=x_4,  \res x_4=x_5 \rangle;\\
&L_{5,2}^{17}=\langle x_1,\ldots,x_5\mid [x_1,x_2]=x_3,
\res x_1= x_5, \res x_4=x_3 \rangle;\\
&L_{5,2}^{18}=\langle x_1,\ldots,x_5\mid [x_1,x_2]=x_3,
\res x_3= x_5, \res x_4=x_3 \rangle;\\
&L_{5,2}^{19}=\langle x_1,\ldots,x_5\mid [x_1,x_2]=x_3,
\res x_2= x_4, \res x_4=x_3 \rangle;\\
&L_{5,2}^{20}=\langle x_1,\ldots,x_5\mid [x_1,x_2]=x_3,
\res x_1=  x_5, \res x_2=x_4,  \res x_4=x_3 \rangle;\\
&L_{5,2}^{21}=\langle x_1,\ldots,x_5\mid [x_1,x_2]=x_3,
\res x_2= x_4, \res x_3=x_5,  \res x_4=x_3 \rangle;\\
&L_{5,2}^{22}=\langle x_1,\ldots,x_5\mid [x_1,x_2]=x_3,
\res x_1= x_4,\res x_3=x_5, \res x_4=x_5 \rangle.
\end{align*}
\end{theorem}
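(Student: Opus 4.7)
The plan is to finish the proof of Theorem \ref{thm L52} by verifying that the 21 listed algebras are pairwise non-isomorphic; exhaustiveness of the list is already in place, since Sections \ref{L52-x3} and \ref{L_{5,2} mod x5} show that every restricted structure on $L_{5,2}$ is isomorphic to some $K_2^j$, and the three automorphisms of $L_{5,2}$ exhibited at the end of Section \ref{iso-L52} further collapse the $K_2^j$ onto the $L_{5,2}^i$. So the outstanding task is to rule out isomorphisms between distinct members of the final list.

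Because all 21 algebras share the same underlying Lie algebra, any potential isomorphism $L_{5,2}^i\to L_{5,2}^j$ is induced by an element of $\Aut(L_{5,2})$. From the matrix form recorded in Section \ref{iso-L52}, every such automorphism preserves the derived ideal $[L,L]=\la x_3\ra$ and the center $Z(L)=\la x_3,x_4,x_5\ra$, and descends to two independent $\gl{2}{\F}$ blocks acting on $L/Z(L)=\la x_1,x_2\ra$ and on $Z(L)/[L,L]=\la x_4,x_5\ra$. The plan is therefore to separate the $L_{5,2}^i$ by tabulating, for each, the isomorphism invariants
\[
\dim L^{[p]},\ \dim L^{[p]^2},\ \dim L^{[p]^3},\ \dim(L^{[p]}\cap [L,L]),\ \dim(L^{[p]}\cap Z(L)),\ \dim\la L^{[p]}\ra_p,
\]
together with $\dim\bigl(L/\la L^{[p]}\ra_p\bigr)$ from Lemma \ref{invariants}. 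These quantities can be read off directly from the presentation of each $L_{5,2}^i$, and I expect that most pairs already give distinct rows.

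For the residual collisions I would invoke finer structural invariants: whether $[L,L]\subseteq L^{[p]}$, whether $[L,L]\cap L^{[p]^k}=0$ for small $k$, and the isomorphism type of the $p$-semilinear map that $[p]$ induces modulo $[L,L]$, regarded as a map out of $L/Z(L)\oplus Z(L)/[L,L]$. The main obstacle will be the clusters of algebras that differ only in which basis vector of $Z(L)$ or of $[L,L]$ carries the nontrivial $p$-image, without any change in the numerical invariants above. Here the block-diagonal structure of $\Aut(L_{5,2})$ is decisive: an automorphism can send the $p$-image of $x_1$ into $[L,L]=\la x_3\ra$ only through the $\la x_1,x_2\ra$-block, whereas a $p$-image lying in $\la x_4,x_5\ra$ is controlled by the independent $Z(L)/[L,L]$-block, and these two blocks cannot be coupled. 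A case-by-case comparison along these lines, performed after the table of invariants has narrowed the possibilities to a small number of pairs, should complete the argument.
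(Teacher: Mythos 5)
Your proposal is essentially the paper's own argument: the paper also proves pairwise non-isomorphism by first separating algebras with dimension-type invariants (the $\dim (L^{[p]^k})$ data and $L/\la L^{[p]}\ra_p$ as in \eqref{L/Lp}), and then, for the residual pairs, assuming an isomorphism, writing it in the displayed matrix form of $\Aut(L_{5,2})$ — which preserves $[L,L]=\la x_3\ra$ and $Z(L)$ and multiplies $x_3$ by $a_{11}a_{22}-a_{12}a_{21}$ — and deriving the contradiction $a_{11}a_{22}-a_{12}a_{21}=0$ (or a zero column). Only note that your ``blocks cannot be coupled'' claim is true only of the induced actions on $L/Z(L)$ and $Z(L)/[L,L]$ (the automorphism matrices themselves do have coupling entries $a_{34},a_{35},a_{41},\dots$), and the pairs your numerical table misses (e.g.\ $L_{5,2}^{2}$ vs $L_{5,2}^{3}$, or $L_{5,2}^{4}$ vs $L_{5,2}^{5}$) are settled precisely by whether the $p$-map vanishes on $Z(L)$, respectively on $[L,L]$, which is what the paper's explicit computations amount to.
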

In the remaining of this section we establish that the algebras given in Theorem \ref{thm L52} are pairwise non-isomorphic, thereby
completing the proof of Theorem \ref{thm L52}.

It is clear that $L_{5,2}^1$ is not isomorphic to the other restricted Lie algebras.

We claim that $L_{5,2}^2$ and $L_{5,2}^3$ are not isomorphic. Suppose to the contrary that there exists an isomorphism $A: L_{5,2}^3\to L_{5,2}^2$. Then 
\begin{align*}
&A(\res x_4)=\res {A(x_4)}\\
&A(x_3)=\res {(a_{34}x_3+a_{44}x_4+a_{54}x_5)}\\
&(a_{11}a_{22}-a_{12}a_{21})x_3=0.
\end{align*}
 Therefore, $a_{11}a_{22}-a_{12}a_{21}=0$ which is a contradiction.
Note  that $L_{5,2}^2\ncong L_{5,2}^4$ because 
\begin{align}\label{L/Lp}
L_{5,2}^2/(L_{5,2}^2){^{[p]}}\ncong L_{5,2}^4/(L_{5,2}^4){^{[p]}}.
\end{align}
 Similarly, 
 $L_{5,2}^2\ncong L_{5,2}^5$ and $L_{5,2}^2\ncong L_{5,2}^6$. It is clear that $L_{5,2}^2$ is not isomorphic to the other restricted Lie algebras.

Similar argument as in \eqref{L/Lp} shows that $L_{5,2}^3\ncong L_{5,2}^4$, 
$L_{5,2}^3\ncong L_{5,2}^5$, and $L_{5,2}^3\ncong L_{5,2}^6$. It is clear that $L_{5,2}^3$ is not isomorphic to the other restricted Lie algebras.

Next, we claim that $L_{5,2}^4$ and $L_{5,2}^5$ are not isomorphic. Suppose to the contrary that there exists an isomorphism $A: L_{5,2}^4\to L_{5,2}^5$. Then 
\begin{align*}
&A(\res x_3)=\res {A(x_3)}\\
&0=\res {((a_{11}a_{22}-a_{12}a_{21})x_3)}\\
&0=(a_{11}a_{22}-a_{12}a_{21})^px_5.
\end{align*}
Therefore, $a_{11}a_{22}-a_{12}a_{21}=0$ which is a contradiction.

Next, we claim that $L_{5,2}^4$ and $L_{5,2}^6$ are not isomorphic. Suppose to the contrary that there exists an isomorphism $A: L_{5,2}^6\to L_{5,2}^4$. Then we have
\begin{align*}
&A(\res x_1)=\res {A(x_1)}\\
&0=\res {(a_{11}x_1+a_{21}x_2+a_{31}x_3+a_{41}x_4+a_{51}x_5)}\\
&0=a_{11}^px_5. 
\end{align*}
Therefore, $a_{11}=0$.
Also we have
\begin{align*}
&A(\res x_2)=\res {A(x_2)}\\
&0=\res {(a_{12}x_1+a_{22}x_2+a_{32}x_3+a_{42}x_4+a_{52}x_5)}\\
&0=a_{12}^px_5. 
\end{align*}
Therefore, $a_{12}=0$. Hence, $a_{11}a_{22}-a_{12}a_{21}=0$ which is a contradiction.

It is clear that $L_{5,2}^4$ is not isomorphic to the other restricted Lie algebras.

Next, we claim that $L_{5,2}^5$ and $L_{5,2}^6$ are not isomorphic. Suppose to the contrary that there exists an isomorphism $A: L_{5,2}^6\to L_{5,2}^5$. Then 
\begin{align*}
&A(\res x_3)=\res {A(x_3)}\\
&0=\res {((a_{11}a_{22}-a_{12}a_{21})x_3)}\\
&0=(a_{11}a_{22}-a_{12}a_{21})^px_5.
\end{align*}
Therefore, $a_{11}a_{22}-a_{12}a_{21}=0$ which is a contradiction.

It is clear that $L_{5,2}^5$ and $L_{5,2}^6$ are not isomorphic to the other restricted Lie algebras.

Note that $L_{5,2}^7$ and $L_{5,2}^8$ are not isomorphic because 
$(L_{5,2}^7)^{[p]^2}=0$ but $(L_{5,2}^8)^{[p]^2}\neq 0$. Also, $L_{5,2}^7$ and $L_{5,2}^9$ are not isomorphic. Suppose to the contrary that there exists an isomorphism $A: L_{5,2}^9\to L_{5,2}^7$. Then 
\begin{align*}
&A(\res x_2)=\res {A(x_2)}\\
&0=\res {(a_{12}x_1+a_{22}x_2+a_{32}x_3+a_{42}x_4+a_{52}x_5)}\\
&0=a_{12}^px_3+a_{22}^p x_5. 
\end{align*}
Therefore, $a_{12}=0$ and $a_{22}=0$. Hence, $a_{11}a_{22}-a_{12}a_{21}=0$ which is a contradiction.

Similar argument as in \eqref{L/Lp} shows that $L_{5,2}^7\ncong L_{5,2}^{10}$, 
$L_{5,2}^7\ncong L_{5,2}^{11}$,  $L_{5,2}^7\ncong L_{5,2}^{12}$, and 
$L_{5,2}^7  \ncong L_{5,2}^{15}$.
Next, we claim that $L_{5,2}^7  $ and $L_{5,2}^{17}$ are not isomorphic. Suppose to the contrary that there exists an isomorphism $A: L_{5,2}^{17}\to L_{5,2}^7  $. Then 
\begin{align*}
&A(\res x_4)=\res {A(x_4)}\\
&A(x_3)=\res {(a_{34}x_3+a_{44}x_4+a_{54}x_5)}\\
&(a_{11}a_{22}-a_{12}a_{21})x_3=0.
\end{align*}
Therefore, $a_{11}a_{22}-a_{12}a_{21}=0$ which is a contradiction.

Next, we claim that $L_{5,2}^7  $ and $L_{5,2}^{18}$ are not isomorphic. Suppose to the contrary that there exists an isomorphism $A: L_{5,2}^7  \to L_{5,2}^{18}$. Then 
\begin{align*}
&A(\res x_3)=\res {A(x_3)}\\
&0=\res {((a_{11}a_{22}-a_{12}a_{21})x_3)}\\
&0=(a_{11}a_{22}-a_{12}a_{21})^px_5.
\end{align*}
Therefore, $a_{11}a_{22}-a_{12}a_{21}=0$ which is a contradiction.

Note  that $L_{5,2}^7  $ is not isomorphic to any of  $L_{5,2}^{19}$ nor $L_{5,2}^{22}$. Because 
$(L_{5,2}^7  )^{[p]^2}=0$ but this is not the case for  $L_{5,2}^{19}$ and $L_{5,2}^{22}$.

It is clear that $L_{5,2}^7  $ is not isomorphic to the remaining  restricted Lie algebras.

Note  that $L_{5,2}^8$ is not isomorphic to any of  $L_{5,2}^{9}$, $L_{5,2}^{10}$,  $L_{5,2}^{11}$, 
$L_{5,2}^{17}$ , $L_{5,2}^{18}$
 because 
$(L_{5,2}^9)^{[p]^2}=(L_{5,2}^{10})^{[p]^2}=
(L_{5,2}^{11})^{[p]^2}=(L_{5,2}^{17})^{[p]^2}=(L_{5,2}^{18})^{[p]^2}=0$ but 
$(L_{5,2}^8)^{[p]^2}\neq 0$.

Next, we claim that $L_{5,2}^8$ and $L_{5,2}^{19}$ are not isomorphic. Suppose to the contrary that there exists an isomorphism $A: L_{5,2}^{19}\to L_{5,2}^8$. Then 
\begin{align*}
&A(\res x_4)=\res {A(x_4)}\\
&A(x_3)=\res {(a_{34}x_3+a_{44}x_4+a_{54}x_5)}\\
&(a_{11}a_{22}-a_{12}a_{21})x_3=a_{34}^px_5.
\end{align*}
Therefore, $a_{11}a_{22}-a_{12}a_{21}=0$ which is a contradiction.

Next, we claim that $L_{5,2}^8$ and $L_{5,2}^{22}$ are not isomorphic. Suppose to the contrary that there exists an isomorphism $A: L_{5,2}^8\to L_{5,2}^{22}$. Then we have
\begin{align*}
&A(\res x_1)=\res {A(x_1)}\\
&A(x_3)=\res {(a_{11}x_1+a_{21}x_2+a_{31}x_3+a_{41}x_4+a_{51}x_5)}\\
&(a_{11}a_{22}-a_{12}a_{21})x_3=a_{11}^px_4+a_{31}^px_5+a_{41}^px_5. 
\end{align*}
Therefore, $a_{11}a_{22}-a_{12}a_{21}=0$ which is a contradiction.

It is clear that $L_{5,2}^8$ is not isomorphic to the other restricted Lie algebras.

Similar argument as in \eqref{L/Lp} shows that  $L_{5,2}^9$  is not isomorphic to any of 
$L_{5,2}^{10}$, $L_{5,2}^{11}$, $L_{5,2}^{12}$, or $L_{5,2}^{15}$.

Next, we claim that $L_{5,2}^9$ and $L_{5,2}^{17}$ are not isomorphic. Suppose to the contrary that there exists an isomorphism $A: L_{5,2}^{17}\to L_{5,2}^9$. Then 
\begin{align*}
&A(\res x_4)=\res {A(x_4)}\\
&A(x_3)=\res {(a_{34}x_3+a_{44}x_4+a_{54}x_5)}\\
&(a_{11}a_{22}-a_{12}a_{21})x_3=a_{44}^px_5.
\end{align*}
Therefore, $a_{11}a_{22}-a_{12}a_{21}=0$ which is a contradiction.

Note that  $L_{5,2}^9$ is not isomorphic to any of  $L_{5,2}^{18}$, $L_{5,2}^{19}$, and  $L_{5,2}^{22}$. Because  $(L_{5,2}^9)^{[p]^2}=0$ but this is not the case for $L_{5,2}^{18}$, $L_{5,2}^{19}$, and  $L_{5,2}^{22}$.
It is clear that $L_{5,2}^9$ is not isomorphic to the other restricted Lie algebras.

Next, we claim that $L_{5,2}^{10}$ and $L_{5,2}^{11}$ are not isomorphic. Suppose to the contrary that there exists an isomorphism $A: L_{5,2}^{10}\to L_{5,2}^{11}$. Then 
\begin{align*}
&A(\res x_3)=\res {A(x_3)}\\
&0=\res {((a_{11}a_{22}-a_{12}a_{21})x_3)}\\
&0=(a_{11}a_{22}-a_{12}a_{21})^px_5.
\end{align*}
Therefore, $a_{11}a_{22}-a_{12}a_{21}=0$ which is a contradiction.
Note that  $L_{5,2}^{10}$ is not isomorphic to $L_{5,2}^{12}$, $L_{5,2}^{15}$, $L_{5,2}^{18}$, $L_{5,2}^{19}$ nor $L_{5,2}^{22}$ because  $(L_{5,2}^{10})^{[p]^2}=0$ but $(L_{5,2}^{12})^{[p]^2}\neq 0$, $(L_{5,2}^{15})^{[p]^2}\neq 0$,
$(L_{5,2}^{18})^{[p]^2}\neq 0$, $(L_{5,2}^{19})^{[p]^2}\neq 0$ and $(L_{5,2}^{22})^{[p]^2}\neq 0$.

Next, we claim that $L_{5,2}^{10}$ and $L_{5,2}^{17}$ are not isomorphic. Suppose to the contrary that there exists an isomorphism $A: L_{5,2}^{17}\to L_{5,2}^{10}$. Then 
\begin{align*}
&A(\res x_4)=\res {A(x_4)}\\
&A(x_3)=\res {(a_{34}x_3+a_{44}x_4+a_{54}x_5)}\\
&(a_{11}a_{22}-a_{12}a_{21})x_3=0.
\end{align*}
Therefore, $a_{11}a_{22}-a_{12}a_{21}=0$ which is a contradiction.
It is clear that $L_{5,2}^{10}$ is not isomorphic to the other restricted Lie algebras.

Note that  $L_{5,2}^{11}$ is not isomorphic to any of  $L_{5,2}^{12}$, $L_{5,2}^{15}$, $L_{5,2}^{18}$,  $L_{5,2}^{19}$  nor $L_{5,2}^{22}$ because  $(L_{5,2}^{11})^{[p]^2}=0$ but $(L_{5,2}^{12})^{[p]^2}\neq 0$, $(L_{5,2}^{15})^{[p]^2}\neq 0$,
$(L_{5,2}^{18})^{[p]^2}\neq 0$, $(L_{5,2}^{19})^{[p]^2}\neq 0$  and $(L_{5,2}^{22})^{[p]^2}\neq 0$. 

Next, we claim that $L_{5,2}^{11}$ and $L_{5,2}^{17}$ are not isomorphic. Suppose to the contrary that there exists an isomorphism $A: L_{5,2}^{17}\to L_{5,2}^{11}$. Then 
\begin{align*}
&A(\res x_4)=\res {A(x_4)}\\
&A(x_3)=\res {(a_{34}x_3+a_{44}x_4+a_{54}x_5)}\\
&(a_{11}a_{22}-a_{12}a_{21})x_3=a_{34}^px_5.
\end{align*}
Therefore, $a_{11}a_{22}-a_{12}a_{21}=0$ which is a contradiction.

It is clear that $L_{5,2}^{11}$ is not isomorphic to the other restricted Lie algebras.

Next, we claim that $L_{5,2}^{12}$ and $L_{5,2}^{15}$ are not isomorphic. Suppose to the contrary that there exists an isomorphism $A: L_{5,2}^{12}\to L_{5,2}^{15}$. Then 
\begin{align*}
&A(\res x_3)=\res {A(x_3)}\\
&0=\res {((a_{11}a_{22}-a_{12}a_{21})x_3)}\\
&0=(a_{11}a_{22}-a_{12}a_{21})^px_4.
\end{align*}
Therefore, $a_{11}a_{22}-a_{12}a_{21}=0$ which is a contradiction.

Similar argument as in \eqref{L/Lp} shows that  $L_{5,2}^{12}$  is not isomorphic to any of 
to any of   $L_{5,2}^{17}$, $L_{5,2}^{18}$, or  $L_{5,2}^{19}$.

Note that $L_{5,2}^{12}$ is not isomorphic to $L_{5,2}^{22}$. Because,  $(L_{5,2}^{12})'{^{[p]}}=0$, but $(L_{5,2}^{22})'{^{[p]}}\neq 0$.

It is clear that $L_{5,2}^{12}$ is not isomorphic to the other restricted Lie algebras.

Next, we claim that $L_{5,2}^{13}$ and $L_{5,2}^{14}  $ are not isomorphic. Suppose to the contrary that there exists an isomorphism $A: L_{5,2}^{14}  \to L_{5,2}^{13}  $. Then 
\begin{align*}
&A(\res x_3)=\res {A(x_3)}\\
&0=\res {((a_{11}a_{22}-a_{12}a_{21})x_3)}\\
&0=(a_{11}a_{22}-a_{12}a_{21})^px_4.
\end{align*}
Therefore, $a_{11}a_{22}-a_{12}a_{21}=0$ which is a contradiction.

Note that  $L_{5,2}^{13}  $ is not isomorphic to any of    $L_{5,2}^{16}$ or  $L_{5,2}^{21}$  because  $(L_{5,2}^{13}  )^{[p]^3}=0$ but $(L_{5,2}^{16})^{[p]^3}\neq 0$  and $(L_{5,2}^{21})^{[p]^3}\neq 0$. 

Next, we claim that $L_{5,2}^{13}  $ and $L_{5,2}^{20}  $ are not isomorphic. Suppose to the contrary that there exists an isomorphism $A: L_{5,2}^{13}  \to L_{5,2}^{20}  $. Then 
\begin{align*}
&A(\res x_3)=\res {A(x_3)}\\
&A(x_4)=\res {((a_{11}a_{22}-a_{12}a_{21})x_3)}\\
&a_{34}x_3+a_{44}x_4+a_{54}x_5=0.
\end{align*}
Therefore, $a_{44}=0$ which is a contradiction.

It is clear that $L_{5,2}^{13}  $ is not isomorphic to the other restricted Lie algebras.

Note that  $L_{5,2}^{14}  $ is not isomorphic to any of    $L_{5,2}^{16}$ or  $L_{5,2}^{21}$  because  $(L_{5,2}^{14}  )^{[p]^3}=0$ but $(L_{5,2}^{16})^{[p]^3}\neq 0$  and $(L_{5,2}^{21})^{[p]^3}\neq 0$. 

Next, we claim that $L_{5,2}^{14}  $ and $L_{5,2}^{20}  $ are not isomorphic. Suppose to the contrary that there exists an isomorphism $A: L_{5,2}^{14}  \to L_{5,2}^{20}  $. Then 
\begin{align*}
&A(\res x_1)=\res {A(x_1)}\\
&A(x_3)=\res {(a_{11}x_1+a_{21}x_2+a_{31}x_3+a_{41}x_4+a_{51}x_5)}\\
&(a_{11}a_{22}-a_{12}a_{21})x_3=a_{11}^p\alpha x_5+a_{21}^px_4+a_{41}^px_3, 
\end{align*}
which implies that $a_{11}=0$ and $a_{21}=0$. Therefore, $a_{11}a_{22}-a_{12}a_{21}=0$ which is a contradiction.

It is clear that $L_{5,2}^{14}  $ is not isomorphic to the other restricted Lie algebras.

Similar argument as in \eqref{L/Lp} shows that  $L_{5,2}^{15}$  is not isomorphic to any of 
to any of   $L_{5,2}^{17}$, $L_{5,2}^{18}$, or  $L_{5,2}^{19}$.

Next, we claim that $L_{5,2}^{15}$ and $L_{5,2}^{22}$ are not isomorphic. Suppose to the contrary that there exists an isomorphism $A: L_{5,2}^{15}\to L_{5,2}^{22}$. Then 
\begin{align*}
&A(\res x_1)=\res {A(x_1)}\\
&0=\res {(a_{11}x_1+a_{21}x_2+a_{31}x_3+a_{41}x_4+a_{51}x_5)}\\
&0=a_{11}^p x_4+a_{31}^px_5+a_{41}^px_5, 
\end{align*}
which implies that $a_{11}=0$.

Also, we have 
\begin{align*}
&A(\res x_2)=\res {A(x_2)}\\
&0=\res {(a_{12}x_1+a_{22}x_2+a_{32}x_3+a_{42}x_4+a_{52}x_5)}\\
&0=a_{12}^p x_4+a_{32}^px_5+a_{42}^px_5, 
\end{align*}
which implies that $a_{12}=0$. Therefore, $a_{11}a_{22}-a_{12}a_{21}=0$ which is a contradiction.

It is clear that $L_{5,2}^{15}$ is not isomorphic to the other restricted Lie algebras.

Note  that $L_{5,2}^{16}$ and $L_{5,2}^{20}  $ are not isomorphic 
because $(L_{5,2}^{16})^{[p]^3}\neq 0$ but $(L_{5,2}^{20}  )^{[p]^3}= 0$.

Next, we claim that $L_{5,2}^{16}$ and $L_{5,2}^{21}$ are not isomorphic. Suppose to the contrary that there exists an isomorphism $A: L_{5,2}^{21}\to L_{5,2}^{16}$. Then 
\begin{align*}
&A(\res x_4)=\res {A(x_4)}\\
&A(x_3)=\res {(a_{34}x_3+a_{44}x_4+a_{54}x_5)}\\
&(a_{11}a_{22}-a_{12}a_{21})x_3=a_{34}^px_4+a_{44}^px_5.
\end{align*}
Therefore, $a_{11}a_{22}-a_{12}a_{21}=0$ which is a contradiction.
It is clear that $L_{5,2}^{16}$ is not isomorphic to the other restricted Lie algebras.

Note that  $L_{5,2}^{17}$ is not isomorphic to any of  $L_{5,2}^{18}$,  $L_{5,2}^{19}$ nor $L_{5,2}^{22}$ because  $(L_{5,2}^{17})^{[p]^2}=0$ but $(L_{5,2}^{18})^{[p]^2}\neq 0$, $(L_{5,2}^{19})^{[p]^2}\neq 0$, and  $(L_{5,2}^{22})^{[p]^2}\neq 0$. 

It is clear that $L_{5,2}^{17}$ is not isomorphic to the other restricted Lie algebras.

Next, we claim that $L_{5,2}^{18}$ and $L_{5,2}^{19}$ are not isomorphic. Suppose to the contrary that there exists an isomorphism $A: L_{5,2}^{18}\to L_{5,2}^{19}$. Then 
\begin{align*}
&A(\res x_1)=\res {A(x_1)}\\
&0=\res {(a_{11}x_1+a_{21}x_2+a_{31}x_3+a_{41}x_4+a_{51}x_5)}\\
&0=a_{21}^px_4+a_{41}^px_3, 
\end{align*}
which implies that,  $a_{21}=0$.
Also we have
\begin{align*}
&A(\res x_2)=\res {A(x_2)}\\
&0=\res {(a_{12}x_1+a_{22}x_2+a_{32}x_3+a_{42}x_4+a_{52}x_5)}\\
&0=a_{22}^px_4+a_{42}^px_3, 
\end{align*}
which implies that $a_{22}=0$. Therefore, $a_{11}a_{22}-a_{12}a_{21}=0$ which is a contradiction.

Next, we claim that $L_{5,2}^{18}$ and $L_{5,2}^{22}$ are not isomorphic. Suppose to the contrary that there exists an isomorphism $A: L_{5,2}^{18}\to L_{5,2}^{22}$. Then 
\begin{align*}
&A(\res x_4)=\res {A(x_4)}\\
&A(x_3)=\res {(a_{34}x_3+a_{44}x_4+a_{54}x_5)}\\
&(a_{11}a_{22}-a_{12}a_{21})x_3=a_{34}^px_4+a_{44}^px_5.
\end{align*}
Therefore, $a_{11}a_{22}-a_{12}a_{21}=0$ which is a contradiction.

It is clear that $L_{5,2}^{18}$ is not isomorphic to the other restricted Lie algebras.

Note that $L_{5,2}^{19}$ is not isomorphic to $L_{5,2}^{22}$. Because,  $(L_{5,2}^{19})'{^{[p]}}=0$, but $(L_{5,2}^{22})'{^{[p]}}\neq 0$. 
It is clear that $L_{5,2}^{19}$ is not isomorphic to the other restricted Lie algebras.

Finally,  $L_{5,2}^{20}  $ and $L_{5,2}^{21}$ are not isomorphic
because $(L_{5,2}^{20}  )^{[p]^3}= 0$ but $(L_{5,2}^{21 })^{[p]^3}\neq 0$.

\chapter{Restriction maps on $L_{5,3}$}

Let $$
K_3=L_{5,3}=\langle x_1,\ldots,x_5\mid [x_1,x_2]=x_3, [x_1,x_3]=x_4\rangle.$$

Note that $Z(L_{5,3})=\la x_4,x_5\ra_{\F}$ and the group $\Aut(L_{5,3})$ consists of invertible matrices of the form
$$\begin{pmatrix}
 a_{11}  & 0 & 0 & 0&0 \\
a_{21} & a_{22} & 0 & 0&0\\
a_{31} & a_{32} & a_{11}a_{22}& 0&0 \\
a_{41} & a_{42} & a_{11}a_{32}& a_{11}^2a_{22}&a_{45}\\
a_{51}&a_{52}&0&0&a_{55}
\end{pmatrix}.$$

 There exists an element $\alpha x_4+\beta x_5 \in Z(L_{5,3})$ such that 
 $\res {(\alpha x_4+\beta x_5)}=0,$ for some $\alpha ,\beta \in \F$. If $\beta \neq 0$ then consider 
 $$ K=\langle x_1',\ldots,x_5'\mid [x_1',x_2']=x_3', [x_1',x_3']=x_4'\rangle,$$
 where $x_1'=x_1, x_2'=x_2, x_3'=x_3, x_4'=x_4, x_5'=\alpha x_4+\beta x_5$. Let $\phi :K_3\to K$ given by  $x_i\mapsto x_i'$, for $1\leq i\leq 5$. It is easy to see that $\phi$ is an isomorphism. Therefore, in this case we can suppose that $\res x_5=0$.
If $\beta=0$ then $\alpha \neq 0$ and we rescale $x_4$ so that $\res x_4=0$.
Hence we have two cases:
 \begin{enumerate}
\item[I.]  $\res x_4=0$; 
\item[II.] $\res x_5=0$. 
\end{enumerate}

\section{Extensions of  $L=\frac {L_{5,3}}{\la x_4\ra}$}
In this section we find all non-isomorphic p-maps on $L_{5,3}$ such that $\res x_4=0$.  We let
$$L=\frac {L_{5,3}}{\la x_4\ra}\cong L_{4,2},$$
where $L_{4,2}=\la x_1,x_2,x_3,x_4 \mid [x_1,x_2]=x_3\ra $. Note that we denote the image of $x_i$ in $L$ by $x_i$ again. We rename $x_4$ with $x_5$ and $x_5$ with $x_4$ and at the end we will switch them.
The group $\Aut_p(L)$ consists of invertible matrices of the form
$$\begin{pmatrix}
 a_{11}  & a_{12} & 0 & 0 \\
a_{21} & a_{22} & 0 & 0\\
a_{31} & a_{32} & r & a_{34} \\
a_{41} & a_{42} & 0& a_{44}
\end{pmatrix},$$
where $r = a_{11}a_{22}-a_{12}a_{21}\neq 0$. 
\begin{lemma}\label{L53-x4}
Let $K=L_{5,3}$ and $[p]:K\to K$ be a $p$-map on $K$ such that $\res x_4=0$ and let $L=\frac{K}{M}$, where $M=\la x_4\ra_{\F}$.  Then $K\cong L_{\theta}$ where $\theta=(\Delta_{13},\omega)\in Z^2(L,\F)$.
\end{lemma}
\begin{proof}
 Let $\pi :K\rightarrow L$ be the projection map. We have the exact sequence 
$$ 0\rightarrow M\rightarrow K\rightarrow L\rightarrow 0.$$
Let $\sigma :L\rightarrow K$ such that $x_i\mapsto x_i$, $1\leq i\leq 4$. Then $\sigma$ is an injective linear map and $\pi \sigma =1_L$. Now, we define $\phi : L\times L\rightarrow M$ by $\phi (x_i,x_j)=[\sigma (x_i),\sigma (x_j)]-\sigma ([x_i,x_j])$, $1\leq i,j\leq 4$ and $\omega: L\rightarrow M$ by $\omega (x)=\res {\sigma (x)} -\sigma (\res x)$. Note that
\begin{align*}
&\phi(x_1,x_3)=[\sigma(x_1),\sigma(x_3)]-\sigma([x_1,x_3])=[x_1,x_3]=x_4;\\
&\phi(x_1,x_2)=[\sigma(x_1),\sigma(x_2)]-\sigma([x_1,x_2])=0.
\end{align*}
Similarly, we can show that $\phi(x_1,x_4)=\phi(x_2,x_3)=\phi(x_2,x_4)=\phi(x_3,x_4)=0$. Therefore, $\phi=\Delta_{13}$.
Now, by Lemma \ref{K=L-theta}, we have $\theta=(\Delta_{13},\omega)\in Z^2(L,\F)$ and $K\cong L_{\theta}$.
\end{proof}

Note that by Theorem \ref{4-dim}, there are eight non-isomorphic restricted Lie algebra structures on $L$ given by the following $p$-maps:
\begin{enumerate}
\item[I.1]  Trivial $p$-map;
\item[I.2]$\res x_1=x_3$; 
\item[I.3] $\res x_1 =x_4 $;
\item[I.4] $ \res x_1=x_3, \res x_2=x_4$;
\item[I.5] $\res x_3=x_4$;
\item[I.6] $\res x_3=x_4, \res x_2=x_3$;
\item[I.7] $\res x_4=x_3$;
\item[I.8] $\res x_4=x_3, \res x_2=x_4$.
\end{enumerate}

We make $L$ into a restricted Lie algebra by equipping it with each of the above $p$-maps. Then,  in  each  case,  we  find all possible orbit representatives of the form $(\Delta_{13},\omega)$ under the action of $\Aut_p(L)$ on $H^2(L,\F)$. By  Lemma \ref{L53-x4}, we do get all possible $p$-maps on $K_3$ with the property that $x_4^{[p]}=0$. 

Consider case I.2 where the $p$-map of $L$ is $ \res x_1=x_3$. 
Let $[(\phi,\omega)]\in H^2(L,\F)$. Then we must have $\phi (x,\res y)=0$, for all $x,y\in L$, where $\phi =a\Delta_{13}+b\Delta_{14}+c\Delta_{23}+d\Delta_{24}$, for some $a,b,c,d \in \F$.  Since, $\res L=\la x_3\ra$ we get $\phi (x, x_3)=0$, for all $x\in L$. Therefore, $\phi (x_1,x_3)= 0$ which implies that $a=0$. Since $\phi=\Delta_{13}$ gives us $L_{5,3}$, we deduce by Lemma \ref{L53-x4} that  $L_{5,3}$ cannot be constructed  in this case. Similarly, we can show that in cases I.4, I.6, I.7, and I.8 we also get $a=0$.
In the follwoing subsections, we consider the remaining cases.

\subsection{Extensions of ($L$,  trivial $p$-map)}\label{L_{5,3} mod x4 trivial pmap}
 First, we find a basis for $Z^2(L,\F)$. Let $(\phi,\omega)=(a\Delta _{12}+b\Delta_{13}+c\Delta_{14}+d\Delta_{23}+e\Delta_{24}+f\Delta_{34},\alpha f_1+\beta f_2+ \gamma f_3+\delta f_4)\in Z^2(L,\F)$. Then we must have $\delta^2\phi(x,y,z) =0$ and $\phi(x,\res y)=0$, for all $x,y,z \in L$. Therefore,
\begin{align*}
0=(\delta^2\phi)(x_1,x_2,x_4)&=\phi([x_1,x_2],x_4)+\phi([x_2,x_4],x_1)+\phi([x_4,x_1],x_2)=\phi(x_3,x_4).
\end{align*}
Thus, we get $f=0$.
Since the $p$-map is trivial, $\phi(x,\res y)=\phi(x,0)=0$, for all $x,y\in L$. Therefore, a basis for $Z^2(L,\F)$ is as follows:
$$
 (\Delta_{12},0),(\Delta_{13},0),(\Delta_{14},0),(\Delta_{23},0),(\Delta_{24},0),(0,f_1),
(0,f_2),(0,f_3),(0,f_4).
$$
Next, we find a basis for $B^2(L,\F)$. Let $(\phi,\omega)\in B^2(L,\F)$. Since $B^2(L,\F)\subseteq Z^2(L,\F)$, we have $(\phi,\omega)=(a\Delta _{12}+b\Delta_{13}+c\Delta_{14}+d\Delta_{23}+e\Delta_{24},\alpha f_1+\beta f_2+ \gamma f_3+\delta f_4)$. So, there exists a linear map $\psi:L\to \F$ such that $\delta^1\psi(x,y)=\phi(x,y)$ and $\tilde \psi(x)=\omega(x)$, for all $x,y \in L$. So, we have
\begin{align*}
b=\phi(x_1,x_3)=\delta^1\psi(x_1,x_3)=\psi([x_1,x_3])=0.
\end{align*}
Similarly, we can show that $c=d=e=0$. Also, we have
\begin{align*}
\alpha=\omega(x_1)=\tilde \psi(x_1)=\psi(\res x_1)=0.
\end{align*}
Similarly, we can show that $\beta=\gamma=\delta=0$. Therefore, $(\phi,\omega)=(a\Delta_{12},0)$ and hence
 $B^2(L,\F)=\la(\Delta_{12},0)\ra_{\F}$. We deduce that a basis for $H^2(L,\F)$ is as follows:
$$
[(\Delta_{13},0)],[(\Delta_{14},0)],[(\Delta_{23},0)],[(\Delta_{24},0)],[(0,f_1)],
[(0,f_2)],[(0,f_3)],[(0,f_4)].
$$
Let $[(\phi,\omega)] \in H^2(L,\F)$. Then, we have $\phi = a\Delta_{13} + b\Delta_{14}+c\Delta_{23}+d\Delta_{24}$, for some $a,b,c,d \in \F$. Suppose that $A\phi = a'\Delta_{13}+b'\Delta_{14}+c'\Delta_{23}+d'\Delta_{24}$, for some $a',b',c',d' \in \F$. We can verify that the action of $\Aut_p(L)$ on the set of $\phi$'s in the matrix form is as follows:
\begin{align}\label{ab}
\begin{pmatrix}
a' \\ b'\\c'\\d'
\end{pmatrix}=
\begin{pmatrix}
ra_{11} & 0& r a_{21}&0\\
a_{11}a_{34}& a_{11}a_{44}&a_{21}a_{34}&a_{21}a_{44}\\
r a_{12}&0&r a_{22}&0\\
a_{12}a_{34}&a_{12}a_{44}&a_{22}a_{34}&a_{22}a_{44}
\end{pmatrix}
\begin{pmatrix}
a \\ b\\c\\d
\end{pmatrix}.
\end{align}
 The orbit with representative 
$\begin{pmatrix}
1\\ 0\\0\\0
\end{pmatrix}$ of this action  gives us $L_{5,3}$. 

Also, we have $\omega=\alpha f_1+\beta f_2+\gamma f_3+\delta f_4$, for some $\alpha ,\beta ,\gamma ,\delta \in \F$. Suppose that $A\omega = \alpha' f_1+\beta' f_2+\gamma' f_3+\delta' f_4$, for some $\alpha',\beta',\gamma',\delta' \in \F$. Then we have
\begin{align*}
A\omega (x_1) = &\omega (Ax_1)=\omega (a_{11}x_1+a_{21}x_2+a_{31}x_3+a_{41}x_4)=a_{11}^p \alpha +a_{21}^p\beta + a_{31}^p \gamma+a_{41}^p \delta;\\
A\omega (x_2)=&a_{12}^p \alpha+a_{22}^p\beta +a_{32}^p \gamma + a_{42}^p \delta;\\
A\omega (x_3)= &r ^p \gamma;\\
A\omega(x_4)=&a_{34}^p \gamma +a_{44}^p \delta.
\end{align*}
In the matrix form we can write this as
\begin{align}\label{cd}
\begin{pmatrix}
\alpha' \\ 
\beta'\\
\gamma'\\
\delta'
\end{pmatrix}=
\begin{pmatrix}
a_{11}^p& a_{21}^p&a_{31}^p&a_{41}^p\\
a_{12}^p& a_{22}^p& a_{32}^p&a_{42}^p\\
0&0&r ^p&0\\
0&0&a_{34}^p&a_{44}^p
\end{pmatrix}
\begin{pmatrix}
\alpha\\ 
\beta \\
\gamma \\
\delta
\end{pmatrix}.
\end{align}
Thus, we can write Equations \eqref{ab} and \eqref{cd} together as follows:
\begin{align*}
\small{\bigg[\begin{pmatrix}
 ra_{11} & 0&  ra_{21}&0\\
a_{11}a_{34}& a_{11}a_{44}&a_{21}a_{34}&a_{21}a_{44}\\
r a_{12}&0&r a_{22}&0\\
a_{12}a_{34}&a_{12}a_{44}&a_{22}a_{34}&a_{22}a_{44}
\end{pmatrix},
\begin{pmatrix}
a_{11}^p& a_{21}^p&a_{31}^p&a_{41}^p\\
a_{12}^p& a_{22}^p& a_{32}^p&a_{42}^p\\
0&0&r ^p&0\\
0&0&a_{34}^p&a_{44}^p
\end{pmatrix}\bigg]
\bigg[\begin{pmatrix}
a\\b\\c\\d
\end{pmatrix},
\begin{pmatrix}
\alpha\\ 
\beta \\
\gamma \\
\delta
\end{pmatrix}\bigg]=
\bigg[\begin{pmatrix}
a' \\ b'\\c'\\d'
\end{pmatrix},
\begin{pmatrix}
\alpha' \\ 
\beta'\\
\gamma'\\
\delta'
\end{pmatrix}\bigg]}.
\end{align*}
Now we find the representatives of the orbits of the action of $\Aut (L)$ on the set of $\omega$'s such that 
the orbit represented by
 $\begin{pmatrix}
1\\ 0\\0\\0
\end{pmatrix}$
 is preserved under the action of $\Aut (L)$ on the set of $\phi$'s.

Let $\nu = \begin{pmatrix}
\alpha\\ 
\beta \\
\gamma \\
\delta
\end{pmatrix} \in \F^4$. If $\nu= \begin{pmatrix}
0\\ 
0 \\
0 \\
0
\end{pmatrix}$, then $\{\nu \}$ is clearly an $\Aut_p(L)$-orbit. Let $\nu \neq 0$. Suppose that $\gamma \neq 0$. Then 
\begin{align*}
&\bigg[\begin{pmatrix}
1& 0&0&0\\
0& 1&0&0\\
0&0&1&0\\
0&0&0&1
\end{pmatrix},
\begin{pmatrix}
1& 0&-\alpha /\gamma&0\\
0& 1& -\beta /\gamma&0\\
0&0&1&0\\
0&0&0&1
\end{pmatrix}\bigg]
\bigg[\begin{pmatrix}
1 \\ 0\\0\\0
\end{pmatrix},
\begin{pmatrix}
\alpha\\ 
\beta \\
\gamma \\
\delta
\end{pmatrix}\bigg]=
\bigg[\begin{pmatrix}
1\\0\\0\\0
\end{pmatrix},
\begin{pmatrix}
0 \\ 
0\\
\gamma\\
\delta
\end{pmatrix}\bigg], \text{ and }\\
&\footnotesize{\bigg[\begin{pmatrix}
1& 0&0&0\\
0& \gamma ^{1/p}&0&0\\
0&0&\gamma ^{-3/p}&0\\
0&0&0&\gamma ^{-2/p}
\end{pmatrix},
\begin{pmatrix}
\gamma& 0&0&0\\
0& \gamma ^{-2}& 0&0\\
0&0&1/\gamma&0\\
0&0&0&1
\end{pmatrix}\bigg]
\bigg[\begin{pmatrix}
1\\ 0\\0\\0
\end{pmatrix},
\begin{pmatrix}
0\\ 
0 \\
\gamma \\
\delta
\end{pmatrix}\bigg]=
\bigg[\begin{pmatrix}
1\\0\\0\\0
\end{pmatrix},
\begin{pmatrix}
0 \\ 
0\\
1\\
\delta
\end{pmatrix}\bigg]}.
\end{align*}
Next, if $\delta\neq 0$, then
\begin{align*}
\bigg[\begin{pmatrix}
1& 0&0&0\\
0& (1/\delta )^{1/p}&0&0\\
0&0&1&0\\
0&0&0&(1/\delta )^{1/p}
\end{pmatrix},
\begin{pmatrix}
1& 0&0&0\\
0& 1& 0&0\\
0&0&1&0\\
0&0&0&1/\delta
\end{pmatrix}\bigg]
\bigg[\begin{pmatrix}
1\\ 0\\0\\0
\end{pmatrix},
\begin{pmatrix}
0\\ 
0 \\
1 \\
\delta
\end{pmatrix}\bigg]=
\bigg[\begin{pmatrix}
1\\0\\0\\0
\end{pmatrix},
\begin{pmatrix}
0 \\ 
0\\
1\\
1
\end{pmatrix}\bigg].
\end{align*}
 
If $\delta =0$, then we have 
\bigg[$\begin{pmatrix}
1\\0\\0\\0
\end{pmatrix},
\begin{pmatrix}
0 \\ 
0\\
1\\
0
\end{pmatrix}\bigg].$\\

Next, if $\gamma =0$, but $\delta \neq 0$, then\\
\begin{align*}
&\bigg[\begin{pmatrix}
1& 0&0&0\\
0& 1&0&0\\
0&0&1&0\\
0&0&0&1
\end{pmatrix},
\begin{pmatrix}
1& 0&0&-\alpha /\delta\\
0& 1& 0&-\beta/\delta\\
0&0&1&0\\
0&0&0&1
\end{pmatrix}\bigg]
\bigg[\begin{pmatrix}
1\\ 0\\0\\0
\end{pmatrix},
\begin{pmatrix}
\alpha\\ 
\beta\\
0 \\
\delta
\end{pmatrix}\bigg]=
\bigg[\begin{pmatrix}
1\\0\\0\\0
\end{pmatrix},
\begin{pmatrix}
0 \\ 
0\\
0\\
\delta
\end{pmatrix}\bigg], \text{ and }\\
&\bigg[\begin{pmatrix}
1& 0&0&0\\
0& (1/\delta )^{1/p}&0&0\\
0&0&1&0\\
0&0&0&(1/\delta )^{1/p}
\end{pmatrix},
\begin{pmatrix}
1& 0&0&0\\
0& 1& 0&0\\
0&0&1&0\\
0&0&0&1/\delta
\end{pmatrix}\bigg]
\bigg[\begin{pmatrix}
1\\ 0\\0\\0
\end{pmatrix},
\begin{pmatrix}
0\\ 
0\\
0 \\
\delta
\end{pmatrix}\bigg]=
\bigg[\begin{pmatrix}
1\\0\\0\\0
\end{pmatrix},
\begin{pmatrix}
0 \\ 
0\\
0\\
1
\end{pmatrix}\bigg].
\end{align*}
Next, if $\gamma =\delta =0$, but $\beta \neq 0$, then
\begin{align*}
\footnotesize{\bigg[\begin{pmatrix}
1& 0&(-\alpha /\beta )^{1/p}&0\\
0& 1&0&(-\alpha /\beta )^{1/p}\\
0&0&1&0\\
0&0&0&1
\end{pmatrix},
\begin{pmatrix}
1& -\alpha /\beta&0&0\\
0& 1& 0&0\\
0&0&1&0\\
0&0&0&1
\end{pmatrix}\bigg]
\bigg[\begin{pmatrix}
1\\ 0\\0\\0
\end{pmatrix},
\begin{pmatrix}
\alpha\\ 
\beta\\
0 \\
0
\end{pmatrix}\bigg]=
\bigg[\begin{pmatrix}
1\\0\\0\\0
\end{pmatrix},
\begin{pmatrix}
0 \\ 
\beta\\
0\\
0
\end{pmatrix}\bigg]}.
\end{align*}
Finally, if $\gamma =\delta =\beta =0$, but $\alpha \neq 0$, then
\begin{align*}
\bigg[\begin{pmatrix}
1& 0&0&0\\
0& \alpha ^{-1/p}&0&0\\
0&0&\alpha ^{3/p}&0\\
0&0&0&\alpha ^{2/p}
\end{pmatrix},
\begin{pmatrix}
1/\alpha& 0&0&0\\
0& \alpha ^{2}& 0&0\\
0&0&\alpha&0\\
0&0&0&1
\end{pmatrix}\bigg]
\bigg[\begin{pmatrix}
1\\ 0\\0\\0
\end{pmatrix},
\begin{pmatrix}
\alpha\\ 
0\\
0 \\
0
\end{pmatrix}\bigg]=
\bigg[\begin{pmatrix}
1\\0\\0\\0
\end{pmatrix},
\begin{pmatrix}
1 \\ 
0\\
0\\
0
\end{pmatrix}\bigg].
\end{align*}
Thus the following elements are $\Aut (L)$-orbit representatives:
\begin{align*}
\begin{pmatrix}
0\\ 
0\\
0\\
0
\end{pmatrix},
\begin{pmatrix}
0 \\ 
0\\
1\\
1
\end{pmatrix},
\begin{pmatrix}
0 \\ 
0\\
1\\
0
\end{pmatrix},
\begin{pmatrix}
0\\ 
0\\
0\\
1
\end{pmatrix},
\begin{pmatrix}
0 \\ 
\beta\\
0\\
0
\end{pmatrix},
\begin{pmatrix}
1 \\ 
0\\
0\\
0
\end{pmatrix}.
\end{align*}

Therefore, the corresponding restricted Lie algebra structures are as follows: (Note that we need to switch $x_4$ and $x_5$.)
\begin{align*}
&K_3^{1}=\langle x_1,\ldots,x_5\mid [x_1,x_2]=x_3,[x_1,x_3]=x_4 \rangle;\\
&K_3^{2}=\langle x_1,\ldots,x_5\mid [x_1,x_2]=x_3,[x_1,x_3]=x_4, 
 \res x_3=x_4, \res x_5=x_4 \rangle;\\
&K_3^{3}=\langle x_1,\ldots,x_5\mid [x_1,x_2]=x_3,[x_1,x_3]=x_4, 
 \res x_3=x_4 \rangle;\\
&K_3^{4}(\beta)=\langle x_1,\ldots,x_5\mid [x_1,x_2]=x_3,[x_1,x_3]=x_4,
 \res x_2=\beta x_4 \rangle;\\
&K_3^{5}=\langle x_1,\ldots,x_5\mid [x_1,x_2]=x_3,[x_1,x_3]=x_4, 
 \res x_1=x_4 \rangle;\\
&K_3^{6}=\langle x_1,\ldots,x_5\mid [x_1,x_2]=x_3,[x_1,x_3]=x_4, 
 \res x_5=x_4 \rangle.
\end{align*}

\begin{lemma}\label{lemma-K34-2}
We have $K_3^4(\beta_1) \cong K_3^4(\beta_2)$ if and only if $\beta_1 \beta_2^{-1} \in (\F^*)^2$. 
\end{lemma}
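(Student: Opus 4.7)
The plan is to argue the two directions separately using the explicit description of $\Aut(L_{5,3})$ together with one structural observation: on any restricted Lie algebra structure on $L_{5,3}$, the $p$-map is $p$-semilinear. This is because $L_{5,3}$ has nilpotency class $3$ and $p\geq 5$, so every $p$-fold iterated bracket vanishes and consequently each Jacobson term $s_j(x,y)$ is identically zero.

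For the ``if'' direction, I would assume $\beta_2\beta_1^{-1}=\epsilon^2$ with $\epsilon\in\F^*$ and verify directly that the linear map $f:K_3^4(\beta_1)\to K_3^4(\beta_2)$ defined by $f(x_1)=\epsilon x_1$, $f(x_2)=x_2$, $f(x_3)=\epsilon x_3$, $f(x_4)=\epsilon^2 x_4$, $f(x_5)=x_5$ is a restricted Lie algebra isomorphism. The bracket relations $[x_1,x_2]=x_3$ and $[x_1,x_3]=x_4$ are preserved by inspection, and the only non-trivial $p$-map relation gives $f(\res{x_2})=\beta_1\epsilon^2 x_4=\beta_2 x_4=\res{f(x_2)}$, while all other $\res$-relations reduce to $0=0$.

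For the ``only if'' direction, let $f:K_3^4(\beta_1)\to K_3^4(\beta_2)$ be any isomorphism of restricted Lie algebras. Because $f$ is in particular a Lie algebra automorphism of $L_{5,3}$, its matrix must be of the form displayed at the start of this chapter; in particular $a_{12}=0$ and $f(x_4)=a_{11}^2 a_{22}\,x_4$, with $a_{11},a_{22}\in\F^*$. Writing $f(x_2)=a_{22}x_2+a_{32}x_3+a_{42}x_4+a_{52}x_5$ and applying $p$-semilinearity of the $p$-map on the target yields $\res{f(x_2)}=a_{22}^p\beta_2\,x_4$, while on the other hand $f(\res{x_2})=\beta_1 f(x_4)=\beta_1 a_{11}^2 a_{22}\,x_4$. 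Equating gives $\beta_2\beta_1^{-1}=a_{11}^2 a_{22}^{1-p}=\bigl(a_{11}\,a_{22}^{(1-p)/2}\bigr)^2$, which is a square since $p$ is odd. The only delicate point is the invocation of $p$-semilinearity, and it is here that the hypothesis $p\geq 5$ is used in an essential way; without it one would have to carry the nontrivial $s_j$ contributions through the calculation, which would obscure the clean ratio formula.
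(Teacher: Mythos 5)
Your proof is correct, but it takes a genuinely different route from the paper's. The paper proves this lemma by passing to the quotient $L=L_{5,3}/\la x_4\ra$ and invoking the general cohomological criterion of Lemma \ref{lemma}: an isomorphism $K_3^4(\beta_1)\to K_3^4(\beta_2)$ preserves $\la x_4\ra_{\F}$, hence induces an automorphism $A$ of $L$ with $A\theta_1=c\,\theta_2$ for some scalar $c$, and after normalizing to $c=1$ the conclusion is read off from the orbit computation already done in Lemma \ref{lemma-K34-1}. You instead work directly inside $\Aut(L_{5,3})$, using the explicit matrix form recorded at the start of the chapter together with the $p$-semilinearity of the $p$-map (which is exactly the content of Remark \ref{phi}, and your justification via nilpotency class $3<p$ is the right one), and you exhibit an explicit diagonal automorphism for the converse. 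Your closed form $\beta_2\beta_1^{-1}=a_{11}^2a_{22}^{1-p}$ looks different from the paper's $a_{11}^{-2p}$, but the two agree once one imposes the normalization $a_{11}^2a_{22}=1$ that the orbit condition forces, and in any case both are manifestly squares since $p$ is odd. What your argument buys is self-containedness: it bypasses Lemma \ref{lemma} entirely and in particular sidesteps the ``without loss of generality $c=1$'' step, which the paper asserts rather than spells out. What the paper's route buys is economy and uniformity: the orbit computation of Lemma \ref{lemma-K34-1} is needed anyway to enumerate the extensions, so the isomorphism statement comes essentially for free, and the same template transfers verbatim to $K_3^8(\beta)$, $K_3^{14}(\beta)$, $L_{5,5}^3(\beta)$ and the other parametrized families.
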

\begin{proof}
Suppose that $f=(a_{ij}): K_3^4(\beta_1) \to   K_3^4(\beta_2)$ is an isomorphism. Then, we have 
$f(\res x_2)=f(x_2)^{[p]}$ which in turn implies that $\beta_1 a_{11}^2a_{22} x_4=\beta_2 a_{22}^p x_4$. Hence, $\beta_1/\beta_2=a_{22}^{p-1}a_{11}^{-2}\in (\F^*)^2$. To prove the converse, suppose that $\beta_1/\beta_2=\epsilon^2\in (\F^*)^2$. Then the following is an isomorphism from $K_3^4(\beta_1)$ to $K_3^4(\beta_2)$:
\begin{align*}
\begin{pmatrix}
\epsilon^{-1/p}&0&0&0 & 0\\
0& \epsilon^{2/p}& 0&0& 0\\
0&0&\epsilon^{1/p} &0& 0\\
0&0&0&1& 0\\
0&0&0&0& 1\\
\end{pmatrix}.
\end{align*}

\end{proof}

\subsection{Extensions of $(L, \res x_1=x_4)$}
 First, we find a basis for $Z^2(L,\F)$. Let $(\phi,\omega)=(a\Delta _{12}+b\Delta_{13}+c\Delta_{14}+d\Delta_{23}+e\Delta_{24}+f\Delta_{34},\alpha f_1+\beta f_2+ \gamma f_3+\delta f_4)\in Z^2(L,\F)$. Then we must have $\delta^2\phi(x,y,z) =0$ and $\phi(x,\res y)=0$, for all $x,y,z \in L$. Therefore, we have
\begin{align*}
0=(\delta^2\phi)(x_1,x_2,x_4)&=\phi([x_1,x_2],x_4)+\phi([x_2,x_4],x_1)+\phi([x_4,x_1],x_2)=\phi(x_3,x_4)=f.
\end{align*}
Also, we have $\phi(x,\res y)=0$. Therefore, $\phi(x,x_4)=0$, for all $x\in L$ and hence $\phi(x_1,x_4)=\phi(x_2,x_4)=\phi(x_3,x_4)=0$ which implies that $c=e=f=0$. Therefore, $Z^2(L,\F)$ has a basis consisting of:
$$ (\Delta_{12},0),(\Delta_{13},0),(\Delta_{23},0),(0,f_1),
(0,f_2),(0,f_3),(0,f_4).$$
Next, we find a basis for $B^2(L,\F)$. Let $(\phi,\omega)\in B^2(L,\F)$. Since $B^2(L,\F)\subseteq Z^2(L,\F)$, we have $(\phi,\omega)=(a\Delta _{12}+b\Delta_{13}+c\Delta_{23},\alpha f_1+\beta f_2+ \gamma f_3+\delta f_4)$. Note that there exists a linear map $\psi:L\to \F$  such that $\delta^1\psi(x,y)=\phi(x,y)$ and $\tilde \psi(x)=\omega(x)$, for all $x,y \in L$. So, we have
\begin{align*}
&a=\phi(x_1,x_2)=\delta^1\psi(x_1,x_2)=\psi([x_1,x_2])=\psi(x_3), \text{ and }\\
&b=\phi(x_1,x_3)=\delta^1\psi(x_1,x_3)=\psi([x_1,x_3])=0.
\end{align*}
Similarly, we can show that $c=0$. Also, we have
\begin{align*}
&\alpha=\omega(x_1)=\tilde \psi(x_1)=\psi(\res x_1)=\psi(x_4), \text{ and }\\
&\beta=\omega(x_2)=\tilde \psi(x_2)=\psi(\res x_2)=0.
\end{align*}
Similarly, we can show that $\gamma=\delta=0$. Therefore, $(\phi,\omega)=(a\Delta_{12},\alpha f_1)$ and hence $B^2(L,\F)=\la(\Delta_{12},0),(0,f_1)\ra_{\F}$. We deduce that a basis for $H^2(L,\F)$ is as follows:
$$[(\Delta_{13},0)],[(\Delta_{23},0)],[(0,f_2)],[(0,f_3)],[(0,f_4)].$$

Note that the group $\Aut_p(L)$ in this case consists of invertible matrices of the form
$$\begin{pmatrix}
 a_{11}  & a_{12} & 0 & 0 \\
a_{21} & a_{22} & 0 & 0\\
a_{31} & a_{32} & r & a_{34} \\
a_{41} & a_{42} & 0& a_{44}
\end{pmatrix},$$
where $r = a_{11}a_{22}-a_{12}a_{21}$, $a_{12}=a_{34}=0$, and $a_{44}=a_{11}^p$.

Let $[(\phi,\omega)] \in H^2(L,\F)$. Then, we have $\phi = a\Delta_{13}+c\Delta_{23}$, for some $a,c \in \F$. Suppose that $A\phi=a'\Delta_{13}+c'\Delta_{23}$, for some $a',c' \in \F$. We determine $a',c'$. Note that
\begin{align*}
A\phi (x_1,x_3)=\phi (Ax_1,Ax_3)&=\phi (a_{11}x_1+a_{21}x_2+a_{31}x_3+a_{41}x_4,r x_3)\\
&=a_{11}r a+a_{21}r c; \text{ and }\\ 
A\phi (x_2,x_3)=\phi (Ax_2,Ax_3)&=\phi (a_{12}x_1+a_{22}x_2+a_{32}x_3+a_{42}x_4,r x_3)\\
&=a_{12}r a+a_{22}r c.
\end{align*}
In the matrix form we can write this as
\begin{align}\label{ef}
\begin{pmatrix}
a'\\c'
\end{pmatrix}=
\begin{pmatrix}
a_{11}r &a_{21}r\\
0&a_{22}r
\end{pmatrix}
\begin{pmatrix}
a\\c
\end{pmatrix}.
\end{align}
The orbit with representative 
$\begin{pmatrix}
1\\ 0
\end{pmatrix}$ of this action  gives us $L_{5,3}$.\\
Note that we need to have $A\phi(x_1,x_4)=A\phi(x_2,x_4)=A\phi(x_3,x_4)=0$ which implies that
\begin{align*}
&a_{11}a_{34}a+a_{21}a_{34}c=0,\\
&a_{12}a_{34}a+a_{22}a_{34}c=0.
\end{align*}
Also, we have $\omega=\beta f_2+\gamma f_3+\delta f_4$, for some $\beta,\gamma,\delta \in \F$. Suppose that $A\omega =\beta' f_2+\gamma' f_3+\delta' f_4$, for some $\beta',\gamma',\delta' \in \F$. We have 
\begin{align*}
A\omega (x_2)=&a_{22}^p\beta +a_{32}^p \gamma + a_{42}^p \delta;\\
A\omega (x_3)=& r^p \gamma;\\ 
A\omega(x_4)=&a_{34}^p \gamma +a_{44}^p \delta.
\end{align*}
In the matrix form we can write this as
\begin{align}\label{gh}
\begin{pmatrix}
\beta'\\
\gamma'\\
\delta'
\end{pmatrix}=
\begin{pmatrix}
a_{22}^p& a_{32}^p&a_{42}^p\\
0&r ^p&0\\
0&0&a_{11}^{p^2}
\end{pmatrix}
\begin{pmatrix}
\beta \\
\gamma \\
\delta
\end{pmatrix}.
\end{align}
Thus, we can write Equations \eqref{ef} and \eqref{gh} together as follows:
\begin{align*}
\bigg[r \begin{pmatrix}
a_{11}&a_{21}\\
0&a_{22}
\end{pmatrix},
\begin{pmatrix}
a_{22}^p& a_{32}^p&a_{42}^p\\
0&r^p&0\\
0&0&a_{11}^{p^2}
\end{pmatrix}\bigg]
\bigg[\begin{pmatrix}
a\\c
\end{pmatrix},
\begin{pmatrix}
\beta \\
\gamma \\
\delta
\end{pmatrix}\bigg]=
\bigg[\begin{pmatrix}
a' \\ c'
\end{pmatrix},
\begin{pmatrix}
\beta'\\
\gamma'\\
\delta'
\end{pmatrix}\bigg].
\end{align*}
Now we find the representatives of the orbits of the action of $\Aut (L)$ on the set of $\omega$'s such that 
the orbit represented by
$\begin{pmatrix}
1\\ 0
\end{pmatrix}$
 is preserved under the action of $\Aut (L)$ on the set of$\omega$'s and 
\begin{align*}
&a_{11}a_{34}a+a_{21}a_{34}c=0,\\
&a_{12}a_{34}a+a_{22}a_{34}c=0.
\end{align*}
Let $\nu = \begin{pmatrix}
\beta \\
\gamma \\
\delta
\end{pmatrix} \in \F^3$. If $\nu= \begin{pmatrix} 
0 \\
0 \\
0
\end{pmatrix}$, then $\{\nu \}$ is clearly an $\Aut_p(L)$-orbit. Suppose that $\delta\neq 0$. Then 
\begin{align*}
&\bigg[\begin{pmatrix}
1& 0\\
0&1
\end{pmatrix},
\begin{pmatrix}
1& 0&-\beta/\delta\\
0&1&0\\
0&0&1
\end{pmatrix}\bigg]
\bigg[\begin{pmatrix}
1 \\ 0
\end{pmatrix},
\begin{pmatrix} 
\beta \\
\gamma \\
\delta
\end{pmatrix}\bigg]=
\bigg[\begin{pmatrix}
1\\0
\end{pmatrix},
\begin{pmatrix}
0\\
\gamma\\
\delta
\end{pmatrix}\bigg], \text{ and }\\
&\bigg[\delta^{1/p^2}\begin{pmatrix}
\delta^{-1/p^2}& 0\\
0&\delta^{2/p^2}
\end{pmatrix},
\begin{pmatrix}
\delta^{2/p}&0&0\\
0&\delta^{1/p}&0\\
0&0&1/\delta
\end{pmatrix}\bigg]
\bigg[\begin{pmatrix}
1\\ 0
\end{pmatrix},
\begin{pmatrix}
0 \\
\gamma\\
\delta
\end{pmatrix}\bigg]=
\bigg[\begin{pmatrix}
1\\0
\end{pmatrix},
\begin{pmatrix}
0\\
\delta^{1/p}\gamma\\
1
\end{pmatrix}\bigg].
\end{align*}
So, if $\delta \neq 0$ and $\gamma \neq 0$, then we have 
$\begin{pmatrix}
0\\
\gamma\\
1
\end{pmatrix}$.

If $\delta \neq 0$ but $\gamma=0$, then we have 
$\bigg[\begin{pmatrix}
1\\0
\end{pmatrix},
\begin{pmatrix}
0\\
0\\
1
\end{pmatrix}\bigg].$
Next, if $\delta=0$ but $\gamma \neq 0$, then 
\begin{align*}
&\bigg[\begin{pmatrix}
1& 0\\
0&1
\end{pmatrix},
\begin{pmatrix}
1& -\beta/\gamma&0\\
0&1&0\\
0&0&1
\end{pmatrix}\bigg]
\bigg[\begin{pmatrix}
1 \\ 0
\end{pmatrix},
\begin{pmatrix} 
\beta \\
\gamma \\
0
\end{pmatrix}\bigg]=
\bigg[\begin{pmatrix}
1\\0
\end{pmatrix},
\begin{pmatrix}
0\\
\gamma\\
0
\end{pmatrix}\bigg], \text{ and }\\
&\bigg[(1/\gamma)^{1/p} \begin{pmatrix}
\gamma^{1/p}& 0\\
0&\gamma^{-2/p}\\
\end{pmatrix},
\begin{pmatrix}
(1/\gamma )^2& 0&0\\
0& 1/\gamma& 0\\
0&0&\gamma^p\\
\end{pmatrix}\bigg]
\bigg[\begin{pmatrix}
1\\ 0
\end{pmatrix},
\begin{pmatrix}
0\\ 
\gamma \\
0\\
\end{pmatrix}\bigg]=
\bigg[\begin{pmatrix}
1\\0
\end{pmatrix},
\begin{pmatrix}
0\\
1\\
0
\end{pmatrix}\bigg].
\end{align*}
Next, if $\delta=\gamma=0$, then we have
$\bigg[\begin{pmatrix}
1\\0
\end{pmatrix},
\begin{pmatrix}
\beta\\
0\\
0
\end{pmatrix}\bigg].$

 Thus the following elements are $\Aut (L)$-orbit representatives:
 \begin{align*}
\begin{pmatrix}
0\\
0\\
0
\end{pmatrix},
\begin{pmatrix}
0\\
1\\
0
\end{pmatrix},
\begin{pmatrix} 
0\\
0\\
1
\end{pmatrix},
\begin{pmatrix}
\beta\\
0\\
0
\end{pmatrix},
\begin{pmatrix} 
0\\
\gamma\\
1
\end{pmatrix}.
\end{align*}

Therefore, the corresponding restricted Lie algebra structures are as follows: (Note that we need to switch $x_4$ and $x_5.$)
\begin{align*}
&K_3^{7}=\langle x_1,\ldots,x_5\mid [x_1,x_2]=x_3,[x_1,x_3]=x_4 ,
\res x_1=x_5 \rangle;\\
&K_3^{8}(\beta)=\langle x_1,\ldots,x_5\mid [x_1,x_2]=x_3,[x_1,x_3]=x_4, 
\res x_1=x_5, \res x_2=\beta x_4 \rangle;\\
&K_3^{9}=\langle x_1,\ldots,x_5\mid [x_1,x_2]=x_3,[x_1,x_3]=x_4, 
\res x_1=x_5, \res x_3=x_4 \rangle;\\
&K_3^{10}=\langle x_1,\ldots,x_5\mid [x_1,x_2]=x_3,[x_1,x_3]=x_4,
\res x_1=x_5, \res x_5=x_4 \rangle;\\
&K_3^{11}(\gamma)=\langle x_1,\ldots,x_5\mid [x_1,x_2]=x_3,[x_1,x_3]=x_4,
\res x_1=x_5, \res x_3=\gamma x_4 ,\res x_5=x_4 \rangle
\end{align*}

\begin{lemma}
The restricted Lie algebras $K_3^{8}(\beta_1)$ and $K_3^{8}(\beta_2)$ are isomorphic if and only if  $\beta_2\beta_1^{-1}\in (\F^*)^2$. 
\end{lemma}
\begin{proof}
First assume that $f=(a_{ij}):  K_3^{8}(\beta_1)\to K_3^{8}(\beta_2)$ is an isomorphism. 
Then $f(\res x_2)=f(x_2)^{[p]}$ which implies that 
$\beta_1 a_{11}^2a_{22} x_4=\beta_2 a_{22}^p x_4$. Hence, $\beta_1/\beta_2=a_{22}^{p-1}a_{11}^{-2}\in (\F^*)^2$. To prove the converse, suppose that $\beta_1/\beta_2=\epsilon^2\in (\F^*)^2$. Then the following is an isomorphism from $K_3^8(\beta_1)$ to $K_3^8(\beta_2)$:
\begin{align*}
\begin{pmatrix}
\epsilon^{-1/p}&0&0&0 & 0\\
0& \epsilon^{2/p}& 0&0& 0\\
0&0&\epsilon^{1/p} &0& 0\\
0&0&0&1& 0\\
0&0&0&0& \epsilon^{-1}\\
\end{pmatrix}.
\end{align*}

\end{proof}

\begin{lemma}\label{lemma-K3-11}
The restricted Lie algebras $K_3^{11}(\gamma_1)$ and $K_3^{11}(\gamma_2)$ are isomorphic if and only if  $\gamma_1\gamma_2^{-1}=\epsilon^{p(p^2-p-1)}$, for some $\epsilon\in \F^*$. 
\end{lemma}
\begin{proof}
First assume that $f=(a_{ij}):  K_3^{11}(\gamma_1)\to K_3^{11}(\gamma_2)$ is an isomorphism. 
It follows that 
\begin{align*}
  \gamma_1/\gamma_2=  a_{11}^{p-2}a_{22}^{p-1}, \quad 
a_{11}^2 a_{22}=a_{55}^p, \quad 
 a_{55}=a_{11}^p.
\end{align*} 
The above  Equations then imply that  
$ \gamma_1\gamma_2^{-1}=a_{11}^{p(p^2-p-1)}$.  To prove the converse, suppose that $\gamma_1\gamma_2^{-1}=\epsilon^{p(p^2-p-1)}$, for some $\epsilon\in \F^*$. Then the following is an isomorphism from $K_3^{11}(\gamma_1)$ to $K_3^{11}(\gamma_2)$:
\begin{align*}
\begin{pmatrix}
\epsilon&0&0&0 & 0\\
0& \epsilon^{p^2-2}& 0&0& 0\\
0&0&\epsilon^{p^2-1} &0& 0\\
0&0&0&\epsilon^{p^2}& 0\\
0&0&0&0& \epsilon^{p}\\
\end{pmatrix}.
\end{align*}

\end{proof}

\subsection{Extensions of  $(L, \res x_3=x_4)$}
 First, we find a basis for $Z^2(L,\F)$. Let $(\phi,\omega)=(a\Delta _{12}+b\Delta_{13}+c\Delta_{14}+d\Delta_{23}+e\Delta_{24}+f\Delta_{34},\alpha f_1+\beta f_2+ \gamma f_3+\delta f_4)\in Z^2(L,\F)$. Then we must have $\delta^2\phi(x,y,z) =0$ and $\phi(x,\res y)=0$, for all $x,y,z \in L$. Therefore, we have
\begin{align*}
0=(\delta^2\phi)(x_1,x_2,x_4)&=\phi([x_1,x_2],x_4)+\phi([x_2,x_4],x_1)+\phi([x_4,x_1],x_2)=\phi(x_3,x_4)=f.
\end{align*}
Also, we have $\phi(x,\res y)=0$. Therefore, $\phi(x,x_4)=0$, for all $x\in L$ and hence $\phi(x_1,x_4)=\phi(x_2,x_4)=\phi(x_3,x_4)=0$ which implies that $c=e=f=0$. Therefore, $Z^2(L,\F)$ has a basis consisting of:
$$ (\Delta_{12},0),(\Delta_{13},0),(\Delta_{23},0),(0,f_1),
(0,f_2),(0,f_3),(0,f_4).$$
Next, we find a basis for $B^2(L,\F)$. Let $(\phi,\omega)\in B^2(L,\F)$. Since $B^2(L,\F)\subseteq Z^2(L,\F)$, we have $(\phi,\omega)=(a\Delta _{12}+b\Delta_{13}+c\Delta_{23},\alpha f_1+\beta f_2+ \gamma f_3+\delta f_4)$. Note that there exists a linear map $\psi:L\to \F$  such that $\delta^1\psi(x,y)=\phi(x,y)$ and $\tilde \psi(x)=\omega(x)$, for all $x,y \in L$. So, we have
\begin{align*}
&a=\phi(x_1,x_2)=\delta^1\psi(x_1,x_2)=\psi([x_1,x_2])=\psi(x_3), \text{ and }\\
&b=\phi(x_1,x_3)=\delta^1\psi(x_1,x_3)=\psi([x_1,x_3])=0.
\end{align*}
Similarly, we can show that $c=0$. Also, we have
\begin{align*}
&\gamma=\omega(x_3)=\tilde \psi(x_3)=\psi(\res x_3)=\psi(x_4), \text{ and }\\
&\alpha=\omega(x_1)=\tilde \psi(x_1)=\psi(\res x_1)=0.
\end{align*}
Similarly, we can show that $\beta=\delta=0$. Therefore, $(\phi,\omega)=(a\Delta_{12},\gamma f_3)$ and hence $B^2(L,\F)=\la(\Delta_{12},0),(0,f_3)\ra_{\F}$. We deduce that a basis for $H^2(L,\F)$ is as follows:
$$[(\Delta_{13},0)],[(\Delta_{23},0)],[(0,f_1)],[(0,f_2)],[(0,f_4)].$$

Note that the group $\Aut_p(L)$ in this case consists of invertible matrices of the form
$$\begin{pmatrix}
 a_{11}  & a_{12} & 0 & 0 \\
a_{21} & a_{22} & 0 & 0\\
a_{31} & a_{32} & r & a_{34} \\
a_{41} & a_{42} & 0& a_{44}
\end{pmatrix},$$
where $r = a_{11}a_{22}-a_{12}a_{21}$, $a_{31}=a_{32}=a_{34}=0$, and $a_{44}=r^p$.

Let $[\theta]=[(\phi,\omega)]\in H^2(L,\F)$. Then $\phi =a\Delta_{13}+c\Delta_{23}$. We can verify that the action of $\Aut_p(L)$ on the $\phi$'s in the matrix form is as follows:
\begin{align}\label{ij}
\begin{pmatrix}
a'\\c'
\end{pmatrix}=
\begin{pmatrix}
a_{11}r &a_{21}r\\
a_{12}r&a_{22}r
\end{pmatrix}
\begin{pmatrix}
a\\c
\end{pmatrix}.
\end{align}
The orbit with representative 
$\begin{pmatrix}
1\\ 0
\end{pmatrix}$ of this action  gives us $L_{5,3}$.
Note that we need to have $A\phi(x_1,x_4)=A\phi(x_2,x_4)=A\phi(x_3,x_4)=0$ which implies that
\begin{align*}
&a_{11}a_{34}a+a_{21}a_{34}c=0,\\
&a_{12}a_{34}a+a_{22}a_{34}c=0.
\end{align*}
Let $\omega=\alpha f_1+\beta f_2+\delta f_4$, for some $\alpha,\beta,\delta \in \F$. Suppose that $A\omega =\alpha'f_1+\beta' f_2+\delta' f_4$,  for some $\alpha',\beta',\delta' \in \F.$ We have
\begin{align*}
&A\omega (x_1)=a_{11}^p\alpha +a_{21}^p \beta + a_{41}^p \delta;\\
&A\omega (x_2)= a_{12}^p\alpha +a_{22}^p\beta +a_{42}^p\delta;\\
&A\omega(x_4)=a_{44}^p \delta.
\end{align*}
In the matrix form we can write this as
\begin{align}\label{kl}
\begin{pmatrix}
\alpha'\\
\beta'\\
\delta'
\end{pmatrix}=
\begin{pmatrix}
a_{11}^p& a_{21}^p&a_{41}^p\\
a_{12}^p&a_{22}^p&a_{42}^p\\
0&0&r^{p^2}
\end{pmatrix}
\begin{pmatrix}
\alpha \\
\beta \\
\delta
\end{pmatrix}.
\end{align}

Thus, we can write Equations \eqref{ij} and \eqref{kl} together as follows:
\begin{align*}
\bigg[r \begin{pmatrix}
a_{11}&a_{21}\\
a_{12}&a_{22}
\end{pmatrix},
\begin{pmatrix}
a_{11}^p& a_{21}^p&a_{41}^p\\
a_{12}^p&a_{22}^p&a_{42}^p\\
0&0&r^{p^2}
\end{pmatrix}\bigg]
\bigg[\begin{pmatrix}
a\\c
\end{pmatrix}
\begin{pmatrix}
\alpha \\
\beta\\
\delta
\end{pmatrix}\bigg]=
\bigg[\begin{pmatrix}
a' \\ c'
\end{pmatrix},
\begin{pmatrix}
\alpha'\\
\beta'\\
\delta'
\end{pmatrix}\bigg].
\end{align*}
Now we find the representatives of the orbits of the action of $\Aut_p(L)$ on the set of $\omega$'s such that 
the orbit represented by 
$\begin{pmatrix}
1\\ 0
\end{pmatrix}$ is preserved under the action of $\Aut_p(L)$ on the set of $\phi$'s.
Note that we take $a_{34}=0$. Then we have 
\begin{align*}
&a_{11}a_{34}a+a_{21}a_{34}c=0,\\
&a_{12}a_{34}a+a_{22}a_{34}c=0.
\end{align*}
Let $\nu = \begin{pmatrix}
\alpha\\
\beta \\
\delta
\end{pmatrix} \in \F^3$. If $\nu= \begin{pmatrix} 
0 \\
0 \\
0
\end{pmatrix}$, then $\{\nu \}$ is clearly an $\Aut_p(L)$-orbit. Suppose that $\delta \neq 0$. Then 
\begin{align*}
&\bigg[\begin{pmatrix}
1& 0\\
0&1
\end{pmatrix},
\begin{pmatrix}
1& 0&-\alpha/\delta\\
0&1&-\beta/\delta\\
0&0&1
\end{pmatrix}\bigg]
\bigg[\begin{pmatrix}
1 \\ 0
\end{pmatrix},
\begin{pmatrix} 
\alpha \\
\beta \\
\delta
\end{pmatrix}\bigg]=
\bigg[\begin{pmatrix}
1\\0
\end{pmatrix},
\begin{pmatrix}
0\\
0\\
\delta
\end{pmatrix}\bigg], \text{ and }\\
&\bigg[\delta^{-1/p^2}\begin{pmatrix}
\delta^{1/p^2}& 0\\
0&\delta^{-2/p^2}
\end{pmatrix},
\begin{pmatrix}
\delta^{1/p}&0&0\\
0&\delta^{-2/p}&0\\
0&0&1/\delta
\end{pmatrix}\bigg]
\bigg[\begin{pmatrix}
1\\ 0
\end{pmatrix},
\begin{pmatrix}
0 \\
0 \\
\delta
\end{pmatrix}\bigg]=
\bigg[\begin{pmatrix}
1\\0
\end{pmatrix},
\begin{pmatrix}
0\\
0\\
1
\end{pmatrix}\bigg].
\end{align*}

Next, if $\delta =0$, but $\beta \neq 0$, then
\begin{align*}
\bigg[\begin{pmatrix}
1& (-\alpha/\beta)^{1/p}\\
0&1
\end{pmatrix},
\begin{pmatrix}
1&-\alpha/\beta&0\\
0&1&0\\
0&0&1
\end{pmatrix}\bigg]
\bigg[\begin{pmatrix}
1\\ 0
\end{pmatrix},
\begin{pmatrix}
\alpha\\
\beta\\
0
\end{pmatrix}\bigg]=
\bigg[\begin{pmatrix}
1\\0
\end{pmatrix},
\begin{pmatrix}
0\\
\beta\\
0
\end{pmatrix}\bigg].
\end{align*}

Finally, if $\delta =\beta =0$, but $\alpha \neq 0$, then
\begin{align*}
\bigg[\alpha^{1/p}\begin{pmatrix}
\alpha^{-1/p}& 0\\
0&\alpha^{2/p}
\end{pmatrix},
\begin{pmatrix}
1/\alpha&0&0\\
0&\alpha^{2}&0\\
0&0&\alpha^p
\end{pmatrix}\bigg]
\bigg[\begin{pmatrix}
1\\ 0
\end{pmatrix},
\begin{pmatrix}
\alpha \\0\\
0
\end{pmatrix}\bigg]=
\bigg[\begin{pmatrix}
1\\0
\end{pmatrix},
\begin{pmatrix}
1\\
0\\
0
\end{pmatrix}\bigg].
\end{align*}
Thus the following elements are $\Aut_p(L)$-orbit representatives: 
\begin{align*}
\begin{pmatrix}
0\\
0\\
0
\end{pmatrix}, 
\begin{pmatrix}
1\\
0\\
0
\end{pmatrix},
\begin{pmatrix} 
0\\
0\\
1
\end{pmatrix},
\begin{pmatrix}
0\\
\beta\\
0
\end{pmatrix}.
\end{align*}

%

Therefore, the corresponding restricted Lie algebra structures are as follows: (Note that we need to switch $x_4$ and $x_5$.)\\
\begin{align*}
&K_3^{12}=\langle x_1,\ldots,x_5\mid [x_1,x_2]=x_3,[x_1,x_3]=x_4 ,
\res x_3=x_5 \rangle;\\
&K_3^{13}=\langle x_1,\ldots,x_5\mid [x_1,x_2]=x_3,[x_1,x_3]=x_4, 
\res x_1=x_4, \res x_3=x_5 \rangle;\\
&K_3^{14}(\beta)=\langle x_1,\ldots,x_5\mid [x_1,x_2]=x_3,[x_1,x_3]=x_4, 
\res x_2=\beta x_4, \res x_3=x_5 \rangle;\\
&K_3^{15}=\langle x_1,\ldots,x_5\mid [x_1,x_2]=x_3,[x_1,x_3]=x_4,
\res x_3=x_5, \res x_5=x_4 \rangle
\end{align*}
where $\beta \in \F^*$.  

\begin{lemma}\label{lemma-K314}
The restricted Lie algebras $K_3^{14}(\beta_1)$ and $K_3^{14}(\beta_2)$ are isomorphic if and only if  $\beta_2\beta_1^{-1}\in (\F^*)^2$. 
\end{lemma}
\begin{proof}
First assume that $f=(a_{ij}):  K_3^{14}(\beta_1)\to K_3^{14}(\beta_2)$ is an isomorphism. 
Then $f(\res x_2)=f(x_2)^{[p]}$ which implies that 
$\beta_1 a_{11}^2a_{22} x_4=\beta_2 a_{22}^p x_4$. Hence, $\beta_1/\beta_2=a_{22}^{p-1}a_{11}^{-2}\in (\F^*)^2$. To prove the converse, suppose that $\beta_1/\beta_2=\epsilon^2\in (\F^*)^2$. Then the following is an isomorphism from
 $K_3^{14}(\beta_1)$ to $K_3^{14}(\beta_2)$:
\begin{align*}
\begin{pmatrix}
\epsilon^{-1/p}&0&0&0 & 0\\
0& \epsilon^{2/p}& 0&0& 0\\
0&0&\epsilon^{1/p} &0& 0\\
0&0&0&1& 0\\
0&0&0&0& \epsilon\\
\end{pmatrix}.
\end{align*}

\end{proof}

\section{Extensions of  $L=\frac {L_{5,3}}{\la x_5\ra}$}

In this section we find all non-isomorphic p-maps on $L_{5,3}$ such that $\res x_5=0$.  We let
$$L=\frac {L_{5,3}}{\la x_5\ra}\cong L_{4,3},$$ where $L_{4,3} = \la x_1,x_2,x_3,x_4 \mid [x_1 , x_2]=x_3 , [x_1,x_3]=x_4 \ra$. The group $\Aut(L)$ consists of invertible matrices of the form
$$\begin{pmatrix}
 a_{11}  & 0 & 0 & 0 \\
a_{21} & a_{22} & 0 & 0\\
a_{31} & a_{32} & r & 0 \\
a_{41} & a_{42} & a_{11}a_{32}& a_{11}r
\end{pmatrix},$$
where $r = a_{11}a_{22} \neq 0$.

\begin{lemma}\label{L53-x5}
Let $K=L_{5,3}$ and $[p]:K\to K$ be a $p$-map on $K$ such that $\res x_5=0$ and let $L=\frac{K}{M}$ where $M=\la x_5\ra_{\F}$. Then $K\cong L_{\theta}$ where $\theta=(0,\omega)\in Z^2(L,M)$.
\end{lemma}
\begin{proof}
 Let $\pi :K\rightarrow L$ be the projection map. We have the exact sequence 
$$ 0\rightarrow M\rightarrow K\rightarrow L\rightarrow 0.$$
Let $\sigma :L\rightarrow K$ such that $x_i\mapsto x_i$, $1\leq i\leq 4$. Then $\sigma$ is an injective linear map and $\pi \sigma =1_L$. Now, we define $\phi : L\times L\rightarrow M$ by $\phi (x_i,x_j)=[\sigma (x_i),\sigma (x_j)]-\sigma ([x_i,x_j])$, $1\leq i,j\leq 4$ and $\omega: L\rightarrow M$ by $\omega (x)=\res {\sigma (x)} -\sigma (\res x)$. Note that
\begin{align*}
&\phi(x_1,x_2)=[\sigma(x_1),\sigma(x_2)]-\sigma([x_1,x_2])=0;\\
&\phi(x_1,x_3)=[\sigma(x_1),\sigma(x_3)]-\sigma([x_1,x_3])=0.
\end{align*}
Similarly, we can show that $\phi(x_1,x_4)=\phi(x_2,x_3)=\phi(x_2,x_4)=\phi(x_3,x_4)=0$. Therefore, $\phi=0$.
Now, by Lemma \ref{K=L-theta}, we have $\theta=(0,\omega)\in Z^2(L,M)$ and $K\cong L_{\theta}$.
\end{proof}

 Note that by Theorem \ref{4-dim}, there are four non-isomorphic restricted Lie algebra structures on $L$ given by the following  $p$-maps:
\begin{enumerate}
\item[II.1]Trivial $p$-map;
\item[II.2]$\res x_1=x_4$;
\item[II.3]$\res x_2 =\xi x_4 $;
\item[II.4]$\res x_3 =x_4$.
\end{enumerate}

In the following subsections, we make $L$ into a restricted Lie algebra by equipping it with each of the above $p$-maps. Then,  in  each  case,  we  find all possible orbit representatives of the form $(0,\omega)$ under the action of $\Aut_p(L)$ on $H^2(L,\F)$. By  Lemma \ref{L53-x5}, we do get all possible $p$-maps on $K_3$ with the property that $x_5^{[p]}=0$.

\subsection{Extensions of ($L$, trivial $p$-map)}\label{L_{5,3} mod x5 trivial pmap}
 First, we find a basis for $Z^2(L,\F)$. Let $(\phi,\omega)=(a\Delta _{12}+b\Delta_{13}+c\Delta_{14}+d\Delta_{23}+e\Delta_{24}+f\Delta_{34},\alpha f_1+\beta f_2+ \gamma f_3+\delta f_4)\in Z^2(L,\F)$. Then we must have $\delta^2\phi(x,y,z) =0$ and $\phi(x,\res y)=0$, for all $x,y,z \in L$. Therefore,
\begin{align*}
&0=(\delta^2\phi)(x_1,x_2,x_4)=\phi([x_1,x_2],x_4)+\phi([x_2,x_4],x_1)+\phi([x_4,x_1],x_2)=\phi(x_3,x_4), \text{ and }\\
&0=(\delta^2\phi)(x_1,x_2,x_3)=\phi([x_1,x_2],x_3)+\phi([x_2,x_3],x_1)+\phi([x_3,x_1],x_2)=\phi(x_2,x_4).
\end{align*}
Thus, we get $f=e=0$.
Since the $p$-map is trivial, $\phi(x,\res y)=\phi(x,0)=0$, for all $x,y\in L$. Therefore, a basis for $Z^2(L,\F)$ is as follows:
$$
 (\Delta_{12},0),(\Delta_{13},0),(\Delta_{14},0),(\Delta_{23},0),(0,f_1),
(0,f_2),(0,f_3),(0,f_4).
$$
Next, we find a basis for $B^2(L,\F)$. Let $(\phi,\omega)\in B^2(L,\F)$. Since $B^2(L,\F)\subseteq Z^2(L,\F)$, we have $(\phi,\omega)=(a\Delta _{12}+b\Delta_{13}+c\Delta_{14}+d\Delta_{23},\alpha f_1+\beta f_2+ \gamma f_3+\delta f_4)$. So, there exists a linear  map $\psi:L\to \F$ such that $\delta^1\psi(x,y)=\phi(x,y)$ and $\tilde \psi(x)=\omega(x)$, for all $x,y \in L$. So, we have
\begin{align*}
&a=\phi(x_1,x_2)=\delta^1\psi(x_1,x_2)=\psi([x_1,x_2])=\psi(x_3), \text{ and }\\
&b=\phi(x_1,x_3)=\delta^1\psi(x_1,x_3)=\psi([x_1,x_3])=\psi(x_4), \text{ and }\\
&c=\phi(x_1,x_4)=\delta^1\psi(x_1,x_4)=\psi([x_1,x_4])=0
\end{align*}
Similarly, we can show that $d=0$. Also, we have
\begin{align*}
\alpha=\omega(x_1)=\tilde \psi(x_1)=\psi(\res x_1)=0.
\end{align*}
Similarly, we can show that $\beta=\gamma=\delta=0$. Therefore, $(\phi,\omega)=(a\Delta_{12}+b\Delta_{13},0)$ and hence
 $B^2(L,\F)=\la(\Delta_{12},0),(\Delta_{13},0)\ra_{\F}$. We deduce that a basis for $H^2(L,\F)$ is as follows:
$$
[(\Delta_{14},0)],[(\Delta_{23},0)],[(0,f_1)],
[(0,f_2)],[(0,f_3)],[(0,f_4)].
$$
Let $[\theta]=[(\phi,\omega)]\in H^2(L,\F)$. Since we want $L_{\theta}$ and  $L_{5,3}$ to be isomorphic as  Lie algebras, we should have $\phi=0$. Since $0$ is preserved under $\Aut_p(L)$, it is enough to find $\Aut_p(L)$-representatives of the $\omega$'s.

Let $\omega=\alpha f_1+\beta f_2+\gamma f_3+\delta f_4$, for some $\alpha,\beta,\gamma,\delta \in \F$. Suppose that $A\omega =\alpha'f_1+\beta' f_2+\gamma'f_3+\delta' f_4$,  for some $\alpha',\beta',\gamma',\delta' \in \F.$ 
We can easily verify that the action of $\Aut_p(L)$ on the $\omega$'s in the matrix form is as follows:
\begin{align*} 
\begin{pmatrix}
\alpha' \\ 
\beta'\\
\gamma'\\
\delta'
\end{pmatrix}=
\begin{pmatrix}
a_{11}^p& a_{21}^p&a_{31}^p&a_{41}^p\\
0& a_{22}^p& a_{32}^p&a_{42}^p\\
0&0&r^p&a_{11}^pa_{32}^p\\
0&0&0&a_{11}^pr^p
\end{pmatrix}
\begin{pmatrix}
\alpha\\ 
\beta \\
\gamma \\
\delta
\end{pmatrix}.
\end{align*}
Now we find the representatives of the orbits of this action. 
Let $\nu = \begin{pmatrix}
\alpha\\ 
\beta \\
\gamma \\
\delta
\end{pmatrix} \in \F^4$. If $\nu= \begin{pmatrix}
0\\ 
0 \\
0 \\
0
\end{pmatrix}$, then $\{\nu \}$ is clearly an $\Aut_p(L)$-orbit. Suppose that $\delta \neq 0$. Then 
\begin{align*}
&\begin{pmatrix}
1& 0&0&-\alpha/\delta\\
0& 1& 0&-\beta/\delta\\
0&0&1&0\\
0&0&0&1
\end{pmatrix}
\begin{pmatrix}
\alpha\\ 
\beta \\
\gamma \\
\delta
\end{pmatrix}=
\begin{pmatrix}
0 \\ 
0\\
\gamma\\
\delta
\end{pmatrix}, \text{ and }\\
&\begin{pmatrix}
1& 0&0&0\\
0& 1& -\gamma/\delta&\gamma^2/\delta^2\\
0&0&1&-\gamma/\delta\\
0&0&0&1
\end{pmatrix}
\begin{pmatrix}
0\\ 
0 \\
\gamma \\
\delta
\end{pmatrix}=
\begin{pmatrix}
0 \\ 
0\\
0\\
\delta
\end{pmatrix}, \text{ and }\\
&\begin{pmatrix}
1& 0&0&0\\
0& 1/\delta& 0&0\\
0&0&1/\delta&0\\
0&0&0&1/\delta
\end{pmatrix}
\begin{pmatrix}
0\\ 
0 \\
0 \\
\delta
\end{pmatrix}=
\begin{pmatrix}
0 \\ 
0\\
0\\
1
\end{pmatrix}.
\end{align*}
 Next, if $\delta=0$, but $\gamma \neq 0$, then
\begin{align*}
&\begin{pmatrix}
1& 0&-\alpha /\gamma&0\\
0& 1& -\beta/\gamma&0\\
0&0&1&-\beta/\gamma\\
0&0&0&1
\end{pmatrix}
\begin{pmatrix}
\alpha\\ 
\beta \\
\gamma \\
0
\end{pmatrix}=
\begin{pmatrix}
0 \\ 
0\\
\gamma\\
0
\end{pmatrix}, \text{ and }\\
&\begin{pmatrix}
(1/\gamma)^2& 0&0&0\\
0& \gamma& 0&0\\
0&0&1/\gamma&0\\
0&0&0&(1/\gamma)^3
\end{pmatrix}
\begin{pmatrix}
0\\ 
0 \\
\gamma \\
0
\end{pmatrix}=
\begin{pmatrix}
0 \\ 
0\\
1\\
0
\end{pmatrix}.
\end{align*}
Next, if $\gamma =\delta=0$, but $\beta \neq 0$, then
\begin{align*}
&\begin{pmatrix}
1& -\alpha/\beta&0&0\\
0& 1& 0&0\\
0&0&1&0\\
0&0&0&1
\end{pmatrix}
\begin{pmatrix}
\alpha\\ 
\beta \\
0 \\
0
\end{pmatrix}=
\begin{pmatrix}
0 \\ 
\beta\\
0\\
0
\end{pmatrix}, \text{ and }\\
&\begin{pmatrix}
(1/\beta )^{-2}& 0&0&0\\
0& 1/\beta& 0&0\\
0&0&(1/\beta )^{-1}&0\\
0&0&0&(1/\beta )^{-3}
\end{pmatrix}
\begin{pmatrix}
0\\ 
\beta \\
0 \\
0
\end{pmatrix}=
\begin{pmatrix}
0 \\ 
1\\
0\\
0
\end{pmatrix}.
\end{align*}
Finally, if $\delta=\gamma =\beta =0$, but $\alpha \neq 0$, then
\begin{align*}
\begin{pmatrix}
1/\alpha& 0&0&0\\
0& 1& 0&0\\
0&0&1/\alpha&0\\
0&0&0&(1/\alpha )^2
\end{pmatrix}
\begin{pmatrix}
\alpha\\ 
0 \\
0 \\
0
\end{pmatrix}=
\begin{pmatrix}
1 \\ 
0\\
0\\
0
\end{pmatrix}.
\end{align*}
Thus the following elements are $\Aut_p(L)$-orbit representatives: 
\begin{align*}
\begin{pmatrix}
0\\ 
0\\
0\\
0
\end{pmatrix},
\begin{pmatrix}
1 \\ 
0\\
0\\
0
\end{pmatrix},
\begin{pmatrix}
0 \\ 
1\\
0\\
0
\end{pmatrix},
\begin{pmatrix}
0\\ 
0\\
1\\
0
\end{pmatrix},
\begin{pmatrix}
0 \\ 
0\\
0\\
1
\end{pmatrix}.
\end{align*}
 
Therefore, the corresponding restricted Lie algebra structures are as follows:
\begin{align*}
&K_3^{16}=\langle x_1,\ldots,x_5\mid [x_1,x_2]=x_3,[x_1,x_3]=x_4  \rangle;\\
&K_3^{17}=\langle x_1,\ldots,x_5\mid [x_1,x_2]=x_3,[x_1,x_3]=x_4, 
\res x_1=x_5 \rangle;\\
&K_3^{18}=\langle x_1,\ldots,x_5\mid [x_1,x_2]=x_3,[x_1,x_3]=x_4, 
\res x_2=x_5  \rangle;\\
&K_3^{19}=\langle x_1,\ldots,x_5\mid [x_1,x_2]=x_3,[x_1,x_3]=x_4, 
\res x_3=x_5 \rangle;\\
&K_3^{20}=\langle x_1,\ldots,x_5\mid [x_1,x_2]=x_3,[x_1,x_3]=x_4, 
\res x_4=x_5 \rangle.
\end{align*}

\subsection{Extensions of $(L,  \res x_1 = x_4)$}
 First, we find a basis for $Z^2(L,\F)$. Let $(\phi,\omega)=(a\Delta _{12}+b\Delta_{13}+c\Delta_{14}+d\Delta_{23}+e\Delta_{24}+f\Delta_{34},\alpha f_1+\beta f_2+ \gamma f_3+\delta f_4)\in Z^2(L,\F)$. Then we must have $\delta^2\phi(x,y,z) =0$ and $\phi(x,\res y)=0$, for all $x,y,z \in L$. Therefore, we have
\begin{align*}
&0=(\delta^2\phi)(x_1,x_2,x_4)=\phi([x_1,x_2],x_4)+\phi([x_2,x_4],x_1)+\phi([x_4,x_1],x_2)=\phi(x_3,x_4), \text{ and }\\
&0=(\delta^2\phi)(x_1,x_2,x_3)=\phi([x_1,x_2],x_3)+\phi([x_2,x_3],x_1)+\phi([x_3,x_1],x_2)=\phi(x_2,x_4).
\end{align*}
Thus, we get $f=e=0$.
Also, we have $\phi(x,\res y)=0$. Therefore, $\phi(x,x_4)=0$, for all $x\in L$ and hence $\phi(x_1,x_4)=\phi(x_2,x_4)=\phi(x_3,x_4)=0$ which implies that $c=e=f=0$. Therefore, $Z^2(L,\F)$
has a basis consisting of:
$$
 (\Delta_{12},0),(\Delta_{13},0),(\Delta_{23},0),(0,f_1),
(0,f_2),(0,f_3),(0,f_4).
$$
Next, we find a basis for $B^2(L,\F)$. Let $(\phi,\omega)\in B^2(L,\F)$. Since $B^2(L,\F)\subseteq Z^2(L,\F)$, we have $(\phi,\omega)=(a\Delta _{12}+b\Delta_{13}+c\Delta_{23},\alpha f_1+\beta f_2+ \gamma f_3+\delta f_4)$. Note that there exists a linear  map $\psi:L\to \F$ such that $\delta^1\psi(x,y)=\phi(x,y)$ and $\tilde \psi(x)=\omega(x)$, for all $x,y \in L$. So, we have
\begin{align*}
&a=\phi(x_1,x_2)=\delta^1\psi(x_1,x_2)=\psi([x_1,x_2])=\psi(x_3), \text{ and }\\
&b=\phi(x_1,x_3)=\delta^1\psi(x_1,x_3)=\psi([x_1,x_3])=\psi(x_4), \text{ and }\\
&c=\phi(x_2,x_3)=\delta^1\psi(x_2,x_3)=\psi([x_2,x_3])=0.
\end{align*}
Also, we have
\begin{align*}
\alpha=\omega(x_1)&=\tilde \psi(x_1)=\psi(\res x_1)=\psi(x_4), \text{ and }\\
\beta=\omega(x_2)&=\tilde \psi(x_2)=\psi(\res x_2)=0.
\end{align*}
Similarly, we can show that $\gamma=\delta=0$. Note that $\psi(x_4)=b=\alpha$.  Therefore, $(\phi,\omega)=(a\Delta_{12}+b\Delta_{13},bf_1)$ and hence $B^2(L,\F)=\la(\Delta_{13},f_1),(\Delta_{12},0)\ra_{\F}$. Note that
$$
\bigg\{[(\Delta_{13},0)],[(\Delta_{23},0)],[(0,f_1)],
[(0,f_2)],[(0,f_3)],[(0,f_4)]\bigg\}
$$
spans $H^2(L,\F)$.
Since $[(\Delta_{13},0)]+[(0,f_1)]=[(\Delta_{13},f_1)]=[0]$, then $[(0,f_1)]$ is an scalar multiple of $[(\Delta_{13},0)]$
in $H^2(L,\F)$. Note that $\dim H^2=\dim Z^2-\dim B^2=5$. Therefore, 
$$
\bigg \{[(\Delta_{13},0)],[(\Delta_{23},0)],[(0,f_2)],[(0,f_3)],[(0,f_4)]\bigg\}
$$
 forms a basis for $H^2(L,\F)$.

 Note that he group $\Aut_p(L)$ in this case consists of invertible matrices of the form
$$\begin{pmatrix}
 a_{11}  & 0 & 0 & 0 \\
a_{21} & a_{22} & 0 & 0\\
a_{31} & a_{32} & r & 0 \\
a_{41} & a_{42} & a_{11}a_{32}& a_{11}r
\end{pmatrix},$$
where $r = a_{11}a_{22} \neq 0$ and $a_{11}r=a_{11}^p$.
 
Let $[\theta]=[(\phi,\omega)]\in H^2(L,\F)$. As in Section \ref{L_{5,3} mod x5 trivial pmap},  it is enough to find $\Aut_p(L)$-representatives of the $\omega$'s.

Let $\omega =\beta f_2+\gamma f_3+\delta f_4$, for some $\beta,\gamma,\delta \in \F$. Suppose that $A\omega =\beta' f_2+\gamma' f_3+\delta' f_4$, for some  $\beta',\gamma',\delta' \in \F$. We have
\begin{align*}
&A\omega (x_2)=a_{22}^p\beta+a_{32}^p\gamma +a_{42}^p\delta;\\
&A\omega (x_3)=r^p\gamma +a_{11}^p a_{32}^p\delta;\\
&A\omega (x_4)=a_{11}^p r^p \delta.
\end{align*}
In the matrix form we can write this as
\begin{align*}
\begin{pmatrix} 
\beta'\\
\gamma'\\
\delta'
\end{pmatrix}=
\begin{pmatrix}
 a_{22}^p& a_{32}^p&a_{42}^p\\
0&r^p&a_{11}^pa_{32}^p\\
0&0&a_{11}^pr^p
\end{pmatrix}
\begin{pmatrix} 
\beta \\
\gamma \\
\delta
\end{pmatrix}.
\end{align*}
Note that we need to have $A\omega(x_1)=0$ which implies that $a_{21}^p\beta+a_{31}^p\gamma+a_{41}^p\delta=0$.
Now we find the representatives of the orbits of this action. Note that we take $a_{21}=a_{31}=a_{41}=0$. Therefore, $a_{21}^p\beta+a_{31}^p\gamma+a_{41}^p\delta=0$.
Let $\nu = \begin{pmatrix}
\beta \\
\gamma \\
\delta
\end{pmatrix} \in \F^3$. If $\nu= \begin{pmatrix}
0 \\
0 \\
0
\end{pmatrix}$, then $\{\nu \}$ is clearly an $\Aut_p(L)$-orbit. Suppose that $\delta \neq 0$. Then 
\begin{align*}
&\begin{pmatrix}
1&0&-\beta/\delta\\
0& 1& 0\\
0&0&1\\
\end{pmatrix}
\begin{pmatrix} 
\beta \\
\gamma \\
\delta
\end{pmatrix}=
\begin{pmatrix}
0\\
\gamma\\
\delta
\end{pmatrix}, \text{ and } \\
&\begin{pmatrix}
1& -\gamma/\delta&\gamma^2/\delta^2\\
0& 1& -\gamma/\delta\\
0&0&1\\
\end{pmatrix}
\begin{pmatrix} 
0 \\
\gamma \\
\delta
\end{pmatrix}=
\begin{pmatrix}
0\\
0\\
\delta
\end{pmatrix}, \text{ and }\\
&\begin{pmatrix}
(1/\delta)^\frac{p-2}{p}& 0&0\\
0& (1/\delta)^\frac {p-1}{p}& 0\\
0&0&1/\delta
\end{pmatrix}
\begin{pmatrix}
0 \\
0 \\
\delta
\end{pmatrix}=
\begin{pmatrix} 
0\\
0\\
1
\end{pmatrix}.
\end{align*}
 Next, if $\delta=0$, but $\gamma \neq 0$, then
 $$
\begin{pmatrix}
1& -\beta /\gamma&0\\
0& 1& -\beta/\gamma\\
0&0&1
\end{pmatrix}
\begin{pmatrix}
\beta \\
\gamma \\
0
\end{pmatrix}=
\begin{pmatrix}
 
0\\
\gamma\\
0
\end{pmatrix}.$$

 Finally, if $\gamma =\delta=0$, but $\beta \neq 0$, then we have
$\begin{pmatrix}
\beta \\
0 \\
0
\end{pmatrix}$.
Thus the following elements are $\Aut (L)$-orbit representatives:
\begin{align*}
\begin{pmatrix}
0\\
0\\
0
\end{pmatrix},
\begin{pmatrix}
\beta \\ 
0\\
0
\end{pmatrix},
\begin{pmatrix}
0 \\ 
\gamma\\
0
\end{pmatrix},
\begin{pmatrix}
0\\ 
0\\
1
\end{pmatrix}.
\end{align*}
Therefore, the corresponding restricted Lie algebra structures are as follows: 
\begin{align*}
&K_3^{21}=\langle x_1,\ldots,x_5\mid [x_1,x_2]=x_3,[x_1,x_3]=x_4 ,
\res x_1=x_4 \rangle;\\
&K_3^{22}(\beta)=\langle x_1,\ldots,x_5\mid [x_1,x_2]=x_3,[x_1,x_3]=x_4, 
\res x_1=x_4, \res x_2=\beta x_5 \rangle;\\
&K_3^{23}(\gamma)=\langle x_1,\ldots,x_5\mid [x_1,x_2]=x_3,[x_1,x_3]=x_4, 
\res x_1=x_4, \res x_3=\gamma x_5 \rangle;\\
&K_3^{24}=\langle x_1,\ldots,x_5\mid [x_1,x_2]=x_3,[x_1,x_3]=x_4, 
\res x_1=x_4, \res x_4=x_5 \rangle.
\end{align*}

\begin{lemma}\label{lem 5301}
We have 
$
K_3^{22}(\beta)\cong \langle x_1,\ldots,x_5\mid [x_1,x_2]=x_3,[x_1,x_3]=x_4, 
\res x_1=x_4, \res x_2= x_5 \rangle,
$
for every $\beta\in \F^*$.
\end{lemma}
\begin{proof}
The following automorphism of $L_{5,3}$ gives us the desired isomorphism:
$$\begin{pmatrix}
1&0&0&0&0\\
0& 1&0&0&0\\
0&0&1&0&0\\
0&0&0&1&0\\
0&0&0&0&1/\beta\\
\end{pmatrix}.$$
\end{proof}

\begin{lemma}
We have
$K_3^{23}(\gamma)\cong \langle x_1,\ldots,x_5\mid [x_1,x_2]=x_3,[x_1,x_3]=x_4, 
\res x_1=x_4, \res x_3= x_5 \rangle$,
for for every $\gamma\in \F^*$.
\end{lemma}
\begin{proof}
The following automorphism of $L_{5,3}$ gives us the desired isomorphism:
$$\begin{pmatrix}
1&0&0&0&0\\
0& 1&0&0&0\\
0&0&1&0&0\\
0&0&0&1&0\\
0&0&0&0&1/\gamma\\
\end{pmatrix}.$$
\end{proof}

\subsection{Extensions of $(L,  \res x_2 =\xi  x_4)$}
 First, we find a basis for $Z^2(L,\F)$. Let $(\phi,\omega)=(a\Delta _{12}+b\Delta_{13}+c\Delta_{14}+d\Delta_{23}+e\Delta_{24}+f\Delta_{34},\alpha f_1+\beta f_2+ \gamma f_3+\delta f_4)\in Z^2(L,\F)$. Then we must have $\delta^2\phi(x,y,z) =0$ and $\phi(x,\res y)=0$, for all $x,y,z \in L$. Therefore, we have
\begin{align*}
&0=(\delta^2\phi)(x_1,x_2,x_4)=\phi([x_1,x_2],x_4)+\phi([x_2,x_4],x_1)+\phi([x_4,x_1],x_2)=\phi(x_3,x_4), \text{ and }\\
&0=(\delta^2\phi)(x_1,x_2,x_3)=\phi([x_1,x_2],x_3)+\phi([x_2,x_3],x_1)+\phi([x_3,x_1],x_2)=\phi(x_2,x_4).
\end{align*}
Thus, we get $f=e=0$.
Also, we have $\phi(x,\res y)=0$. Therefore, $\phi(x,x_4)=0$, for all $x\in L$ and hence $\phi(x_1,x_4)=\phi(x_2,x_4)=\phi(x_3,x_4)=0$ which implies that $c=e=f=0$. Therefore, $Z^2(L,\F)$
has a basis consisting of:
$$
 (\Delta_{12},0),(\Delta_{13},0),(\Delta_{23},0),(0,f_1),
(0,f_2),(0,f_3),(0,f_4).
$$
Next, we find a basis for $B^2(L,\F)$. Let $(\phi,\omega)\in B^2(L,\F)$. Since $B^2(L,\F)\subseteq Z^2(L,\F)$, we have $(\phi,\omega)=(a\Delta _{12}+b\Delta_{13}+c\Delta_{23},\alpha f_1+\beta f_2+ \gamma f_3+\delta f_4)$. Note that there exists a linear  map $\psi:L\to \F$ such that $\delta^1\psi(x,y)=\phi(x,y)$ and $\tilde \psi(x)=\omega(x)$, for all $x,y \in L$. So, we have
\begin{align*}
&a=\phi(x_1,x_2)=\delta^1\psi(x_1,x_2)=\psi([x_1,x_2])=\psi(x_3), \text{ and }\\
&b=\phi(x_1,x_3)=\delta^1\psi(x_1,x_3)=\psi([x_1,x_3])=\psi(x_4), \text{ and }\\
&c=\phi(x_2,x_3)=\delta^1\psi(x_2,x_3)=\psi([x_2,x_3])=0.
\end{align*}
Also, we have
\begin{align*}
\beta=\omega(x_2)&=\tilde \psi(x_2)=\psi(\res x_2)=\xi \psi(x_4), \text{ and }\\
\alpha=\omega(x_1)&=\tilde \psi(x_1)=\psi(\res x_1)=0.
\end{align*}
Similarly, we can show that $\gamma=\delta=0$.  Note that $\psi(x_4)=b=\beta\xi^{-1}$.  Therefore, $(\phi,\omega)=(a\Delta_{12}+b\Delta_{13},b\xi f_2)$ and hence $B^2(L,\F)=\la(\Delta_{13},\xi f_2),(\Delta_{12},0)\ra_{\F}$. Note that
$$
\bigg\{[(\Delta_{13},0)],[(\Delta_{23},0)],[(0,f_1)],
[(0,f_2)],[(0,f_3)],[(0,f_4)]\bigg\}
$$
spans $H^2(L,\F)$.
Since $[(\Delta_{13},0)]+\xi [(0,f_2)]=[(\Delta_{13},\xi f_2)]=[0]$, then $[(0,f_2)]$ is an scalar multiple of $[(\Delta_{13},0)]$
in $H^2(L,\F)$. Note that $\dim H^2=\dim Z^2-\dim B^2=5$. Therefore, 
$$
\bigg \{[(\Delta_{13},0)],[(\Delta_{23},0)],[(0,f_1)],[(0,f_3)],[(0,f_4)]\bigg\}
$$
 forms a basis for $H^2(L,\F)$.

  Note that he group $\Aut_p(L)$ in this case consists of invertible matrices of the form
$$\begin{pmatrix}
 a_{11}  & 0 & 0 & 0 \\
a_{21} & a_{22} & 0 & 0\\
a_{31} & a_{32} & r & 0 \\
a_{41} & a_{42} & a_{11}a_{32}& a_{11}r
\end{pmatrix},$$
where $r = a_{11}a_{22} \neq 0$, $a_{21}=0$ and $a_{11}r=a_{22}^p$.

Let $[\theta]=[(\phi,\omega)]\in H^2(L,\F)$. As in Section \ref{L_{5,3} mod x5 trivial pmap},  it is enough to find $\Aut_p(L)$-representatives of the $\omega$'s.

Let $\omega =\alpha f_1+\gamma f_3+\delta f_4$, for some $\alpha,\gamma,\delta \in \F$. Suppose that $A\omega =\alpha' f_1+\gamma' f_3+\delta' f_4$, for some  $\alpha',\gamma',\delta' \in \F$. We have
\begin{align*}
&A\omega (x_1)=a_{11}^p\alpha+a_{31}^p\gamma +a_{41}^p\delta;\\
&A\omega (x_3)=r^p\gamma +a_{11}^p a_{32}^p\delta;\\
&A\omega (x_4)=a_{11}^p r^p \delta.
\end{align*}
In the matrix form we can write this as
\begin{align*}
\begin{pmatrix} 
\alpha'\\
\gamma'\\
\delta'
\end{pmatrix}=
\begin{pmatrix}
 a_{11}^p& a_{31}^p&a_{41}^p\\
0&r^p&a_{11}^pa_{32}^p\\
0&0&a_{11}^pr^p
\end{pmatrix}
\begin{pmatrix} 
\alpha \\
\gamma \\
\delta
\end{pmatrix}.
\end{align*}

Now we find the representatives of the orbits of this action. Note that we need to have $A\omega(x_2)=0$ which implies that $a_{32}^p\gamma+a_{42}^p\delta=0$.

Let $\nu = \begin{pmatrix}
\alpha \\
\gamma \\
\delta
\end{pmatrix} \in \F^3$. If $\nu= \begin{pmatrix}
0 \\
0 \\
0
\end{pmatrix}$, then $\{\nu \}$ is clearly an $\Aut_p(L)$-orbit. Suppose that $\delta \neq 0$. Then 
\begin{align*}
&\begin{pmatrix}
1&0&-\alpha/\delta\\
0& 1& -\gamma/\delta\\
0&0&1\\
\end{pmatrix}
\begin{pmatrix} 
\alpha\\
\gamma \\
\delta
\end{pmatrix}=
\begin{pmatrix}
0\\
0\\
\delta
\end{pmatrix}.
\end{align*}
 Next, if $\delta=0$, but $\gamma \neq 0$, then
 $$
\begin{pmatrix}
1& -\alpha /\gamma&0\\
0& 1&0\\
0&0&1
\end{pmatrix}
\begin{pmatrix}
\alpha\\
\gamma \\
0
\end{pmatrix}=
\begin{pmatrix}
0\\
\gamma\\
0
\end{pmatrix}.$$
 Finally, if $\gamma =\delta=0$, but $\alpha \neq 0$, then we have
$
\begin{pmatrix}
\alpha\\
0 \\
0
\end{pmatrix}.$
Thus the following elements are $\Aut (L)$-orbit representatives:
\begin{align*}
\begin{pmatrix}
0 \\ 
0\\
0
\end{pmatrix},
\begin{pmatrix}
\alpha\\ 
0\\
0
\end{pmatrix},
\begin{pmatrix}
0 \\ 
\gamma\\
0
\end{pmatrix},
\begin{pmatrix}
0\\ 
0\\
\delta
\end{pmatrix}.
\end{align*}

Therefore, the corresponding restricted Lie algebra structures are as follows: 
\begin{align*}
&K_3^{25}(\xi)=\langle x_1,\ldots,x_5\mid [x_1,x_2]=x_3,[x_1,x_3]=x_4, 
\res x_2=\xi x_4  \rangle;\\
&K_3^{26}(\alpha,\xi)=\langle x_1,\ldots,x_5\mid [x_1,x_2]=x_3,[x_1,x_3]=x_4, 
\res x_1=\alpha x_5, \res x_2=\xi x_4 \rangle;\\
&K_3^{27}(\gamma,\xi)=\langle x_1,\ldots,x_5\mid [x_1,x_2]=x_3,[x_1,x_3]=x_4, 
\res x_2=\xi x_4, \res x_3=\gamma x_5 \rangle;\\
&K_3^{28}(\delta,\xi)=\langle x_1,\ldots,x_5\mid [x_1,x_2]=x_3,[x_1,x_3]=x_4, 
\res x_2=\xi x_4, \res x_4=\delta x_5  \rangle\\
\end{align*}
where $\alpha, \gamma, \delta, \xi \in \F^*$.

\begin{lemma}\label{lem 5326}
We have 
$
K_3^{26}(\alpha,\xi)\cong K_3^{26}(1,\xi),
$
for every $\alpha, \xi\in \F^*$.
\end{lemma}
\begin{proof}
The following automorphism of $L_{5,3}$ gives us the desired isomorphism:
$$\begin{pmatrix}
1&0&0&0&0\\
0& 1&0&0&0\\
0&0&1&0&0\\
0&0&0&1&0\\
0&0&0&0&1/\alpha 
\end{pmatrix}.$$
\end{proof}

\begin{lemma}\label{lem 5327}
We have 
$
K_3^{27}(\gamma,\xi)\cong K_3^{27}(1,\xi),
$
for every $\gamma, \xi\in \F^*$.
\end{lemma}
\begin{proof}
The following automorphism of $L_{5,3}$ gives us the desired isomorphism:
$$\begin{pmatrix}
1&0&0&0&0\\
0& 1&0&0&0\\
0&0&1&0&0\\
0&0&0&1&0\\
0&0&0&0&1/\gamma
\end{pmatrix}.$$
\end{proof}

\begin{lemma}\label{lem 5328}
We have 
$
K_3^{28}(\delta,\xi) \cong  K_3^{28}(1,\xi),
$
for every $\delta, \xi\in \F^*$.
\end{lemma}
\begin{proof}
The following automorphism of $L_{5,3}$ gives us the desired isomorphism:
$$\begin{pmatrix}
1&0&0&0&0\\
0& 1&0&0&0\\
0&0&1&0&0\\
0&0&0&1&0\\
0&0&0&0&1/\delta
\end{pmatrix}.$$
\end{proof}

Therefore, the corresponding restricted Lie algebra structures are as follows: 
\begin{align*}
&K_3^{25}(\xi)=\langle x_1,\ldots,x_5\mid [x_1,x_2]=x_3,[x_1,x_3]=x_4, 
\res x_2=\xi x_4  \rangle;\\
&K_3^{26}(\xi)=\langle x_1,\ldots,x_5\mid [x_1,x_2]=x_3,[x_1,x_3]=x_4, 
\res x_1=x_5, \res x_2=\xi x_4 \rangle;\\
&K_3^{27}(\xi)=\langle x_1,\ldots,x_5\mid [x_1,x_2]=x_3,[x_1,x_3]=x_4, 
\res x_2=\xi x_4, \res x_3=x_5 \rangle;\\
&K_3^{28}(\xi)=\langle x_1,\ldots,x_5\mid [x_1,x_2]=x_3,[x_1,x_3]=x_4, 
\res x_2=\xi x_4, \res x_4= x_5  \rangle\\
\end{align*}
where $\xi \in \F^*$.
Note that $K_3^{25}(\xi)$ is identical to $K_3^{4}(\beta)$, $K_3^{26}(\xi)$ is identical to $K_3^{8}(\beta)$, and $K_3^{27}(\xi)$ is identical to $K_3^{14}(\beta)$.
\begin{lemma}\label{lemma-K327}
We have  $K_3^{28}(\xi_1)\cong K_3^{28}(\xi_2)$ if and only if $\xi_2\xi_1^{-1} \in (\F^*)^2$.
\end{lemma}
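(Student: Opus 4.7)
\textbf{Proof plan for Lemma \ref{lemma-K327}.}

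My plan is to prove both directions directly, in parallel to Lemma \ref{lemma-K34-2}. For the ``if'' direction, I will write down an explicit isomorphism; for the ``only if'' direction, I will use the fact that any isomorphism $f:K_3^{28}(\xi_1)\to K_3^{28}(\xi_2)$ must lie in $\Aut(L_{5,3})$ (whose general form is displayed at the top of Chapter~5), and then extract the square condition from the $p$-map compatibility.

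For $(\Leftarrow)$: given $\xi_2=\epsilon^2\xi_1$ with $\epsilon\in\F^*$, I would set $f(x_1)=\epsilon x_1$, $f(x_2)=x_2$, $f(x_3)=\epsilon x_3$, $f(x_4)=\epsilon^2 x_4$, $f(x_5)=\epsilon^{2p}x_5$. This sits in $\Aut(L_{5,3})$ (it is the matrix with $a_{11}=\epsilon$, $a_{22}=1$, all other off-diagonal entries zero, $a_{55}=\epsilon^{2p}$), and a direct check of $f(x_3)=[f(x_1),f(x_2)]$, $f(x_4)=[f(x_1),f(x_3)]$, $f(\res x_2)=\xi_1\epsilon^2 x_4=\xi_2 x_4=\res f(x_2)$, and $f(\res x_4)=\epsilon^{2p}x_5=\res f(x_4)$ shows it is an isomorphism of restricted Lie algebras.

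For $(\Rightarrow)$: suppose $f:K_3^{28}(\xi_1)\to K_3^{28}(\xi_2)$ is an isomorphism, so $f$ is of the matrix form at the start of Chapter~5. The key observation is that since $L_{5,3}$ has nilpotency class $3$ and $p\geq 5$, every Jacobson bracket term arising in $(\sum y_i)^{[p]}$ vanishes (since these are iterated commutators of length $\geq 4$). Consequently, for any $y=\sum c_i x_i$ we have $\res y=\sum c_i^{p}\res x_i$ inside $K_3^{28}(\xi_2)$. Applying the condition $f(\res x_i)=\res f(x_i)$ in turn for $i=1$ (forces $a_{21}=a_{41}=0$), $i=3$ (forces $a_{32}=0$), $i=4$ (forces $a_{45}=0$ and $a_{55}=(a_{11}^2a_{22})^p$), and finally $i=2$ yields
\[
\xi_1 a_{11}^2 a_{22}\,x_4 \;=\; a_{22}^p\xi_2\,x_4 + a_{42}^p\,x_5,
\]
so $a_{42}=0$ and $\xi_2\xi_1^{-1}=a_{11}^2 a_{22}^{1-p}$. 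Since $p\geq 5$ is odd, $1-p$ is even and $a_{22}^{1-p}=(a_{22}^{(1-p)/2})^2$, whence $\xi_2\xi_1^{-1}=(a_{11}a_{22}^{(1-p)/2})^2\in(\F^*)^2$.

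The only potentially delicate step is the claim that the Jacobson correction terms in the $p$-th power of a sum vanish; this is the one place the hypothesis $p\geq 5$ (together with the nilpotency class of $L_{5,3}$) is really used, and it is precisely the observation already exploited in Remark \ref{phi}. All remaining computations are routine matrix bookkeeping.
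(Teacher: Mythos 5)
Your proposal is correct and follows essentially the same route as the paper: the same explicit diagonal automorphism $\diag{\epsilon,1,\epsilon,\epsilon^2,\epsilon^{2p}}$ for the sufficiency, and the same key identity $\xi_1a_{11}^2a_{22}=a_{22}^p\xi_2$ extracted from $f(\res x_2)=\res{f(x_2)}$ for the necessity, giving $\xi_2\xi_1^{-1}=a_{11}^2a_{22}^{1-p}\in(\F^*)^2$. Your extra remarks (semilinearity of the $p$-map via Remark \ref{phi}, and the parity of $1-p$) only make explicit what the paper leaves implicit.
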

\begin{proof}
If  $f:K_3^{28}(\xi_1)\to K_3^{28}(\xi_2)$ is  an isomorphism, then $f(\res x_2)=\res{f(x_2)}$ which implies that 
 $\xi_1a_{11}^2a_{22}x_4=a_{22}^p\xi_2x_4+a_{42}^px_5$. We deduce that 
 $\xi_2\xi_1^{-1}=a_{11}^2a_{22}^{1-p}\in (\F^*)^2.$ 
Conversely, assume  that $\xi_2\xi_1^{-1}=\epsilon^2$ with some $\epsilon \in \F^*$.
Then the following automorphism of $L_{5,3}$
$$\begin{pmatrix}
\epsilon&0&0&0&0\\
0&1&0&0&0\\
0&0&\epsilon&0&0\\
0&0&0&\epsilon^2&0\\
0&0&0&0&\epsilon^{2p}
\end{pmatrix}$$ 
is an isomorphism between $K_3^{28}(\xi_1)$ and $K_3^{28}(\xi_2)$.
\end{proof}

\subsection{Extensions of $(L,  \res x_3 = x_4)$}
 First, we find a basis for $Z^2(L,\F)$. Let $(\phi,\omega)=(a\Delta _{12}+b\Delta_{13}+c\Delta_{14}+d\Delta_{23}+e\Delta_{24}+f\Delta_{34},\alpha f_1+\beta f_2+ \gamma f_3+\delta f_4)\in Z^2(L,\F)$. Then we must have $\delta^2\phi(x,y,z) =0$ and $\phi(x,\res y)=0$, for all $x,y,z \in L$. Therefore, we have
\begin{align*}
&0=(\delta^2\phi)(x_1,x_2,x_4)=\phi([x_1,x_2],x_4)+\phi([x_2,x_4],x_1)+\phi([x_4,x_1],x_2)=\phi(x_3,x_4), \text{ and }\\
&0=(\delta^2\phi)(x_1,x_2,x_3)=\phi([x_1,x_2],x_3)+\phi([x_2,x_3],x_1)+\phi([x_3,x_1],x_2)=\phi(x_2,x_4).
\end{align*}
Thus, we get $f=e=0$.
Also, we have $\phi(x,\res y)=0$. Therefore, $\phi(x,x_4)=0$, for all $x\in L$ and hence $\phi(x_1,x_4)=\phi(x_2,x_4)=\phi(x_3,x_4)=0$ which implies that $c=e=f=0$. Therefore, $Z^2(L,\F)$
has a basis consisting of:
$$
 (\Delta_{12},0),(\Delta_{13},0),(\Delta_{23},0),(0,f_1),
(0,f_2),(0,f_3),(0,f_4).
$$
Next, we find a basis for $B^2(L,\F)$. Let $(\phi,\omega)\in B^2(L,\F)$. Since $B^2(L,\F)\subseteq Z^2(L,\F)$, we have $(\phi,\omega)=(a\Delta _{12}+b\Delta_{13}+c\Delta_{23},\alpha f_1+\beta f_2+ \gamma f_3+\delta f_4)$. Note that there exists a linear  map $\psi:L\to \F$ such that $\delta^1\psi(x,y)=\phi(x,y)$ and $\tilde \psi(x)=\omega(x)$, for all $x,y \in L$. So, we have
\begin{align*}
&a=\phi(x_1,x_2)=\delta^1\psi(x_1,x_2)=\psi([x_1,x_2])=\psi(x_3), \text{ and }\\
&b=\phi(x_1,x_3)=\delta^1\psi(x_1,x_3)=\psi([x_1,x_3])=\psi(x_4), \text{ and }\\
&c=\phi(x_2,x_3)=\delta^1\psi(x_2,x_3)=\psi([x_2,x_3])=0.
\end{align*}
Also, we have
\begin{align*}
\gamma=\omega(x_3)&=\tilde \psi(x_3)=\psi(\res x_3)=\psi(x_4), \text{ and }\\
\alpha=\omega(x_1)&=\tilde \psi(x_1)=\psi(\res x_1)=0.
\end{align*}
Similarly, we can show that $\beta=\delta=0$. Note that $\psi(x_4)=b=\gamma$.  Therefore, $(\phi,\omega)=(a\Delta_{12}+b\Delta_{13},bf_3)$ and hence $B^2(L,\F)=\la(\Delta_{13},f_3),(\Delta_{12},0)\ra_{\F}$. Note that
$$
\bigg\{[(\Delta_{13},0)],[(\Delta_{23},0)],[(0,f_1)],
[(0,f_2)],[(0,f_3)],[(0,f_4)]\bigg\}
$$
spans $H^2(L,\F)$.
Since $[(\Delta_{13},0)]+[(0,f_3)]=[(\Delta_{13},f_3)]=[0]$, then $[(0,f_3)]$ is an scalar multiple of $[(\Delta_{13},0)]$
in $H^2(L,\F)$. Note that $\dim H^2=\dim Z^2-\dim B^2=5$. Therefore, 
$$
\bigg \{[(\Delta_{13},0)],[(\Delta_{23},0)],[(0,f_1)],[(0,f_2)],[(0,f_4)]\bigg\}
$$
 forms a basis for $H^2(L,\F)$.
 
  Note that he group $\Aut_p(L)$ in this case consists of invertible matrices of the form
$$\begin{pmatrix}
 a_{11}  & 0 & 0 & 0 \\
a_{21} & a_{22} & 0 & 0\\
a_{31} & a_{32} & r & 0 \\
a_{41} & a_{42} & a_{11}a_{32}& a_{11}r
\end{pmatrix},$$
where $r = a_{11}a_{22} \neq 0$, $a_{31}=a_{32}=0$ and $a_{11}r=r^p$.

Let $[\theta]=[(\phi,\omega)]\in H^2(L,\F)$. As in Section \ref{L_{5,3} mod x5 trivial pmap},  it is enough to find $\Aut_p(L)$-representatives of the $\omega$'s.
Let $\omega =\alpha f_1+\beta f_2+\delta f_4$, for some $\alpha,\beta,\delta \in \F$. Suppose that $A\omega =\alpha' f_1+\beta' f_2+\delta' f_4$, for some  $\alpha',\beta',\delta' \in \F$. We have
\begin{align*}
&A\omega (x_1)=a_{11}^p\alpha+a_{21}^p\beta+a_{41}^p\delta;\\
&A\omega (x_2)=a_{22}^p\beta+a_{42}^p\delta;\\
&A\omega (x_4)=a_{11}^p r^p \delta.
\end{align*}
In the matrix form we can write this as
\begin{align*}
\begin{pmatrix} 
\alpha'\\
\beta'\\
\delta'
\end{pmatrix}=
\begin{pmatrix}
 a_{11}^p& a_{21}^p&a_{41}^p\\
0&a_{22}^p&a_{42}^p\\
0&0&a_{11}^pr^p
\end{pmatrix}
\begin{pmatrix} 
\alpha \\
\beta\\
\delta
\end{pmatrix}.
\end{align*}
Note that we need to have $A\omega(x_3)=0$ which implies that $a_{11}^pa_{32}^p\delta=0$.
Now we find the representatives of the orbits of this action. Note that we take $a_{32}=0$. Therefore, $a_{11}^pa_{32}^p\delta=0$.
Let $\nu = \begin{pmatrix}
\alpha \\
\beta\\
\delta
\end{pmatrix} \in \F^3$. If $\nu= \begin{pmatrix}
0 \\
0 \\
0
\end{pmatrix}$, then $\{\nu \}$ is clearly an $\Aut_p(L)$-orbit. Suppose that $\delta \neq 0$. Then 
\begin{align*}
&\begin{pmatrix}
1&0&-\alpha/\delta\\
0& 1& -\beta/\delta\\
0&0&1\\
\end{pmatrix}
\begin{pmatrix} 
\alpha\\
\beta\\
\delta
\end{pmatrix}=
\begin{pmatrix}
0\\
0\\
\delta
\end{pmatrix}, \text{ and } \\
&\begin{pmatrix}
(1/\delta)^\frac{p-1}{p}& 0&0\\
0&(1/\delta)^\frac{2-p}{p}& 0\\
0&0&1/\delta
\end{pmatrix}
\begin{pmatrix}
0 \\
0 \\
\delta
\end{pmatrix}=
\begin{pmatrix} 
0\\
0\\
1
\end{pmatrix}.
\end{align*}
 Next, if $\delta=0$, but $\beta \neq 0$, then
 $$
\begin{pmatrix}
1& -\alpha /\beta&0\\
0& 1&0\\
0&0&1
\end{pmatrix}
\begin{pmatrix}
\alpha\\
\beta\\
0
\end{pmatrix}=
\begin{pmatrix}
0\\
\beta\\
0
\end{pmatrix}. $$

 Finally, if $\beta =\delta=0$, but $\alpha \neq 0$, then we have
$
\begin{pmatrix}
\alpha\\
0 \\
0
\end{pmatrix}.$
Thus the following elements are $\Aut (L)$-orbit representatives:
\begin{align*}
\begin{pmatrix}
0\\ 
0\\
0
\end{pmatrix},
\begin{pmatrix}
\alpha \\ 
0\\
0
\end{pmatrix},
\begin{pmatrix}
0 \\ 
\beta\\
0
\end{pmatrix},
\begin{pmatrix}
0\\ 
0\\
1
\end{pmatrix}.
\end{align*}
Therefore, the corresponding restricted Lie algebra structures are as follows:
\begin{align*}
&K_3^{29}=\langle x_1,\ldots,x_5\mid [x_1,x_2]=x_3,[x_1,x_3]=x_4, 
\res x_3= x_4 \rangle;\\
&K_3^{30}(\alpha)=\langle x_1,\ldots,x_5\mid [x_1,x_2]=x_3,[x_1,x_3]=x_4, 
\res x_1=\alpha x_5, \res x_3= x_4 \rangle;\\
&K_3^{31}(\beta)=\langle x_1,\ldots,x_5\mid [x_1,x_2]=x_3,[x_1,x_3]=x_4, 
\res x_2=\beta x_5, \res x_3=x_4 \rangle;\\
&K_3^{32}=\langle x_1,\ldots,x_5\mid [x_1,x_2]=x_3,[x_1,x_3]=x_4, 
\res x_3=x_4, \res x_4=x_5 \rangle.
\end{align*}

\begin{lemma}
We have
$K_3^{30}(\alpha)\cong \langle x_1,\ldots,x_5\mid [x_1,x_2]=x_3,[x_1,x_3]=x_4, 
\res x_1= x_5, \res x_3= x_4 \rangle$,
for for every $\alpha\in \F^*$.
\end{lemma}
\begin{proof}
The following automorphism of $L_{5,3}$ gives us the desired isomorphism:
$$\begin{pmatrix}
1&0&0&0&0\\
0& 1&0&0&0\\
0&0&1&0&0\\
0&0&0&1&0\\
0&0&0&0&1/\alpha\\
\end{pmatrix}.$$
\end{proof}

\begin{lemma}
We have
$K_3^{31}(\beta)\cong \langle x_1,\ldots,x_5\mid [x_1,x_2]=x_3,[x_1,x_3]=x_4, 
\res x_2=x_5, \res x_3=x_4 \rangle$,
for for every $\beta\in \F^*$.
\end{lemma}
\begin{proof}
The following automorphism of $L_{5,3}$ gives us the desired isomorphism:
$$\begin{pmatrix}
1&0&0&0&0\\
0& 1&0&0&0\\
0&0&1&0&0\\
0&0&0&1&0\\
0&0&0&0&1/\beta\\
\end{pmatrix}.$$
\end{proof}

Therefore, the corresponding restricted Lie algebra structures are as follows:
\begin{align*}
&K_3^{29}=\langle x_1,\ldots,x_5\mid [x_1,x_2]=x_3,[x_1,x_3]=x_4, 
\res x_3= x_4 \rangle;\\
&K_3^{30}=\langle x_1,\ldots,x_5\mid [x_1,x_2]=x_3,[x_1,x_3]=x_4, 
\res x_1= x_5, \res x_3= x_4 \rangle;\\
&K_3^{31}=\langle x_1,\ldots,x_5\mid [x_1,x_2]=x_3,[x_1,x_3]=x_4, 
\res x_2= x_5, \res x_3=x_4 \rangle;\\
&K_3^{32}=\langle x_1,\ldots,x_5\mid [x_1,x_2]=x_3,[x_1,x_3]=x_4, 
\res x_3=x_4, \res x_4=x_5 \rangle.
\end{align*}
\subsection{A list of restricted Lie algebra structures on $L_{5,3}$}

Therefore, the list of all (possibly redundant) restricted Lie algebra structures on $L_{5,3}$ is as follows:
\begin{align*}
&K_3^{1}=\langle x_1,\ldots,x_5\mid [x_1,x_2]=x_3,[x_1,x_3]=x_4 \rangle;\\
&K_3^{2}=\langle x_1,\ldots,x_5\mid [x_1,x_2]=x_3,[x_1,x_3]=x_4, 
 \res x_3=x_4, \res x_5=x_4 \rangle;\\
&K_3^{3}=\langle x_1,\ldots,x_5\mid [x_1,x_2]=x_3,[x_1,x_3]=x_4, 
 \res x_3=x_4 \rangle;\\
&K_3^{4}(\beta)=\langle x_1,\ldots,x_5\mid [x_1,x_2]=x_3,[x_1,x_3]=x_4,
 \res x_2=\beta x_4 \rangle;\\
&K_3^{5}=\langle x_1,\ldots,x_5\mid [x_1,x_2]=x_3,[x_1,x_3]=x_4, 
 \res x_1=x_4 \rangle;\\
&K_3^{6}=\langle x_1,\ldots,x_5\mid [x_1,x_2]=x_3,[x_1,x_3]=x_4, 
 \res x_5=x_4 \rangle;\\
 &K_3^{7}=\langle x_1,\ldots,x_5\mid [x_1,x_2]=x_3,[x_1,x_3]=x_4 ,
\res x_1=x_5 \rangle;\\
&K_3^{8}(\beta)=\langle x_1,\ldots,x_5\mid [x_1,x_2]=x_3,[x_1,x_3]=x_4, 
\res x_1=x_5, \res x_2=\beta x_4 \rangle;\\
&K_3^{9}=\langle x_1,\ldots,x_5\mid [x_1,x_2]=x_3,[x_1,x_3]=x_4, 
\res x_1=x_5, \res x_3=x_4 \rangle;\\
&K_3^{10}=\langle x_1,\ldots,x_5\mid [x_1,x_2]=x_3,[x_1,x_3]=x_4,
\res x_1=x_5, \res x_5=x_4 \rangle;\\
&K_3^{11}(\gamma)=\langle x_1,\ldots,x_5\mid [x_1,x_2]=x_3,[x_1,x_3]=x_4,
\res x_1=x_5, \res x_3=\gamma x_4 ,\res x_5=x_4 \rangle;\\
&K_3^{12}=\langle x_1,\ldots,x_5\mid [x_1,x_2]=x_3,[x_1,x_3]=x_4 ,
\res x_3=x_5 \rangle;\\
&K_3^{13}=\langle x_1,\ldots,x_5\mid [x_1,x_2]=x_3,[x_1,x_3]=x_4, 
\res x_1=x_4, \res x_3=x_5 \rangle;\\
&K_3^{14}(\beta)=\langle x_1,\ldots,x_5\mid [x_1,x_2]=x_3,[x_1,x_3]=x_4, 
\res x_2=\beta x_4, \res x_3=x_5 \rangle;\\
&K_3^{15}=\langle x_1,\ldots,x_5\mid [x_1,x_2]=x_3,[x_1,x_3]=x_4,
\res x_3=x_5, \res x_5=x_4 \rangle;\\
&K_3^{16}=\langle x_1,\ldots,x_5\mid [x_1,x_2]=x_3,[x_1,x_3]=x_4  \rangle;\\
&K_3^{17}=\langle x_1,\ldots,x_5\mid [x_1,x_2]=x_3,[x_1,x_3]=x_4, 
\res x_1=x_5 \rangle;\\
&K_3^{18}=\langle x_1,\ldots,x_5\mid [x_1,x_2]=x_3,[x_1,x_3]=x_4, 
\res x_2=x_5  \rangle;\\
&K_3^{19}=\langle x_1,\ldots,x_5\mid [x_1,x_2]=x_3,[x_1,x_3]=x_4, 
\res x_3=x_5 \rangle;\\
&K_3^{20}=\langle x_1,\ldots,x_5\mid [x_1,x_2]=x_3,[x_1,x_3]=x_4, 
\res x_4=x_5 \rangle;\\
&K_3^{21}=\langle x_1,\ldots,x_5\mid [x_1,x_2]=x_3,[x_1,x_3]=x_4 ,
\res x_1=x_4 \rangle;\\
&K_3^{22}=\langle x_1,\ldots,x_5\mid [x_1,x_2]=x_3,[x_1,x_3]=x_4, 
\res x_1=x_4, \res x_2=x_5 \rangle;\\
&K_3^{23}=\langle x_1,\ldots,x_5\mid [x_1,x_2]=x_3,[x_1,x_3]=x_4, 
\res x_1=x_4, \res x_3= x_5 \rangle;\\
&K_3^{24}=\langle x_1,\ldots,x_5\mid [x_1,x_2]=x_3,[x_1,x_3]=x_4, 
\res x_1=x_4, \res x_4=x_5 \rangle;\\
&K_3^{25}(\xi)=\langle x_1,\ldots,x_5\mid [x_1,x_2]=x_3,[x_1,x_3]=x_4, 
\res x_2=\xi x_4  \rangle;\\
&K_3^{26}(\xi)=\langle x_1,\ldots,x_5\mid [x_1,x_2]=x_3,[x_1,x_3]=x_4, 
\res x_1= x_5, \res x_2=\xi x_4 \rangle;\\
&K_3^{27}(\xi)=\langle x_1,\ldots,x_5\mid [x_1,x_2]=x_3,[x_1,x_3]=x_4, 
\res x_2=\xi x_4, \res x_3=x_5 \rangle;\\
&K_3^{28}(\xi)=\langle x_1,\ldots,x_5\mid [x_1,x_2]=x_3,[x_1,x_3]=x_4, 
\res x_2=\xi x_4, \res x_4= x_5  \rangle;\\
&K_3^{29}=\langle x_1,\ldots,x_5\mid [x_1,x_2]=x_3,[x_1,x_3]=x_4, 
\res x_3= x_4 \rangle;\\
&K_3^{30}=\langle x_1,\ldots,x_5\mid [x_1,x_2]=x_3,[x_1,x_3]=x_4, 
\res x_1=x_5, \res x_3= x_4 \rangle;\\
&K_3^{31}=\langle x_1,\ldots,x_5\mid [x_1,x_2]=x_3,[x_1,x_3]=x_4, 
\res x_2= x_5, \res x_3=x_4 \rangle;\\
&K_3^{32}=\langle x_1,\ldots,x_5\mid [x_1,x_2]=x_3,[x_1,x_3]=x_4, 
\res x_3=x_4, \res x_4=x_5 \rangle.
\end{align*}
\section{Detecting isomorphisms}
We can easily see that some of the algebras above are identical.

\begin{theorem}\label{thm L53}
The list of all restricted Lie algebra structures on $L_{5,3}$, up to isomorphism, is as follows:
\begin{align*}
&L_{5,3}^{1}=\langle x_1,\ldots,x_5\mid [x_1,x_2]=x_3,[x_1,x_3]=x_4 \rangle;\\
&L_{5,3}^{2}=\langle x_1,\ldots,x_5\mid [x_1,x_2]=x_3,[x_1,x_3]=x_4, 
 \res x_3=x_4, \res x_5=x_4 \rangle;\\
&L_{5,3}^{3}=\langle x_1,\ldots,x_5\mid [x_1,x_2]=x_3,[x_1,x_3]=x_4, 
 \res x_3=x_4 \rangle;\\
&L_{5,3}^{4}(\beta)=\langle x_1,\ldots,x_5\mid [x_1,x_2]=x_3,[x_1,x_3]=x_4,
 \res x_2=\beta x_4 \rangle;\\
&L_{5,3}^{5}=\langle x_1,\ldots,x_5\mid [x_1,x_2]=x_3,[x_1,x_3]=x_4, 
 \res x_1=x_4 \rangle;\\
&L_{5,3}^{6}=\langle x_1,\ldots,x_5\mid [x_1,x_2]=x_3,[x_1,x_3]=x_4, 
 \res x_5=x_4 \rangle;\\
&L_{5,3}^{7}=\langle x_1,\ldots,x_5\mid [x_1,x_2]=x_3,[x_1,x_3]=x_4 ,
\res x_1=x_5 \rangle;\\
&L_{5,3}^{8}(\beta)=\langle x_1,\ldots,x_5\mid [x_1,x_2]=x_3,[x_1,x_3]=x_4, 
\res x_1=x_5, \res x_2=\beta x_4 \rangle;\\
&L_{5,3}^{9}=\langle x_1,\ldots,x_5\mid [x_1,x_2]=x_3,[x_1,x_3]=x_4, 
\res x_1=x_5, \res x_3=x_4 \rangle;\\
&L_{5,3}^{10}=\langle x_1,\ldots,x_5\mid [x_1,x_2]=x_3,[x_1,x_3]=x_4,
\res x_1=x_5, \res x_5=x_4 \rangle;\\
&L_{5,3}^{11}=\langle x_1,\ldots,x_5\mid [x_1,x_2]=x_3,[x_1,x_3]=x_4 ,
\res x_3=x_5 \rangle;\\
&L_{5,3}^{12}=\langle x_1,\ldots,x_5\mid [x_1,x_2]=x_3,[x_1,x_3]=x_4, 
\res x_1=x_4, \res x_3=x_5 \rangle;\\
&L_{5,3}^{13}(\beta)=\langle x_1,\ldots,x_5\mid [x_1,x_2]=x_3,[x_1,x_3]=x_4, 
\res x_2=\beta x_4, \res x_3=x_5 \rangle;\\
&L_{5,3}^{14}=\langle x_1,\ldots,x_5\mid [x_1,x_2]=x_3,[x_1,x_3]=x_4,
\res x_3=x_5, \res x_5=x_4 \rangle;\\
&L_{5,3}^{15}=\langle x_1,\ldots,x_5\mid [x_1,x_2]=x_3,[x_1,x_3]=x_4, 
\res x_2=x_5  \rangle;\\
&L_{5,3}^{16}=\langle x_1,\ldots,x_5\mid [x_1,x_2]=x_3,[x_1,x_3]=x_4, 
\res x_4=x_5 \rangle;\\
&L_{5,3}^{17}=\langle x_1,\ldots,x_5\mid [x_1,x_2]=x_3,[x_1,x_3]=x_4, 
\res x_1=x_4, \res x_2=x_5 \rangle;\\
&L_{5,3}^{18}=\langle x_1,\ldots,x_5\mid [x_1,x_2]=x_3,[x_1,x_3]=x_4, 
\res x_1=x_4, \res x_4=x_5 \rangle;\\
&L_{5,3}^{19}(\xi)=\langle x_1,\ldots,x_5\mid [x_1,x_2]=x_3,[x_1,x_3]=x_4, 
\res x_2=\xi x_4, \res x_4=x_5  \rangle;\\
&L_{5,3}^{20}=\langle x_1,\ldots,x_5\mid [x_1,x_2]=x_3,[x_1,x_3]=x_4, 
\res x_2=x_5, \res x_3=x_4 \rangle;\\
&L_{5,3}^{21}=\langle x_1,\ldots,x_5\mid [x_1,x_2]=x_3,[x_1,x_3]=x_4, 
\res x_3=x_4, \res x_4=x_5 \rangle;\\
&L_{5,3}^{22}(\gamma)=\langle x_1,\ldots,x_5\mid [x_1,x_2]=x_3,[x_1,x_3]=x_4,
\res x_1=x_5, \res x_3=\gamma x_4,\res x_5=x_4 \rangle
\end{align*}
where $\beta, \xi \in T_2$.
\end{theorem}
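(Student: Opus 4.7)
The plan is to deduce the theorem from the list of candidate algebras $K_3^{1},\dots,K_3^{32}$ constructed in the previous section by (i) collapsing duplicates that differ only by a relabeling of basis vectors, (ii) removing further redundancies via explicit automorphisms of $L_{5,3}$, and (iii) establishing pairwise non-isomorphism of the surviving twenty-two algebras together with the correct parameter identification for the families depending on $\beta$ and $\xi$.

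First, by simple inspection of the presentations already noted at the end of the construction, $K_3^{16}=K_3^{1}$ (both trivial restrictions), $K_3^{17}=K_3^{7}$, $K_3^{25}(\xi)=K_3^{4}(\beta)$, $K_3^{26}(\xi)=K_3^{8}(\beta)$, and $K_3^{27}(\xi)=K_3^{14}(\beta)$. This already eliminates several of the 32 algebras and matches up the two families coming from the two different reductions (modding out $\la x_4\ra$ versus $\la x_5\ra$). The remaining algebras from the first reduction that do not yet correspond to ones from the second reduction will be re-indexed as $L_{5,3}^{1},\dots,L_{5,3}^{22}$, with the new parametric family $L_{5,3}^{19}(\xi)=K_3^{28}(\xi)$.

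Next, I would look for further isomorphisms arising from automorphisms of the underlying Lie algebra $L_{5,3}$. The group $\Aut(L_{5,3})$ was computed in the previous section, and a carefully chosen element (for instance one that swaps $x_4$ and $x_5$ after rescaling, or one that acts nontrivially on the centre) can identify some of the remaining $K_3^i$. In particular, I expect the identifications among algebras that only differ by relabeling of the central elements $x_4$ and $x_5$ to account for most of the reduction from the 27 or so surviving candidates down to the 22 in the statement. The lemmas $K_3^{4}(\beta_1)\cong K_3^{4}(\beta_2)\iff \beta_2\beta_1^{-1}\in(\F^*)^2$, and the analogous statements for $K_3^{8},K_3^{14},K_3^{28}$ already proved in the preceding subsections, supply the parameter identification for the four parametric families.

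Finally, and following exactly the strategy used for $L_{5,2}$ in Theorem \ref{thm L52}, I would separate the twenty-two surviving algebras by restricted invariants. Most pairs are distinguished immediately by the invariants $\dim L^{[p]}$, $\dim L^{[p]^2}$, $\dim L^{[p]^3}$, or by the dimension of $L/\la L^{[p]}\ra_p$ (cf.\ Lemma \ref{invariants}); for the remaining pairs that share these crude invariants I would assume a putative isomorphism $A:L_{5,3}^{i}\to L_{5,3}^{j}$, write it in the matrix form supplied by $\Aut(L_{5,3})$ (recall the automorphism matrix requires $a_{12}=a_{13}=a_{14}=a_{15}=0$ and $a_{33}=a_{11}a_{22}$, etc.), and compare the two sides of $A(\res x_k)=\res{A(x_k)}$ for well-chosen $k$. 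In each case I expect the equation to force a minor of the top-left $2\times 2$ block to vanish, contradicting $a_{11}a_{22}\neq 0$.

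The main obstacle will be handling the parametric families $L_{5,3}^{4}(\beta)$, $L_{5,3}^{8}(\beta)$, $L_{5,3}^{13}(\beta)$, and $L_{5,3}^{19}(\xi)$: I must both show that different $\Aut(L_{5,3})$-orbits give non-isomorphic algebras (with parameters $\beta$ identified modulo $(\F^*)^2$) and make sure that no such parametric algebra is accidentally isomorphic to one of the non-parametric ones with coinciding $p$-power invariants. The first part is handled by the lemmas already proved; the second part will require a careful case analysis using the stronger invariant $L/\la L^{[p]}\ra_p$ together with the centralizer of $\res L$, as in the $L_{5,2}$ argument.
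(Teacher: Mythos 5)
Your proposal follows essentially the same route as the paper: list the candidates $K_3^{1},\dots,K_3^{32}$, discard the redundant ones, invoke the already-proved lemmas identifying $\beta$ (resp.\ $\xi$) modulo $(\F^*)^2$ for the parametric families, and then separate the surviving algebras pairwise using the invariants $\dim (L^{[p]^k})$ together with explicit contradictions of the form $A(\res x_k)=\res{A(x_k)}\Rightarrow a_{11}a_{22}=0$ read off from the matrix shape of $\Aut(L_{5,3})$. One small correction: no automorphism of $L_{5,3}$ can swap $x_4$ and $x_5$ (since $x_4\in[L_{5,3},L_{5,3}]$ but $x_5$ is not), and in fact no automorphism argument is needed at this stage — the ten redundant $K_3^i$ (namely $K_3^{16},K_3^{17},K_3^{19},K_3^{21},K_3^{23},K_3^{25},K_3^{26},K_3^{27},K_3^{29},K_3^{30}$) are literally identical presentations to earlier ones, which already brings the count to $22$.
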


In the remaining of this section we establish that the algebras given in Theorem \ref{thm L53} are pairwise non-isomorphic, thereby
completing the proof of Theorem \ref{thm L53}.

It is clear that $L_{5,3}^1$ is not isomorphic to the other restricted Lie algebras.
We claim that $L_{5,3}^2$ and $L_{5,3}^{3}$ are not isomorphic. Suppose to the contrary that there exists an isomorphism $A:L_{5,3}^2\to L_{5,3}^3$. Then 
\begin{align*}
&A(\res x_5)=\res {A(x_5)}\\
&A(x_4)=\res {(a_{45}x_4+a_{55}x_5)}\\
 &a_{11}^2a_{22}x_4=0.
 \end{align*}
Therefore, $a_{11}=0$ or $a_{22}=0$ which is a contradiction.
  
Next, we claim that $L_{5,3}^2$ and $L_{5,3}^{4}(\beta)$ are not isomorphic. Suppose to the contrary that there exists an isomorphism $A:L_{5,3}^2\to L_{5,3}^4(\beta)$. Then 
\begin{align*}
&A(\res x_5)=\res {A(x_5)}\\
&A(x_4)=\res {(a_{45}x_4+a_{55}x_5)}\\
 &a_{11}^2a_{22}x_4=0.
 \end{align*}
Therefore, $a_{11}=0$ or $a_{22}=0$ which is a contradiction.
 
Next, we claim that $L_{5,3}^2$ and $L_{5,3}^{5}$ are not isomorphic. Suppose to the contrary that there exists an isomorphism $A:L_{5,3}^2\to L_{5,3}^5$. Then 
\begin{align*}
&A(\res x_5)=\res {A(x_5)}\\
&A(x_4)=\res {(a_{45}x_4+a_{55}x_5)}\\
 &a_{11}^2a_{22}x_4=0.
 \end{align*}
Therefore, $a_{11}=0$ or $a_{22}=0$ which is a contradiction.

Next, we claim that $L_{5,3}^2$ and $L_{5,3}^6$ are not isomorphic. Suppose to the contrary that there exists an isomorphism $A:L_{5,3}^2\to L_{5,3}^6$. Then 
\begin{align*}
&A(\res x_3)=\res {A(x_3)}\\
&A(x_4)=\res {(a_{11}a_{22}x_3+a_{11}a_{32}x_4)}\\
 &a_{11}^2a_{22}x_4=0,\\ 
 \end{align*}
 which implies that $a_{11}^2a_{22}=0$. Therefore, we have $a_{11}=0$ or $a_{22}=0$ which is a contradiction.

Next, we claim that $L_{5,3}^2$ and $L_{5,3}^{7}$ are not isomorphic. Suppose to the contrary that there exists an isomorphism $A:L_{5,3}^2\to L_{5,3}^7$. Then 
\begin{align*}
&A(\res x_5)=\res {A(x_5)}\\
&A(x_4)=\res {(a_{45}x_4+a_{55}x_5)}\\
 &a_{11}^2a_{22}x_4=0.
 \end{align*}
Therefore, $a_{11}=0$ or $a_{22}=0$ which is a contradiction.
 
 Next, we claim that $L_{5,3}^2$ and $L_{5,3}^{11}$ are not isomorphic. Suppose to the contrary that there exists an isomorphism $A:L_{5,3}^2\to L_{5,3}^{11}$. Then 
\begin{align*}
&A(\res x_5)=\res {A(x_5)}\\
&A(x_4)=\res {(a_{45}x_4+a_{55}x_5)}\\
 &a_{11}^2a_{22}x_4=0.
 \end{align*}
Therefore, $a_{11}=0$ or $a_{22}=0$ which is a contradiction.
 
 Next, we claim that $L_{5,3}^2$ and $L_{5,3}^{15}$ are not isomorphic. Suppose to the contrary that there exists an isomorphism $A:L_{5,3}^2\to L_{5,3}^{15}$. Then 
\begin{align*}
&A(\res x_5)=\res {A(x_5)}\\
&A(x_4)=\res {(a_{45}x_4+a_{55}x_5)}\\
 &a_{11}^2a_{22}x_4=0.
 \end{align*}
Therefore, $a_{11}=0$ or $a_{22}=0$ which is a contradiction.
 
Next, we claim that $L_{5,3}^2$ and $L_{5,3}^{16}$ are not isomorphic. Suppose to the contrary that there exists an isomorphism $A:L_{5,3}^2\to L_{5,3}^{16}$. Then 
\begin{align*}
&A(\res x_4)=\res {A(x_4)}\\
&0=\res {(a_{11}^2a_{22}x_4)}\\
&0=a_{11}^{2p}a_{22}^px_5. 
 \end{align*}
 Therefore, $a_{11}=0$ or $a_{22}=0$ which is a contradiction.
 
It is clear that $L_{5,3}^2$ is not isomorphic to the other restricted Lie algebras. 
 
Next, we claim that $L_{5,3}^3$ and $L_{5,3}^{4}(\beta)$ are not isomorphic. Suppose to the contrary that there exists an isomorphism $A:L_{5,3}^3\to L_{5,3}^{4}(\beta)$. Then 
\begin{align*}
&A(\res x_2)=\res {A(x_2)}\\
&0=\res {(a_{22}x_2+a_{32}x_3+a_{42}x_4+a_{52}x_5)}\\
&0=a_{22}^p\beta x_4,
 \end{align*}
 which implies that $a_{22}^p\beta=0$. Since, $\beta \neq 0$, we have $a_{22}=0$ which is a contradiction.
 
Next, we claim that $L_{5,3}^3$ and $L_{5,3}^{5}$ are not isomorphic. Suppose to the contrary that there exists an isomorphism $A:L_{5,3}^3\to L_{5,3}^{5}$. Then 
\begin{align*}
&A(\res x_1)=\res {A(x_1)}\\
&0=\res {(a_{11}x_1+a_{21}x_2+a_{31}x_3+a_{41}x_4+a_{51}x_5)}\\
 &0=a_{11}^px_4.
 \end{align*}
Therefore, $a_{11}=0$ which is a contradiction.

Next, we claim that $L_{5,3}^3$ and $L_{5,3}^{6}$ are not isomorphic. Suppose to the contrary that there exists an isomorphism $A:L_{5,3}^3\to L_{5,3}^{6}$. Then 
\begin{align*}
&A(\res x_3)=\res {A(x_3)}\\
&A(x_4)=\res {(a_{11}a_{22}x_3+a_{11}a_{32}x_4)}\\
 &a_{11}^2a_{22}x_4=0,\\ 
 \end{align*}
 which implies that $a_{11}^2a_{22}=0$. Therefore, we have $a_{11}=0$ or $a_{22}=0$ which is a contradiction.
 
Next, we claim that $L_{5,3}^3$ and $L_{5,3}^{7}$ are not isomorphic. Suppose to the contrary that there exists an isomorphism $A:L_{5,3}^3\to L_{5,3}^{7}$. Then 
\begin{align*}
&A(\res x_1)=\res {A(x_1)}\\
&0=\res {(a_{11}x_1+a_{21}x_2+a_{31}x_3+a_{41}x_4+a_{51}x_5)}\\
 &0=a_{11}^px_5.
 \end{align*}
Therefore, $a_{11}=0$ which is a contradiction.
 
Next, we claim that $L_{5,3}^3$ and $L_{5,3}^{11}$ are not isomorphic. Suppose to the contrary that there exists an isomorphism $A:L_{5,3}^3\to L_{5,3}^{11}$. Then 
\begin{align*}
&A(\res x_3)=\res {A(x_3)}\\
&A(x_4)=\res {(a_{11}a_{22}x_3+a_{11}a_{32}x_4)}\\
 &a_{11}^2a_{22}x_4=a_{11}^pa_{22}^px_5,\\ 
 \end{align*}
 which implies that $a_{11}^2a_{22}=0$ and $a_{11}^pa_{22}^p=0$. Therefore, we have $a_{11}=0$ or $a_{22}=0$ which is a contradiction.
 
Next, we claim that $L_{5,3}^3$ and $L_{5,3}^{15}$ are not isomorphic. Suppose to the contrary that there exists an isomorphism $A:L_{5,3}^3\to L_{5,3}^{15}$. Then 
\begin{align*}
&A(\res x_2)=\res {A(x_2)}\\
&0=\res {(a_{22}x_2+a_{32}x_3+a_{42}x_4+a_{52}x_5)}\\
&0=a_{22}^p x_5.
 \end{align*}
Therefore, $a_{22}=0$ which is a contradiction.
 
Next, we claim that $L_{5,3}^3$ and $L_{5,3}^{16}$ are not isomorphic. Suppose to the contrary that there exists an isomorphism $A:L_{5,3}^3\to L_{5,3}^{16}$. Then 
\begin{align*}
&A(\res x_4)=\res {A(x_4)}\\
&0=\res {(a_{11}^2a_{22}x_4)}\\
&0=a_{11}^{2p}a_{22}^px_5. 
 \end{align*}
 Therefore, $a_{11}=0$ or $a_{22}=0$ which is a contradiction.
 
 It is clear that $L_{5,3}^3$ is not isomorphic to the other restricted Lie algebras.

Next, we claim that $L_{5,3}^4(\beta)$ and $L_{5,3}^{5}$ are not isomorphic. Suppose to the contrary that there exists an isomorphism $A:L_{5,3}^4(\beta)\to L_{5,3}^{5}$. Then 
\begin{align*}
&A(\res x_1)=\res {A(x_1)}\\
&0=\res {(a_{11}x_1+a_{21}x_2+a_{31}x_3+a_{41}x_4+a_{51}x_5)}\\
 &0=a_{11}^px_4.
 \end{align*}
Therefore, $a_{11}=0$ which is a contradiction.
  
Next, we claim that $L_{5,3}^4(\beta)$ and $L_{5,3}^{6}$ are not isomorphic. Suppose to the contrary that there exists an isomorphism $A:L_{5,3}^6\to L_{5,3}^{4}(\beta)$. Then 
\begin{align*}
&A(\res x_5)=\res {A(x_5)}\\
&A(x_4)=\res {(a_{45}x_4+a_{55}x_5)}\\
 &a_{11}^2a_{22}x_4=0.
 \end{align*}
Therefore, $a_{11}=0$ or $a_{22}=0$ which is a contradiction.
  
Next, we claim that $L_{5,3}^4(\beta)$ and $L_{5,3}^{7}$ are not isomorphic. Suppose to the contrary that there exists an isomorphism $A:L_{5,3}^4(\beta)\to L_{5,3}^{7}$. Then 
\begin{align*}
&A(\res x_1)=\res {A(x_1)}\\
&0=\res {(a_{11}x_1+a_{21}x_2+a_{31}x_3+a_{41}x_4+a_{51}x_5)}\\
 &0=a_{11}^px_5.
 \end{align*}
Therefore, $a_{11}=0$ which is a contradiction.
  
Next, we claim that $L_{5,3}^4(\beta)$ and $L_{5,3}^{11}$ are not isomorphic. Suppose to the contrary that there exists an isomorphism $A:L_{5,3}^4(\beta)\to L_{5,3}^{11}$. Then 
\begin{align*}
&A(\res x_3)=\res {A(x_3)}\\
&0=\res {(a_{11}a_{22}x_3+a_{11}a_{32}x_4)}\\
 &0=a_{11}^pa_{22}^px_5,\\ 
 \end{align*}
 which implies that $a_{11}^pa_{22}^p=0$. Therefore, we have $a_{11}=0$ or $a_{22}=0$ which is a contradiction.
  
Next, we claim that $L_{5,3}^4(\beta)$ and $L_{5,3}^{15}$ are not isomorphic. Suppose to the contrary that there exists an isomorphism $A:L_{5,3}^4(\beta)\to L_{5,3}^{15}$. Then 
\begin{align*}
&A(\res x_2)=\res {A(x_2)}\\
&A(x_4)=\res {(a_{22}x_2+a_{32}x_3+a_{42}x_4+a_{52}x_5)}\\
&a_{11}^2a_{22}x_4=a_{22}^p x_5.
 \end{align*}
Therefore, $a_{22}=0$ which is a contradiction.
 
Next, we claim that $L_{5,3}^4(\beta)$ and $L_{5,3}^{16}$ are not isomorphic. Suppose to the contrary that there exists an isomorphism $A:L_{5,3}^4(\beta)\to L_{5,3}^{16}$. Then 
\begin{align*}
&A(\res x_4)=\res {A(x_4)}\\
&0=\res {(a_{11}^2a_{22}x_4)}\\
&0=a_{11}^{2p}a_{22}^px_5. 
 \end{align*}
 Therefore, $a_{11}=0$ or $a_{22}=0$ which is a contradiction.
 
  It is clear that $L_{5,3}^4(\beta)$ is not isomorphic to the other restricted Lie algebras. 
 
Next, we claim that $L_{5,3}^5$ and $L_{5,3}^{6}$ are not isomorphic. Suppose to the contrary that there exists an isomorphism $A:L_{5,3}^6\to L_{5,3}^5$. Then 
\begin{align*}
&A(\res x_5)=\res {A(x_5)}\\
&A(x_4)=\res {(a_{45}x_4+a_{55}x_5)}\\
 &a_{11}^2a_{22}x_4=0.
 \end{align*}
Therefore, $a_{11}=0$ or $a_{22}=0$ which is a contradiction.
  
Next, we claim that $L_{5,3}^5$ and $L_{5,3}^{7}$ are not isomorphic. Suppose to the contrary that there exists an isomorphism $A:L_{5,3}^5\to L_{5,3}^{7}$. Then 
\begin{align*}
&A(\res x_1)=\res {A(x_1)}\\
&A(x_4)=\res {(a_{11}x_1+a_{21}x_2+a_{31}x_3+a_{41}x_4+a_{51}x_5)}\\
 &a_{11}^2a_{22}x_4=a_{11}^px_5.
 \end{align*}
Therefore, $a_{11}=0$ which is a contradiction.
   
Next, we claim that $L_{5,3}^5$ and $L_{5,3}^{11}$ are not isomorphic. Suppose to the contrary that there exists an isomorphism $A:L_{5,3}^5\to L_{5,3}^{11}$. Then 
\begin{align*}
&A(\res x_3)=\res {A(x_3)}\\
&0=\res {(a_{11}a_{22}x_3+a_{11}a_{32}x_4)}\\
 &0=a_{11}^pa_{22}^px_5,\\ 
 \end{align*}
 which implies that $a_{11}^pa_{22}^p=0$. Therefore, we have $a_{11}=0$ or $a_{22}=0$ which is a contradiction.
   
Next, we claim that $L_{5,3}^5$ and $L_{5,3}^{15}$ are not isomorphic. Suppose to the contrary that there exists an isomorphism $A:L_{5,3}^5\to L_{5,3}^{15}$. Then 
\begin{align*}
&A(\res x_2)=\res {A(x_2)}\\
&0=\res {(a_{22}x_2+a_{32}x_3+a_{42}x_4+a_{52}x_5)}\\
&0=a_{22}^p x_5.
 \end{align*}
Therefore, $a_{22}=0$ which is a contradiction.
  
Next, we claim that $L_{5,3}^5$ and $L_{5,3}^{16}$ are not isomorphic. Suppose to the contrary that there exists an isomorphism $A:L_{5,3}^5\to L_{5,3}^{16}$. Then 
\begin{align*}
&A(\res x_4)=\res {A(x_4)}\\
&0=\res {(a_{11}^2a_{22}x_4)}\\
&0=a_{11}^{2p}a_{22}^px_5. 
 \end{align*}
 Therefore, $a_{11}=0$ or $a_{22}=0$ which is a contradiction.
 
It is clear that $L_{5,3}^5$ is not isomorphic to the other restricted Lie algebras. 
 
Next, we claim that $L_{5,3}^6$ and $L_{5,3}^{7}$ are not isomorphic. Suppose to the contrary that there exists an isomorphism $A:L_{5,3}^6\to L_{5,3}^{7}$. Then 
\begin{align*}
&A(\res x_1)=\res {A(x_1)}\\
&0=\res {(a_{11}x_1+a_{21}x_2+a_{31}x_3+a_{41}x_4+a_{51}x_5)}\\
 &0=a_{11}^px_5.
 \end{align*}
Therefore, $a_{11}=0$ which is a contradiction.
   
Next, we claim that $L_{5,3}^6$ and $L_{5,3}^{11}$ are not isomorphic. Suppose to the contrary that there exists an isomorphism $A:L_{5,3}^6\to L_{5,3}^{11}$. Then 
\begin{align*}
&A(\res x_3)=\res {A(x_3)}\\
&0=\res {(a_{11}a_{22}x_3+a_{11}a_{32}x_4)}\\
 &0=a_{11}^pa_{22}^px_5,\\ 
 \end{align*}
 which implies that $a_{11}^pa_{22}^p=0$. Therefore, we have $a_{11}=0$ or $a_{22}=0$ which is a contradiction.
 
 Next, we claim that $L_{5,3}^6$ and $L_{5,3}^{15}$ are not isomorphic. Suppose to the contrary that there exists an isomorphism $A:L_{5,3}^6\to L_{5,3}^{15}$. Then 
\begin{align*}
&A(\res x_2)=\res {A(x_2)}\\
&0=\res {(a_{22}x_2+a_{32}x_3+a_{42}x_4+a_{52}x_5)}\\
&0=a_{22}^p x_5.
 \end{align*}
Therefore, $a_{22}=0$ which is a contradiction.
  
Next, we claim that $L_{5,3}^6$ and $L_{5,3}^{16}$ are not isomorphic. Suppose to the contrary that there exists an isomorphism $A:L_{5,3}^6\to L_{5,3}^{16}$. Then 
\begin{align*}
&A(\res x_4)=\res {A(x_4)}\\
&0=\res {(a_{11}^2a_{22}x_4)}\\
&0=a_{11}^{2p}a_{22}^px_5. 
 \end{align*}
 Therefore, $a_{11}=0$ or $a_{22}=0$ which is a contradiction.
  
It is clear that $L_{5,3}^6$ is not isomorphic to the other restricted Lie algebras.  
 
Next, we claim that $L_{5,3}^7$ and $L_{5,3}^{11}$ are not isomorphic. Suppose to the contrary that there exists an isomorphism $A:L_{5,3}^7\to L_{5,3}^{11}$. Then 
\begin{align*}
&A(\res x_3)=\res {A(x_3)}\\
&0=\res {(a_{11}a_{22}x_3+a_{11}a_{32}x_4)}\\
 &0=a_{11}^pa_{22}^px_5,\\ 
 \end{align*}
 which implies that $a_{11}^pa_{22}^p=0$. Therefore, we have $a_{11}=0$ or $a_{22}=0$ which is a contradiction.
 
 Next, we claim that $L_{5,3}^7$ and $L_{5,3}^{15}$ are not isomorphic. Suppose to the contrary that there exists an isomorphism $A:L_{5,3}^7\to L_{5,3}^{15}$. Then 
\begin{align*}
&A(\res x_2)=\res {A(x_2)}\\
&0=\res {(a_{22}x_2+a_{32}x_3+a_{42}x_4+a_{52}x_5)}\\
&0=a_{22}^p x_5.
 \end{align*}
Therefore, $a_{22}=0$ which is a contradiction.

Next, we claim that $L_{5,3}^7$ and $L_{5,3}^{16}$ are not isomorphic. Suppose to the contrary that there exists an isomorphism $A:L_{5,3}^7\to L_{5,3}^{16}$. Then 
\begin{align*}
&A(\res x_4)=\res {A(x_4)}\\
&0=\res {(a_{11}^2a_{22}x_4)}\\
&0=a_{11}^{2p}a_{22}^px_5. 
 \end{align*}
 Therefore, $a_{11}=0$ or $a_{22}=0$ which is a contradiction.
  
It is clear that $L_{5,3}^7$ is not isomorphic to the other restricted Lie algebras.  
 
Next, we claim that $L_{5,3}^8(\beta)$ and $L_{5,3}^{9}$ are not isomorphic. Suppose to the contrary that there exists an isomorphism $A:L_{5,3}^8(\beta)\to L_{5,3}^{9}$. Then 
\begin{align*}
&A(\res x_3)=\res {A(x_3)}\\
&0=\res {(a_{11}a_{22}x_3+a_{11}a_{32}x_4)}\\
 &0=a_{11}^pa_{22}^px_4,\\ 
 \end{align*}
 which implies that $a_{11}^pa_{22}^p=0$. Therefore, we have $a_{11}=0$ or $a_{22}=0$ which is a contradiction.
 
Note that $L_{5,3}^{8}(\beta)$ is not isomorphic to any of $L_{5,3}^{10}$, $L_{5,3}^{14}$, $L_{5,3}^{18}$, $L_{5,3}^{19}(\xi)$, $L_{5,3}^{21}$, $L_{5,3}^{22}(\gamma)$
 because 
$(L_{5,3}^8)^{[p]^2}=0$  but this is not true for those restricted Lie algebras.

  
Next, we claim that $L_{5,3}^8(\beta)$ and $L_{5,3}^{12}$ are not isomorphic. Suppose to the contrary that there exists an isomorphism $A:L_{5,3}^8(\beta)\to L_{5,3}^{12}$. Then 
\begin{align*}
&A(\res x_3)=\res {A(x_3)}\\
&0=\res {(a_{11}a_{22}x_3+a_{11}a_{32}x_4)}\\
 &0=a_{11}^pa_{22}^px_5,\\ 
 \end{align*}
 which implies that $a_{11}^pa_{22}^p=0$. Therefore, we have $a_{11}=0$ or $a_{22}=0$ which is a contradiction.

Next, we claim that $L_{5,3}^8(\beta)$ and $L_{5,3}^{13}(\beta)$ are not isomorphic. Suppose to the contrary that there exists an isomorphism $A:L_{5,3}^8(\beta)\to L_{5,3}^{13}(\beta)$. Then 
\begin{align*}
&A(\res x_3)=\res {A(x_3)}\\
&0=\res {(a_{11}a_{22}x_3+a_{11}a_{32}x_4)}\\
 &0=a_{11}^pa_{22}^px_5,\\ 
 \end{align*}
 which implies that $a_{11}^pa_{22}^p=0$. Therefore, we have $a_{11}=0$ or $a_{22}=0$ which is a contradiction.


 Next, we claim that $L_{5,3}^8(\beta)$ and $L_{5,3}^{17}$ are not isomorphic. Suppose to the contrary that there exists an isomorphism $A:L_{5,3}^8(\beta)\to L_{5,3}^{17}$. Then 
\begin{align*}
&A(\res x_2)=\res {A(x_2)}\\
&A(\beta x_4)=\res {(a_{22}x_2+a_{32}x_3+a_{42}x_4+a_{52}x_5)}\\
&\beta a_{11}^2a_{22}x_4=a_{22}^p x_5.
 \end{align*}
Therefore, $a_{22}=0$ which is a contradiction.

  
  
Next, we claim that $L_{5,3}^8(\beta)$ and $L_{5,3}^{20}$ are not isomorphic. Suppose to the contrary that there exists an isomorphism $A:L_{5,3}^8(\beta)\to L_{5,3}^{20}$. Then 
\begin{align*}
&A(\res x_3)=\res {A(x_3)}\\
&0=\res {(a_{11}a_{22}x_3+a_{11}a_{32}x_4)}\\
 &0=a_{11}^pa_{22}^px_4,\\ 
 \end{align*}
 which implies that $a_{11}^pa_{22}^p=0$. Therefore, we have $a_{11}=0$ or $a_{22}=0$ which is a contradiction.


It is clear that $L_{5,3}^8(\beta)$ is not isomorphic to the other restricted Lie algebras.  

Note that $L_{5,3}^9$ is not isomorphic to any of $L_{5,3}^{10}$, $L_{5,3}^{14}$, $L_{5,3}^{18}$, $L_{5,3}^{19}(\xi)$, $L_{5,3}^{21}$, $L_{5,3}^{22}(\gamma)$
 because 
$(L_{5,3}^9)^{[p]^2}=0$  but this is not true for those restricted Lie algebras.


Next, we claim that $L_{5,3}^9$ and $L_{5,3}^{12}$ are not isomorphic. Suppose to the contrary that there exists an isomorphism $A:L_{5,3}^9\to L_{5,3}^{12}$. Then 
\begin{align*}
&A(\res x_3)=\res {A(x_3)}\\
&A(x_4)=\res {(a_{11}a_{22}x_3+a_{11}a_{32}x_4)}\\
 &a_{11}^2a_{22}x_4=a_{11}^pa_{22}^px_5,\\ 
 \end{align*}
 which implies that $a_{11}^2a_{22}=0$ and $a_{11}^pa_{22}^p=0$. Therefore, we have $a_{11}=0$ or $a_{22}=0$ which is a contradiction.

Next, we claim that $L_{5,3}^9$ and $L_{5,3}^{13}(\beta)$ are not isomorphic. Suppose to the contrary that there exists an isomorphism $A:L_{5,3}^9\to L_{5,3}^{13}(\beta)$. Then 
\begin{align*}
&A(\res x_2)=\res {A(x_2)}\\
&0=\res {(a_{22}x_2+a_{32}x_3+a_{42}x_4+a_{52}x_5)}\\
&0=a_{22}^p\beta x_4+a_{32}^px_5,
 \end{align*}
 which implies that $a_{22}^p\beta=0$. Since, $\beta \neq 0$ we have $a_{22}=0$ which is a contradiction.


Next, we claim that $L_{5,3}^9$ and $L_{5,3}^{17}$ are not isomorphic. Suppose to the contrary that there exists an isomorphism $A:L_{5,3}^9\to L_{5,3}^{17}$. Then 
\begin{align*}
&A(\res x_2)=\res {A(x_2)}\\
&0=\res {(a_{22}x_2+a_{32}x_3+a_{42}x_4+a_{52}x_5)}\\
&0=a_{22}^p x_5.
 \end{align*}
 Therefore, $a_{22}=0$ which is a contradiction.

  

Next, we claim that $L_{5,3}^9$ and $L_{5,3}^{20}$ are not isomorphic. Suppose to the contrary that there exists an isomorphism $A:L_{5,3}^9\to L_{5,3}^{20}$. Then 
\begin{align*}
&A(\res x_2)=\res {A(x_2)}\\
&0=\res {(a_{22}x_2+a_{32}x_3+a_{42}x_4+a_{52}x_5)}\\
&0=a_{22}^p x_5+a_{32}^px_4.
 \end{align*}
 Therefore, $a_{22}=0$ which is a contradiction.

  
It is clear that $L_{5,3}^9$ is not isomorphic to the other restricted Lie algebras. 

Note that $L_{5,3}^{10}$ is not isomorphic to any of $L_{5,3}^{12}$, $L_{5,3}^{13}(\beta)$, $L_{5,3}^{17}$, $L_{5,3}^{20}$
 because 
$(L_{5,3}^{10})^{[p]^2}\neq 0$  but this is not true for those restricted Lie algebras.



Next, we claim that $L_{5,3}^{10}$ and $L_{5,3}^{14}$ are not isomorphic. Suppose to the contrary that there exists an isomorphism $A:L_{5,3}^{10}\to L_{5,3}^{14}$. Then 
\begin{align*}
&A(\res x_3)=\res {A(x_3)}\\
&0=\res {(a_{11}a_{22}x_3+a_{11}a_{32}x_4)}\\
 &0=a_{11}^pa_{22}^px_5,\\ 
 \end{align*}
 which implies that $a_{11}^pa_{22}^p=0$. Therefore, we have $a_{11}=0$ or $a_{22}=0$ which is a contradiction.


Next, we claim that $L_{5,3}^{10}$ and $L_{5,3}^{18}$ are not isomorphic. Suppose to the contrary that there exists an isomorphism $A:L_{5,3}^{10}\to L_{5,3}^{18}$. Then 
\begin{align*}
&A(\res x_4)=\res {A(x_4)}\\
&0=\res {(a_{11}^2a_{22}x_4)}\\
&0=a_{11}^{2p}a_{22}^px_5. 
 \end{align*}
 Therefore, $a_{11}=0$ or $a_{22}=0$ which is a contradiction.

Next, we claim that $L_{5,3}^{10}$ and $L_{5,3}^{19}(\xi)$ are not isomorphic. Suppose to the contrary that there exists an isomorphism $A:L_{5,3}^{10}\to L_{5,3}^{19}(\xi)$. Then 
\begin{align*}
&A(\res x_4)=\res {A(x_4)}\\
&0=\res {(a_{11}^2a_{22}x_4)}\\
&0=a_{11}^{2p}a_{22}^px_5. 
 \end{align*}
 Therefore, $a_{11}=0$ or $a_{22}=0$ which is a contradiction.


Next, we claim that $L_{5,3}^{10}$ and $L_{5,3}^{21}$ are not isomorphic. Suppose to the contrary that there exists an isomorphism $A:L_{5,3}^{10}\to L_{5,3}^{21}$. Then 
\begin{align*}
&A(\res x_3)=\res {A(x_3)}\\
&0=\res {(a_{11}a_{22}x_3+a_{11}a_{32}x_4)}\\
 &0=a_{11}^pa_{22}^px_4+a_{11}^pa_{32}^px_5,\\ 
 \end{align*}
 which implies that $a_{11}^pa_{22}^p=0$. Therefore, we have $a_{11}=0$ or $a_{22}=0$ which is a contradiction.
 
Next, we claim that $L_{5,3}^{10}$ and $L_{5,3}^{22}(\gamma)$ are not isomorphic. Suppose to the contrary that there exists an isomorphism $A:L_{5,3}^{10}\to L_{5,3}^{22}(\gamma)$. Then 
\begin{align*}
&A(\res x_3)=\res {A(x_3)}\\
&0=\res {(a_{11}a_{22}x_3+a_{11}a_{32}x_4)}\\
 &0=a_{11}^pa_{22}^p\gamma x_4,\\ 
 \end{align*}
 which implies that $a_{11}^pa_{22}^p=0$. Therefore, we have $a_{11}=0$ or $a_{22}=0$ which is a contradiction.

It is clear that $L_{5,3}^{10}$ is not isomorphic to the other restricted Lie algebras.  

Next, we claim that $L_{5,3}^{11}$ and $L_{5,3}^{15}$ are not isomorphic. Suppose to the contrary that there exists an isomorphism $A:L_{5,3}^{11}\to L_{5,3}^{15}$. Then 
\begin{align*}
&A(\res x_2)=\res {A(x_2)}\\
&0=\res {(a_{22}x_2+a_{32}x_3+a_{42}x_4+a_{52}x_5)}\\
&0=a_{22}^p x_5.
 \end{align*}
 Therefore, $a_{22}=0$ which is a contradiction.

Next, we claim that $L_{5,3}^{11}$ and $L_{5,3}^{16}$ are not isomorphic. Suppose to the contrary that there exists an isomorphism $A:L_{5,3}^{11}\to L_{5,3}^{16}$. Then 
\begin{align*}
&A(\res x_4)=\res {A(x_4)}\\
&0=\res {(a_{11}^2a_{22}x_4)}\\
&0=a_{11}^{2p}a_{22}^px_5. 
 \end{align*}
 Therefore, $a_{11}=0$ or $a_{22}=0$ which is a contradiction.

It is clear that $L_{5,3}^{11}$ is not isomorphic to the other restricted Lie algebras.  

Next, we claim that $L_{5,3}^{12}$ and $L_{5,3}^{13}(\beta)$ are not isomorphic. Suppose to the contrary that there exists an isomorphism $A:L_{5,3}^{12}\to L_{5,3}^{13}(\beta)$. Then 
\begin{align*}
&A(\res x_2)=\res {A(x_2)}\\
&0=\res {(a_{22}x_2+a_{32}x_3+a_{42}x_4+a_{52}x_5)}\\
&0=a_{22}^p \beta x_4+a_{32}^px_5,
 \end{align*}
 which implies that $a_{22}^p\beta \neq 0$. Since, $\beta \neq 0$ we have $a_{22}=0$ which is a contradiction.

Note that $L_{5,3}^{12}$ is not isomorphic to any of $L_{5,3}^{14}$, $L_{5,3}^{18}$, $L_{5,3}^{19}(\xi)$, $L_{5,3}^{21}$,
 $L_{5,3}^{22}(\gamma)$ because 
$(L_{5,3}^{12})^{[p]^2}=0$  but this is not true for those restricted Lie algebras.

  
Next, we claim that $L_{5,3}^{12}$ and $L_{5,3}^{17}$ are not isomorphic. Suppose to the contrary that there exists an isomorphism $A:L_{5,3}^{12}\to L_{5,3}^{17}$. Then 
\begin{align*}
&A(\res x_2)=\res {A(x_2)}\\
&0=\res {(a_{22}x_2+a_{32}x_3+a_{42}x_4+a_{52}x_5)}\\
&0=a_{22}^p x_5.
 \end{align*}
Therefore, $a_{22}=0$ which is a contradiction.



Next, we claim that $L_{5,3}^{12}$ and $L_{5,3}^{20}$ are not isomorphic. Suppose to the contrary that there exists an isomorphism $A:L_{5,3}^{12}\to L_{5,3}^{20}$. Then 
\begin{align*}
&A(\res x_2)=\res {A(x_2)}\\
&0=\res {(a_{22}x_2+a_{32}x_3+a_{42}x_4+a_{52}x_5)}\\
&0=a_{22}^p x_5+a_{32}^px_4.
 \end{align*}
Therefore, $a_{22}=0$ which is a contradiction.


It is clear that $L_{5,3}^{12}$ is not isomorphic to the other restricted Lie algebras. 

Note that $L_{5,3}^{13}$ is not isomorphic to any of $L_{5,3}^{14}$, $L_{5,3}^{18}$, $L_{5,3}^{19}(\xi)$, $L_{5,3}^{21}$,
$L_{5,3}^{22}(\gamma)$  because 
$(L_{5,3}^{13})^{[p]^2}=0$  but this is not true for those restricted Lie algebras.

  
Next, we claim that $L_{5,3}^{13}(\beta)$ and $L_{5,3}^{17}$ are not isomorphic. Suppose to the contrary that there exists an isomorphism $A:L_{5,3}^{13}(\beta)\to L_{5,3}^{17}$. Then 
\begin{align*}
&A(\res x_1)=\res {A(x_1)}\\
&0=\res {(a_{11}x_1+a_{21}x_2+a_{31}x_3+a_{41}x_4+a_{51}x_5)}\\
 &0=a_{11}^px_4+a_{21}^px_5.
 \end{align*}
Therefore, $a_{11}=0$ which is a contradiction.
   
   

Next, we claim that $L_{5,3}^{13}(\beta)$ and $L_{5,3}^{20}$ are not isomorphic. Suppose to the contrary that there exists an isomorphism $A:L_{5,3}^{13}(\beta)\to L_{5,3}^{20}$. Then 
\begin{align*}
&A(\res x_2)=\res {A(x_2)}\\
&A(\beta x_4)=\res {(a_{22}x_2+a_{32}x_3+a_{42}x_4+a_{52}x_5)}\\
&\beta a_{11}^2a_{22}x_4=a_{22}^p x_5+a_{32}^px_4.
 \end{align*}
Therefore, $a_{22}=0$ which is a contradiction.

 
 It is clear that $L_{5,3}^{13}(\beta)$ is not isomorphic to the other restricted Lie algebras. 
 
 Note that $L_{5,3}^{14}$ is not isomorphic to any of $L_{5,3}^{17}$, $L_{5,3}^{20}$
 because 
$(L_{5,3}^{14})^{[p]^2}\neq 0$  but this is not true for those restricted Lie algebras.


Next, we claim that $L_{5,3}^{14}$ and $L_{5,3}^{18}$ are not isomorphic. Suppose to the contrary that there exists an isomorphism $A:L_{5,3}^{14}\to L_{5,3}^{18}$. Then 
\begin{align*}
&A(\res x_4)=\res {A(x_4)}\\
&0=\res {(a_{11}^2a_{22}x_4)}\\
&0=a_{11}^{2p}a_{22}^px_5. 
 \end{align*}
Therefore, $a_{11}=0$ or $a_{22}=0$ which is a contradiction.
 
Next, we claim that $L_{5,3}^{14}$ and $L_{5,3}^{19}(\xi)$ are not isomorphic. Suppose to the contrary that there exists an isomorphism $A:L_{5,3}^{14}\to L_{5,3}^{19}(\xi)$. Then 
\begin{align*}
&A(\res x_4)=\res {A(x_4)}\\
&0=\res {(a_{11}^2a_{22}x_4)}\\
&0=a_{11}^{2p}a_{22}^px_5. 
 \end{align*}
Therefore, $a_{11}=0$ or $a_{22}=0$ which is a contradiction.
 

Next, we claim that $L_{5,3}^{14}$ and $L_{5,3}^{21}$ are not isomorphic. Suppose to the contrary that there exists an isomorphism $A:L_{5,3}^{14}\to L_{5,3}^{21}$. Then 
\begin{align*}
&A(\res x_4)=\res {A(x_4)}\\
&0=\res {(a_{11}^2a_{22}x_4)}\\
&0=a_{11}^{2p}a_{22}^px_5. 
 \end{align*}
Therefore, $a_{11}=0$ or $a_{22}=0$ which is a contradiction.

Next, we claim that $L_{5,3}^{14}$ and $L_{5,3}^{22}(\gamma)$ are not isomorphic. Suppose to the contrary that there exists an isomorphism $A:L_{5,3}^{22}(\gamma)\to L_{5,3}^{14}$. Then 
\begin{align*}
&A(\res x_3)=\res {A(x_3)}\\
&A(\gamma x_4)=\res {(a_{11}a_{22}x_3+a_{11}a_{32}x_4)}\\
 &a_{11}^2a_{22}\gamma x_4=a_{11}^pa_{22}^px_5,\\ 
 \end{align*}
 which implies that $a_{11}^pa_{22}^p=0$. Therefore, we have $a_{11}=0$ or $a_{22}=0$ which is a contradiction.

 It is clear that $L_{5,3}^{14}$ is not isomorphic to the other restricted Lie algebras. 

Next, we claim that $L_{5,3}^{15}$ and $L_{5,3}^{16}$ are not isomorphic. Suppose to the contrary that there exists an isomorphism $A:L_{5,3}^{15}\to L_{5,3}^{16}$. Then 
\begin{align*}
&A(\res x_4)=\res {A(x_4)}\\
&0=\res {(a_{11}^2a_{22}x_4)}\\
&0=a_{11}^{2p}a_{22}^px_5. 
 \end{align*}
Therefore, $a_{11}=0$ or $a_{22}=0$ which is a contradiction.
 
 It is clear that $L_{5,3}^{15}$ and $L_{5,3}^{16}$ are not isomorphic to the other restricted Lie algebras. 
 
 Note that $L_{5,3}^{17}$ is not isomorphic to any of $L_{5,3}^{18}$, $L_{5,3}^{19}(\xi)$, $L_{5,3}^{21}$, $L_{5,3}^{22}(\gamma)$
 because 
$(L_{5,3}^{17})^{[p]^2}=0$  but this is not true for those restricted Lie algebras.

 
 
Next, we claim that $L_{5,3}^{17}$ and $L_{5,3}^{20}$ are not isomorphic. Suppose to the contrary that there exists an isomorphism $A:L_{5,3}^{17}\to L_{5,3}^{20}$. Then 
\begin{align*}
&A(\res x_3)=\res {A(x_3)}\\
&0=\res {(a_{11}a_{22}x_3+a_{11}a_{32}x_4)}\\
 &0=a_{11}^pa_{22}^px_4,\\ 
 \end{align*}
 which implies that $a_{11}^pa_{22}^p=0$. Therefore, we have $a_{11}=0$ or $a_{22}=0$ which is a contradiction.

 
Next, we claim that $L_{5,3}^{18}$ and $L_{5,3}^{19}(\xi)$ are not isomorphic. Suppose to the contrary that there exists an isomorphism $A:L_{5,3}^{18}\to L_{5,3}^{19}(\xi)$. Then 
\begin{align*}
&A(\res x_2)=\res {A(x_2)}\\
&0=\res {(a_{22}x_2+a_{32}x_3+a_{42}x_4+a_{52}x_5)}\\
&0=a_{22}^p x_4+a_{42}^px_5.
 \end{align*}
Therefore, $a_{22}=0$ which is a contradiction.

Note that $L_{5,3}^{18}$ is not isomorphic to $L_{5,3}^{20}$
 because 
$(L_{5,3}^{18})^{[p]^2}\neq 0$  but this is not true for $L_{5,3}^{20}$.


Next, we claim that $L_{5,3}^{18}$ and $L_{5,3}^{21}$ are not isomorphic. Suppose to the contrary that there exists an isomorphism $A:L_{5,3}^{18}\to L_{5,3}^{21}$. Then 
\begin{align*}
&A(\res x_3)=\res {A(x_3)}\\
&0=\res {(a_{11}a_{22}x_3+a_{11}a_{32}x_4)}\\
 &0=a_{11}^pa_{22}^px_4+a_{11}^pa_{32}^px_5,\\ 
 \end{align*}
 which implies that $a_{11}^pa_{22}^p=0$. Therefore, we have $a_{11}=0$ or $a_{22}=0$ which is a contradiction.
 
 Next, we claim that $L_{5,3}^{18}$ and $L_{5,3}^{22}(\gamma)$ are not isomorphic. Suppose to the contrary that there exists an isomorphism $A:L_{5,3}^{18}\to L_{5,3}^{22}(\gamma)$. Then 
\begin{align*}
&A(\res x_3)=\res {A(x_3)}\\
&0=\res {(a_{11}a_{22}x_3+a_{11}a_{32}x_4)}\\
 &0=a_{11}^pa_{22}^p\gamma x_4,\\ 
 \end{align*}
 which implies that $a_{11}^pa_{22}^p=0$. Therefore, we have $a_{11}=0$ or $a_{22}=0$ which is a contradiction.

 Note that $L_{5,3}^{19}$ is not isomorphic to $L_{5,3}^{20}$
 because 
$(L_{5,3}^{19})^{[p]^2}\neq 0$  but this is not true for $L_{5,3}^{20}$.


Next, we claim that $L_{5,3}^{19}(\xi)$ and $L_{5,3}^{21}$ are not isomorphic. Suppose to the contrary that there exists an isomorphism $A:L_{5,3}^{19}(\xi)\to L_{5,3}^{21}$. Then 
\begin{align*}
&A(\res x_3)=\res {A(x_3)}\\
&0=\res {(a_{11}a_{22}x_3+a_{11}a_{32}x_4)}\\
 &0=a_{11}^pa_{22}^px_4+a_{11}^pa_{32}^px_5,\\ 
 \end{align*}
 which implies that $a_{11}^pa_{22}^p=0$. Therefore, we have $a_{11}=0$ or $a_{22}=0$ which is a contradiction.
 
 Next, we claim that $L_{5,3}^{19}$ and $L_{5,3}^{22}(\gamma)$ are not isomorphic. Suppose to the contrary that there exists an isomorphism $A:L_{5,3}^{19}\to L_{5,3}^{22}(\gamma)$. Then 
\begin{align*}
&A(\res x_3)=\res {A(x_3)}\\
&0=\res {(a_{11}a_{22}x_3+a_{11}a_{32}x_4)}\\
 &0=a_{11}^pa_{22}^p\gamma x_4,\\ 
 \end{align*}
 which implies that $a_{11}^pa_{22}^p=0$. Therefore, we have $a_{11}=0$ or $a_{22}=0$ which is a contradiction.

Note that $L_{5,3}^{20}$ is not isomorphic to $L_{5,3}^{21}$ and $L_{5,3}^{22}(\gamma)$
 because 
$(L_{5,3}^{20})^{[p]^2}=0$  but this is not true for $L_{5,3}^{21}$ and $L_{5,3}^{22}(\gamma)$.

Next, we claim that $L_{5,3}^{21}$ and $L_{5,3}^{22}(\gamma)$ are not isomorphic. Suppose to the contrary that there exists an isomorphism $A:L_{5,3}^{22}(\gamma)\to L_{5,3}^{21}$. Then 
\begin{align*}
&A(\res x_4)=\res {A(x_4)}\\
&0=\res {(a_{11}^2a_{22}x_4)}\\
 &0=a_{11}^{2p}a_{22}^px_5,\\ 
 \end{align*}
 which implies that $a_{11}^{2p}a_{22}^p=0$. Therefore, we have $a_{11}=0$ or $a_{22}=0$ which is a contradiction.


\chapter{Restriction maps on Lie algebras of 1-dimensional centre}
\section{Restriction maps on $L_{5,4}$}
Let $
L_{5,4}=\langle x_1,\ldots,x_5\mid [x_1,x_2]=x_5, [x_3,x_4]=x_5 \rangle.$  We have $Z(L_{5,4})=\la x_5\ra_{\F}$ and so $x_5^{[p]}=0$.
Let
$$L=\frac {L_{5,4}}{\la x_5\ra_{\F}}\cong L_{4,1},$$
where $L_{4,1}=\la x_1,x_2,x_3,x_4\ra$.  Note that the group $\Aut(L)$ consists of invertible matrices of the form 
\[\begin{pmatrix} a_{11}  & a_{12} & a_{13}&a_{14} \\
a_{21} & a_{22} & a_{23}&a_{24}\\
a_{31} & a_{32} & a_{33} & a_{34}\\
a_{41} & a_{42} & a_{43} & a_{44}
\end{pmatrix}.\]

\begin{lemma}\label{LemmaL54}
Let $K=L_{5,4}$ and $[p]:K\to K$ be a $p$-map on $K$ and let $L=\frac{K}{M}$ where $M=\la x_5\ra_{\F}$. Then $K\cong L_{\theta}$ where $\theta=(\Delta_{12}+\Delta_{34},\omega)\in Z^2(L,\F)$.
\end{lemma}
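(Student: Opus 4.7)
The plan is to mirror the construction used in Lemma \ref{LemmaL52} and Lemma \ref{cocycle-L53}, the only new ingredient being a two-term bracket form on $L_{5,4}$. Since $M=\la x_5\ra_{\F}$ is central in $K$ and $\res{x_5}=0$, it is a restricted ideal, and the short exact sequence
$$0\rightarrow M\rightarrow K\rightarrow L\rightarrow 0$$
of restricted Lie algebras is the setting in which Lemma \ref{Lemma 1} applies.

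First I would choose the obvious linear section $\sigma:L\to K$ by $\sigma(x_i)=x_i$ for $1\leq i\leq 4$, which satisfies $\pi\sigma=1_L$. Next I would define the pair $(\phi,\omega)$ exactly as in Lemma \ref{Lemma 1}, namely $\phi(x_i,x_j)=[\sigma(x_i),\sigma(x_j)]-\sigma([x_i,x_j])$ and $\omega(x)=\res{\sigma(x)}-\sigma(\res x)$. The core calculation is then the evaluation of $\phi$ on the six basis pairs $(x_i,x_j)$ with $1\leq i<j\leq 4$: in $L$ every bracket $[x_i,x_j]_L$ vanishes (since $x_5$ dies in the quotient), while in $K$ only $[x_1,x_2]_K$ and $[x_3,x_4]_K$ are nonzero, both equal to $x_5$. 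Hence
$$\phi(x_1,x_2)=x_5,\qquad \phi(x_3,x_4)=x_5,$$
and all other $\phi(x_i,x_j)$ vanish. Identifying $M\cong\F$ via $x_5\mapsto 1$, this reads $\phi=\Delta_{12}+\Delta_{34}$.

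The conclusion $\theta=(\phi,\omega)\in Z^2(L,\F)$ and $K\cong L_{\theta}$ is then an immediate invocation of Lemma \ref{Lemma 1}, with no additional verification needed. I do not expect any serious obstacle here: the shape of $\phi$ is rigidly forced by the two nonzero structure constants of $L_{5,4}$, and everything else is transported verbatim from the analogous lemmas in the preceding chapters.
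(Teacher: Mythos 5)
Your proposal is correct and follows essentially the same route as the paper: the identity section $\sigma(x_i)=x_i$, the computation of $\phi$ on basis pairs yielding $\phi=\Delta_{12}+\Delta_{34}$, and the final appeal to Lemma \ref{Lemma 1}. No gaps.
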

\begin{proof}
 Let $\pi :K\rightarrow L$ be the projection map. We have the exact sequence 
$$ 0\rightarrow M\rightarrow K\rightarrow L\rightarrow 0.$$
Let $\sigma :L\rightarrow K$ such that $x_i\mapsto x_i$, $1\leq i\leq 4$. Then $\sigma$ is an injective linear map and $\pi \sigma =1_L$. Now, we define $\phi : L\times L\rightarrow M$ by $\phi (x_i,x_j)=[\sigma (x_i),\sigma (x_j)]-\sigma ([x_i,x_j])$, $1\leq i,j\leq 4$ and $\omega: L\rightarrow M$ by $\omega (x)=\res {\sigma (x)} -\sigma (\res x)$. Note that
\begin{align*}
&\phi(x_1,x_2)=[\sigma(x_1),\sigma(x_2)]-\sigma([x_1,x_2])=[x_1,x_2]=x_5;\\
&\phi(x_3,x_4)=[\sigma(x_3),\sigma(x_4)]-\sigma([x_3,x_4])=x_5;\\
&\phi(x_1,x_3)=[\sigma(x_1),\sigma(x_3)]-\sigma([x_1,x_3])=0;\\
\end{align*}
Similarly, we can show that $\phi(x_1,x_4)=\phi(x_2,x_3)=\phi(x_2,x_4)=0$. Therefore, $\phi=\Delta_{12}+\Delta_{34}$.
Now, by Lemma \ref{K=L-theta}, we have $\theta=(\Delta_{12}+\Delta_{34},\omega)\in Z^2(L,\F)$ and $K\cong L_{\theta}$.
\end{proof}

Note that by Theorem \ref{4-dim}, there are five non-isomorphic restricted Lie algebra structures on $L$ given by the following $p$-maps:
\begin{enumerate}
\item[I.1]Trivial $p$-map;
\item[I.2]$\res x_1=x_2$;
\item[I.3]$\res x_1 =x_2 , \res x_3=x_4$;
\item[I.4]$\res x_1 =x_2 , \res x_2=x_3$;
\item[I.5]$\res x_1 =x_2 , \res x_2=x_3, \res x_3=x_4$.
\end{enumerate}

We make $L$ into a restricted Lie algebra by equipping it with each of the above $p$-maps. Then,  in  each  case,  we  find all possible orbit representatives of the form $(\Delta_{12}+\Delta_{34},\omega)$ under the action of $\Aut_p(L)$ on $H^2(L,\F)$. By  Lemma \ref{LemmaL54}, we do get all possible $p$-maps on  $L_{5,5}$ with the property that $x_5^{[p]}=0$. 

Consider the case I.2 where the $p$-map of $L$ is given by $\res x_1 = x_2$.
Let  $[(\phi,\omega)]\in H^2(L,\F)$. Then we must have $\phi (x,\res y)=0$, for all $x,y\in L$, where $\phi=a\Delta _{12}+b\Delta_{13}+c\Delta_{14}+d\Delta_{23}+e\Delta_{24}+f\Delta_{34}$, for some $a,b,c,d,e,f \in \F$.
Since $\res L=\la x_2 \ra_{\F}$, we get  $\phi (x,x_2)=0$, for all $x\in L$. Therefore, 
\begin{align*}
\phi (x_1,x_2)&=a\Delta_{12}(x_1,x_2)+b\Delta_{13}(x_1,x_2)+c\Delta_{14}(x_1,x_2)\\
&+d\Delta_{23}(x_1,x_2)+a\Delta_{24}(x_1,x_2)+f\Delta_{34}(x_1,x_2)=0
\end{align*}
which implies that $a=0$. Since $\phi =\Delta_{12}+\Delta_{34}$ gives us $L_{5,4}$, we deduce by Lemma \ref{LemmaL54} that $L_{5,4}$ cannot be constructed in this case.  Similarly, in cases I.3, I.4, and I.5 we can show that  we  cannot construct $L_{5,4}$. In the following subsections, we consider the remaining cases.

\subsection{Extension of $L_{5,4}/\la x_5\ra$ via the trivial $p$-map}
 First, we find a basis for $Z^2(L,\F)$. Let $(\phi,\omega)=(a\Delta _{12}+b\Delta_{13}+c\Delta_{14}+d\Delta_{23}+e\Delta_{24}+f\Delta_{34},\alpha f_1+\beta f_2+ \gamma f_3+\delta f_4)\in Z^2(L,\F)$. Then we must have $\delta^2\phi(x,y,z) =0$ and $\phi(x,\res y)=0$, for all $x,y,z \in L$.
Since $L$ is abelian and the $p$-map is trivial,  $\delta^2\phi(x,y,z) =0$ and $\phi(x,\res y)=0$, for all $x,y,z \in L$.
Therefore, a basis for $Z^2(L,\F)$ is as follows:
$$
 (\Delta_{12},0),(\Delta_{13},0),(\Delta_{14},0),(\Delta_{23},0),(\Delta_{24},0),(\Delta_{34},0),(0,f_1),
(0,f_2),(0,f_3),(0,f_4).
$$
Next, we find a basis for $B^2(L,\F)$. Let $(\phi,\omega)\in B^2(L,\F)$. Since $B^2(L,\F)\subseteq Z^2(L,\F)$, we have $(\phi,\omega)=(a\Delta _{12}+b\Delta_{13}+c\Delta_{14}+d\Delta_{23}+e\Delta_{24}+f\Delta_{34},\alpha f_1+\beta f_2+ \gamma f_3+\delta f_4)$. So, there exists a linear map $\psi:L\to \F$ such that $\delta^1\psi(x,y)=\phi(x,y)$ and $\tilde \psi(x)=\omega(x)$, for all $x,y \in L$. So, we have
\begin{align*}
a=\phi(x_1,x_2)=\delta^1\psi(x_1,x_2)=\psi([x_1,x_2])=0.
\end{align*}
Similarly, we can show that $b=c=d=e=f=0$. Also, we have
\begin{align*}
\alpha=\omega(x_1)=\tilde \psi(x_1)=\psi(\res x_1)=0.
\end{align*}
Similarly, we can show that $\beta=\gamma=\delta=0$. Therefore, $(\phi,\omega)=(0,0)$ and hence
 $B^2(L,\F)=0$. We deduce that a basis for $H^2(L,\F)$ is as follows:
$$
[ (\Delta_{12},0)],[(\Delta_{13},0)],[(\Delta_{14},0)],[(\Delta_{23},0)],[(\Delta_{24},0)],[(\Delta_{34},0)],[(0,f_1)],
[(0,f_2)],[(0,f_3)],[(0,f_4)].
$$
Let $[(\phi,\omega)] \in H^2(L,\F)$. Then we have $\phi = a\Delta_{12} + b\Delta_{13}+c\Delta_{14}+d\Delta_{23}+e\Delta_{24}+f\Delta_{34}$, for some $a,b,c,d,e,f \in \F$. Suppose that $A\phi = a'\Delta_{12}+b'\Delta_{13}+c'\Delta_{14}+ d'\Delta_{23}+e'\Delta_{24}+f'\Delta_{34}$, for some $a',b',c',d',e',f' \in \F$. Then
\begin{align*}
A\phi(x_1,x_2)&=\phi(Ax_1,Ax_2)\\
&=\phi(a_{11}x_1+a_{21}x_2+a_{31}x_3+a_{41}x_4,a_{12}x_1+a_{22}x_2+a_{32}x_3+a_{42}x_4)\\
&=(a_{11}a_{22}-a_{12}a_{21})a+( a_{11}a_{32}-a_{31}a_{12})b+(a_{11}a_{42}-a_{41}a_{12})c\\
&+(a_{21}a_{32}-a_{31}a_{22})d+(a_{21}a_{42}-a_{41}a_{22})e+(a_{31}a_{42}-a_{41}a_{32})f, \text{ and }\\
\end{align*}
\begin{align*}
A\phi(x_1,x_3)&=\phi(Ax_1,Ax_3)\\
&=\phi(a_{11}x_1+a_{21}x_2+a_{31}x_3+a_{41}x_4,a_{13}x_1+a_{23}x_2+a_{33}x_3+a_{43}x_4)\\
&=(a_{11}a_{23}-a_{21}a_{13})a+(a_{11}a_{33}-a_{31}a_{13})b+(a_{11}a_{43}-a_{13}a_{41})c\\
&+( a_{21}a_{33}-a_{31}a_{23})d+(a_{21}a_{43}-a_{23}a_{41})e+(a_{31}a_{43}-a_{33}a_{41})f, \text{ and }
\end{align*}
\begin{align*}
A\phi(x_1,x_4)&=\phi(Ax_1,Ax_4)\\
&=\phi(a_{11}x_1+a_{21}x_2+a_{31}x_3+a_{41}x_4,a_{14}x_1+a_{24}x_2+a_{34}x_3+a_{44}x_4)\\
&=(a_{11}a_{24}-a_{14}a_{21})a+(a_{11}a_{34}-a_{14}a_{31})b+(a_{11}a_{44}-a_{14}a_{41})c\\
&+(a_{21}a_{34}-a_{24}a_{31})d+(a_{21}a_{44}-a_{24}a_{41})e+(a_{31}a_{44}-a_{34}a_{41})f, \text{ and }
\end{align*}
\begin{align*}
A\phi(x_2,x_3)&=\phi(Ax_2,Ax_3)\\
&=\phi(a_{12}x_1+a_{22}x_2+a_{32}x_3+a_{42}x_4,a_{13}x_1+a_{23}x_2+a_{33}x_3+a_{43}x_4)\\
&=(a_{12}a_{23}-a_{22}a_{13})a+(a_{12}a_{33}-a_{32}a_{13})b+(a_{12}a_{43}-a_{13}a_{42})c\\
&+(a_{22}a_{33}-a_{32}a_{23})d+(a_{22}a_{43}-a_{23}a_{42})e+(a_{32}a_{43}-a_{33}a_{42})f, \text{ and }
\end{align*}
\begin{align*}
A\phi(x_2,x_4)&=\phi(Ax_2,Ax_4)\\
&=\phi(a_{12}x_1+a_{22}x_2+a_{32}x_3+a_{42}x_4,a_{14}x_1+a_{24}x_2+a_{34}x_3+a_{44}x_4)\\
&=(a_{12}a_{24}-a_{14}a_{22})a+(a_{12}a_{34}-a_{14}a_{32})b+(a_{12}a_{44}-a_{14}a_{42})c\\
&+(a_{22}a_{34}-a_{24}a_{32})d+(a_{22}a_{44}-a_{24}a_{42})e+(a_{32}a_{44}-a_{34}a_{42})f, \text{ and }
\end{align*}
\begin{align*}
A\phi(x_3,x_4)&=\phi(Ax_3,Ax_4)\\
&=\phi(a_{13}x_1+a_{23}x_2+a_{33}x_3+a_{43}x_4,a_{14}x_1+a_{24}x_2+a_{34}x_3+a_{44}x_4)\\
&=(a_{13}a_{24}-a_{14}a_{23})a+(a_{13}a_{34}-a_{14}a_{33})b+(a_{13}a_{44}-a_{14}a_{43})c\\
&+(a_{23}a_{34}-a_{24}a_{33})d+(a_{23}a_{44}-a_{24}a_{43})e+(a_{33}a_{44}-a_{34}a_{43})f.
\end{align*}
In the matrix form we can write this as
\begin{align*}
&\tiny\begin{pmatrix}
a' \\ b'\\c'\\d'\\e'\\f'
\end{pmatrix}=\\
&\tiny\begin{pmatrix}
a_{11}a_{22}-a_{12}a_{21} & a_{11}a_{32}-a_{31}a_{12}&a_{11}a_{42}-a_{41}a_{12}&a_{21}a_{32}-a_{31}a_{22}&a_{21}a_{42}-a_{41}a_{22}&a_{31}a_{42}-a_{41}a_{32} \\
a_{11}a_{23}-a_{21}a_{13}&a_{11}a_{33}-a_{31}a_{13}&a_{11}a_{43}-a_{13}a_{41}& a_{21}a_{33}-a_{31}a_{23}&a_{21}a_{43}-a_{23}a_{41}&a_{31}a_{43}-a_{33}a_{41}\\
a_{11}a_{24}-a_{14}a_{21}&a_{11}a_{34}-a_{14}a_{31}&a_{11}a_{44}-a_{14}a_{41}&a_{21}a_{34}-a_{24}a_{31}&a_{21}a_{44}-a_{24}a_{41}&a_{31}a_{44}-a_{34}a_{41}\\
a_{12}a_{23}-a_{22}a_{13}&a_{12}a_{33}-a_{32}a_{13}&a_{12}a_{43}-a_{13}a_{42}&a_{22}a_{33}-a_{32}a_{23}&a_{22}a_{43}-a_{23}a_{42}&a_{32}a_{43}-a_{33}a_{42}\\
a_{12}a_{24}-a_{14}a_{22}&a_{12}a_{34}-a_{14}a_{32}&a_{12}a_{44}-a_{14}a_{42}&a_{22}a_{34}-a_{24}a_{32}&a_{22}a_{44}-a_{24}a_{42}&a_{32}a_{44}-a_{34}a_{42}\\
a_{13}a_{24}-a_{14}a_{23}&a_{13}a_{34}-a_{14}a_{33}&a_{13}a_{44}-a_{14}a_{43}&a_{23}a_{34}-a_{24}a_{33}&a_{23}a_{44}-a_{24}a_{43}&a_{33}a_{44}-a_{34}a_{43}
\end{pmatrix}
\begin{pmatrix}
a \\ b\\c\\d\\e\\f
\end{pmatrix}.
\end{align*}
The orbit with representative 
$\begin{pmatrix}
1\\ 0\\0\\0\\0\\1
\end{pmatrix}$ of this action  gives us $L_{5,4}$.

Also, we have $\omega=\alpha f_1+\beta f_2+\gamma f_3+\delta f_4$,  for some $\alpha ,\beta ,\gamma ,\delta \in \F$. Suppose that $A\omega = \alpha' f_1+\beta' f_2+\gamma' f_3+\delta' f_4$,  for some $\alpha' ,\beta' ,\gamma' ,\delta' \in \F$. Then we can verify that  the action of $\Aut(L)$ on the set of $\omega$'s in the matrix form is as follows:
\begin{align*}
\begin{pmatrix}
\alpha' \\ 
\beta'\\
\gamma'\\
\delta'
\end{pmatrix}=
\begin{pmatrix}
a_{11}^p& a_{21}^p&a_{31}^p&a_{41}^p\\
a_{12}^p& a_{22}^p& a_{32}^p&a_{42}^p\\
a_{13}^p&a_{23}^p&a_{33}^p&a_{43}^p\\
a_{14}^p&a_{24}^p&a_{34}^p&a_{44}^p
\end{pmatrix}
\begin{pmatrix}
\alpha\\ 
\beta \\
\gamma \\
\delta
\end{pmatrix}.
\end{align*}
Now we find the representatives of the orbits of the action of $\Aut (L)$ on the set of $\omega$'s such that 
the orbit represented by
 $\begin{pmatrix}
1\\ 0\\0\\0\\0\\1
\end{pmatrix}$
 is preserved under the action of $\Aut (L)$ on the set of $\phi$'s.
Let $\nu = \begin{pmatrix}
\alpha\\ 
\beta \\
\gamma \\
\delta
\end{pmatrix} \in \F^4$. If $\nu= \begin{pmatrix}
0\\ 
0 \\
0 \\
0
\end{pmatrix}$, then $\{\nu \}$ is clearly an $\Aut(L)$-orbit. Let $\nu \neq 0$. Suppose that $\delta \neq 0$, then
\begin{align*}
&\footnotesize\bigg[\begin{pmatrix}
1&0&(-\beta/\delta)^{1/p}&0&(\alpha/\delta)^{1/p}&0\\
(-\alpha/\delta)^{1/p}& 1&(\alpha\beta/\delta^2)^{1/p}&0&(-\alpha^2/\delta^2)^{1/p}&(\alpha/\delta)^{1/p}\\
0&0&1&0&0&0\\
(-\beta/\delta)^{1/p}&0&(\beta^2/\delta^2)^{1/p}&1&(-\alpha\beta/\delta^2)^{1/p}&(\beta/\delta)^{1/p}\\
0&0&0&0&1&0\\
0&0&(\beta/\delta)^{1/p}&0&(-\alpha/\delta)^{1/p}&1
\end{pmatrix},
\begin{pmatrix}
1& 0&0&-\alpha/\delta\\
0& 1&0&-\beta/\delta\\
\beta/\delta&-\alpha/\delta&1&0\\
0&0&0&1
\end{pmatrix}\bigg]\\
&\bigg[\begin{pmatrix}
1 \\ 0\\0\\0\\0\\1
\end{pmatrix},
\begin{pmatrix}
\alpha\\ 
\beta \\
\gamma \\
\delta
\end{pmatrix}\bigg]
=\bigg[\begin{pmatrix}
1\\0\\0\\0\\0\\1
\end{pmatrix},
\begin{pmatrix}
0 \\ 
0\\
\gamma\\
\delta
\end{pmatrix}\bigg], \text{ and }
\end{align*}
\begin{align*}
& \bigg[\begin{pmatrix}
1&0& 0&0&0&0\\
0& \delta^{1/p}&(-\gamma)^{1/p}&0&0&0\\
0&0&(1/\delta)^{1/p}&0&0&0\\
0&0&0&\delta^{1/p}&(-\gamma)^{1/p}&0\\
0&0&0&0&(1/\delta)^{1/p}&0\\
0&0&0&0&0&1
\end{pmatrix},
\begin{pmatrix}
1& 0&0&0\\
0& 1&0&0\\
0&0&\delta&-\gamma\\
0&0&0&1/\delta
\end{pmatrix}\bigg]
\bigg[\begin{pmatrix}
1 \\ 0\\0\\0\\0\\1
\end{pmatrix},
\begin{pmatrix}
0\\ 
0 \\
\gamma\\
\delta
\end{pmatrix}\bigg]\\
&=
\bigg[\begin{pmatrix}
1\\0\\0\\0\\0\\1
\end{pmatrix},
\begin{pmatrix}
0 \\ 
0\\
0\\
1
\end{pmatrix}\bigg].
\end{align*}

Next, suppose that $\delta =0$, but $\beta \neq 0$, then
\begin{align*}
&\footnotesize\bigg[\begin{pmatrix}
1&0&0&0&(\gamma/\beta)^{1/p}&0\\
(-\gamma/\beta)^{1/p}& 1&0&(-\alpha/\beta)^{1/p}&(-\gamma^2/\beta^2)^{1/p}&(\gamma/\beta)^{1/p}\\
0&0&1&0&(-\alpha/\beta)^{1/p}&0\\
0&0&0&1&0&0\\
0&0&0&0&1&0\\
0&0&0&0&(-\gamma/\beta)^{1/p}&1
\end{pmatrix},
\begin{pmatrix}
1& -\alpha/\beta&0&-\gamma/\beta\\
0& 1&0&0\\
0&-\gamma/\beta&1&0\\
0&0&0&1
\end{pmatrix}\bigg]\\
&\bigg[\begin{pmatrix}
1 \\ 0\\0\\0\\0\\1
\end{pmatrix},
\begin{pmatrix}
\alpha\\ 
\beta \\
\gamma \\
0
\end{pmatrix}\bigg]
=\bigg[\begin{pmatrix}
1\\0\\0\\0\\0\\1
\end{pmatrix},
\begin{pmatrix}
0 \\ 
\beta\\
0\\
0
\end{pmatrix}\bigg], \text{ and }\\
&\bigg[\begin{pmatrix}
1&0& 0&0&0&0\\
0& \beta^{1/p}&0&0&0&0\\
0&0&\beta^{1/p}&0&0&0\\
0&0&0&(1/\beta)^{1/p}&0&0\\
0&0&0&0&(1/\beta)^{1/p}&0\\
0&0&0&0&0&1
\end{pmatrix},
\begin{pmatrix}
\beta& 0&0&0\\
0& 1/\beta&0&0\\
0&0&1&0\\
0&0&0&1
\end{pmatrix}\bigg]
\bigg[\begin{pmatrix}
1 \\ 0\\0\\0\\0\\1
\end{pmatrix},
\begin{pmatrix}
0\\ 
\beta \\
0\\
0
\end{pmatrix}\bigg]\\
&=
\bigg[\begin{pmatrix}
1\\0\\0\\0\\0\\1
\end{pmatrix},
\begin{pmatrix}
0 \\ 
1\\
0\\
0
\end{pmatrix}\bigg].
\end{align*}
Next, if $\delta=\beta=0$, but $\alpha \neq 0$, then
\begin{align*}
&\small
\bigg[\begin{pmatrix}
1&0& (\gamma/\alpha)^{1/p}&0&0&0\\
0& 1&0&0&0&0\\
0&0&1&0&0&0\\
(\gamma/\alpha)^{1/p}&0&(\gamma^2/\alpha^2)^{1/p}&1&0&(-\gamma/\alpha)^{1/p}\\
0&0&0&0&1&0\\
0&0&(-\gamma/\alpha)^{1/p}&0&0&1
\end{pmatrix},
\begin{pmatrix}
1& 0&0&0\\
0& 1&0&\gamma/\alpha\\
-\gamma/\alpha&0&1&0\\
0&0&0&1
\end{pmatrix}\bigg]
\bigg[\begin{pmatrix}
1 \\ 0\\0\\0\\0\\1
\end{pmatrix},
\begin{pmatrix}
\alpha\\ 
0\\
\gamma \\
0
\end{pmatrix}\bigg]\\
&=\bigg[\begin{pmatrix}
1\\0\\0\\0\\0\\1
\end{pmatrix},
\begin{pmatrix}
\alpha \\ 
0\\
0\\
0
\end{pmatrix}\bigg], \text{ and }
\end{align*}
\begin{align*}
&\small
 \bigg[\begin{pmatrix}
1&0& 0&0&0&0\\
0& (1/\alpha)^{1/p}&0&0&0&0\\
0&0&(1/\alpha)^{1/p}&0&0&0\\
0&0&0&\alpha^{1/p}&0&0\\
0&0&0&0&\alpha^{1/p}&0\\
0&0&0&0&0&1
\end{pmatrix},
\begin{pmatrix}
1/\alpha& 0&0&0\\
0& \alpha&0&0\\
0&0&1&0\\
0&0&0&1
\end{pmatrix}\bigg]
\bigg[\begin{pmatrix}
1 \\ 0\\0\\0\\0\\1
\end{pmatrix},
\begin{pmatrix}
\alpha\\ 
0 \\
0\\
0
\end{pmatrix}\bigg]=
\bigg[\begin{pmatrix}
1\\0\\0\\0\\0\\1
\end{pmatrix},
\begin{pmatrix}
1 \\ 
0\\
0\\
0
\end{pmatrix}\bigg].
\end{align*}
Next, if $\delta=\beta=\alpha=0$, but $\gamma \neq 0$, then
\begin{align*}
\small
 \bigg[\begin{pmatrix}
1&0& 0&0&0&0\\
0& (1/\gamma)^{1/p}&0&0&0&0\\
0&0&\gamma^{1/p}&0&0&0\\
0&0&0&(1/\gamma)^{1/p}&0&0\\
0&0&0&0&\gamma^{1/p}&0\\
0&0&0&0&0&1
\end{pmatrix},
\begin{pmatrix}
1& 0&0&0\\
0& 1&0&0\\
0&0&1/\gamma&0\\
0&0&0&\gamma
\end{pmatrix}\bigg]
\bigg[\begin{pmatrix}
1 \\ 0\\0\\0\\0\\1
\end{pmatrix},
\begin{pmatrix}
0\\ 
0 \\
\gamma\\
0
\end{pmatrix}\bigg]=
\bigg[\begin{pmatrix}
1\\0\\0\\0\\0\\1
\end{pmatrix},
\begin{pmatrix}
0 \\ 
0\\
1\\
0
\end{pmatrix}\bigg].
\end{align*}
Now we claim that the following elements are in the same $\Aut(L)$-orbit representatives.
\begin{align*}
\bigg[\begin{pmatrix}
1\\0\\0\\0\\0\\1
\end{pmatrix},
\begin{pmatrix}
1 \\ 
0\\
0\\
0
\end{pmatrix}\bigg],
\bigg[\begin{pmatrix}
1\\0\\0\\0\\0\\1
\end{pmatrix},
\begin{pmatrix}
0 \\ 
1\\
0\\
0
\end{pmatrix}\bigg],
\bigg[\begin{pmatrix}
1\\0\\0\\0\\0\\1
\end{pmatrix},
\begin{pmatrix}
0 \\ 
0\\
1\\
0
\end{pmatrix}\bigg],
\bigg[\begin{pmatrix}
1\\0\\0\\0\\0\\1
\end{pmatrix},
\begin{pmatrix}
0 \\ 
0\\
0\\
1
\end{pmatrix}\bigg].
\end{align*}
Indeed,
\begin{align*}
&\bigg[\begin{pmatrix}
1&0& 0&0&0&0\\
0& 0&0&-1&0&0\\
0&0&0&0&-1&0\\
0&1&0&0&0&0\\
0&0&1&0&0&0\\
0&0&0&0&0&1
\end{pmatrix},
\begin{pmatrix}
0& -1&0&0\\
1& 0&0&0\\
0&0&1&0\\
0&0&0&1
\end{pmatrix}\bigg]
\bigg[\begin{pmatrix}
1 \\ 0\\0\\0\\0\\1
\end{pmatrix},
\begin{pmatrix}
1\\ 
0 \\
0\\
0
\end{pmatrix}\bigg]=
\bigg[\begin{pmatrix}
1\\0\\0\\0\\0\\1
\end{pmatrix},
\begin{pmatrix}
0 \\ 
1\\
0\\
0
\end{pmatrix}\bigg],\\
&\bigg[\begin{pmatrix}
0&0& 0&0&0&1\\
0& -1&0&0&0&0\\
0&0&0&-1&0&0\\
0&0&-1&0&0&0\\
0&0&0&0&-1&0\\
1&0&0&0&0&0
\end{pmatrix},
\begin{pmatrix}
0& 0&1&0\\
0& 0&0&1\\
1&0&0&0\\
0&1&0&0
\end{pmatrix}\bigg]
\bigg[\begin{pmatrix}
1 \\ 0\\0\\0\\0\\1
\end{pmatrix},
\begin{pmatrix}
1\\ 
0 \\
0\\
0
\end{pmatrix}\bigg]=
\bigg[\begin{pmatrix}
1\\0\\0\\0\\0\\1
\end{pmatrix},
\begin{pmatrix}
0 \\ 
0\\
1\\
0
\end{pmatrix}\bigg],\\
&\bigg[\begin{pmatrix}
0&0& 0&0&0&1\\
0& 0&0&1&0&0\\
0&-1&0&0&0&0\\
0&0&0&0&1&0\\
0&0&-1&0&0&0\\
1&0&0&0&0&0
\end{pmatrix},
\begin{pmatrix}
0& 0&1&0\\
0& 0&0&1\\
0&-1&0&0\\
1&0&0&0
\end{pmatrix}\bigg]
\bigg[\begin{pmatrix}
1 \\ 0\\0\\0\\0\\1
\end{pmatrix},
\begin{pmatrix}
1\\ 
0 \\
0\\
0
\end{pmatrix}\bigg]=
\bigg[\begin{pmatrix}
1\\0\\0\\0\\0\\1
\end{pmatrix},
\begin{pmatrix}
0 \\ 
0\\
0\\
1
\end{pmatrix}\bigg].
\end{align*}
Therefore, the following elements are $\Aut(L)$-orbit representatives:
\begin{align*}
\bigg[\begin{pmatrix}
1\\0\\0\\0\\0\\1
\end{pmatrix},
\begin{pmatrix}
0 \\ 
0\\
0\\
0
\end{pmatrix}\bigg],
\bigg[\begin{pmatrix}
1\\0\\0\\0\\0\\1
\end{pmatrix},
\begin{pmatrix}
1 \\ 
0\\
0\\
0
\end{pmatrix}\bigg].
\end{align*}

\begin{theorem}
The list of all restricted Lie algebra structures on $L_{5,4}$, up to isomorphism, is as follows:
\begin{align*}
& L_{5,4}^1=\langle x_1,\ldots,x_5\mid [x_1,x_2]=x_5,[x_3,x_4]=x_5\rangle;\\
&L_{5,4}^2=\langle x_1,\ldots,x_5\mid [x_1,x_2]=x_5,[x_3,x_4]=x_5, \res x_1=x_5\rangle.
 \end{align*}
\end{theorem}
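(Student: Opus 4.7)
The plan is to assemble the theorem from the orbit analysis that has just been carried out and then dispose of the non-isomorphism part, which in this case is essentially trivial. First I would recall that by Lemma \ref{LemmaL54}, every $p$-map on $K=L_{5,4}$ arises from a cocycle of the form $\theta=(\Delta_{12}+\Delta_{34},\omega)\in Z^2(L,\F)$ with $L=L_{5,4}/\la x_5\ra\cong L_{4,1}$. By Lemma \ref{lemma}, two such extensions yield isomorphic restricted Lie algebras precisely when the corresponding cocycles lie in the same $\Aut(L)$-orbit (after rescaling), so the classification reduces to listing the $\Aut(L)$-orbits on the set of pairs $[\theta]\in H^2(L,\F)$ whose underlying bilinear form is $\Aut(L)$-equivalent to $\Delta_{12}+\Delta_{34}$.

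Next I would observe that among the five restricted Lie algebra structures on $L_{4,1}$ enumerated from \cite{SU}, only the trivial $p$-map on $L$ is compatible with $\phi=\Delta_{12}+\Delta_{34}$; in each of the other four cases the constraint $\phi(x,\res y)=0$ forces the coefficient of $\Delta_{12}$ to vanish, so $\phi$ cannot be brought to the required form and $L_{5,4}$ cannot be realized. Thus only the trivial $p$-map subsection contributes, and from the orbit computation just completed the full list of $\Aut(L)$-orbit representatives preserving $\begin{pmatrix}1&0&0&0&0&1\end{pmatrix}^t$ is exactly
\[
\bigg[\begin{pmatrix}1\\0\\0\\0\\0\\1\end{pmatrix},\begin{pmatrix}0\\0\\0\\0\end{pmatrix}\bigg], \qquad \bigg[\begin{pmatrix}1\\0\\0\\0\\0\\1\end{pmatrix},\begin{pmatrix}1\\0\\0\\0\end{pmatrix}\bigg].
\]
Translating these two pairs through the construction of $L_\theta$ in Lemma \ref{Ltheta} yields precisely $L_{5,4}^1$ (trivial $p$-map) and $L_{5,4}^2$ (with $\res x_1=x_5$).

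Finally, I would verify that $L_{5,4}^1$ and $L_{5,4}^2$ are non-isomorphic. This is the easy part, not the main obstacle: $p$-nilpotency is an invariant of restricted Lie algebra isomorphism and in fact so is the dimension of $\la L^{[p]}\ra_p$ by Lemma \ref{invariants}. For $L_{5,4}^1$ the $p$-map is identically zero, so $\la L^{[p]}\ra_p=0$, whereas for $L_{5,4}^2$ we have $x_1^{[p]}=x_5\neq 0$, so $\la L^{[p]}\ra_p=\la x_5\ra\neq 0$. Hence $L_{5,4}^1\ncong L_{5,4}^2$, completing the proof. The only genuinely nontrivial input has already been supplied by the orbit computation above; the theorem is essentially a bookkeeping statement consolidating that calculation.
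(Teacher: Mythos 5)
Your proposal is correct and follows essentially the same route as the paper: reduce to extensions of $L_{4,1}$ by the cocycle $\Delta_{12}+\Delta_{34}$ via Lemma \ref{LemmaL54}, observe that the four nontrivial $p$-maps on the quotient force the $\Delta_{12}$-coefficient to vanish so only the trivial $p$-map contributes, and collapse the candidate $\omega$-representatives to the two orbits $(0,0,0,0)$ and $(1,0,0,0)$. The only (harmless) divergence is the final non-isomorphism check, where you use the invariant $\dim\la L^{[p]}\ra_p$ from Lemma \ref{invariants} instead of the paper's appeal to Lemma \ref{lemma}; both arguments are valid.
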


\section{Restriction maps on $L_{5,5}$}
Let $
L_{5,5}=\langle x_1,\ldots,x_5\mid [x_1,x_2]=x_3, [x_1,x_3]=x_5, [x_2,x_4]=x_5\rangle.$  We have $Z(L_{5,5})=\la x_5\ra_{\F}$. 
Let
$$L=\frac {L_{5,5}}{\la x_5\ra_{\F}}\cong  L_{4,2},$$
 where $L_{4,2}=\la x_1,x_2,x_3,x_4 \mid [x_1,x_2]=x_3\ra $. Note that the group $\Aut(L)$ consists of invertible matrices of the form 
$$\begin{pmatrix}
 a_{11}  & a_{12} & 0 & 0 \\
a_{21} & a_{22} & 0 & 0\\
a_{31} & a_{32} & r & a_{34} \\
a_{41} & a_{42} & 0& a_{44}
\end{pmatrix},$$
where $r = a_{11}a_{22}-a_{12}a_{21}\neq 0$. 

\begin{lemma}\label{LemmaL55}
Let $K=L_{5,5}$ and $[p]:K\to K$ be a $p$-map on $K$ and let $L=\frac{K}{M}$ where $M=\la x_5\ra_{\F}$. Then $K\cong L_{\theta}$ where $\theta=(\Delta_{13}+\Delta_{24},\omega)\in Z^2(L,\F)$.
\end{lemma}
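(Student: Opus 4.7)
The plan is to follow exactly the same template used in the proofs of Lemmas \ref{LemmaL52}, \ref{LemmaL522}, and \ref{LemmaL54}: exhibit a specific linear section $\sigma$ of the projection $\pi: K \to L$, compute the resulting $2$-cocycle $\phi$ on the chosen basis, verify that it equals $\Delta_{13}+\Delta_{24}$, and invoke Lemma \ref{Lemma 1} to conclude that $\theta=(\phi,\omega)\in Z^2(L,\F)$ and $K\cong L_\theta$.

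First I would set up the short exact sequence $0 \to M \to K \to L \to 0$ where $M = \la x_5\ra_\F$, and define $\sigma: L \to K$ to be the $\F$-linear map sending the image of $x_i$ in $L$ back to $x_i \in K$ for $1 \le i \le 4$. This is clearly injective and satisfies $\pi\sigma = 1_L$, so $\sigma$ is a valid linear section. Then define $\phi: L\times L \to M$ and $\omega: L \to M$ by the standard formulas $\phi(x_i,x_j) = [\sigma(x_i),\sigma(x_j)] - \sigma([x_i,x_j])$ and $\omega(x) = \res{\sigma(x)} - \sigma(\res x)$.

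Next I would compute $\phi$ on the six basis pairs. Using the defining relations of $L_{5,5}$ together with the fact that $[x_1,x_2]_L = x_3$ in $L$ and all other brackets in $L$ vanish, I get
\begin{align*}
\phi(x_1,x_2) &= [x_1,x_2]_K - \sigma(x_3) = x_3 - x_3 = 0,\\
\phi(x_1,x_3) &= [x_1,x_3]_K - \sigma(0) = x_5,\\
\phi(x_2,x_4) &= [x_2,x_4]_K - \sigma(0) = x_5,\\
\phi(x_1,x_4) &= \phi(x_2,x_3) = \phi(x_3,x_4) = 0,
\end{align*}
since the corresponding brackets in $K$ are zero and the brackets in $L$ are also zero. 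Hence $\phi = \Delta_{13} + \Delta_{24}$ as claimed.

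Finally, I would appeal to Lemma \ref{Lemma 1}, which guarantees that the pair $\theta = (\phi,\omega) = (\Delta_{13}+\Delta_{24},\omega)$ lies in $Z^2(L,\F)$ and that $K \cong L_\theta$ as restricted Lie algebras. There is no real obstacle here: the argument is entirely mechanical and mirrors the preceding lemmas of the same type, the only content being the bracket computation identifying $\phi$ with $\Delta_{13}+\Delta_{24}$.
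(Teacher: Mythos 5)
Your proposal is correct and follows essentially the same argument as the paper: the same section $\sigma(x_i)=x_i$ for $1\le i\le 4$, the same computation showing $\phi=\Delta_{13}+\Delta_{24}$ on basis pairs, and the same appeal to Lemma \ref{Lemma 1} to conclude $\theta\in Z^2(L,\F)$ and $K\cong L_\theta$.
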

\begin{proof}
 Let $\pi :K\rightarrow L$ be the projection map. We have the exact sequence 
$$ 0\rightarrow M\rightarrow K\rightarrow L\rightarrow 0.$$
Let $\sigma :L\rightarrow K$ such that $x_i\mapsto x_i$, $1\leq i\leq 4$. Then $\sigma$ is an injective linear map and $\pi \sigma =1_L$. Now, we define $\phi : L\times L\rightarrow M$ by $\phi (x_i,x_j)=[\sigma (x_i),\sigma (x_j)]-\sigma ([x_i,x_j])$, $1\leq i,j\leq 4$ and $\omega: L\rightarrow M$ by $\omega (x)=\res {\sigma (x)} -\sigma (\res x)$. Note that
\begin{align*}
&\phi(x_1,x_3)=[\sigma(x_1),\sigma(x_3)]-\sigma([x_1,x_3])=[x_1,x_3]=x_5;\\
&\phi(x_2,x_4)=[\sigma(x_2),\sigma(x_4)]-\sigma([x_2,x_4])=x_5;\\
&\phi(x_1,x_2)=[\sigma(x_1),\sigma(x_2)]-\sigma([x_1,x_2])=0;\\
\end{align*}
Similarly, we can show that $\phi(x_1,x_4)=\phi(x_2,x_3)=\phi(x_3,x_4)=0$. Therefore, $\phi=\Delta_{13}+\Delta_{24}$.
Now, by Lemma \ref{K=L-theta}, we have $\theta=(\Delta_{13}+\Delta_{24},\omega)\in Z^2(L,\F)$ and $K\cong L_{\theta}$.
\end{proof}

Note that by Theorem \ref{4-dim}, there are eight non-isomorphic restricted Lie algebra structures on $L$ given by the following $p$-maps:
\begin{enumerate}
\item[II.1]Trivial $p$-map;
\item[II.2]$\res x_1=x_3;$
\item[II.3]$\res x_1 =x_4;$
\item[II.4]$ \res x_1=x_3, \res x_2=x_4;$
\item[II.5]$\res x_3=x_4;$
\item[II.6]$\res x_3=x_4, \res x_2=x_3;$
\item[II.7]$\res x_4=x_3;$
\item[II.8]$\res x_4=x_3, \res x_2=x_4.$
\end{enumerate}

 We make $L$ into a restricted Lie algebra by equipping it with each of the above $p$-maps. Then,  in  each  case,  we  find all possible orbit representatives of the form $(\Delta_{13}+\Delta_{24},\omega)$ under the action of $\Aut_p(L)$ on $H^2(L,\F)$. By  Lemma \ref{LemmaL55}, we do get all possible $p$-maps on $L_{5,5}$  with the property that $x_5^{[p]}=0$.

Consider the case I.2 where the $p$-map of $L$ is given by $\res x_1 = x_3$.
Note that $\res L=\la x_3 \ra_{\F}$. Let  $[(\phi,\omega)]\in H^2(L,\F)$. Then we must have $\phi (x,\res y)=0$, for all $x,y\in L$, where $\phi=a\Delta _{12}+b\Delta_{13}+c\Delta_{14}+d\Delta_{23}+e\Delta_{24}+f\Delta_{34}$, for some $a,b,c,d,e,f \in \F$.
Hence, $\phi (x_1,x_3)=0$ which implies that $b=0$. Since $\phi =\Delta_{13}+\Delta_{24}$ gives us $L_{5,5}$, we deduce by Lemma \ref{LemmaL55} that $L_{5,5}$ cannot be constructed in this case.  Similarly, in cases II3-II8 we can show that  we  cannot construct $L_{5,5}$. It remains to consider case II.1.

\subsection{Extension of $L_{5,5}/\la x_5\ra$ via the trivial $p$-map}
 First, we find a basis for $Z^2(L,\F)$. Let $(\phi,\omega)=(a\Delta _{12}+b\Delta_{13}+c\Delta_{14}+d\Delta_{23}+e\Delta_{24}+f\Delta_{34},\alpha f_1+\beta f_2+ \gamma f_3+\delta f_4)\in Z^2(L,\F)$. Then we must have $\delta^2\phi(x,y,z) =0$ and $\phi(x,\res y)=0$, for all $x,y,z \in L$. Therefore,
\begin{align*}
0=(\delta^2\phi)(x_1,x_2,x_4)&=\phi([x_1,x_2],x_4)+\phi([x_2,x_4],x_1)+\phi([x_4,x_1],x_2)=\phi(x_3,x_4).
\end{align*}
Thus, we get $f=0$.
Since the $p$-map is trivial, $\phi(x,\res y)=\phi(x,0)=0$, for all $x,y\in L$. Therefore, a basis for $Z^2(L,\F)$ is as follows:
$$
 (\Delta_{12},0),(\Delta_{13},0),(\Delta_{14},0),(\Delta_{23},0),(\Delta_{24},0),(0,f_1),
(0,f_2),(0,f_3),(0,f_4).
$$
Next, we find a basis for $B^2(L,\F)$. Let $(\phi,\omega)\in B^2(L,\F)$. Since $B^2(L,\F)\subseteq Z^2(L,\F)$, we have $(\phi,\omega)=(a\Delta _{12}+b\Delta_{13}+c\Delta_{14}+d\Delta_{23}+e\Delta_{24},\alpha f_1+\beta f_2+ \gamma f_3+\delta f_4)$. So, there exists a linear map $\psi:L\to \F$ such that $\delta^1\psi(x,y)=\phi(x,y)$ and $\tilde \psi(x)=\omega(x)$, for all $x,y \in L$. So, we have
\begin{align*}
b=\phi(x_1,x_3)=\delta^1\psi(x_1,x_3)=\psi([x_1,x_3])=0.
\end{align*}
Similarly, we can show that $c=d=e=0$. Also, we have
\begin{align*}
\alpha=\omega(x_1)=\tilde \psi(x_1)=\psi(\res x_1)=0.
\end{align*}
Similarly, we can show that $\beta=\gamma=\delta=0$. Therefore, $(\phi,\omega)=(a\Delta_{12},0)$ and hence
 $B^2(L,\F)=\la(\Delta_{12},0)\ra_{\F}$. We deduce that a basis for $H^2(L,\F)$ is as follows:
$$
[(\Delta_{13},0)],[(\Delta_{14},0)],[(\Delta_{23},0)],[(\Delta_{24},0)],[(0,f_1)],
[(0,f_2)],[(0,f_3)],[(0,f_4)].
$$
Let $[(\phi,\omega)] \in H^2(L,\F)$. Then we have $\phi = a\Delta_{13} + b\Delta_{14}+c\Delta_{23}+d\Delta_{24}$,  for some  $a,b,c,d \in \F$. Suppose that $A\phi =a'\Delta_{13} + b'\Delta_{14}+c'\Delta_{23}+d'\Delta_{24}$, for some $a',b',c',d' \in \F$. We can verify that the action of $\Aut(L)$ on the set of $\phi$'s in the matrix form is as follows:
\begin{align}\label{12}
\begin{pmatrix}
a' \\ b'\\c'\\d'
\end{pmatrix}=
\begin{pmatrix}
ra_{11} & 0& r a_{21}&0\\
a_{11}a_{34}& a_{11}a_{44}&a_{21}a_{34}&a_{21}a_{44}\\
r a_{12}&0&r a_{22}&0\\
a_{12}a_{34}&a_{12}a_{44}&a_{22}a_{34}&a_{22}a_{44}
\end{pmatrix}
\begin{pmatrix}
a \\ b\\c\\d
\end{pmatrix}.
\end{align}
 The orbit with representative 
$\begin{pmatrix}
1\\ 0\\0\\1
\end{pmatrix}$ of this action  gives us $L_{5,5}$.

Also, we have $\omega=\alpha f_1+\beta f_2+\gamma f_3+\delta f_4$,  for some $\alpha ,\beta ,\gamma ,\delta \in \F$. Suppose that $A\omega = \alpha' f_1+\beta' f_2+\gamma' f_3+\delta' f_4$, for some $\alpha',\beta',\gamma',\delta' \in \F$. We can verify that the action of $\Aut(L)$ on the set of $\omega$'s in the matrix form is as follows: 
\begin{align}\label{23}
\begin{pmatrix}
\alpha' \\ 
\beta'\\
\gamma'\\
\delta'
\end{pmatrix}=
\begin{pmatrix}
a_{11}^p& a_{21}^p&a_{31}^p&a_{41}^p\\
a_{12}^p& a_{22}^p& a_{32}^p&a_{42}^p\\
0&0&r ^p&0\\
0&0&a_{34}^p&a_{44}^p
\end{pmatrix}
\begin{pmatrix}
\alpha\\ 
\beta \\
\gamma \\
\delta
\end{pmatrix}.
\end{align}
Thus, we can write Equations \eqref{12} and \eqref{23} together as follows:
\begin{align*}
&\bigg[\begin{pmatrix}
r a_{11} & 0& r a_{21}&0\\
a_{11}a_{34}& a_{11}a_{44}&a_{21}a_{34}&a_{21}a_{44}\\
r a_{12}&0&r a_{22}&0\\
a_{12}a_{34}&a_{12}a_{44}&a_{22}a_{34}&a_{22}a_{44}
\end{pmatrix},
\begin{pmatrix}
a_{11}^p& a_{21}^p&a_{31}^p&a_{41}^p\\
a_{12}^p& a_{22}^p& a_{32}^p&a_{42}^p\\
0&0&r ^p&0\\
0&0&a_{34}^p&a_{44}^p
\end{pmatrix}\bigg]
\bigg[\begin{pmatrix}
a\\b\\c\\d
\end{pmatrix},
\begin{pmatrix}
\alpha\\ 
\beta \\
\gamma \\
\delta
\end{pmatrix}\bigg]\\
&=
\bigg[\begin{pmatrix}
a' \\ b'\\c'\\d'
\end{pmatrix},
\begin{pmatrix}
\alpha' \\ 
\beta'\\
\gamma'\\
\delta'
\end{pmatrix}\bigg].
\end{align*}
Now we find the representatives of the orbits of the action of $\Aut (L)$ on the set of $\omega$'s  such that 
the orbit represented by 
$\begin{pmatrix}
1 \\ 0\\0\\1
\end{pmatrix}$ is preserved under the action of $\Aut (L)$ on the set of $\phi$'s.

Let $\nu = \begin{pmatrix}
\alpha\\ 
\beta \\
\gamma \\
\delta
\end{pmatrix} \in \F^4$. If $\nu= \begin{pmatrix}
0\\ 
0 \\
0 \\
0
\end{pmatrix}$, then $\{\nu \}$ is clearly an $\Aut (L)$-orbit. Let $\nu \neq 0$. Suppose that $\gamma \neq 0$. Then 
\begin{align*}
&\bigg[\begin{pmatrix}
1& 0&0&0\\
0& 1& 0&0\\
0&0&1&0\\
0&0&0&1
\end{pmatrix},
\begin{pmatrix}
1& 0&-\alpha /\gamma&0\\
0& 1& -\beta /\gamma&0\\
0&0&1&0\\
0&0&0&1
\end{pmatrix}\bigg]
\bigg[\begin{pmatrix}
1\\
0\\
0\\
1
\end{pmatrix},
\begin{pmatrix}
\alpha\\ 
\beta \\
\gamma \\
\delta
\end{pmatrix}\bigg]=
\bigg[\begin{pmatrix}
1\\0\\0\\1
\end{pmatrix},
\begin{pmatrix}
0\\ 
0 \\
\gamma \\
\delta
\end{pmatrix}\bigg], \text{ and }\\
&\bigg[\begin{pmatrix}
1& 0&0&0\\
0& \gamma^{3/p}& 0&0\\
0&0&\gamma^{-3/p}&0\\
0&0&0&1
\end{pmatrix},
\begin{pmatrix}
\gamma& 0&0&0\\
0& \gamma^{-2}& 0&0\\
0&0&\gamma^{-1}&0\\
0&0&0&\gamma^2
\end{pmatrix}\bigg]
\bigg[\begin{pmatrix}
1\\
0\\
0\\
1
\end{pmatrix},
\begin{pmatrix}
0\\ 
0 \\
\gamma \\
\delta
\end{pmatrix}\bigg]=
\bigg[\begin{pmatrix}
1\\0\\0\\1
\end{pmatrix},
\begin{pmatrix}
0\\ 
0 \\
1 \\
\gamma^2\delta
\end{pmatrix}\bigg].
\end{align*}
Furthermore,
\begin{align*}
\bigg[\begin{pmatrix}
1& 0&\delta^{1/p}&0\\
-\delta^{1/p}&1& -\delta^{2/p}&\delta^{1/p}\\
0&0&1&0\\
0&0&-\delta^{1/p}&1
\end{pmatrix},
\begin{pmatrix}
1& \delta&0&0\\
0& 1& 0&0\\
0&0&1&0\\
0&0&-\delta&1
\end{pmatrix}\bigg]
\bigg[\begin{pmatrix}
1\\
0\\
0\\
1
\end{pmatrix},
\begin{pmatrix}
0\\ 
0 \\
1\\
\delta
\end{pmatrix}\bigg]=
\bigg[\begin{pmatrix}
1\\0\\0\\1
\end{pmatrix},
\begin{pmatrix}
0\\ 
0 \\
1 \\
0
\end{pmatrix}\bigg].
\end{align*}
Next, if $\gamma =0$, but $\delta \neq 0$, then
\begin{align*}
&\bigg[\begin{pmatrix}
1& 0&0&0\\
0& 1& 0&0\\
0&0&1&0\\
0&0&0&1
\end{pmatrix},
\begin{pmatrix}
1& 0&0&-\alpha/\delta\\
0& 1& 0&-\beta/\delta\\
0&0&1&0\\
0&0&0&1
\end{pmatrix}\bigg]
\bigg[\begin{pmatrix}
1\\
0\\
0\\
1
\end{pmatrix},
\begin{pmatrix}
\alpha\\ 
\beta \\
0 \\
\delta
\end{pmatrix}\bigg]=
\bigg[\begin{pmatrix}
1\\0\\0\\1
\end{pmatrix},
\begin{pmatrix}
0\\ 
0 \\
0 \\
\delta
\end{pmatrix}\bigg].
\end{align*}
Next, if $\gamma=\delta=0$, but $\beta \neq 0$, then
\begin{align*}
\small \bigg[\begin{pmatrix}
1& 0&(-\alpha/\beta)^{1/p}&0\\
(\alpha/\beta)^{1/p}&1& (-\alpha/\beta)^{2/p}&(-\alpha/\beta)^{1/p}\\
0&0&1&0\\
0&0&(\alpha/\beta)^{1/p}&1
\end{pmatrix},
\begin{pmatrix}
1& -\alpha/\beta&0&0\\
0&1& 0&0\\
0&0&1&0\\
0&0&\alpha/\beta&1
\end{pmatrix}\bigg]
\bigg[\begin{pmatrix}
1\\
0\\
0\\
1
\end{pmatrix},
\begin{pmatrix}
\alpha\\ 
\beta \\
0 \\
0
\end{pmatrix}\bigg]=
\bigg[\begin{pmatrix}
1\\0\\0\\1
\end{pmatrix},
\begin{pmatrix}
0\\ 
\beta\\
0\\
0
\end{pmatrix}\bigg].
\end{align*}
Finally, if $\gamma=\delta=\beta=0$, but $\alpha \neq 0$, then 
\begin{align*}
\bigg[\begin{pmatrix}
1& 0&0&0\\
0& \alpha^{-3/p}& 0&0\\
0&0&\alpha^{3/p}&0\\
0&0&0&1
\end{pmatrix},
\begin{pmatrix}
\alpha^{-1}& 0&0&0\\
0& \alpha^2& 0&0\\
0&0&\alpha&0\\
0&0&0&\alpha^{-2}
\end{pmatrix}\bigg]
\bigg[\begin{pmatrix}
1\\
0\\
0\\
1
\end{pmatrix},
\begin{pmatrix}
\alpha\\ 
0 \\
0 \\
0
\end{pmatrix}\bigg]=
\bigg[\begin{pmatrix}
1\\0\\0\\1
\end{pmatrix},
\begin{pmatrix}
1\\ 
0 \\
0 \\
0
\end{pmatrix}\bigg].
\end{align*}
Thus the following elements are $\Aut (L)$-orbit representatives:
\begin{align*}
\begin{pmatrix}
0\\ 
0\\
0\\
0
\end{pmatrix},
\begin{pmatrix}
1\\ 
0\\
0\\
0
\end{pmatrix},
\begin{pmatrix}
0 \\ 
\beta\\
0\\
0
\end{pmatrix},
\begin{pmatrix}
0 \\ 
0\\
1\\
0
\end{pmatrix},
\begin{pmatrix}
0 \\ 
0\\
0\\
\delta
\end{pmatrix}.
\end{align*}

\begin{theorem}
The list of all restricted Lie algebra structures on $L_{5,5}$, up to isomorphism, is as follows:
\begin{align*}
&L_{5,5}^{1}=\langle x_1,\ldots,x_5\mid [x_1,x_2]=x_3, [x_1,x_3]=x_5, [x_2,x_4]=x_5 \rangle;\\
&L_{5,5}^{2}=\langle x_1,\ldots,x_5\mid [x_1,x_2]=x_3, [x_1,x_3]=x_5, [x_2,x_4]=x_5, 
\res x_1= x_5  \rangle;\\
&L_{5,5}^{3}(\beta)=\langle x_1,\ldots,x_5\mid [x_1,x_2]=x_3, [x_1,x_3]=x_5, [x_2,x_4]=x_5,
\res x_2= \beta x_5 \rangle;\\
&L_{5,5}^{4}=\langle x_1,\ldots,x_5\mid [x_1,x_2]=x_3, [x_1,x_3]=x_5, [x_2,x_4]=x_5,
\res x_3= x_5  \rangle;\\
&L_{5,5}^{5}(\delta)=\langle x_1,\ldots,x_5\mid [x_1,x_2]=x_3, [x_1,x_3]=x_5, [x_2,x_4]=x_5,
\res x_4=\delta x_5 \rangle
\end{align*}
where $\beta,\gamma \in \F^*$. 
\end{theorem}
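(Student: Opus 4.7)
My plan is to assemble the theorem from the case analysis already carried out in this section, and then to verify pairwise non-isomorphism of the five families.

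First, by Lemma \ref{LemmaL55}, any $p$-map on $K=L_{5,5}$ corresponds to a cocycle of the form $\theta=(\Delta_{13}+\Delta_{24},\omega)$ on $L=L_{5,5}/\la x_5\ra\cong L_{4,2}$, and by the general Skjelbred--Sund philosophy (Lemma \ref{lemma}) two such extensions are isomorphic as restricted algebras via a map preserving $M=\la x_5\ra$ exactly when their cocycles lie in the same $\Aut(L)$-orbit (up to scalars) in $H^2(L,\F)$. I would therefore loop over the eight non-isomorphic restricted structures on $L_{4,2}$ listed in II.1--II.8, and for each ask which ones can support a cocycle whose bilinear part is $\Delta_{13}+\Delta_{24}$. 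The subsections immediately preceding the theorem show that this is possible only for the trivial $p$-map on $L$: in every case II.2--II.8, the constraint $\phi(x,\res y)=0$ forces the coefficient of $\Delta_{13}$ (or $\Delta_{24}$) in $\phi$ to vanish, contradicting $\phi=\Delta_{13}+\Delta_{24}$. This reduces the whole problem to the trivial-$p$-map subsection.

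Next, in the trivial-$p$-map subsection I have already computed the $\Aut(L)$-orbit representatives of the pairs $\bigl(\phi,\omega\bigr)$ whose $\phi$-part is $\Delta_{13}+\Delta_{24}$, namely the five vectors
\[
\begin{pmatrix}0\\0\\0\\0\end{pmatrix},\quad
\begin{pmatrix}1\\0\\0\\0\end{pmatrix},\quad
\begin{pmatrix}0\\\beta\\0\\0\end{pmatrix},\quad
\begin{pmatrix}0\\0\\1\\0\end{pmatrix},\quad
\begin{pmatrix}0\\0\\0\\\delta\end{pmatrix},
\]
and Lemmas \ref{lemma-K5-1} and \ref{lemma-K5-2} show that the $\beta$- and $\delta$-parameters are classified, respectively, by the square-class of $\beta$ and of $\delta$ in $\F^*/(\F^*)^2$. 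Translating each representative back through the construction of $L_\theta$ (using Lemma \ref{Ltheta}, so that the $i$-th entry of $\omega$ puts $\res x_i=x_5$ weighted by the corresponding scalar) produces exactly the five families $L_{5,5}^1,\ldots,L_{5,5}^5(\delta)$ with $\beta,\delta$ ranging over $T_2$. This yields the displayed list.

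Finally I need to check pairwise non-isomorphism. The five families have different $p$-nilpotency invariants that can be read off directly: $\dim \la L^{[p]}\ra_p$, $\dim\la L^{[p]^2}\ra_p$, and (most importantly) the structural invariant $L/\la L^{[p]}\ra_p$ from Lemma \ref{invariants}, distinguish the five orbits that have distinct $\omega$. The only delicate sub-case is distinguishing $L_{5,5}^3(\beta_1)$ from $L_{5,5}^3(\beta_2)$ and $L_{5,5}^5(\delta_1)$ from $L_{5,5}^5(\delta_2)$ for different square-classes; here I would argue exactly as in the proof of Lemma \ref{lemma-K34-2}: an isomorphism $f\colon L_{5,5}^3(\beta_1)\to L_{5,5}^3(\beta_2)$ would fix the centre $\la x_5\ra$ setwise and hence descend to an automorphism $A$ of $L=L_{4,2}$ with $A\theta_1\equiv c\theta_2$ in $H^2(L,\F)$ for some $c\in\F^*$; rescaling $c$ into $x_5$ reduces to the orbit equality, which by Lemma \ref{lemma-K5-1} forces $\beta_2\beta_1^{-1}\in(\F^*)^2$. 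The same argument using Lemma \ref{lemma-K5-2} handles the $\delta$-family. The main (and only) obstacle is bookkeeping across the five families to ensure that no non-trivial automorphism of $L_{5,5}$ sends an orbit representative of one family to that of another; I would handle this by computing, for each pair, either the invariant $\dim\la L^{[p]^i}\ra_p$ (which immediately separates the families whose $\omega$ picks different coordinate directions, since $x_3=[x_1,x_2]$ has a different bracket-origin than $x_5$) or, failing that, by a direct short computation of the form already used repeatedly in Section \ref{iso-L52}.
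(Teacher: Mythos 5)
Your construction of the list coincides with the paper's: the reduction via Lemma \ref{LemmaL55} to cocycles with $\phi=\Delta_{13}+\Delta_{24}$, the elimination of cases II.2--II.8 because $\phi(x,\res y)=0$ kills the $\Delta_{13}$-coefficient, the five $\Aut(L)$-orbit representatives in the trivial-$p$-map case, and the square-class Lemmas \ref{lemma-K5-1} and \ref{lemma-K5-2} for the $\beta$- and $\delta$-parameters. That part is fine.

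The final non-isomorphism step contains a false claim. You assert that $\dim\la L^{[p]}\ra_p$, $\dim\la L^{[p]^2}\ra_p$ and the quotient $L/\la L^{[p]}\ra_p$ "distinguish the five orbits that have distinct $\omega$". They do not: for each of $L_{5,5}^{2}$, $L_{5,5}^{3}(\beta)$, $L_{5,5}^{4}$ and $L_{5,5}^{5}(\delta)$ the image of the $p$-map spans exactly $\la x_5\ra$, so all four have $\dim\la L^{[p]}\ra_p=1$, $\la L^{[p]^2}\ra_p=0$, and $L/\la L^{[p]}\ra_p\cong L_{4,2}$ with the trivial $p$-map; these invariants separate only $L_{5,5}^{1}$ from the rest. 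The argument you actually need (and which the paper uses) is already implicit in your first paragraph: since $M=\la x_5\ra=Z(L_{5,5})$, \emph{every} isomorphism between two of these algebras preserves $M$, so Lemma \ref{lemma} applies unconditionally and distinct $\Aut(L)$-orbits of $[(\Delta_{13}+\Delta_{24},\omega)]$ in the one-dimensional span sense (after absorbing the scalar $c$ exactly as in the proof of Lemma \ref{lemma-K5-3}) yield non-isomorphic restricted Lie algebras. If you replace the invariant-based paragraph by this one-line appeal to Lemma \ref{lemma} --- or fall back on the direct computations you mention --- the proof is complete; as written, the stated justification for separating families $2$ through $5$ would fail.
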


The automorphism group of $L_{5,5}$ consists of 
 
$$ \begin{pmatrix} a_{11} & 0 & 0 & 0 & 0 \\
                   a_{21} & a_{22} & 0 & 0 & 0\\
                   a_{31} & a_{32} & a_{11}a_{22} & -a_{11}a_{21} & 0 \\
                   a_{41} & a_{42} & 0 & a_{11}^2 & 0 \\
                   a_{51} & a_{52} & d & a_{54} & a_{11}^2a_{22}
\end{pmatrix},$$
with $d = a_{11}a_{32}+a_{21}a_{42}-a_{41}a_{22}$.

\begin{lemma}
We have $L_{5,5}^{3}(\beta_1)\cong L_{5,5}^{3}(\beta_2)$ if and only if $\beta_1\beta_2^{-1}\in (\F^*)^2$.
\end{lemma}
\begin{proof}
Suppose that $f=(a_{ij}): L_{5,5}^3(\beta_1) \to  L_{5,5}^3(\beta_2)$ 
is an isomorphism.  Then we have $f(\res x_2)=f(x_2)^{[p]}$ which implies that
$\beta_1/\beta_2=a_{11}^{-2}a_{22}^{p-1}\in (\F^*)^2$.
To prove the converse, suppose that 
 $\beta_1/ \beta_2=\epsilon^2$, for some $\epsilon \in \F^*$. It is easy to see that the 
 following is an isomorphism from   $L_{5,5}^3(\beta_1)$ to  $L_{5,5}^3(\beta_2)$:
 \begin{align*}
\begin{pmatrix}
\epsilon^{-1/p} & 0 & 0 & 0 & 0\\
 0 & \epsilon^{2/p} &0 & 0 & 0\\
 0 & 0 & \epsilon^{1/p} & 0 & 0\\
 0 & 0 & 0 &\epsilon^{-2/p} &  0\\
0 & 0 & 0 & 0 & 1\\
\end{pmatrix} 
  \end{align*}
  
\end{proof}

\begin{lemma}
We have 
$L_{5,5}^{5}(\delta)\cong \langle x_1,\ldots,x_5\mid [x_1,x_2]=x_3, [x_1,x_3]=x_5, [x_2,x_4]=x_5,
\res x_4= x_5 \rangle$, for every $\delta\in \F^*$.
\end{lemma}
\begin{proof}
\begin{align*}
\begin{pmatrix}
1 & 0 & 0 & 0 & 0\\
 0 & \delta^{-1} &0 & 0 & 0\\
 0 & 0 &\delta^{-1}  & 0 & 0\\
 0 & 0 & 0 &1 &  0\\
0 & 0 & 0 & 0 & \delta^{-1} \\
\end{pmatrix}.
  \end{align*}

\end{proof}

\section{Restriction maps on $L_{5,6}$}
Let $ L_{5,6}=\langle x_1,\ldots,x_5\mid [x_1,x_2]=x_3, [x_1,x_3]=x_4, [x_1,x_4]=x_5, [x_2,x_3]=x_5\rangle.$  We have $Z(L_{5,6})=\la x_5\ra_{\F}$.  Let
$$L=\frac {L_{5,6}}{\la x_5\ra_{\F}} = \la x_1,x_2,x_3,x_4 \mid [x_1,x_2] =x_3, [x_1,x_3]=x_4 \ra \cong L_{4,3}.$$

The group $\Aut(L)$ consists of invertible matrices of the form
\[\begin{pmatrix} a_{11}  & 0 & 0 & 0 \\
a_{21} & a_{22} & 0 & 0\\
a_{31} & a_{32} & r& 0 \\
a_{41} & a_{42} & a_{11}a_{32}& a_{11}r
\end{pmatrix},\]
where $r= a_{11}a_{22} \neq 0$.

\begin{lemma}\label{LemmaL56}
Let $K=L_{5,6}$ and $[p]:K\to K$ be a $p$-map on $K$ and let $L=\frac{K}{M}$ where $M=\la x_5\ra_{\F}$. Then $K\cong L_{\theta}$ where $\theta=(\Delta_{14}+\Delta_{23},\omega)\in Z^2(L,\F)$.
\end{lemma}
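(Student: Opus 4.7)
The plan is to mirror the template used for Lemmas \ref{LemmaL52}, \ref{LemmaL522}, \ref{cocycle-L53}, \ref{LemmaL54}, and \ref{LemmaL55}: exhibit an explicit section of the projection, evaluate the resulting $2$-cocycle on a basis, and then invoke Lemma \ref{Lemma 1}. Concretely, I will set $\pi:K\to L$ to be the canonical projection and write down the exact sequence
\[
0\to M\to K\to L\to 0,
\]
then take $\sigma:L\to K$ to be the linear map defined on the chosen basis by $x_i\mapsto x_i$ for $1\leq i\leq 4$. This $\sigma$ is injective and satisfies $\pi\sigma=1_L$, so it is a valid section and the associated $\phi(x_i,x_j)=[\sigma(x_i),\sigma(x_j)]-\sigma([x_i,x_j])$ and $\omega(x)=\res{\sigma(x)}-\sigma(\res x)$ land in $M$.

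The key step is to compute $\phi$ on each unordered pair from $\{x_1,x_2,x_3,x_4\}$. Since $[x_1,x_2]_K=x_3=[x_1,x_2]_L$ and $[x_1,x_3]_K=x_4=[x_1,x_3]_L$, the corresponding values of $\phi$ vanish. On the other hand, in $K$ we have $[x_1,x_4]_K=x_5$ and $[x_2,x_3]_K=x_5$, whereas in $L$ the brackets $[x_1,x_4]_L$ and $[x_2,x_3]_L$ are zero because $x_5$ has been killed. Thus
\[
\phi(x_1,x_4)=x_5,\qquad \phi(x_2,x_3)=x_5,
\]
and the remaining pairs $(x_2,x_4)$ and $(x_3,x_4)$ commute in both $K$ and $L$, giving $\phi(x_2,x_4)=\phi(x_3,x_4)=0$. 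Identifying $x_5$ with $1\in\F$ via the chosen basis of $M$, this yields $\phi=\Delta_{14}+\Delta_{23}$.

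Finally, Lemma \ref{Lemma 1} guarantees that $\theta=(\phi,\omega)$ lies in $Z^2(L,\F)$ and that $K\cong L_\theta$, completing the argument. There is essentially no obstacle here: the computation is mechanical and the structural content is entirely packaged in Lemma \ref{Lemma 1}. The only point requiring mild care is bookkeeping the distinction between brackets in $K$ and brackets in $L=K/M$, to ensure that $\phi$ faithfully records precisely the two relations $[x_1,x_4]=x_5$ and $[x_2,x_3]=x_5$ that are lost on passing to the quotient.
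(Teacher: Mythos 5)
Your proposal is correct and follows the same route as the paper's own proof: the same section $\sigma(x_i)=x_i$ for $1\leq i\leq 4$, the same evaluation of $\phi$ on basis pairs yielding $\phi(x_1,x_4)=\phi(x_2,x_3)=x_5$ and zero elsewhere, and the same appeal to Lemma \ref{Lemma 1} to conclude $\theta\in Z^2(L,\F)$ and $K\cong L_\theta$. No gaps.
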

\begin{proof}
 Let $\pi :K\rightarrow L$ be the projection map. We have the exact sequence 
$$ 0\rightarrow M\rightarrow K\rightarrow L\rightarrow 0.$$
Let $\sigma :L\rightarrow K$ such that $x_i\mapsto x_i$, $1\leq i\leq 4$. Then $\sigma$ is an injective linear map and $\pi \sigma =1_L$. Now, we define $\phi : L\times L\rightarrow M$ by $\phi (x_i,x_j)=[\sigma (x_i),\sigma (x_j)]-\sigma ([x_i,x_j])$, $1\leq i,j\leq 4$ and $\omega: L\rightarrow M$ by $\omega (x)=\res {\sigma (x)} -\sigma (\res x)$. Note that
\begin{align*}
&\phi(x_1,x_4)=[\sigma(x_1),\sigma(x_4)]-\sigma([x_1,x_4])=[x_1,x_4]=x_5;\\
&\phi(x_2,x_3)=[\sigma(x_2),\sigma(x_3)]-\sigma([x_2,x_3])=x_5;\\
&\phi(x_1,x_2)=[\sigma(x_1),\sigma(x_2)]-\sigma([x_1,x_2])=0;\\
\end{align*}
Similarly, we can show that $\phi(x_1,x_3)=\phi(x_2,x_4)=\phi(x_3,x_4)=0$. Therefore, $\phi=\Delta_{14}+\Delta_{23}$.
Now, by Lemma \ref{K=L-theta}, we have $\theta=(\Delta_{14}+\Delta_{23},\omega)\in Z^2(L,\F)$ and $K\cong L_{\theta}$.
\end{proof}

Note that by Theorem \ref{4-dim}, there are four non-isomorphic restricted Lie algebra structures on $L$ given by the following $p$-maps:
\begin{enumerate}
\item[I.1]Trivial $p$-map;
\item[I.2] $\res x_1=x_4$;
\item[I.3]  $\res x_2 =\xi x_4$;
\item[I.4]  $ \res x_3 =x_4$.
\end{enumerate}

We make $L$ into a restricted Lie algebra by equipping it with each of the above $p$-maps. Then,  in  each  case,  we  find all possible orbit representatives of the form $(\Delta_{14}+\Delta_{23},\omega)$ under the action of $\Aut_p(L)$ on $H^2(L,\F)$. By  Lemma \ref{LemmaL56}, we do get all possible $p$-maps on  $L_{5,6}$ with the property that $x_5^{[p]}=0$. 

Consider the case I.2 where the $p$-map of $L$ is given by $\res x_1 = x_4$.
Note that $\res L=\la x_4 \ra_{\F}$.  Let  $[(\phi,\omega)]\in H^2(L,\F)$. Then we must have $\phi (x,\res y)=0$, for all $x,y\in L$, where $\phi=a\Delta _{12}+b\Delta_{13}+c\Delta_{14}+d\Delta_{23}+e\Delta_{24}+f\Delta_{34}$, for some $a,b,c,d,e,f \in \F$.
Hence,  $\phi (x_1,x_4)=0$ which implies that $c=0$. Since $\phi =\Delta_{14}+\Delta_{23}$ gives us $L_{5,6}$, we deduce by Lemma \ref{LemmaL56} that $L_{5,6}$ cannot be constructed in this case.  Similarly we can show that in cases I.3 and I.4  we also get $c=0$. It remains to consider the case I.1.

\subsection{Extensions of $L_{5,6}/\la x_5\ra$ via the trivial $p$-map}
 First, we find a basis for $Z^2(L,\F)$. Let $(\phi,\omega)=(a\Delta _{12}+b\Delta_{13}+c\Delta_{14}+d\Delta_{23}+e\Delta_{24}+f\Delta_{34},\alpha f_1+\beta f_2+ \gamma f_3+\delta f_4)\in Z^2(L,\F)$. Then we must have $\delta^2\phi(x,y,z) =0$ and $\phi(x,\res y)=0$, for all $x,y,z \in L$. Therefore,
\begin{align*}
0=(\delta^2\phi)(x_1,x_2,x_3)&=\phi([x_1,x_2],x_3)+\phi([x_2,x_3],x_1)+\phi([x_3,x_1],x_2)=\phi(x_2,x_4), \text{ and }\\
0=(\delta^2\phi)(x_1,x_2,x_4)&=\phi([x_1,x_2],x_4)+\phi([x_2,x_4],x_1)+\phi([x_4,x_1],x_2)=\phi(x_3,x_4).
\end{align*}
Thus, we get $e=f=0$.
Since the $p$-map is trivial, $\phi(x,\res y)=\phi(x,0)=0$, for all $x,y\in L$. Therefore, a basis for $Z^2(L,\F)$ is as follows:
$$
 (\Delta_{12},0),(\Delta_{13},0),(\Delta_{14},0),(\Delta_{23},0),(0,f_1),
(0,f_2),(0,f_3),(0,f_4).
$$
Next, we find a basis for $B^2(L,\F)$. Let $(\phi,\omega)\in B^2(L,\F)$. Since $B^2(L,\F)\subseteq Z^2(L,\F)$, we have $(\phi,\omega)=(a\Delta _{12}+b\Delta_{13}+c\Delta_{14}+d\Delta_{23},\alpha f_1+\beta f_2+ \gamma f_3+\delta f_4)$. So, there exists a linear map $\psi:L\to \F$ such that $\delta^1\psi(x,y)=\phi(x,y)$ and $\tilde \psi(x)=\omega(x)$, for all $x,y \in L$. So, we have
\begin{align*}
c=\phi(x_1,x_4)=\delta^1\psi(x_1,x_4)=\psi([x_1,x_4])=0.
\end{align*}
Similarly, we can show that $d=0$. Also, we have
\begin{align*}
\alpha=\omega(x_1)=\tilde \psi(x_1)=\psi(\res x_1)=0.
\end{align*}
Similarly, we can show that $\beta=\gamma=\delta=0$. Therefore, $(\phi,\omega)=(a\Delta_{12}+b\Delta_{13},0)$ and hence
 $B^2(L,\F)=\la(\Delta_{12},0),(\Delta_{13},0)\ra_{\F}$. We deduce that a basis for $H^2(L,\F)$ is as follows:
$$[(\Delta_{14},0)],[(\Delta_{23},0)],[(0,f_1)],
[(0,f_2)],[(0,f_3)],[(0,f_4)].
$$
Let $[(\phi,\omega)] \in H^2(L,\F)$. Then we have $\phi = a\Delta_{14} + b\Delta_{23}$, for some $a, b\in \F$. Suppose that $A\phi = a'\Delta_{14}+b'\Delta_{23}$, for some $a', b'\in \F$. We determine $a', b'$. 
Note that 
\begin{align*}
A\phi (x_1,x_4)=&\phi (Ax_1,Ax_4)=\phi (a_{11}x_1+a_{21}x_2+a_{31}x_3+a_{41}x_4,a_{11}rx_4)=a_{11}^2ra; \\
A\phi (x_2,x_3)=&\phi (Ax_2,Ax_3)=\phi (a_{22}x_2+a_{32}x_3+a_{42}x_4,rx_3+a_{11}a_{32}x_4)=a_{22}rb.
\end{align*}
In the matrix form we can write this as
\begin{align}\label{34}
\begin{pmatrix}
a' \\ b'
\end{pmatrix}=
\begin{pmatrix}
a_{11}^2r& 0\\
0& a_{22}r
\end{pmatrix}
\begin{pmatrix}
a \\ b
\end{pmatrix}.
\end{align}
 The orbit with representative 
$\begin{pmatrix}
1 \\ 1
\end{pmatrix}$ of this action  gives us $L_{5,6}$.

Also, we have $\omega=\alpha f_1+\beta f_2+\gamma f_3+\delta f_4$, for some $\alpha, \beta, \gamma, \delta\in \F$.
 Suppose that $A\omega = \alpha' f_1+\beta' f_2+\gamma' f_3+\delta' f_4$, for some $\alpha', \beta', \gamma', \delta'\in \F$. We have
\begin{align*}
A\omega (x_1) =& \omega (Ax_1)=\omega (a_{11}x_1+a_{21}x_2+a_{31}x_3+a_{41}x_4)=a_{11}^p \alpha+a_{21}^p \beta +a_{31}^p \gamma+a_{41}^p \delta; \\
A\omega (x_2)=& a_{22}^p \beta +a_{32}^p \gamma + a_{42}^p \delta;\\
A\omega (x_3)=& r^p \gamma + a_{11}^pa_{32}^p \delta;\\
A\omega(x_4)=&a_{11}^pr^p \delta.
\end{align*}
In the matrix form we can write this as
\begin{align}\label{45}
\begin{pmatrix}
\alpha' \\ 
\beta'\\
\gamma'\\
\delta'
\end{pmatrix}
=
\begin{pmatrix}
a_{11}^p& a_{21}^p&a_{31}^p&a_{41}^p\\
0& a_{22}^p& a_{32}^p&a_{42}^p\\
0&0&r^p&a_{11}^pa_{32}^p\\
0&0&0&a_{11}^pr^p
\end{pmatrix}
\begin{pmatrix}
\alpha\\ 
\beta \\
\gamma \\
\delta
\end{pmatrix}.
\end{align}
Thus, we can write Equations \eqref{34} and \eqref{45} together as follows:
\begin{align*}
\bigg[
r\begin{pmatrix}
a_{11}^2& 0\\
0& a_{22}
\end{pmatrix},
\begin{pmatrix}
a_{11}^p& a_{21}^p&a_{31}^p&a_{41}^p\\
0& a_{22}^p& a_{32}^p&a_{42}^p\\
0&0&r^p&a_{11}^pa_{32}^p\\
0&0&0&a_{11}^pr^p
\end{pmatrix}
\bigg]
\bigg[
\begin{pmatrix}
a \\ b
\end{pmatrix},
\begin{pmatrix}
\alpha\\ 
\beta \\
\gamma \\
\delta
\end{pmatrix}
\bigg]
=
\bigg[
\begin{pmatrix}
a' \\ b'
\end{pmatrix},
\begin{pmatrix}
\alpha' \\ 
\beta'\\
\gamma'\\
\delta'
\end{pmatrix}
\bigg].
\end{align*}
Now we find the representatives of the orbits of the action of $\Aut (L)$ on the set of $\omega$'s such that 
the orbit represented by 
$\begin{pmatrix}
1 \\ 1
\end{pmatrix}$ is preserved under the action of $\Aut (L)$ on the set of $\phi$'s.

Let $\nu = \begin{pmatrix}
\alpha\\ 
\beta \\
\gamma \\
\delta
\end{pmatrix} \in \F^4$. If $\nu= \begin{pmatrix}
0\\ 
0 \\
0 \\
0
\end{pmatrix}$, then $\{\nu \}$ is clearly an $\Aut (L)$-orbit. Let $\nu \neq 0$. Suppose that $\delta \neq 0$. Then 
\begin{align*}
\bigg[\begin{pmatrix}
1& 0\\
0& 1
\end{pmatrix},
\begin{pmatrix}
1& 0&0&-\alpha /\delta\\
0& 1& 0&-\beta /\delta\\
0&0&1&0\\
0&0&0&1
\end{pmatrix}\bigg]
\bigg[\begin{pmatrix}
1 \\ 1
\end{pmatrix},
\begin{pmatrix}
\alpha\\ 
\beta \\
\gamma \\
\delta
\end{pmatrix}\bigg]
=&
\bigg[\begin{pmatrix}
1\\1
\end{pmatrix},
\begin{pmatrix}
0 \\ 
0\\
\gamma\\
\delta
\end{pmatrix}\bigg], \text{ and }\\
\bigg[\begin{pmatrix}
1& 0\\
0& 1
\end{pmatrix},
\begin{pmatrix}
1& 0&0&0\\
0& 1& -\gamma /\delta&\gamma^2/\delta^2\\
0&0&1&-\gamma /\delta\\
0&0&0&1
\end{pmatrix}\bigg]
\bigg[\begin{pmatrix}
1 \\ 1
\end{pmatrix},
\begin{pmatrix}
0\\ 
0 \\
\gamma \\
\delta
\end{pmatrix}\bigg]
=&
\bigg[\begin{pmatrix}
1\\1
\end{pmatrix},
\begin{pmatrix}
0 \\ 
0\\
0\\
\delta
\end{pmatrix}\bigg].
\end{align*}
Next, if $\delta= 0$ , but $\gamma \neq 0$, then
\begin{align*}
&\bigg[\begin{pmatrix}
1& 0\\
0& 1
\end{pmatrix},
\begin{pmatrix}
1& 0&-\alpha /\gamma&0\\
0& 1& -\beta /\gamma &0\\
0&0&1&-\beta /\gamma\\
0&0&0&1
\end{pmatrix}\bigg]
\bigg[\begin{pmatrix}
1 \\ 1
\end{pmatrix},
\begin{pmatrix}
\alpha\\ 
\beta\\
\gamma \\
0
\end{pmatrix}\bigg]=
\bigg[\begin{pmatrix}
1\\1
\end{pmatrix},
\begin{pmatrix}
0 \\ 
0\\
\gamma\\
0
\end{pmatrix}\bigg].
\end{align*}
Next, if $\gamma=\delta =0$, but $\beta \neq 0$, then 
\begin{align*}
&\bigg[\begin{pmatrix}
1& 0\\
0& 1
\end{pmatrix},
\begin{pmatrix}
1& -\alpha /\beta&0&0\\
0& 1& 0&0\\
0&0&1&0\\
0&0&0&1
\end{pmatrix}\bigg]
\bigg[\begin{pmatrix}
1 \\ 1
\end{pmatrix},
\begin{pmatrix}
\alpha\\ 
\beta\\
0 \\
0
\end{pmatrix}\bigg]=
\bigg[\begin{pmatrix}
1\\1
\end{pmatrix},
\begin{pmatrix}
0 \\ 
\beta\\
0\\
0
\end{pmatrix}\bigg].
\end{align*}
Finally, if  $\beta=\gamma=\delta =0$, but $\alpha \neq 0$, then we have
$\bigg[\begin{pmatrix}
1\\1
\end{pmatrix},
\begin{pmatrix}
\alpha \\ 
0\\
0\\
0
\end{pmatrix}\bigg]$.\\
Thus the following elements are $\Aut (L)$-orbit representatives: 
\begin{align*}
\begin{pmatrix}
0\\ 
0 \\
0 \\
0
\end{pmatrix}, 
\begin{pmatrix}
\alpha \\ 
0\\
0\\
0
\end{pmatrix}, 
\begin{pmatrix}
0 \\ 
\beta\\
0\\
0
\end{pmatrix}, 
\begin{pmatrix}
0\\ 
0\\
\gamma\\
0
\end{pmatrix}, 
\begin{pmatrix}
0 \\ 
0\\
0\\
\delta
\end{pmatrix}.
\end{align*}

\begin{theorem}
The list of all restricted Lie algebra structures on $L_{5,6}$, up to isomorphism, is as follows:
\begin{align*}
& L_{5,6}^1=\langle x_1,\ldots,x_5\mid [x_1,x_2]=x_3,[x_1,x_3]=x_4, [x_1,x_4]=x_5, [x_2,x_3]=x_5 \rangle;\\
& L_{5,6}^2(\alpha)=\langle x_1,\ldots,x_5\mid [x_1,x_2]=x_3,[x_1,x_3]=x_4, [x_1,x_4]=x_5, [x_2,x_3]=x_5 ,
\res x_1= \alpha x_5\rangle;\\
& L_{5,6}^3(\beta)=\langle x_1,\ldots,x_5\mid [x_1,x_2]=x_3,[x_1,x_3]=x_4,[x_1,x_4]=x_5,  [x_2,x_3]=x_5 ,
\res x_2=\beta x_5\rangle;\\
& L_{5,6}^4(\gamma)=\langle x_1,\ldots,x_5\mid [x_1,x_2]=x_3,[x_1,x_3]=x_4, [x_1,x_4]=x_5, [x_2,x_3]=x_5 ,
\res x_3=\gamma x_5\rangle;\\
& L_{5,6}^5(\delta)=\langle x_1,\ldots,x_5\mid [x_1,x_2]=x_3,[x_1,x_3]=x_4, [x_1,x_4]=x_5, [x_2,x_3]=x_5 ,
\res x_4= \delta x_5\rangle
\end{align*} 
where $\alpha, \gamma, \delta \in T_{2,5}$ and $\beta \in T_{3,5}$.
\end{theorem}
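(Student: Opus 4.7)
The proof will have three threads: first, collecting the results already assembled above to justify the enumeration; second, establishing that the parametric families are genuinely indexed by the stated coset sets; and third, verifying pairwise non-isomorphism across different families.

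The plan is to begin by invoking Lemma \ref{LemmaL56} to reduce every restricted Lie algebra structure on $K = L_{5,6}$ to a cocycle of the form $\theta = (\Delta_{14}+\Delta_{23}, \omega) \in Z^2(L,\F)$ where $L = L_{5,6}/\langle x_5\rangle \cong L_{4,3}$. By \cite{SU} the $p$-map on $L$ is conjugate to one of the four listed possibilities, and the arguments immediately preceding the theorem show that cases I.2, I.3, I.4 all force the coefficient $c$ of $\Delta_{14}$ in $\phi$ to vanish, contradicting $\phi = \Delta_{14}+\Delta_{23}$. Hence every restricted structure on $L_{5,6}$ arises from a $\theta$ above the trivial $p$-map on $L$. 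The explicit $\Aut(L)$-orbit computation carried out for this case yielded the five representatives $(0,0,0,0)$, $(\alpha,0,0,0)$, $(0,\beta,0,0)$, $(0,0,\gamma,0)$, $(0,0,0,\delta)$, so the families $L_{5,6}^1, \dots, L_{5,6}^5$ exhaust the list.

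Next, I will upgrade Lemmas \ref{lemma-L6-1} through \ref{lemma-L6-4} to statements about isomorphism of algebras. For each $i \in \{2,3,4,5\}$, if $f : L_{5,6}^i(\lambda_1) \to L_{5,6}^i(\lambda_2)$ is an isomorphism, then $f$ preserves the one-dimensional centre $\langle x_5\rangle$ and so induces an automorphism $A$ of $L$; by Lemma \ref{lemma} the induced cocycles differ by a scalar, so after rescaling the parameter we may assume the orbits coincide. This argument is the same one used in Lemma \ref{lemma-K5-3}. Applying Lemmas \ref{lemma-L6-1}--\ref{lemma-L6-4} then gives $\alpha_2\alpha_1^{-1}, \gamma_2\gamma_1^{-1}, \delta_2\delta_1^{-1} \in (\F^*)^2 \cap G$ and $\beta_2\beta_1^{-1} \in (\F^*)^3 \cap G$, where $G = \{\epsilon \in \F^* : \epsilon^5 = 1\}$. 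Taking coset representatives in $T_{2,5}$ and $T_{3,5}$ gives exactly the parametrization claimed.

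Finally, the pairwise non-isomorphism of the five families is by invariants of the $p$-map. Specifically, $L_{5,6}^1$ is the unique family with $L^{[p]} = 0$. To separate the remaining four, I will use the filtration $L \supseteq L^{[p]} \supseteq L^{[p]^2} \supseteq \cdots$ together with the location of $x_5$ in $L^{[p]^k}$: $\res x_4 = \delta x_5$ places $x_5 = \delta^{-1}\res x_4 \in L^{[p]}$ for $L_{5,6}^5$, whereas for $L_{5,6}^4$ we only get $x_5 = \gamma^{-1}\res x_3 \in L^{[p]}$ and $x_5 \notin L^{[p]^2}$ in various combinations, giving distinct invariants. Where the filtration alone is inconclusive, I will argue directly as in the other chapters: assume an isomorphism $A$, apply it to the defining $p$-relation, and read off a constraint like $a_{11}^2 a_{22} = 0$ or $a_{11}a_{22} = 0$ that contradicts invertibility of $A$.

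I expect the main obstacle to be the bookkeeping for pairwise non-isomorphism between $L_{5,6}^2(\alpha)$, $L_{5,6}^3(\beta)$, $L_{5,6}^4(\gamma)$, and $L_{5,6}^5(\delta)$, since all four have the same coarse $p$-filtration dimensions and one must single out finer invariants (for instance, whether $x_5$ lies in $\res(L \setminus L')$ or only in $\res(L')$, i.e., whether $x_5$ is in the restricted hull of the abelianization), so the argument amounts to several explicit case-checks rather than a single clean invariant.
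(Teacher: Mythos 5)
Your proposal is correct, and its first two threads (reduction via Lemma \ref{LemmaL56} to cocycles $(\Delta_{14}+\Delta_{23},\omega)$ over the trivial $p$-map on $L_{4,3}$, followed by upgrading Lemmas \ref{lemma-L6-1}--\ref{lemma-L6-4} to isomorphism statements in the style of Lemma \ref{lemma-K5-3}) coincide with the paper's. The divergence is in the third thread. The paper disposes of pairwise non-isomorphism across the five families in one line: since $M=\la x_5\ra=Z(L_{5,6})$ is one-dimensional, every isomorphism of restricted Lie algebras automatically preserves $M$, so Lemma \ref{lemma} applies directly and distinct $\Aut(L)$-orbits (up to the scalar ambiguity, which is killed because rescaling $\Delta_{14}+\Delta_{23}$ must be undone by the automorphism) give non-isomorphic algebras. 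You instead propose to separate the families by $p$-power filtration invariants and, where those fail, by explicit matrix computations reading off contradictions like $a_{11}a_{22}=0$. This works --- e.g.\ $\res{(L')}=0$ distinguishes $L_{5,6}^{2},L_{5,6}^{3}$ from $L_{5,6}^{4},L_{5,6}^{5}$, and the remaining pairs fall to the direct argument since $\Aut(L_{5,6})$ forces $a_{12}=0$ --- and it is exactly the style the paper is forced into for $L_{5,2}$, $L_{5,3}$, $L_{5,8}$, $L_{5,9}$, where the centre is higher-dimensional and the quotient $M$ is not canonical. But for $L_{5,6}$ (as for $L_{5,4}$, $L_{5,5}$, $L_{5,7}$) the cohomological route is both available and strictly shorter; your anticipated ``main obstacle'' of bookkeeping among $L_{5,6}^{2},\dots,L_{5,6}^{5}$ evaporates once one notices that the orbit correspondence of Lemma \ref{lemma} already encodes all of it. So: correct, but you buy generality (an argument that would survive even if $M$ were not characteristic) at the cost of several case-checks the paper avoids.
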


The automorphism group consists of
$$ \begin{pmatrix} a_{11} & 0 & 0 & 0 & 0 \\
                   a_{21} & a_{11}^2 & 0 & 0 & 0\\
                   a_{31} & a_{32} & a_{11}^3 & 0 & 0 \\
                   a_{41} & a_{42} & a_{11}a_{32} & a_{11}^4 & 0 \\
                   a_{51} & a_{52} & u & v & a_{11}^5
\end{pmatrix},$$
where $u=a_{11}a_{42}+a_{21}a_{32}-a_{31}a_{11}^2$, $v=a_{21}a_{11}^3
+a_{32}a_{11}^2$. \par

\begin{lemma}\label{lemma-K6-2}
We have $L_{5,6}^{2}(\alpha_1)\cong L_{5,6}^{2}(\alpha_2)$ if and only if  $\alpha_1/\alpha_2=\epsilon^{p-5}$, for some $\epsilon \in \F^*$.
\end{lemma}
\begin{proof}
Suppose that $f=(a_{ij}): L_{5,6}^{2}(\alpha_1)\to L_{5,6}^{2}(\alpha_2)$ 
is an isomorphism.  Then we have $f(\res x_1)=f(x_1)^{[p]}$ which implies that
$\alpha_1/\alpha_2=a_{11}^{p-5}$.
To prove the converse, suppose that 
 $\alpha_1/\alpha_2=\epsilon^{p-5}$, for some $\epsilon \in \F^*$. It is easy to see that the 
 following is an isomorphism from   $L_{5,6}^{2}(\alpha_1)$  to $L_{5,6}^{2}(\alpha_2)$:
 \begin{align*}
\begin{pmatrix}
\epsilon & 0 & 0 & 0 & 0\\
 0 & \epsilon^{2} &0 & 0 & 0\\
 0 & 0 & \epsilon^{3} & 0 & 0\\
 0 & 0 & 0 &\epsilon^{4} &  0\\
0 & 0 & 0 & 0 & \epsilon^{5}\\
\end{pmatrix} 
  \end{align*}
  
\end{proof}

\begin{lemma}\label{lemma-K6-3}
We have $L_{5,6}^{3}(\beta_1)\cong L_{5,6}^{3}(\beta_2)$ if and only if 
 $\beta_1/\beta_2=\epsilon^{2p-5}$, for some $\epsilon \in \F^*$.
\end{lemma}
\begin{proof}
Suppose that $f=(a_{ij}): L_{5,6}^{3}(\beta_1)\to L_{5,6}^{3}(\beta_2)$ 
is an isomorphism.  Then we have $f(\res x_2)=f(x_2)^{[p]}$ which implies that
$\alpha_1/\alpha_2=a_{11}^{2p-5}$.
To prove the converse, suppose that 
 $\beta_1/\beta_2=\epsilon^{2p-5}$, for some $\epsilon \in \F^*$. It is easy to see that the 
 following is an isomorphism from   $L_{5,6}^{2}(\alpha_1)$  to $L_{5,6}^{2}(\alpha_2)$:
 \begin{align*}
\begin{pmatrix}
\epsilon & 0 & 0 & 0 & 0\\
 0 & \epsilon^{2} &0 & 0 & 0\\
 0 & 0 & \epsilon^{3} & 0 & 0\\
 0 & 0 & 0 &\epsilon^{4} &  0\\
0 & 0 & 0 & 0 & \epsilon^{5}\\
\end{pmatrix} 
  \end{align*}
  
\end{proof}

\begin{lemma}\label{lemma-K6-4}
We have $L_{5,6}^{4}(\gamma_1)\cong L_{5,6}^{4}(\gamma_2)$ if and only if 
 $\gamma_1/\gamma_2=\epsilon^{3p-5}$, for some $\epsilon \in \F^*$.
\end{lemma}
\begin{proof}
Suppose that $f=(a_{ij}): L_{5,6}^{4}(\gamma_1)\to L_{5,6}^{4}(\gamma_2)$ 
is an isomorphism.  Then we have $f(\res x_3)=f(x_3)^{[p]}$ which implies that
$\gamma_1/\gamma_2=a_{11}^{3p-5}$.
To prove the converse, suppose that 
 $\gamma_1/\gamma_2=a_{11}^{3p-5}$, for some $\epsilon \in \F^*$. It is easy to see that the 
 following is an isomorphism from   $L_{5,6}^{4}(\gamma_1)$  to $L_{5,6}^{4}(\gamma_2)$:
 \begin{align*}
\begin{pmatrix}
\epsilon & 0 & 0 & 0 & 0\\
 0 & \epsilon^{2} &0 & 0 & 0\\
 0 & 0 & \epsilon^{3} & 0 & 0\\
 0 & 0 & 0 &\epsilon^{4} &  0\\
0 & 0 & 0 & 0 & \epsilon^{5}\\
\end{pmatrix} 
  \end{align*}
  
\end{proof}

\begin{lemma}\label{lemma-K6-5}
We have $L_{5,6}^{5}(\delta_1)\cong L_{5,6}^{5}(\delta_2)$ if and only if 
 $\delta_1/\delta_2=\epsilon^{4p-5}$, for some $\epsilon \in \F^*$.
\end{lemma}
\begin{proof}
Suppose that $f=(a_{ij}): L_{5,6}^{5}(\delta_1) \to L_{5,6}^{5}(\delta_2)$ 
is an isomorphism.  Then we have $f(\res x_4)=f(x_4)^{[p]}$ which implies that
$\delta_1/\delta_2=a_{11}^{4p-5}$.
To prove the converse, suppose that 
 $\delta_1/\delta_2=\epsilon^{4p-5}$, for some $\epsilon \in \F^*$. It is easy to see that the 
 following is an isomorphism from   $L_{5,6}^{5}(\delta_1)$  to $L_{5,6}^{5}(\delta_2)$:
 \begin{align*}
\begin{pmatrix}
\epsilon & 0 & 0 & 0 & 0\\
 0 & \epsilon^{2} &0 & 0 & 0\\
 0 & 0 & \epsilon^{3} & 0 & 0\\
 0 & 0 & 0 &\epsilon^{4} &  0\\
0 & 0 & 0 & 0 & \epsilon^{5}\\
\end{pmatrix} 
  \end{align*}
  
\end{proof}

\section{Restriction maps on $L_{5,7}$} Let 
$L_{5,7} = \langle x_1, \ldots, x_5\mid  [x_1,x_2]=x_3 , [x_1,x_3]=x_4 , [x_1,x_4]=x_5\rangle. $
We have $Z(L_{5,7})=\la x_5\ra_{\F} $. Let
$$L=\frac {L_{5,7}}{\la x_5\ra}\cong L_{4,3},
$$ where $L_{4,3} = \la x_1,\ldots,x_4 \mid [x_1 , x_2]=x_3 , [x_1 , x_3]=x_4\ra $. 
Note that the group $\Aut(L)$ consists of invertible matrices of the form
\[\begin{pmatrix} a_{11}  & 0 & 0 & 0 \\
a_{21} & a_{22} & 0 & 0\\
a_{31} & a_{32} & r& 0 \\
a_{41} & a_{42} & a_{11}a_{32}& a_{11}r
\end{pmatrix},\]
where $r= a_{11}a_{22} \neq 0$.

\begin{lemma}\label{LemmaL57}
Let $K=L_{5,7}$ and $[p]:K\to K$ be a $p$-map on $K$ and let $L=\frac{K}{M}$ where $M=\la x_5\ra_{\F}$. Then $K\cong L_{\theta}$ where $\theta=(\Delta_{14},\omega)\in Z^2(L,\F)$.
\end{lemma}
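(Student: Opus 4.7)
The plan is to mirror the argument used in Lemmas \ref{LemmaL54}, \ref{LemmaL55} and \ref{LemmaL56}, exploiting the fact that the central ideal $M=\la x_5\ra_{\F}$ is $p$-closed (since $x_5^{[p]}=0$ as $x_5$ generates the one-dimensional centre) and that $L_{5,7}/\la x_5\ra \cong L_{4,3}$ has exactly the bracket relations $[x_1,x_2]=x_3$ and $[x_1,x_3]=x_4$. I will first observe that $M$ is a restricted ideal of $K$ and form the short exact sequence
$$0\rightarrow M\rightarrow K\xrightarrow{\pi} L\rightarrow 0.$$
Then I choose the obvious injective linear section $\sigma:L\to K$ given by $x_i\mapsto x_i$ for $1\leq i\leq 4$, which satisfies $\pi\sigma=1_L$.

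Next I will define $\phi:L\times L\to M$ and $\omega:L\to M$ by the standard formulas
$$\phi(x_i,x_j)=[\sigma(x_i),\sigma(x_j)]-\sigma([x_i,x_j]),\qquad \omega(x)=\res{\sigma(x)}-\sigma(\res x),$$
and compute $\phi$ on each basis pair. The key step is to read off the discrepancy between the brackets in $K$ and in $L$: for the pairs $(x_1,x_2)$ and $(x_1,x_3)$ both algebras produce $x_3$ and $x_4$ respectively, so $\phi$ vanishes on these pairs; the pair $(x_1,x_4)$ gives $[x_1,x_4]_K=x_5$ while $[x_1,x_4]_L=0$, so $\phi(x_1,x_4)=x_5$; and the remaining pairs $(x_2,x_3)$, $(x_2,x_4)$, $(x_3,x_4)$ are zero in both algebras. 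Hence $\phi=\Delta_{14}$.

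Finally, I will invoke Lemma \ref{Lemma 1} with this data to conclude that $\theta=(\Delta_{14},\omega)\in Z^2(L,\F)$ and that $K\cong L_{\theta}$, which is exactly the claim. I do not anticipate any genuine obstacle here: the computation of $\phi$ is a direct unwrapping of the defining relations of $L_{5,7}$ and $L_{4,3}$, and both the cocycle property of $\theta$ and the isomorphism $K\cong L_{\theta}$ follow immediately from Lemma \ref{Lemma 1} once $\phi$ is identified. The only point requiring a small sanity check is that the $p$-map on $K$ indeed descends to the claimed $p$-map on $L$, but this is guaranteed by the $p$-nilpotency arrangement that placed $x_5$ in $M$ at the outset.
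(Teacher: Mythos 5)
Your proposal is correct and follows essentially the same route as the paper: the same section $\sigma(x_i)=x_i$, the same computation showing $\phi$ vanishes on all basis pairs except $\phi(x_1,x_4)=x_5$ (so $\phi=\Delta_{14}$), and the same appeal to Lemma \ref{Lemma 1} for the cocycle property and the isomorphism $K\cong L_\theta$. The extra remark that $x_5^{[p]}=0$ because $x_5$ spans the centre and the $p$-map is $p$-nilpotent is a harmless (and correct) piece of bookkeeping the paper leaves implicit.
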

\begin{proof}
 Let $\pi :K\rightarrow L$ be the projection map. We have the exact sequence 
$$ 0\rightarrow M\rightarrow K\rightarrow L\rightarrow 0.$$
Let $\sigma :L\rightarrow K$ such that $x_i\mapsto x_i$, $1\leq i\leq 4$. Then $\sigma$ is an injective linear map and $\pi \sigma =1_L$. Now, we define $\phi : L\times L\rightarrow M$ by $\phi (x_i,x_j)=[\sigma (x_i),\sigma (x_j)]-\sigma ([x_i,x_j])$, $1\leq i,j\leq 4$ and $\omega: L\rightarrow M$ by $\omega (x)=\res {\sigma (x)} -\sigma (\res x)$. Note that
\begin{align*}
&\phi(x_1,x_4)=[\sigma(x_1),\sigma(x_4)]-\sigma([x_1,x_4])=[x_1,x_4]=x_5;\\
&\phi(x_1,x_2)=[\sigma(x_1),\sigma(x_2)]-\sigma([x_1,x_2])=0;\\
\end{align*}
Similarly, we can show that $\phi(x_1,x_3)=\phi(x_2,x_3)=\phi(x_2,x_4)=\phi(x_3,x_4)=0$. Therefore, $\phi=\Delta_{14}$.
Now, by Lemma \ref{K=L-theta}, we have $\theta=(\Delta_{14},\omega)\in Z^2(L,\F)$ and $K\cong L_{\theta}$.
\end{proof}

Note that by Theorem \ref{4-dim}, there are four non-isomorphic restricted Lie algebra structures on $L_{4,3}$ given by the following $p$-maps:
\begin{enumerate}
\item[I.1]Trivial $p$-map;
\item[I.2] $\res x_1=x_4$;
\item[I.3]  $\res x_2 =\xi x_4$;
\item[I.4]  $ \res x_3 =x_4 $.
\end{enumerate}

In the following subsections, we make $L$ into a restricted Lie algebra by equipping it with each of the above $p$-maps. Then,  in  each  case,  we  find all possible orbit representatives of the form $(\Delta_{14},\omega)$ under the action of $\Aut_p(L)$ on $H^2(L,\F)$. By  Lemma \ref{LemmaL57}, we do get all possible $p$-maps on  $L_{5,7}$ with the property that $x_5^{[p]}=0$.

Consider the case I.2 where the $p$-map of $L$ is given by $\res x_1 = x_4$.
 Let  $[(\phi,\omega)]\in H^2(L,\F)$. Then we must have $\phi (x,\res y)=0$, for all $x,y\in L$, where $\phi=a\Delta _{12}+b\Delta_{13}+c\Delta_{14}+d\Delta_{23}+e\Delta_{24}+f\Delta_{34}$, for some $a,b,c,d,e,f \in \F$.
Since $\res L=\la x_4 \ra_{\F}$, we get  $\phi (x_1,x_4)=0$ which implies that $c=0$. Since $\phi =\Delta_{14}$ gives us $L_{5,7}$, we deduce by Lemma \ref{LemmaL57} that $L_{5,7}$ cannot be constructed in this case.  Similarly, we can show in cases I.3 and I.4  we also get $c=0$. It remains to consider the case I.1.

\subsection{Extensions of $L_{5,7}/\la x_5\ra$ via the trivial $p$-map}
 First, we find a basis for $Z^2(L,\F)$. Let $(\phi,\omega)=(a\Delta _{12}+b\Delta_{13}+c\Delta_{14}+d\Delta_{23}+e\Delta_{24}+f\Delta_{34},\alpha f_1+\beta f_2+ \gamma f_3+\delta f_4)\in Z^2(L,\F)$. Then we must have $\delta^2\phi(x,y,z) =0$ and $\phi(x,\res y)=0$, for all $x,y,z \in L$. Therefore,
\begin{align*}
0=(\delta^2\phi)(x_1,x_2,x_3)&=\phi([x_1,x_2],x_3)+\phi([x_2,x_3],x_1)+\phi([x_3,x_1],x_2)=\phi(x_2,x_4), \text{ and }\\
0=(\delta^2\phi)(x_1,x_2,x_4)&=\phi([x_1,x_2],x_4)+\phi([x_2,x_4],x_1)+\phi([x_4,x_1],x_2)=\phi(x_3,x_4).
\end{align*}
Thus, we get $e=f=0$.
Since the $p$-map is trivial, $\phi(x,\res y)=\phi(x,0)=0$, for all $x,y\in L$. Therefore, a basis for $Z^2(L,\F)$ is as follows:
$$
 (\Delta_{12},0),(\Delta_{13},0),(\Delta_{14},0),(\Delta_{23},0),(0,f_1),
(0,f_2),(0,f_3),(0,f_4).
$$
Next, we find a basis for $B^2(L,\F)$. Let $(\phi,\omega)\in B^2(L,\F)$. Since $B^2(L,\F)\subseteq Z^2(L,\F)$, we have $(\phi,\omega)=(a\Delta _{12}+b\Delta_{13}+c\Delta_{14}+d\Delta_{23},\alpha f_1+\beta f_2+ \gamma f_3+\delta f_4)$. So, there exists a linear map $\psi:L\to \F$ such that $\delta^1\psi(x,y)=\phi(x,y)$ and $\tilde \psi(x)=\omega(x)$, for all $x,y \in L$. So, we have
\begin{align*}
c=\phi(x_1,x_4)=\delta^1\psi(x_1,x_4)=\psi([x_1,x_4])=0.
\end{align*}
Similarly, we can show that $d=0$. Also, we have
\begin{align*}
\alpha=\omega(x_1)=\tilde \psi(x_1)=\psi(\res x_1)=0.
\end{align*}
Similarly, we can show that $\beta=\gamma=\delta=0$. Therefore, $(\phi,\omega)=(a\Delta_{12}+b\Delta_{13},0)$ and hence
 $B^2(L,\F)=\la(\Delta_{12},0),(\Delta_{13},0)\ra_{\F}$. We deduce that a basis for $H^2(L,\F)$ is as follows:
$$[(\Delta_{14},0)],[(\Delta_{23},0)],[(0,f_1)],
[(0,f_2)],[(0,f_3)],[(0,f_4)].
$$
Let $[(\phi,\omega)] \in H^2(L,\F)$. Then we have $\phi = a\Delta_{14} + b\Delta_{23}$, for some $a, b\in \F$. Suppose that $A\phi = a'\Delta_{14}+b'\Delta_{23}$, for some $a', b'\in \F$. We determine $a', b'$. 
Note that 
\begin{align*}
A\phi (x_1,x_4)=&\phi (Ax_1,Ax_4)=\phi (a_{11}x_1+a_{21}x_2+a_{31}x_3+a_{41}x_4,a_{11}rx_4)=a_{11}^2ra; \\
A\phi (x_2,x_3)=&\phi (Ax_2,Ax_3)=\phi (a_{22}x_2+a_{32}x_3+a_{42}x_4,rx_3+a_{11}a_{32}x_4)=a_{22}rb.
\end{align*}
In the matrix form we can write this as
\begin{align}\label{56}
\begin{pmatrix}
a' \\ b'
\end{pmatrix}=
\begin{pmatrix}
a_{11}^2r& 0\\
0& a_{22}r
\end{pmatrix}
\begin{pmatrix}
a \\ b
\end{pmatrix}.
\end{align}
 The orbit with representative 
$\begin{pmatrix}
1 \\ 0
\end{pmatrix}$ of this action  gives us $L_{5,7}$.\\
Also, we have $\omega=\alpha f_1+\beta f_2+\gamma f_3+\delta f_4$, for some $\alpha, \beta, \gamma, \delta\in \F$.
 Suppose that $A\omega = \alpha' f_1+\beta' f_2+\gamma' f_3+\delta' f_4$, for some $\alpha', \beta', \gamma', \delta'\in \F$. We have
\begin{align*}
A\omega (x_1) =& \omega (Ax_1)=\omega (a_{11}x_1+a_{21}x_2+a_{31}x_3+a_{41}x_4)=a_{11}^p \alpha+a_{21}^p \beta +a_{31}^p \gamma+a_{41}^p \delta; \\
A\omega (x_2)=& a_{22}^p \beta +a_{32}^p \gamma + a_{42}^p \delta;\\
A\omega (x_3)=& r^p \gamma + a_{11}^pa_{32}^p \delta;\\
A\omega(x_4)=&a_{11}^pr^p \delta.
\end{align*}
In the matrix form we can write this as
\begin{align}\label{67}
\begin{pmatrix}
\alpha' \\ 
\beta'\\
\gamma'\\
\delta'
\end{pmatrix}
=
\begin{pmatrix}
a_{11}^p& a_{21}^p&a_{31}^p&a_{41}^p\\
0& a_{22}^p& a_{32}^p&a_{42}^p\\
0&0&r^p&a_{11}^pa_{32}^p\\
0&0&0&a_{11}^pr^p
\end{pmatrix}
\begin{pmatrix}
\alpha\\ 
\beta \\
\gamma \\
\delta
\end{pmatrix}.
\end{align}
Thus, we can write Equations \eqref{56} and \eqref{67} together as follows:
\begin{align*}
\bigg[
r\begin{pmatrix}
a_{11}^2& 0\\
0& a_{22}
\end{pmatrix},
\begin{pmatrix}
a_{11}^p& a_{21}^p&a_{31}^p&a_{41}^p\\
0& a_{22}^p& a_{32}^p&a_{42}^p\\
0&0&r^p&a_{11}^pa_{32}^p\\
0&0&0&a_{11}^pr^p
\end{pmatrix}
\bigg]
\bigg[
\begin{pmatrix}
a \\ b
\end{pmatrix},
\begin{pmatrix}
\alpha\\ 
\beta \\
\gamma \\
\delta
\end{pmatrix}
\bigg]
=
\bigg[
\begin{pmatrix}
a' \\ b'
\end{pmatrix},
\begin{pmatrix}
\alpha' \\ 
\beta'\\
\gamma'\\
\delta'
\end{pmatrix}
\bigg].
\end{align*}
Now we find the representatives of the orbits of the action of $\Aut (L)$ on the set of $\omega$'s such that 
the orbit represented by 
$\begin{pmatrix}
1 \\ 0
\end{pmatrix}$ is preserved under the action of $\Aut (L)$ on the set of $\phi$'s.
Let $\nu = \begin{pmatrix}
\alpha\\ 
\beta \\
\gamma \\
\delta
\end{pmatrix} \in \F^4$. If $\nu= \begin{pmatrix}
0\\ 
0 \\
0 \\
0
\end{pmatrix}$, then $\{\nu \}$ is clearly an $\Aut (L)$-orbit. Let $\nu \neq 0$. Suppose that $\delta \neq 0$. Then 
\begin{align*}
&\bigg[\begin{pmatrix}
1& 0\\
0& 1
\end{pmatrix},
\begin{pmatrix}
1& 0&0&-\alpha /\delta\\
0& 1& 0&-\beta /\delta\\
0&0&1&0\\
0&0&0&1
\end{pmatrix}\bigg]
\bigg[\begin{pmatrix}
1 \\ 0
\end{pmatrix},
\begin{pmatrix}
\alpha\\ 
\beta \\
\gamma \\
\delta
\end{pmatrix}\bigg]
=
\bigg[\begin{pmatrix}
1\\0
\end{pmatrix},
\begin{pmatrix}
0 \\ 
0\\
\gamma\\
\delta
\end{pmatrix}\bigg], \text{ and }\\
&\bigg[\begin{pmatrix}
1& 0\\
0& 1
\end{pmatrix},
\begin{pmatrix}
1& 0&0&0\\
0& 1& -\gamma /\delta&\gamma^2/\delta^2\\
0&0&1&-\gamma /\delta\\
0&0&0&1
\end{pmatrix}\bigg]
\bigg[\begin{pmatrix}
1 \\ 0
\end{pmatrix},
\begin{pmatrix}
0\\ 
0 \\
\gamma \\
\delta
\end{pmatrix}\bigg]
=
\bigg[\begin{pmatrix}
1\\0
\end{pmatrix},
\begin{pmatrix}
0 \\ 
0\\
0\\
\delta
\end{pmatrix}\bigg], \text{ and }\\
&\bigg[\delta^{-2/p}\begin{pmatrix}
\delta^{2/p}& 0\\
0& \delta^{-3/p}
\end{pmatrix},
\begin{pmatrix}
\delta& 0&0&0\\
0& \delta^{-3}& 0\\
0&0&\delta^{-2}&0\\
0&0&0&\delta^{-1}
\end{pmatrix}\bigg]
\bigg[\begin{pmatrix}
1 \\ 0
\end{pmatrix},
\begin{pmatrix}
0\\ 
0 \\
0\\
\delta
\end{pmatrix}\bigg]
=
\bigg[\begin{pmatrix}
1\\0
\end{pmatrix},
\begin{pmatrix}
0 \\ 
0\\
0\\
1
\end{pmatrix}\bigg].
\end{align*}
Next, if $\delta= 0$ , but $\gamma \neq 0$, then
\begin{align*}
&\bigg[\begin{pmatrix}
1& 0\\
0& 1
\end{pmatrix},
\begin{pmatrix}
1& 0&-\alpha /\gamma&0\\
0& 1& -\beta /\gamma &0\\
0&0&1&-\beta/\gamma\\
0&0&0&1
\end{pmatrix}\bigg]
\bigg[\begin{pmatrix}
1\\ 0
\end{pmatrix},
\begin{pmatrix}
\alpha\\ 
\beta\\
\gamma \\
0
\end{pmatrix}\bigg]=
\bigg[\begin{pmatrix}
1\\0
\end{pmatrix},
\begin{pmatrix}
0 \\ 
0\\
\gamma\\
0
\end{pmatrix}\bigg].
\end{align*}
Next, if $\gamma=\delta =0$, but $\beta \neq 0$, then 
\begin{align*}
&\bigg[\begin{pmatrix}
1& 0\\
0& 1
\end{pmatrix},
\begin{pmatrix}
1& -\alpha /\beta&0&0\\
0& 1& 0&0\\
0&0&1&0\\
0&0&0&1
\end{pmatrix}\bigg]
\bigg[\begin{pmatrix}
1 \\ 0
\end{pmatrix},
\begin{pmatrix}
\alpha\\ 
\beta\\
0 \\
0
\end{pmatrix}\bigg]=
\bigg[\begin{pmatrix}
1\\0
\end{pmatrix},
\begin{pmatrix}
0 \\ 
\beta\\
0\\
0
\end{pmatrix}\bigg].
\end{align*}
Finally, if  $\beta=\gamma=\delta =0$, but $\alpha \neq 0$, then we have
\begin{align*}
 \bigg[\alpha^{2/p}\begin{pmatrix}
\alpha^{-2/p}& 0\\
0& \alpha^{3/p}
\end{pmatrix},
\begin{pmatrix}
\alpha^{-1}& 0&0&0\\
0& \alpha^3&0 &0\\
0&0&\alpha^2&0\\
0&0&0&\alpha
\end{pmatrix}\bigg]
\bigg[\begin{pmatrix}
1 \\ 0
\end{pmatrix},
\begin{pmatrix}
\alpha\\ 
0\\
0 \\
0
\end{pmatrix}\bigg]=
\bigg[\begin{pmatrix}
1\\0
\end{pmatrix},
\begin{pmatrix}
1 \\ 
0\\
0\\
0
\end{pmatrix}\bigg].
\end{align*}
Thus the following elements are $\Aut (L)$-orbit representatives: 
\begin{align*}
\begin{pmatrix}
0\\ 
0 \\
0 \\
0
\end{pmatrix}, 
\begin{pmatrix}
1 \\ 
0\\
0\\
0
\end{pmatrix}, 
\begin{pmatrix}
0 \\ 
\beta\\
0\\
0
\end{pmatrix}, 
\begin{pmatrix}
0\\ 
0\\
\gamma\\
0
\end{pmatrix}, 
\begin{pmatrix}
0 \\ 
0\\
0\\
1
\end{pmatrix}.
\end{align*}

\begin{theorem}
The list of all restricted Lie algebra structures on $L_{5,7}$, up to isomorphism, is as follows:
\begin{align*}
& L_{5,7}^1=\langle x_1,\ldots,x_5\mid [x_1,x_2]=x_3,[x_1,x_3]=x_4, [x_1,x_4]=x_5 \rangle;\\
& L_{5,7}^2=\langle x_1,\ldots,x_5\mid [x_1,x_2]=x_3,[x_1,x_3]=x_4, [x_1,x_4]=x_5 ,
\res x_1=x_5\rangle;\\
& L_{5,7}^3(\beta)=\langle x_1,\ldots,x_5\mid [x_1,x_2]=x_3,[x_1,x_3]=x_4, [x_1,x_4]=x_5 ,
\res x_2=\beta x_5\rangle;\\
& L_{5,7}^4(\gamma)=\langle x_1,\ldots,x_5\mid [x_1,x_2]=x_3,[x_1,x_3]=x_4, [x_1,x_4]=x_5 ,
\res x_3=\gamma x_5\rangle;\\
& L_{5,7}^5=\langle x_1,\ldots,x_5\mid [x_1,x_2]=x_3,[x_1,x_3]=x_4, [x_1,x_4]=x_5 ,
\res x_4=x_5\rangle
\end{align*}
where $\beta, \gamma \in\F^*$.
\end{theorem}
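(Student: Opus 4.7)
The plan is to assemble the classification from the case analysis already carried out in the preceding subsections, and then prove pairwise non-isomorphism by appealing to Lemma \ref{lemma}. By Lemma \ref{LemmaL57}, every $p$-map on $K = L_{5,7}$ arises from an extension of $L = L_{5,7}/\la x_5\ra \cong L_{4,3}$ via some $\theta = (\Delta_{14}, \omega) \in Z^2(L,\F)$. Since $L_{4,3}$ admits (up to isomorphism) the four restricted structures I.1--I.4 listed earlier, the task reduces to scanning each of these and collecting those $\omega$'s for which $\phi = \Delta_{14}$ lies in the $\Aut(L)$-orbit corresponding to $L_{5,7}$.

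For cases I.2, I.3, I.4, the constraint $\phi(x,\res y)=0$ forces $\phi(x_1,x_4)=0$, i.e.\ $c=0$, which excludes $\phi = \Delta_{14}$; hence none of these cases contribute, and only the trivial $p$-map on $L$ yields extensions isomorphic to $L_{5,7}$. For the trivial case, the calculation carried out in the previous subsection produced the five $\Aut(L)$-orbit representatives
$$\begin{pmatrix}0\\0\\0\\0\end{pmatrix},\begin{pmatrix}1\\0\\0\\0\end{pmatrix},\begin{pmatrix}0\\\beta\\0\\0\end{pmatrix},\begin{pmatrix}0\\0\\\gamma\\0\end{pmatrix},\begin{pmatrix}0\\0\\0\\1\end{pmatrix},$$
together with Lemmas \ref{lemma-L7-1} and \ref{lemma-L7-2} telling us that $\beta$ is determined up to $(\F^*)^3$ and $\gamma$ up to $(\F^*)^2$. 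Translating these five representatives into $p$-maps via Lemma \ref{Ltheta} produces exactly the families $L_{5,7}^1,\,L_{5,7}^2,\,L_{5,7}^3(\beta),\,L_{5,7}^4(\gamma),\,L_{5,7}^5$ with $\beta\in T_3$ and $\gamma\in T_2$.

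For pairwise non-isomorphism, I would invoke Lemma \ref{lemma}: an isomorphism $f: L_{\theta_1}\to L_{\theta_2}$ with $f(\la x_5\ra)=\la x_5\ra$ must descend to an automorphism $A\in\Aut(L)$ sending the $\Aut(L)$-orbit of $\theta_1$ onto that of $\theta_2$ modulo $B^2(L,\F)$. Since the five representatives above sit in distinct $\Aut(L)$-orbits, the five families are pairwise non-isomorphic. Within each of the parametric families $L_{5,7}^3(\beta)$ and $L_{5,7}^4(\gamma)$, the same lemma combined with Lemmas \ref{lemma-L7-1} and \ref{lemma-L7-2} gives the sharper statement $L_{5,7}^3(\beta_1)\cong L_{5,7}^3(\beta_2)$ iff $\beta_2\beta_1^{-1}\in (\F^*)^3$, and analogously for $\gamma$, so that the choice of coset representatives in $T_3$ and $T_2$ yields a transversal.

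The only mild subtlety is ensuring that the reduction via Lemma \ref{lemma} is applied with the correct scalar freedom: an isomorphism between $L_{\theta_1}$ and $L_{\theta_2}$ restricted to $M=\la x_5\ra$ need not act as the identity, so a posteriori $\theta_1$ and $\theta_2$ need only match up to an $\F^*$-scalar. This is the same argument used in Lemma \ref{lemma-K5-3}, and transplanting it here is routine; I expect no genuine obstacle, as every step is parallel to the $L_{5,5}$ and $L_{5,6}$ classifications already completed.
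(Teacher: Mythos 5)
Your proposal follows essentially the same route as the paper: reduce via Lemma \ref{LemmaL57} to extensions of $L_{4,3}$ by $\theta=(\Delta_{14},\omega)$, discard the nontrivial $p$-map cases because $\phi(x_1,x_4)=0$ forces the $\Delta_{14}$-coefficient to vanish, read off the five orbit representatives from the trivial case together with Lemmas \ref{lemma-L7-1} and \ref{lemma-L7-2}, and conclude non-isomorphism from Lemma \ref{lemma} (with the scalar freedom handled exactly as in Lemma \ref{lemma-K5-3}, noting that $\la x_5\ra=Z(L_{5,7})$ so the hypothesis $f(M)=M$ is automatic). The argument is correct and matches the paper's proof.
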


The automorphism group $\Aut(L_{5,7})$ consists of
$$ \begin{pmatrix} a_{11} & 0 & 0 & 0 & 0 \\
                   a_{21} & a_{22} & 0 & 0 & 0\\
                   a_{31} & a_{32} & a_{11}a_{22} & 0 & 0 \\
                   a_{41} & a_{42} & a_{11}a_{32} & a_{11}^2a_{22} & 0 \\
                   a_{51} & a_{52} & a_{11}a_{42} & a_{11}^2a_{32} & 
a_{11}^3a_{22}
\end{pmatrix}.$$

\begin{lemma}\label{lemma-K7-3}
We have $L_{5,7}^{3}(\beta_1)\cong L_{5,7}^{3}(\beta_2)$ if and only if the exists non-zero $x, y\in \F^*$ such that $\beta_1/\beta_2=x^3y^{p-1}$.
\end{lemma}
\begin{proof}
Let $f=(a_{ij})\in \Aut(L_{5,7})$. 
We just need to observe that  $f$ is an isomorphism between  $L_{5,7}^3(\beta_1)$
and $L_{5,7}^3(\beta_2)$  if and only if $f(\res x_2)=f(x_2)^{[p]}$ which in turn is equivalent to saying that  
$\beta_1/\beta_2=a_{11}^{-3}a_{22}^{p-1}$.

  \end{proof}
  
Note that the solutions to  Equation   $\beta_1/\beta_2=x^3y^{p-1}$ in Lemma 
\ref{lemma-K7-3} depends on the underlying field. For example, over the prime field $\F_p$, we  have
that $L_{5,7}^{3}(\beta_1)\cong L_{5,7}^{3}(\beta_2)$ if and only if the ratio  $\beta_1/\beta_2$ is a cubic.
  
  \begin{lemma}\label{lemma-K7-4}
We have $L_{5,7}^{4}(\gamma_1)\cong L_{5,7}^{4}(\gamma_2)$ if and only if $\gamma_1/\gamma_2\in (\F^*)^2$.
\end{lemma}
\begin{proof}
Suppose that $f=(a_{ij}): L_{5,7}^{4}(\gamma_1)\to L_{5,7}^{4}(\gamma_2)$ 
is an isomorphism.  Then we have $f(\res x_3)=f(x_3)^{[p]}$ which implies that
$\gamma_1/\gamma_2=a_{11}^{p-3}a_{22}^{p-1}\in (\F^*)^2$.
To prove the converse, suppose that 
 $\gamma_1/\gamma_2=\epsilon^2$, for some $\epsilon \in \F^*$. It is easy to see that the 
 following is an isomorphism from   $L_{5,7}^{4}(\gamma_1)$ to  $L_{5,7}^{4}(\gamma_2)$:
 \begin{align*}
\begin{pmatrix}
\epsilon^{-1/p} & 0 & 0 & 0 & 0\\
 0 & \epsilon^{3/p} &0 & 0 & 0\\
 0 & 0 & \epsilon^{2/p} & 0 & 0\\
 0 & 0 & 0 &\epsilon^{1/p} &  0\\
0 & 0 & 0 & 0 & 1\\
\end{pmatrix}.
  \end{align*}
  \end{proof}

\chapter{Restriction maps on $L_{5,8}$}
Let 
$$
K_8=L_{5,8}=\langle x_1,\ldots,x_5\mid [x_1,x_2]=x_4, [x_1,x_3]=x_5\rangle.
$$  
Then  $Z(L_{5,8})=\la x_4,x_5\ra_{\F}$ and the group $\Aut(L_{5,8})$ consists of invertible matrices of the form 
\[\begin{pmatrix} a_{11} & 0 & 0 & 0 & 0 \\
a_{21} & a_{22} & a_{23}& 0 & 0\\
a_{31} & a_{32} & a_{33} & 0 & 0 \\
a_{41} & a_{42} & a_{43}& a_{11}a_{22}
& a_{11}a_{23} \\
a_{51} & a_{52} & a_{53}& a_{11}a_{32}
& a_{11}a_{33}
\end{pmatrix}.\]
Note that there exists an element $\alpha x_4+\beta x_5 \in Z(L_{5,8})$ such that 
 $\res {(\alpha x_4+\beta x_5)}=0,$ for some $\alpha ,\beta \in \F$. 
If $\alpha \neq 0$ then consider 
 $$ 
 K=\langle y_1,\ldots,y_5\mid [y_1,y_2]=y_4, [y_1,y_3]=y_5\rangle,
 $$
 where $y_1=x_1, y_2=\alpha x_2 +\beta x_3, y_3=x_3, y_4=\alpha x_4+\beta x_5, y_5= x_5$. Let $\phi :K_8\to K$ given by  $x_i\mapsto y_i$, for $1\leq i\leq 5$. It is easy to see that $\phi$ is an isomorphism. Therefore, in this case we can suppose that $\res x_4=0$.  
 If $\alpha=0$ then $\beta\neq 0$ and we rescale $x_5$ so that $\res x_5=0$.
Hence we can assume either $\res x_4=0$ or $\res x_5=0$. Consider the automorphism of 
$K_8$ given by $x_1\mapsto x_1, x_2\mapsto x_3, x_3\mapsto x_2, x_4\mapsto x_5$ and $x_5\mapsto x_4$.
Using this automorphism, we deduce that it is enough to determine all the $p$-maps on $K_8$ for which $\res x_5=0$. 
\begin{lemma}\label{Lemma58}
Let $K=L_{5,8}$ and $[p]:K\to K$ be a $p$-map on $K$ such that $\res x_5=0$ and let $L=\frac{K}{M}$ where $M=\la x_5\ra_{\F}$. Then $K\cong L_{\theta}$ where $\theta=(\Delta_{13},\omega)\in Z^2(L,\F)$.
\end{lemma}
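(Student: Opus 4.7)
The plan is to apply the general construction from Lemma \ref{Lemma 1} and then compute $\phi$ explicitly on basis pairs, just as in the parallel Lemmas \ref{LemmaL52}, \ref{LemmaL522}, \ref{cocycle-L53}, \ref{LemmaL54}, \ref{LemmaL55}, \ref{LemmaL56}, and \ref{LemmaL57}. First I would set up the exact sequence
$$
0 \rightarrow M \rightarrow K \rightarrow L \rightarrow 0
$$
with projection $\pi$, and choose the obvious injective linear section $\sigma : L \rightarrow K$ sending the class of $x_i$ to $x_i$ for $1 \leq i \leq 4$, so that $\pi\sigma = 1_L$. Defining
$$
\phi(x_i,x_j) = [\sigma(x_i),\sigma(x_j)]_K - \sigma([x_i,x_j]_L), \qquad \omega(x) = \res{\sigma(x)} - \sigma(\res x),
$$
Lemma \ref{Lemma 1} immediately gives $\theta = (\phi,\omega) \in Z^2(L,M)$ together with $K \cong L_\theta$, so the only thing that needs checking is that $\phi$ is precisely $\Delta_{13}$.

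That verification is a short direct calculation. In $K$ the only nontrivial brackets among $x_1,\ldots,x_4$ are $[x_1,x_2]=x_4$ and $[x_1,x_3]=x_5$; passing to $L = K/\la x_5\ra$, the first survives as $[x_1,x_2]_L = x_4$ while the second becomes $[x_1,x_3]_L = 0$. Therefore
$$
\phi(x_1,x_2) = x_4 - \sigma(x_4) = 0, \qquad \phi(x_1,x_3) = x_5 - \sigma(0) = x_5,
$$
and $\phi(x_i,x_j) = 0$ for every remaining pair $(i,j)$ with $1 \leq i < j \leq 4$, since in both $K$ and $L$ those brackets vanish. Hence $\phi = \Delta_{13}$ under the identification $M \cong \F$ via $x_5 \mapsto 1$, which is exactly the statement.

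I do not anticipate a genuine obstacle here: the argument is entirely templated on the preceding analogues. The only conceptual point worth flagging is that, unlike the $\res x_4 = 0$ case which would quotient out $x_4$ and produce a different cocycle, quotienting by $\la x_5\ra$ retains the bracket $[x_1,x_2]=x_4$ inside $L$ itself, so the surviving obstruction in $\phi$ is carried solely by the pair $(x_1,x_3)$, producing $\Delta_{13}$ rather than $\Delta_{12}$ or a sum of the two.
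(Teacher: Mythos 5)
Your proposal is correct and follows exactly the paper's argument: choose the section $\sigma(x_i)=x_i$, apply Lemma \ref{Lemma 1}, and verify on basis pairs that $\phi(x_1,x_3)=x_5$ while all other values vanish, so $\phi=\Delta_{13}$. The computation and the identification of $M$ with $\F$ are the same as in the paper's proof, so there is nothing to add.
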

\begin{proof}
 Let $\pi :K\rightarrow L$ be the projection map. We have the exact sequence 
$$ 0\rightarrow M\rightarrow K\rightarrow L\rightarrow 0.$$
Let $\sigma :L\rightarrow K$ such that $x_i\mapsto x_i$, $1\leq i\leq 4$. Then $\sigma$ is an injective linear map and $\pi \sigma =1_L$. Now, we define $\phi : L\times L\rightarrow M$ by $\phi (x_i,x_j)=[\sigma (x_i),\sigma (x_j)]-\sigma ([x_i,x_j])$, $1\leq i,j\leq 4$ and $\omega: L\rightarrow M$ by $\omega (x)=\res {\sigma (x)} -\sigma (\res x)$. Note that
\begin{align*}
&\phi(x_1,x_3)=[\sigma(x_1),\sigma(x_3)]-\sigma([x_1,x_3])=[x_1,x_3]=x_5;\\
&\phi(x_1,x_2)=[\sigma(x_1),\sigma(x_2)]-\sigma([x_1,x_2])=0.
\end{align*}
Similarly, we can show that $\phi(x_1,x_4)=\phi(x_2,x_3)=\phi(x_2,x_4)=\phi(x_3,x_4)=0$. Therefore, $\phi=\Delta_{13}$.
Now, by Lemma, \ref{K=L-theta} we have $\theta=(\Delta_{13},\omega)\in Z^2(L,\F)$ and $K\cong L_{\theta}$.
\end{proof}

 Let 
$$
L=\frac {L_{5,8}}{\la x_5\ra} = \la x_1,x_2,x_3,x_4 \mid [x_1,x_2] =x_4 \ra. 
$$ 
Then $L\cong L_{4,2}$ and  the group $\Aut(L)$ consists of invertible matrices of the form
$$\begin{pmatrix}
 a_{11}  & a_{12} & 0 & 0 \\
a_{21} & a_{22} & 0 & 0\\
a_{31} & a_{32}& a_{33} & 0 \\
a_{41} & a_{42} & a_{43}& r
\end{pmatrix},$$
where $r= a_{11}a_{22}-a_{12}a_{21}\neq 0$.   
By Theorem \ref{4-dim}, there are eight non-isomorphic restricted Lie algebra structures on $L$ given by the following  $p$-maps:
\begin{enumerate}
\item[I.1]Trivial $p$-map;
\item[I.2]$\res x_1=x_4$;
\item[I.3]$\res x_1 =x_3$;
\item[I.4]$ \res x_1=x_4, \res x_2=x_3$;
\item[I.5]$\res x_4=x_3$;
\item[I.6]$\res x_4=x_3, \res x_2=x_4$;
\item[I.7]$\res x_3=x_4$;
\item[I.8]$\res x_3=x_4, \res x_2=x_3$.
\end{enumerate}

 We make $L$ into a restricted Lie algebra by equipping it with each of the above $p$-maps. Then,  in  each  case,  we  find all possible orbit representatives of the form $(\Delta_{13},\omega)$ under the action of $\Aut_p(L)$ on $H^2(L,\F)$. By  Lemma \ref{Lemma58}, we do get all possible $p$-maps on $K_8$.

Consider the case I.3 where the $p$-map of $L$ is given by $\res x_1 = x_3$.
Let  $(\phi,\omega)\in Z^2(L,\F)$. Then we must have $\phi (x,\res y)=0$, for all $x,y\in L$, where $\phi=a\Delta _{12}+b\Delta_{13}+c\Delta_{14}+d\Delta_{23}+e\Delta_{24}+f\Delta_{34}$, for some $a,b,c,d,e,f \in \F$. Since $\res L=\la x_3\ra$, we get $\phi (x_1,x_3) =0$. So, $b=0$. Since $\phi =\Delta_{13}$ gives us $L_{5,8}$, we deduce by Lemma \ref{Lemma58} that $L_{5,8}$ cannot be constructed in this case. Similarly, we can show that in cases I.4, I.5, I.6, and I.8  we also get $b=0$ and so we cannot put a $p$-map on $L_{5,8}$. We  consider 
the remaining cases in  the following sections.

\section{Extensions of ($L$, trivial $p$-map)}
 First, we find a basis for $Z^2(L,\F)$. Let $(\phi,\omega)=(a\Delta _{12}+b\Delta_{13}+c\Delta_{14}+d\Delta_{23}+e\Delta_{24}+f\Delta_{34},\alpha f_1+\beta f_2+ \gamma f_3+\delta f_4)\in Z^2(L,\F)$. Then we must have $\delta^2\phi(x,y,z) =0$ and $\phi(x,\res y)=0$, for all $x,y,z \in L$. Therefore,
\begin{align*}
0=(\delta^2\phi)(x_1,x_2,x_3)&=\phi([x_1,x_2],x_3)+\phi([x_2,x_3],x_1)+\phi([x_3,x_1],x_2)=\phi(x_4,x_3).
\end{align*}
Thus, we get $f=0$.
Since the $p$-map is trivial, $\phi(x,\res y)=\phi(x,0)=0$, for all $x,y\in L$. Therefore, a basis for $Z^2(L,\F)$ is as follows:
$$
 (\Delta_{12},0),(\Delta_{13},0),(\Delta_{14},0),(\Delta_{23},0),(\Delta_{24},0),(0,f_1),
(0,f_2),(0,f_3),(0,f_4).
$$
Next, we find a basis for $B^2(L,\F)$. Let $(\phi,\omega)\in B^2(L,\F)$. Since $B^2(L,\F)\subseteq Z^2(L,\F)$, we have $(\phi,\omega)=(a\Delta _{12}+b\Delta_{13}+c\Delta_{14}+d\Delta_{23}+e\Delta_{24},\alpha f_1+\beta f_2+ \gamma f_3+\delta f_4)$. So, there exists a linear map $\psi:L\to \F$ such that $\delta^1\psi(x,y)=\phi(x,y)$ and $\tilde \psi(x)=\omega(x)$, for all $x,y \in L$. So, we have
\begin{align*}
b=\phi(x_1,x_3)=\delta^1\psi(x_1,x_3)=\psi([x_1,x_3])=0.
\end{align*}
Similarly, we can show that $c=d=e=0$. Also, we have
\begin{align*}
\alpha=\omega(x_1)=\tilde \psi(x_1)=\psi(\res x_1)=0.
\end{align*}
Similarly, we can show that $\beta=\gamma=\delta=0$. Therefore, $(\phi,\omega)=(a\Delta_{12},0)$ and hence
 $B^2(L,\F)=\la(\Delta_{12},0)\ra_{\F}$. We deduce that a basis for $H^2(L,\F)$ is as follows:
$$
[(\Delta_{13},0)],[(\Delta_{14},0)],[(\Delta_{23},0)],[(\Delta_{24},0)],[(0,f_1)],
[(0,f_2)],[(0,f_3)],[(0,f_4)].
$$
Let $[(\phi,\omega)] \in H^2(L,\F)$. Then we have $\phi = a\Delta _{13}+b\Delta_{14}+c\Delta_{23}+d\Delta_{24}$, for some $a,b,c,d \in \F$. Suppose that $A\phi = a'\Delta _{13}+b'\Delta_{14}+c'\Delta_{23}+d'\Delta_{24},$ for some $a',b',c',d' \in \F$. We determine $a',b',c',d'$. Note that
\begin{align*}
&A\phi (x_1,x_3)
=a_{11}a_{33}a+a_{11}a_{43}b+a_{21}a_{33}c+a_{21}a_{43}d;\\
&A\phi (x_1,x_4)
=a_{11}rb+a_{21}rd;\\
&A\phi (x_2,x_3)
=a_{12}a_{33}a+a_{12}a_{43}b+a_{22}a_{33}c+a_{22}a_{43}d;\\
&A\phi (x_2,x_4)
=a_{12}rb+a_{22}rd.
\end{align*}
In the matrix form we can write this as
\begin{align}\label{78}
\begin{pmatrix}
a' \\ b'\\c'\\d'
\end{pmatrix}=
\begin{pmatrix}
a_{11}a_{33} & a_{11}a_{43}& a_{21}a_{33}&a_{21}a_{43}\\
0& a_{11}r&0&a_{21}r\\
a_{12}a_{33}&a_{12}a_{43}& a_{22}a_{33}&a_{22}a_{43}\\
0&a_{12}r&0&a_{22}r
\end{pmatrix}
\begin{pmatrix}
a \\ b\\c\\d
\end{pmatrix}.
\end{align}
The orbit with representative 
$\begin{pmatrix}
1\\ 0\\0\\0
\end{pmatrix}$ of this action  gives us $L_{5,8}$.

Also, we have $\omega=\alpha f_1+\beta f_2+\gamma f_3+\delta f_4$, for some $\alpha , \beta , \gamma, \delta \in \F$. Suppose that $A\omega = \alpha' f_1+\beta' f_2+\gamma' f_3+\delta' f_4$,  for some $\alpha' , \beta' , \gamma', \delta' \in \F$. We determine $\alpha' , \beta' , \gamma', \delta'$. Note that
\begin{align*}
&A\omega (x_1) = \omega (Ax_1)=\omega (a_{11}x_1+a_{21}x_2+a_{31}x_3+a_{41}x_4)=a_{11}^p \alpha +a_{21}^p\beta + a_{31}^p \gamma+a_{41}^p \delta;\\
&A\omega (x_2)=a_{12}^p\alpha+a_{22}^p\beta +a_{32}^p\gamma+ a_{42}^p \delta;\\
&A\omega (x_3)= a_{33}^p \gamma +a_{43}^p\delta;\\
&A\omega(x_4)=r^p \delta.
\end{align*}
In the matrix form we can write this as
\begin{align}\label{89}
\begin{pmatrix}
\alpha' \\ 
\beta'\\
\gamma'\\
\delta'
\end{pmatrix}=
\begin{pmatrix}
a_{11}^p& a_{21}^p&a_{31}^p&a_{41}^p\\
a_{12}^p& a_{22}^p& a_{32}^p&a_{42}^p\\
0&0&a_{33}^p&a_{43}^p\\
0&0&0&r^p
\end{pmatrix}
\begin{pmatrix}
\alpha\\ 
\beta \\
\gamma \\
\delta
\end{pmatrix}.
\end{align}
Thus, we can write Equations \eqref{78}  and \eqref{89} together as follows:
\begin{align*}
\small{
\bigg[\begin{pmatrix}
a_{11}a_{33} & a_{11}a_{43}& a_{21}a_{33}&a_{21}a_{43}\\
0& a_{11}r&0&a_{21}r\\
a_{12}a_{33}&a_{12}a_{43}& a_{22}a_{33}&a_{22}a_{43}\\
0&a_{12}r&0&a_{22}r
\end{pmatrix},
\begin{pmatrix}
a_{11}^p& a_{21}^p&a_{31}^p&a_{41}^p\\
a_{12}^p& a_{22}^p& a_{32}^p&a_{42}^p\\
0&0&a_{33}^p&a_{43}^p\\
0&0&0&r^p
\end{pmatrix}\bigg]
\bigg[\begin{pmatrix}
a \\ b\\c\\d
\end{pmatrix},
\begin{pmatrix}
\alpha \\ 
\beta\\
\gamma\\
\delta
\end{pmatrix}\bigg]=
\bigg[\begin{pmatrix}
a' \\ b'\\c'\\d'
\end{pmatrix},
\begin{pmatrix}
\alpha' \\ 
\beta'\\
\gamma'\\
\delta'
\end{pmatrix}\bigg].}
\end{align*}
Now we find the representatives of the orbits of the action of $\Aut (L)$ on the set of $\omega$'s such that
 the orbit represented by
$\begin{pmatrix}
1\\ 0\\0\\0
\end{pmatrix}$
 is preserved under the action of $\Aut (L)$ on the set of $\phi$'s.

Let $\nu = \begin{pmatrix}
\alpha\\ 
\beta \\
\gamma \\
\delta
\end{pmatrix} \in \F^4$. If $\nu= \begin{pmatrix}
0\\ 
0 \\
0 \\
0
\end{pmatrix}$, then $\{\nu \}$ is clearly an $\Aut(L)$-orbit. Let $\nu \neq 0$. Suppose that $\delta \neq 0$. Then 
\begin{align*}
&\bigg[\begin{pmatrix}
1& -\gamma/\delta&0&0\\
0& 1&0&0\\
0&0&1&-\gamma/\delta\\
0&0&0&1
\end{pmatrix},
\begin{pmatrix}
1& 0&0&-\alpha/\delta\\
0& 1& 0&-\beta/\delta\\
0&0&1&-\gamma/\delta\\
0&0&0&1
\end{pmatrix}\bigg]
\bigg[\begin{pmatrix}
1 \\ 0\\0\\0
\end{pmatrix},
\begin{pmatrix}
\alpha\\ 
\beta \\
\gamma \\
\delta
\end{pmatrix}\bigg]=
\bigg[\begin{pmatrix}
1\\0\\0\\0
\end{pmatrix},
\begin{pmatrix}
0 \\ 
0\\
0\\
\delta
\end{pmatrix}\bigg], \text{ and }\\
&\bigg[\begin{pmatrix}
1& 0&0&0\\
0& \delta ^{-2/p}&0&0\\
0&0&\delta ^{1/p}&0\\
0&0&0&\delta^{-1/p}
\end{pmatrix},
\begin{pmatrix}
1/\delta& 0&0&0\\
0& 1& 0&0\\
0&0&\delta&0\\
0&0&0&1/\delta
\end{pmatrix}\bigg]
\bigg[\begin{pmatrix}
1\\ 0\\0\\0
\end{pmatrix},
\begin{pmatrix}
0\\ 
0 \\
0 \\
\delta
\end{pmatrix}\bigg]=
\bigg[\begin{pmatrix}
1\\0\\0\\0
\end{pmatrix},
\begin{pmatrix}
0 \\ 
0\\
0\\
1
\end{pmatrix}\bigg].
\end{align*}
Next, if $\delta =0$, but $\gamma \neq 0$, then
\begin{align*}
&\bigg[\begin{pmatrix}
1& 0&0&0\\
0& 1&0&0\\
0&0&1&0\\
0&0&0&1
\end{pmatrix},
\begin{pmatrix}
1& 0&-\alpha/\gamma&0\\
0& 1& -\beta/\gamma&0\\
0&0&1&0\\
0&0&0&1
\end{pmatrix}\bigg]
\bigg[\begin{pmatrix}
1\\ 0\\0\\0
\end{pmatrix},
\begin{pmatrix}
\alpha\\ 
\beta\\
\gamma \\
0
\end{pmatrix}\bigg]=
\bigg[\begin{pmatrix}
1\\0\\0\\0
\end{pmatrix},
\begin{pmatrix}
0 \\ 
0\\
\gamma\\
0
\end{pmatrix}\bigg], \text{ and }\\
&\bigg[\begin{pmatrix}
1& 0&0&0\\
0& \gamma^{2/p}&0&0\\
0&0&\gamma^{-1/p}&0\\
0&0&0&\gamma ^{1/p}
\end{pmatrix},
\begin{pmatrix}
\gamma& 0&0&0\\
0& 1& 0&0\\
0&0&1/\gamma&0\\
0&0&0&\gamma
\end{pmatrix}\bigg]
\bigg[\begin{pmatrix}
1\\ 0\\0\\0
\end{pmatrix},
\begin{pmatrix}
0\\ 
0\\
\gamma \\
0
\end{pmatrix}\bigg]=
\bigg[\begin{pmatrix}
1\\0\\0\\0
\end{pmatrix},
\begin{pmatrix}
0 \\ 
0\\
1\\
0
\end{pmatrix}\bigg].
\end{align*}
Next, if $\delta =\gamma=0$, but $\beta \neq 0$, then
\begin{align*}
&\small
\bigg[\begin{pmatrix}
1& 0&(-\alpha /\beta )^{1/p}&0\\
0& 1&0&(-\alpha /\beta )^{1/p}\\
0&0&1&0\\
0&0&0&1
\end{pmatrix},
\begin{pmatrix}
1& -\alpha/\beta&0&0\\
0& 1& 0&0\\
0&0&1&0\\
0&0&0&1
\end{pmatrix}\bigg]
\bigg[\begin{pmatrix}
1\\ 0\\0\\0
\end{pmatrix},
\begin{pmatrix}
\alpha\\ 
\beta\\
0\\
0
\end{pmatrix}\bigg]=
\bigg[\begin{pmatrix}
1\\0\\0\\0
\end{pmatrix},
\begin{pmatrix}
0 \\ 
\beta\\
0\\
0
\end{pmatrix}\bigg], \text{ and }\\
&\small
\bigg[\begin{pmatrix}
1& 0&0&0\\
0& (1/\beta )^{1/p}&0&0\\
0&0&(1/\beta )^{1/p}&0\\
0&0&0&(1/\beta )^{2/p}
\end{pmatrix},
\begin{pmatrix}
1& 0&0&0\\
0& 1/\beta& 0&0\\
0&0&1&0\\
0&0&0&1/\beta
\end{pmatrix}\bigg]
\bigg[\begin{pmatrix}
1\\ 0\\0\\0
\end{pmatrix},
\begin{pmatrix}
0\\ 
\beta\\
0\\
0
\end{pmatrix}\bigg]=
\bigg[\begin{pmatrix}
1\\0\\0\\0
\end{pmatrix},
\begin{pmatrix}
0 \\ 
1\\
0\\
0
\end{pmatrix}\bigg].
\end{align*}
Finally, if $\delta =\gamma =\beta =0$, but $\alpha \neq 0$, then
\begin{align*}
\footnotesize\bigg[\begin{pmatrix}
1& 0&0&0\\
0& \alpha^{-2/p}&0&0\\
0&0&\alpha^{1/p}&0\\
0&0&0&\alpha^{-1/p}
\end{pmatrix},
\begin{pmatrix}
1/\alpha& 0&0&0\\
0& 1& 0&0\\
0&0&\alpha&0\\
0&0&0&1/\alpha
\end{pmatrix}\bigg]
\bigg[\begin{pmatrix}
1\\ 0\\0\\0
\end{pmatrix},
\begin{pmatrix}
\alpha\\ 
0\\
0\\
0
\end{pmatrix}\bigg]=
\bigg[\begin{pmatrix}
1\\0\\0\\0
\end{pmatrix},
\begin{pmatrix}
1\\ 
0\\
0\\
0
\end{pmatrix}\bigg].
\end{align*}
Thus the following elements are $\Aut(L)$-orbit representatives: 
\begin{align*}
\begin{pmatrix}
0\\ 
0\\
0\\
0
\end{pmatrix},
\begin{pmatrix}
1 \\ 
0\\
0\\
0
\end{pmatrix},
\begin{pmatrix}
0 \\ 
1\\
0\\
0
\end{pmatrix},
\begin{pmatrix}
0 \\ 
0\\
1\\
0
\end{pmatrix},
\begin{pmatrix}
0\\ 
0\\
0\\
1
\end{pmatrix}.
\end{align*}
Therefore, the corresponding restricted Lie algebra structures are as follows:
\begin{align*}
&K_8^{1}=\langle x_1,\ldots,x_5\mid [x_1,x_2]=x_4,[x_1,x_3]=x_5 \rangle;\\
&K_8^{2}=\langle x_1,\ldots,x_5\mid [x_1,x_2]=x_4,[x_1,x_3]=x_5, 
 \res x_1=x_5 \rangle;\\
&K_8^{3}=\langle x_1,\ldots,x_5\mid [x_1,x_2]=x_4,[x_1,x_3]=x_5, 
 \res x_2=x_5 \rangle;\\
&K_8^{4}=\langle x_1,\ldots,x_5\mid [x_1,x_2]=x_4,[x_1,x_3]=x_5,
\res x_3=x_5 \rangle;\\
&K_8^{5}=\langle x_1,\ldots,x_5\mid [x_1,x_2]=x_4,[x_1,x_3]=x_5, 
\res x_4=x_5 \rangle.
\end{align*}
\section{Extensions of ($L,  \res x_1 = x_4$)}
 First, we find a basis for $Z^2(L,\F)$. Let $(\phi,\omega)=(a\Delta _{12}+b\Delta_{13}+c\Delta_{14}+d\Delta_{23}+e\Delta_{24}+f\Delta_{34},\alpha f_1+\beta f_2+ \gamma f_3+\delta f_4)\in Z^2(L,\F)$. Then we must have $\delta^2\phi(x,y,z) =0$ and $\phi(x,\res y)=0$, for all $x,y,z \in L$. Therefore, we have
\begin{align*}
0=(\delta^2\phi)(x_1,x_2,x_3)&=\phi([x_1,x_2],x_3)+\phi([x_2,x_3],x_1)+\phi([x_3,x_1],x_2)=\phi(x_4,x_3).
\end{align*}
Thus, we get $f=0$. Also, we have $\phi(x,\res y)=0$. Therefore, $\phi(x,x_4)=0$, for all $x\in L$ and hence $\phi(x_1,x_4)=\phi(x_2,x_4)=\phi(x_3,x_4)=0$ which implies that $c=e=f=0$. Therefore, $Z^2(L,\F)$ has a basis consisting of:
$$ (\Delta_{12},0),(\Delta_{13},0),(\Delta_{23},0),(0,f_1),
(0,f_2),(0,f_3),(0,f_4).$$
Next, we find a basis for $B^2(L,\F)$. Let $(\phi,\omega)\in B^2(L,\F)$. Since $B^2(L,\F)\subseteq Z^2(L,\F)$, we have $(\phi,\omega)=(a\Delta _{12}+b\Delta_{13}+c\Delta_{23},\alpha f_1+\beta f_2+ \gamma f_3+\delta f_4)$. Note that there exists a linear map $\psi:L\to \F$  such that $\delta^1\psi(x,y)=\phi(x,y)$ and $\tilde \psi(x)=\omega(x)$, for all $x,y \in L$. So, we have
\begin{align*}
&a=\phi(x_1,x_2)=\delta^1\psi(x_1,x_2)=\psi([x_1,x_2])=\psi(x_4), \text{ and }\\
&b=\phi(x_1,x_3)=\delta^1\psi(x_1,x_3)=\psi([x_1,x_3])=0.
\end{align*}
Similarly, we can show that $c=0$. Also, we have
\begin{align*}
&\alpha=\omega(x_1)=\tilde \psi(x_1)=\psi(\res x_1)=\psi(x_4), \text{ and }\\
&\beta=\omega(x_2)=\tilde \psi(x_2)=\psi(\res x_2)=0.
\end{align*}
Similarly, we can show that $\gamma=\delta=0$.  Note that $\psi(x_4)=a=\alpha$. Therefore, $(\phi,\omega)=(a\Delta_{12},a f_1)$ and hence $B^2(L,\F)=\la(\Delta_{12},f_1)\ra_{\F}$. Note that 
$$[(\Delta_{12},0)],[(\Delta_{13},0)],[(\Delta_{23},0)],[0,f_1],[(0,f_2)],[(0,f_3)],[(0,f_4)]$$
spans $H^2(L,\F)$.
Since $[(\Delta_{12},0)]+[(0,f_1)]=[(\Delta_{12},f_1)]=[0]$, then $[(0,f_1)]$ is an scalar multiple of $[(\Delta_{12},0)]$
in $H^2(L,\F)$. Note that $\dim H^2=\dim Z^2-\dim B^2=6$. Therefore, 
$$[(\Delta_{12},0)],[(\Delta_{13},0)],[(\Delta_{23},0)],[(0,f_2)],[(0,f_3)],[(0,f_4)].$$
forms a basis for $H^2(L,\F)$.

Note that the group $\Aut(L)$ in this case consists of invertible matrices of the form
$$\begin{pmatrix}
 a_{11}  & a_{12} & 0 & 0 \\
a_{21} & a_{22} & 0 & 0\\
a_{31} & a_{32}& a_{33} & 0 \\
a_{41} & a_{42} & a_{43}& r
\end{pmatrix},$$
where $r= a_{11}a_{22}-a_{12}a_{21}=a_{11}^p$ and $a_{12}=0$.

Let $[(\phi,\omega)] \in H^2(L,\F)$. Then we have $\phi = a\Delta_{13}+b\Delta_{12}+c\Delta_{23}$, for some $a,b,c \in \F$. Suppose that $A\phi=a'\Delta_{13}+b'\Delta_{12}+c'\Delta_{23}$, for some $a',b',c' \in \F$. We determine $a',b',c'$. Note that
\begin{align*}
&A\phi (x_1,x_3)= \phi (a_{11}x_1+a_{21}x_2+a_{31}x_3+a_{41}x_4 , a_{33}x_3+a_{43}x_4)=a_{11}a_{33}a+a_{21}a_{33}c;\\
&A\phi (x_1,x_2)= \phi (a_{11}x_1+a_{21}x_2+a_{31}x_3+a_{41}x_4 , a_{12}x_1+a_{22}x_2+a_{32}x_3+a_{42}x_4)\\
&=(a_{11}a_{32}-a_{31}a_{12})a+(a_{11}a_{22}-a_{12}a_{21})b+(a_{21}a_{32}-a_{31}a_{22})c;\\
&A\phi (x_2,x_3)= \phi (a_{12}x_1+a_{22}x_2+a_{32}x_3+a_{42}x_4 , a_{33}x_3+a_{43}x_4)=a_{12}a_{33}a+a_{22}a_{33}c.
\end{align*}
In the matrix form we can write this as



\begin{align}\label{i}
\begin{pmatrix}
a'\\b' \\c'
\end{pmatrix}=
\begin{pmatrix}
a_{11}a_{33} &0&  a_{21}a_{33}\\
a_{11}a_{32}&a_{11}a_{22}&a_{21}a_{32}-a_{31}a_{22}\\
0&0& a_{22}a_{33}\\
\end{pmatrix}
\begin{pmatrix}
a \\b\\c
\end{pmatrix}.
\end{align}
 The orbit with representative 
$\begin{pmatrix}
1\\ 0\\0
\end{pmatrix}$ of this action  gives us $L_{5,8}$.

Also, we have $\omega=\beta f_2+\gamma f_3+\delta f_4$,  for some $\beta,\gamma,\delta \in \F$. Suppose that $A\omega =\beta' f_2+\gamma' f_3+\delta' f_4$,  for some $\beta',\gamma',\delta' \in \F$. We have
\begin{align*}
&A\omega (x_2)=a_{22}^p\beta +a_{32}^p\gamma+ a_{42}^p \delta;\\
&A\omega (x_3)= a_{33}^p \gamma +a_{43}^p\delta;\\
&A\omega(x_4)=r^p \delta.
\end{align*}
In the matrix form we can write this as
\begin{align}\label{j}
\begin{pmatrix} 
\beta'\\
\gamma'\\
\delta'
\end{pmatrix}=
\begin{pmatrix}
 a_{22}^p& a_{32}^p&a_{42}^p\\
0&a_{33}^p&a_{43}^p\\
0&0&a_{11}^{p^2}
\end{pmatrix}
\begin{pmatrix}
\beta \\
\gamma \\
\delta
\end{pmatrix}.
\end{align}
Thus, we can write Equations \eqref{i} and \eqref{j} together as follows:
\begin{align*}
\small{
\bigg[\begin{pmatrix}
a_{11}a_{33} &0&  a_{21}a_{33}\\
a_{11}a_{32}&a_{11}a_{22}&a_{21}a_{32}{-}a_{31}a_{22}\\
0&0& a_{22}a_{33}\\
\end{pmatrix}
{,}
\begin{pmatrix}
 a_{22}^p& a_{32}^p&a_{42}^p\\
0&a_{33}^p&a_{43}^p\\
0&0&a_{11}^{p^2}
\end{pmatrix}\bigg]
\bigg[\begin{pmatrix}
a\\b \\c
\end{pmatrix},
\begin{pmatrix}
\beta\\
\gamma\\
\delta
\end{pmatrix}\bigg]
{=}
\bigg[\begin{pmatrix}
a' \\b'\\ c'
\end{pmatrix},
\begin{pmatrix}
\beta'\\
\gamma'\\
\delta'
\end{pmatrix}\bigg]}.
\end{align*}
Note that $A\phi(x_1,x_4)=A\phi(x_2,x_4)=A\phi(x_3,x_4)=0$ and $A\omega(x_1)=0$ which imply that $a_{21}^p\beta+a_{31}^p\gamma+a_{41}^p\delta=0$.
Now we find the representatives of the orbits of the action of $\Aut (L)$ on the set of $\omega$'s such that the orbit represented by
$\begin{pmatrix}
1\\ 0\\0
\end{pmatrix}$
 is preserved under the action of $\Aut (L)$ on the set of $\phi$'s. Note that we need to have $a_{21}^p\beta+a_{31}^p\gamma+a_{41}^p\delta=0$.

Let $\nu = \begin{pmatrix}
\beta \\
\gamma \\
\delta
\end{pmatrix} \in \F^3$. If $\nu= \begin{pmatrix}
0 \\
0 \\
0
\end{pmatrix}$, then $\{\nu \}$ is clearly an $\Aut(L)$-orbit. Let $\nu \neq 0$. Suppose that $\delta \neq 0$. Then 
\begin{align*}
&\bigg[\begin{pmatrix}
1&0&0\\
0&1&0\\
0&0&1
\end{pmatrix},
\begin{pmatrix}
 1& 0&-\beta/\delta\\
0&1&-\gamma/\delta\\
0&0&1
\end{pmatrix}\bigg]
\bigg[\begin{pmatrix}
1 \\ 0\\0
\end{pmatrix},
\begin{pmatrix} 
\beta \\
\gamma \\
\delta
\end{pmatrix}\bigg]=
\bigg[\begin{pmatrix}
1\\0\\0
\end{pmatrix},
\begin{pmatrix}
0\\
0\\
\delta
\end{pmatrix}\bigg], \text{ and }\\
&\bigg[\begin{pmatrix}
1&0&0\\
0&(1/\delta )^{1/p}&0\\
0&0&\delta^\frac{p-2}{p^2}
\end{pmatrix},
\begin{pmatrix}
 (1/\delta)^\frac{p-1}{p}& 0&0\\
0&\delta^{1/p}&0\\
0&0&1/\delta
\end{pmatrix}\bigg]
\bigg[\begin{pmatrix}
1\\ 0\\0
\end{pmatrix},
\begin{pmatrix}
0 \\
0 \\
\delta
\end{pmatrix}\bigg]=
\bigg[\begin{pmatrix}
1\\0\\0
\end{pmatrix},
\begin{pmatrix}
0\\
0\\
1
\end{pmatrix}\bigg].
\end{align*}
Next, if $\delta =0$, but $\gamma \neq 0$, then
\begin{align*}
\bigg[\begin{pmatrix}
1& 0&0\\
0&\gamma&0\\
0&0&\gamma^\frac{p-2}{p}
\end{pmatrix},
\begin{pmatrix}
 \gamma^{p-1}&0&0\\
0&1/\gamma&0\\
0&0&\gamma^p
\end{pmatrix}\bigg]
\bigg[\begin{pmatrix}
1\\ 0\\0
\end{pmatrix},
\begin{pmatrix} 
\beta\\
\gamma \\
0
\end{pmatrix}\bigg]=
\bigg[\begin{pmatrix}
1\\0\\0
\end{pmatrix},
\begin{pmatrix}
\gamma^{p-1}\beta\\
1\\
0
\end{pmatrix}\bigg].
\end{align*}

If $\beta =0$, then we have 
$\begin{pmatrix}
0\\
1\\
0
\end{pmatrix}$. If $\beta \neq 0$, then we rename $\gamma^{p-1}\beta$ with $\beta$ and we have 
$
\begin{pmatrix} 
\beta\\
1 \\
0
\end{pmatrix}$.
Finally, if $\delta =\gamma=0$, but $\beta \neq 0$, then we have 
$
\begin{pmatrix}
\beta\\
0\\
0
\end{pmatrix}$.
Thus the following elements are $\Aut (L)$-orbit representatives:
\begin{align*}
\begin{pmatrix}
0\\
0\\
0
\end{pmatrix},
\begin{pmatrix}
\beta\\ 
0\\
0
\end{pmatrix},
\begin{pmatrix}
0 \\ 
1\\
0
\end{pmatrix},
\begin{pmatrix}
0 \\ 
0\\
1
\end{pmatrix},
\begin{pmatrix}
\beta\\ 
1\\
0
\end{pmatrix}.
\end{align*}
\begin{lemma}\label{lemma-K38-2}
The vectors 
$\begin{pmatrix}
\beta_1\\
1\\
0
\end{pmatrix}$ and
$\begin{pmatrix}
\beta_2\\
1\\
0
\end{pmatrix}$
are in the same $\Aut (L)$-orbit if and only if $\beta_1=\beta_2$. 
\end{lemma}
\begin{proof}
First assume that 
$\begin{pmatrix}
\beta_1\\
1\\
0
\end{pmatrix}$ and
$\begin{pmatrix}
\beta_2\\
1\\
0
\end{pmatrix}$
are in the same $\Aut (L)$-orbit. Then
\begin{align*}
\small{
\bigg[\begin{pmatrix}
a_{11}a_{33} &0&  a_{21}a_{33}\\
a_{11}a_{32}&a_{11}a_{22}&a_{21}a_{32}{-}a_{31}a_{22}\\
0&0& a_{22}a_{33}\\
\end{pmatrix}
{,}
\begin{pmatrix}
 a_{22}^p& a_{32}^p&a_{42}^p\\
0&a_{33}^p&a_{43}^p\\
0&0&a_{11}^{p^2}
\end{pmatrix}\bigg]
\bigg[\begin{pmatrix}
1\\0 \\0
\end{pmatrix},
\begin{pmatrix}
\beta_1\\
1\\
0
\end{pmatrix}\bigg]
{=}
\bigg[\begin{pmatrix}1\\0\\0
\end{pmatrix},
\begin{pmatrix}
\beta_2\\
1\\
0
\end{pmatrix}\bigg]}.
\end{align*}
From this we obtain that 
\begin{align}
& a_{11}a_{33}=1\label{58-d1}\\
&a_{11}a_{32}=0 \label{58-d2} \\
 &a_{22}^p\beta_1+a_{32}^p=\beta_2 \label{58-d3}\\
 &a_{33}^p=1. \label{58-d4}
\end{align} 
Then using Equation \eqref{58-d2} and \eqref{58-d3}, we get that $\beta_2\beta_1^{-1}=a_{22}^p$. Note that we have  $r=a_{11}^p$ which get that $a_{22}=a_{11}^{p-1}$ and hence $\beta_2\beta_1^{-1}=a_{22}^p=(a_{11}^p)^{p-1}$. Now using Equation \eqref{58-d1} and \eqref{58-d4}, we obtain that $a_{11}^p=1$ which implies that $\beta_2\beta_1^{-1}=1$. Therefore, $\beta_1=\beta_2$.
The converse is clear.
\end{proof}
Therefore, the corresponding restricted Lie algebra structures are as follows:
\begin{align*}
&K_8^{6}=\langle x_1,\ldots,x_5\mid [x_1,x_2]=x_4,[x_1,x_3]=x_5 ,
\res x_1=x_4\rangle;\\
&K_8^{7}(\beta)=\langle x_1,\ldots,x_5\mid [x_1,x_2]=x_4,[x_1,x_3]=x_5, 
\res x_1=x_4 , \res x_2=\beta x_5 \rangle;\\
&K_8^{8}=\langle x_1,\ldots,x_5\mid [x_1,x_2]=x_4,[x_1,x_3]=x_5, 
\res x_1=x_4, \res x_3=x_5 \rangle;\\
&K_8^{9}=\langle x_1,\ldots,x_5\mid [x_1,x_2]=x_4,[x_1,x_3]=x_5,
\res x_1=x_4, \res x_4=x_5 \rangle;\\
&K_8^{10}(\beta)=\langle x_1,\ldots,x_5\mid [x_1,x_2]=x_4,[x_1,x_3]=x_5,
\res x_1=x_4, \res x_2=\beta x_5, \res x_3=x_5 \rangle
\end{align*}
where $\beta \in \F^*$.

\begin{lemma}
We have 
$
K_8^{7}(\beta)\cong \langle x_1,\ldots,x_5\mid [x_1,x_2]=x_4,[x_1,x_3]=x_5, 
\res x_1=x_4 , \res x_2=x_5 \rangle
$, for every $\beta\in \F^*$.
\end{lemma}
\begin{proof}
Note that the following automorphism of $L_{5,8}$ gives us the required result:
$$\begin{pmatrix}
1&0&0&0&0\\
0& 1&0&0&0\\
0&0&1/\beta&0&0\\
0&0&0&1&0\\
0&0&0&0&1/\beta\\
\end{pmatrix}.$$
\end{proof}

\begin{lemma}
We have $K_8^{10}(\beta_1)\cong K_8^{10}(\beta_2)$ if and only if $\beta_1/\beta_2= \epsilon^{p-2}$, for some $\epsilon\in \F^*$.  
\end{lemma}
\begin{proof}
Let $f=(a_{ij})\in \Aut(K_8^{10})$. Then $f$ is an isomorphism from  $K_8^{10}(\beta_1)$
to  $K_8^{10}(\beta_2)$ if and only if 
\begin{align*}
a_{22}=a_{11}^{p-1},\quad a_{11}=a_{33}^{p-1}, \quad \beta_1/\beta_2=a_{22}^p a_{11}^{-1} a_{33}^{-1},
\end{align*}
and this in turn simplifies to having 
\begin{align*}
 \beta_1/\beta_2= a_{33}^{p^2(p-2)}.
\end{align*}
To prove the converse, suppose that $\beta_1/\beta_2= \epsilon^{p-2}$, for some $\epsilon\in \F^*$.  Then, it is easy to see that the following is an isomorphism from  $K_8^{10}(\beta_1)$
to  $K_8^{10}(\beta_2)$:

$$\begin{pmatrix}
\epsilon^{(p-1)/p^2}&0&0&0&0\\
0& \epsilon^{(p-1)^2/p^2}&0&0&0\\
0&0&\epsilon^{1/p^2}&0&0\\
0&0&0&\epsilon^{(p-1)^3/p^2}&0\\
0&0&0&0&\epsilon^{1/p}\\
\end{pmatrix}.$$

\end{proof}

\section{Extensions of ($L,  \res x_3 = x_4$)}
 First, we find a basis for $Z^2(L,\F)$. Let $(\phi,\omega)=(a\Delta _{12}+b\Delta_{13}+c\Delta_{14}+d\Delta_{23}+e\Delta_{24}+f\Delta_{34},\alpha f_1+\beta f_2+ \gamma f_3+\delta f_4)\in Z^2(L,\F)$. Then we must have $\delta^2\phi(x,y,z) =0$ and $\phi(x,\res y)=0$, for all $x,y,z \in L$. Therefore, we have
\begin{align*}
0=(\delta^2\phi)(x_1,x_2,x_3)&=\phi([x_1,x_2],x_3)+\phi([x_2,x_3],x_1)+\phi([x_3,x_1],x_2)=\phi(x_4,x_3).
\end{align*}
Thus, we get $f=0$. Also, we have $\phi(x,\res y)=0$. Therefore, $\phi(x,x_4)=0$, for all $x\in L$ and hence $\phi(x_1,x_4)=\phi(x_2,x_4)=\phi(x_3,x_4)=0$ which implies that $c=e=f=0$. Therefore, $Z^2(L,\F)$ has a basis consisting of:
$$ (\Delta_{12},0),(\Delta_{13},0),(\Delta_{23},0),(0,f_1),
(0,f_2),(0,f_3),(0,f_4).$$
Next, we find a basis for $B^2(L,\F)$. Let $(\phi,\omega)\in B^2(L,\F)$. Since $B^2(L,\F)\subseteq Z^2(L,\F)$, we have $(\phi,\omega)=(a\Delta _{12}+b\Delta_{13}+c\Delta_{23},\alpha f_1+\beta f_2+ \gamma f_3+\delta f_4)$. Note that there exists a linear map $\psi:L\to \F$  such that $\delta^1\psi(x,y)=\phi(x,y)$ and $\tilde \psi(x)=\omega(x)$, for all $x,y \in L$. So, we have
\begin{align*}
&a=\phi(x_1,x_2)=\delta^1\psi(x_1,x_2)=\psi([x_1,x_2])=\psi(x_4), \text{ and }\\
&b=\phi(x_1,x_3)=\delta^1\psi(x_1,x_3)=\psi([x_1,x_3])=0.
\end{align*}
Similarly, we can show that $c=0$. Also, we have
\begin{align*}
&\gamma=\omega(x_3)=\tilde \psi(x_3)=\psi(\res x_3)=\psi(x_4), \text{ and }\\
&\alpha=\omega(x_1)=\tilde \psi(x_1)=\psi(\res x_1)=0.
\end{align*}
Similarly, we can show that $\beta=\delta=0$.  Note that $\psi(x_4)=a=\gamma$. Therefore, $(\phi,\omega)=(a\Delta_{12},a f_3)$ and hence $B^2(L,\F)=\la(\Delta_{12},f_3)\ra_{\F}$. Note that 
$$[(\Delta_{12},0)],[(\Delta_{13},0)],[(\Delta_{23},0)],[0,f_1],[(0,f_2)],[(0,f_3)],[(0,f_4)]$$
spans $H^2(L,\F)$.
Since $[(\Delta_{12},0)]+[(0,f_3)]=[(\Delta_{12},f_3)]=[0]$, then $[(0,f_3)]$ is an scalar multiple of $[(\Delta_{12},0)]$
in $H^2(L,\F)$. Note that $\dim H^2=\dim Z^2-\dim B^2=6$. Therefore, 
$$[(\Delta_{12},0)],[(\Delta_{13},0)],[(\Delta_{23},0)],[(0,f_1)],[(0,f_2)],[(0,f_4)].$$
forms a basis for $H^2(L,\F)$.

Note that the group $\Aut(L)$ in this case consists of invertible matrices of the form
$$\begin{pmatrix}
 a_{11}  & a_{12} & 0 & 0 \\
a_{21} & a_{22} & 0 & 0\\
a_{31} & a_{32}& a_{33} & 0 \\
a_{41} & a_{42} & a_{43}& r
\end{pmatrix},$$
where $r= a_{11}a_{22}-a_{12}a_{21}=a_{33}^p$ and $a_{31}=a_{32}=0$.

Let $[(\phi,\omega)] \in H^2(L,\F)$. Then we have $\phi = a\Delta_{13}+b\Delta_{12}+c\Delta_{23}$, for some $a,b,c \in \F$. Suppose that $A\phi=a'\Delta_{13}+b'\Delta_{12}+c'\Delta_{23}$, for some $a',b',c' \in \F$. We determine $a',b',c'$. Note that
\begin{align*}
&A\phi (x_1,x_3)= \phi (a_{11}x_1+a_{21}x_2+a_{31}x_3+a_{41}x_4 , a_{33}x_3+a_{43}x_4)=a_{11}a_{33}a+a_{21}a_{33}c;\\
&A\phi (x_1,x_2)= \phi (a_{11}x_1+a_{21}x_2+a_{31}x_3+a_{41}x_4 , a_{12}x_1+a_{22}x_2+a_{32}x_3+a_{42}x_4)\\
&=(a_{11}a_{32}-a_{31}a_{12})a+(a_{11}a_{22}-a_{12}a_{21})b+(a_{21}a_{32}-a_{31}a_{22})c;\\
&A\phi (x_2,x_3)= \phi (a_{12}x_1+a_{22}x_2+a_{32}x_3+a_{42}x_4 , a_{33}x_3+a_{43}x_4)=a_{12}a_{33}a+a_{22}a_{33}c.
\end{align*}
In the matrix form we can write this as
\begin{align}\label{k}
\begin{pmatrix}
a'\\b' \\c'
\end{pmatrix}=
\begin{pmatrix}
a_{11}a_{33} &0&  a_{21}a_{33}\\
0&a_{11}a_{22}-a_{12}a_{21}&0\\
a_{12}a_{33}&0& a_{22}a_{33}\\
\end{pmatrix}
\begin{pmatrix}
a \\b\\c
\end{pmatrix}.
\end{align}
 The orbit with representative 
$\begin{pmatrix}
1\\ 0\\0
\end{pmatrix}$ of this action  gives us $L_{5,8}$.

Also, we have $\omega=\alpha f_1+\beta f_2+\delta f_4$, for some $\alpha, \beta, \delta \in \F$. Suppose that $A\omega =\alpha' f_1+\beta' f_2+\delta' f_4$, for some $\alpha', \beta', \delta' \in \F$. We have
\begin{align*}
&A\omega (x_1)= a_{11}^p \alpha + a_{21}^p\beta +a_{41}^p\delta;\\
&A\omega (x_2)=a_{12}^p \alpha+a_{22}^p\beta + a_{42}^p \delta;\\
&A\omega(x_4)=r^p \delta.
\end{align*}
In the matrix form we can write this as
\begin{align}\label{h}
\begin{pmatrix} 
\alpha'\\
\beta'\\
\delta'
\end{pmatrix}=
\begin{pmatrix}
 a_{11}^p& a_{21}^p&a_{41}^p\\
a_{12}^p&a_{22}^p&a_{42}^p\\
0&0&a_{33}^{p^2}
\end{pmatrix}
\begin{pmatrix}
\alpha\\
\beta \\
\delta
\end{pmatrix}.
\end{align}
Thus, we can write Equations \eqref{k} and \eqref{h} together as follows:
\begin{align*}
&\bigg[\begin{pmatrix}
a_{11}a_{33} &0&  a_{21}a_{33}\\
0&a_{11}a_{22}-a_{12}a_{21}&0\\
a_{12}a_{33}&0& a_{22}a_{33}\\
\end{pmatrix},
\begin{pmatrix}
 a_{11}^p& a_{21}^p&a_{41}^p\\
a_{12}^p&a_{22}^p&a_{42}^p\\
0&0&a_{33}^{p^2}
\end{pmatrix}\bigg]
\bigg[\begin{pmatrix}
a \\b\\c
\end{pmatrix},
\begin{pmatrix}
\alpha\\
\beta\\
\delta
\end{pmatrix}\bigg]\\
&=
\bigg[\begin{pmatrix}
a'\\b' \\ c'
\end{pmatrix},
\begin{pmatrix}
\alpha'\\
\beta'\\
\delta'
\end{pmatrix}\bigg].
\end{align*}
Note that $A\phi(x_1,x_4)=A\phi(x_2,x_4)=A\phi(x_3,x_4)=0$ and $A\omega(x_3)=0$ which imply that $a_{43}^p\delta=0$.
Now we find the representatives of the orbits of the action of $\Aut (L)$ on the set of $\omega$'s such that the orbit represented by
$\begin{pmatrix}
1\\ 0\\0
\end{pmatrix}$
 is preserved under the action of $\Aut (L)$ on the set of $\phi$'s.
Note that we take $a_{43}=0$. Therefore, $a_{43}^p\delta=0$.
Let $\nu = \begin{pmatrix}
\alpha \\
\beta \\
\delta
\end{pmatrix} \in \F^3$. If $\nu= \begin{pmatrix}
0 \\
0 \\
0
\end{pmatrix}$, then $\{\nu \}$ is clearly an $\Aut(L)$-orbit. Let $\nu \neq 0$. Suppose that $\delta \neq 0$. Then 
\begin{align*}
&\bigg[\begin{pmatrix}
1&0&0\\
0&1&0\\
0&0&1
\end{pmatrix},
\begin{pmatrix}
 1& 0&-\alpha/\delta\\
0&1&-\beta/\delta\\
0&0&1
\end{pmatrix}\bigg]
\bigg[\begin{pmatrix}
1 \\ 0\\0
\end{pmatrix},
\begin{pmatrix} 
\alpha \\
\beta \\
\delta
\end{pmatrix}\bigg]=
\bigg[\begin{pmatrix}
1\\0\\0
\end{pmatrix},
\begin{pmatrix}
0\\
0\\
\delta
\end{pmatrix}\bigg], \text{ and }\\
&\bigg[\begin{pmatrix}
1&0&0\\
0&(1/\delta)^{1/p}&0\\
0&0&(1/\delta)^\frac{p+2}{p^2}
\end{pmatrix},
\begin{pmatrix}
 \delta^{1/p}& 0&0\\
0&(1/\delta)^\frac{p+1}{p}&0\\
0&0&1/\delta
\end{pmatrix}\bigg]
\bigg[\begin{pmatrix}
1\\ 0\\0
\end{pmatrix},
\begin{pmatrix}
0 \\
0 \\
\delta
\end{pmatrix}\bigg]=
\bigg[\begin{pmatrix}
1\\0\\0
\end{pmatrix},
\begin{pmatrix}
0\\
0\\
1
\end{pmatrix}\bigg].
\end{align*}
Next, if $\delta =0$, but $\beta \neq 0$, then
\begin{align*}
&\bigg[\begin{pmatrix}
1&0& (-\alpha/\beta)^{1/p}\\
0& 1&0\\
0&0&1
\end{pmatrix},
\begin{pmatrix}
 1& -\alpha/\beta&0\\
0&1&0\\
0&0&1
\end{pmatrix}\bigg]
\bigg[\begin{pmatrix}
1\\ 0\\0
\end{pmatrix},
\begin{pmatrix} 
\alpha\\
\beta \\
0
\end{pmatrix}\bigg]=
\bigg[\begin{pmatrix}
1\\0\\0
\end{pmatrix},
\begin{pmatrix}
0\\
\beta\\
0
\end{pmatrix}\bigg].
\end{align*}
Finally, if $\delta =\beta=0$, but $\alpha \neq 0$, then
\begin{align*}
&\bigg[\begin{pmatrix}
1& 0&0\\
0&\alpha&0\\
0&0&\alpha^\frac{p+2}{p}
\end{pmatrix},
\begin{pmatrix}
 1/\alpha& 0&0\\
0&\alpha^{p+1}&0\\
0&0&\alpha^p
\end{pmatrix}\bigg]
\bigg[\begin{pmatrix}
1\\ 0\\0
\end{pmatrix},
\begin{pmatrix}
\alpha\\
0\\
0
\end{pmatrix}\bigg]=
\bigg[\begin{pmatrix}
1\\0\\0
\end{pmatrix},
\begin{pmatrix}
1\\
0\\
0
\end{pmatrix}\bigg].
\end{align*}
Thus the following elements are $\Aut (L)$-orbit representatives:
\begin{align*}
\begin{pmatrix}
0\\
0\\
0
\end{pmatrix}, 
\begin{pmatrix}
1 \\ 
0\\
0
\end{pmatrix},
\begin{pmatrix}
0 \\ 
\beta\\
0
\end{pmatrix},
\begin{pmatrix}
0 \\ 
0\\
1
\end{pmatrix}.
\end{align*}
Therefore, the corresponding restricted Lie algebra structures are as follows:
\begin{align*}
&K_8^{11}=\langle x_1,\ldots,x_5\mid [x_1,x_2]=x_4,[x_1,x_3]=x_5 ,
\res x_3=x_4 \rangle;\\
&K_8^{12}=\langle x_1,\ldots,x_5\mid [x_1,x_2]=x_4,[x_1,x_3]=x_5, 
\res x_1=x_5 , \res x_3=x_4 \rangle;\\
&K_8^{13}(\beta)=\langle x_1,\ldots,x_5\mid [x_1,x_2]=x_4,[x_1,x_3]=x_5, 
\res x_2=\beta x_5, \res x_3=x_4 \rangle;\\
&K_8^{14}=\langle x_1,\ldots,x_5\mid [x_1,x_2]=x_4,[x_1,x_3]=x_5,
\res x_3=x_4, \res x_4=x_5 \rangle
\end{align*}
where $\beta \in \F^*$.
\begin{lemma}
We have $K_8^{13}(\beta_1)$ and $K_8^{13}(\beta_2)$ are isomorphic if and only if $\beta_1/\beta_2= \epsilon^{p+1}$, for some $\epsilon\in \F^*$. 
\end{lemma}
\begin{proof}
Let $f=(a_{ij})\in \Aut(K_8^{10})$. Then $f$ is an isomorphism from  $K_8^{13}(\beta_1)$
to  $K_8^{13}(\beta_2)$ if and only if 
\begin{align*}
a_{11}=a_{33}^{p}a_{22}^{-1}, \quad \beta_1/\beta_2=a_{22}^p a_{11}^{-1} a_{33}^{-1},
\end{align*}
and this in turn simplifies to having 
\begin{align*}
 \beta_1/\beta_2= (\frac{a_{22}}{a_{33}})^{p+1}.
\end{align*}
To prove the converse, suppose that $\beta_1/\beta_2= \epsilon^{p+1}$, for some $\epsilon\in \F^*$.  Then, it is easy to see that the following is an isomorphism from  $K_8^{13}(\beta_1)$
to  $K_8^{13}(\beta_2)$:

$$\begin{pmatrix}
\epsilon^{-1/p}&0&0&0&0\\
0& \epsilon^{1/p}&0&0&0\\
0&0&1&0&0\\
0&0&0&1&0\\
0&0&0&0&\epsilon^{-1/p}\\
\end{pmatrix}.$$

\end{proof}

\section{Detecting isomorphisms}\label{iso-L58}
 The following is the list of all restricted Lie algebra structures on $L_{5,8}$ and yet, as we shall see below, we prove that some of them are isomorphic.
\begin{align*}
&K_8^{1}=\langle x_1,\ldots,x_5\mid [x_1,x_2]=x_4,[x_1,x_3]=x_5 \rangle;\\
&K_8^{2}=\langle x_1,\ldots,x_5\mid [x_1,x_2]=x_4,[x_1,x_3]=x_5, 
 \res x_1=x_5 \rangle;\\
&K_8^{3}=\langle x_1,\ldots,x_5\mid [x_1,x_2]=x_4,[x_1,x_3]=x_5, 
 \res x_2=x_5 \rangle;\\
&K_8^{4}=\langle x_1,\ldots,x_5\mid [x_1,x_2]=x_4,[x_1,x_3]=x_5,
\res x_3=x_5 \rangle;\\
&K_8^{5}=\langle x_1,\ldots,x_5\mid [x_1,x_2]=x_4,[x_1,x_3]=x_5, 
\res x_4=x_5 \rangle;\\
&K_8^{6}=\langle x_1,\ldots,x_5\mid [x_1,x_2]=x_4,[x_1,x_3]=x_5 ,
\res x_1=x_4\rangle;\\
&K_8^{7}=\langle x_1,\ldots,x_5\mid [x_1,x_2]=x_4,[x_1,x_3]=x_5, 
\res x_1=x_4 , \res x_2= x_5 \rangle;\\
&K_8^{8}=\langle x_1,\ldots,x_5\mid [x_1,x_2]=x_4,[x_1,x_3]=x_5, 
\res x_1=x_4, \res x_3=x_5 \rangle;\\
&K_8^{9}=\langle x_1,\ldots,x_5\mid [x_1,x_2]=x_4,[x_1,x_3]=x_5,
\res x_1=x_4, \res x_4=x_5 \rangle;\\
&K_8^{10}(\beta)=\langle x_1,\ldots,x_5\mid [x_1,x_2]=x_4,[x_1,x_3]=x_5,
\res x_1=x_4, \res x_2=\beta x_5, \res x_3=x_5 \rangle;\\
&K_8^{11}=\langle x_1,\ldots,x_5\mid [x_1,x_2]=x_4,[x_1,x_3]=x_5 ,
\res x_3=x_4 \rangle;\\
&K_8^{12}=\langle x_1,\ldots,x_5\mid [x_1,x_2]=x_4,[x_1,x_3]=x_5, 
\res x_1=x_5 , \res x_3=x_4 \rangle;\\
&K_8^{13}(\beta)=\langle x_1,\ldots,x_5\mid [x_1,x_2]=x_4,[x_1,x_3]=x_5, 
\res x_2=\beta x_5, \res x_3=x_4 \rangle;\\
&K_8^{14}=\langle x_1,\ldots,x_5\mid [x_1,x_2]=x_4,[x_1,x_3]=x_5,
\res x_3=x_4, \res x_4=x_5 \rangle.
\end{align*}

Note that the following automorphism of $L_{5,8}$ that maps $x_1\mapsto x_1$, $x_2 \mapsto x_3$, $x_3 \mapsto x_2$, $x_4 \mapsto x_5$ and $x_5\mapsto x_4$
$$\begin{pmatrix} 1& 0 & 0 & 0 & 0 \\
0 &0& 1 & 0&0\\
0 & 1& 0 & 0 & 0 \\
0& 0 & 0& 0
&1 \\
0 & 0 & 0 & 1
& 0
\end{pmatrix}$$
implies that
\begin{align*}
K_8^2 \cong K_8^6, \quad K_8^{3} \cong K_8^{11}, \quad K_8^{7} \cong K_8^{12}.
\end{align*}
Moreover, $K_8^{10}(\beta)\cong K_8^{8} $ via the automorphism
$$\begin{pmatrix}
1 &0& 0& 0&0\\
0&1&0&0&0\\
\beta^{1/p^2}& \beta^{1/p}& 1 & 0 & 0 \\
0& 0& 0& 1
&0 \\
0 & 0 & 0 & \beta^{1/p}
& 1
\end{pmatrix}.$$
\begin{theorem}\label{thmL58}
The list of all  the restricted Lie algebra structures on $L_{5,8}$, up to isomorphism, is as follows:
\begin{align*}
&L_{5,8}^{1}=\langle x_1,\ldots,x_5\mid [x_1,x_2]=x_4,[x_1,x_3]=x_5 \rangle;\\
&L_{5,8}^{2}=\langle x_1,\ldots,x_5\mid [x_1,x_2]=x_4,[x_1,x_3]=x_5, 
 \res x_1=x_5 \rangle;\\
&L_{5,8}^{3}=\langle x_1,\ldots,x_5\mid [x_1,x_2]=x_4,[x_1,x_3]=x_5, 
 \res x_2=x_5 \rangle;\\
&L_{5,8}^{4}=\langle x_1,\ldots,x_5\mid [x_1,x_2]=x_4,[x_1,x_3]=x_5,
\res x_3=x_5 \rangle;\\
&L_{5,8}^{5}=\langle x_1,\ldots,x_5\mid [x_1,x_2]=x_4,[x_1,x_3]=x_5, 
\res x_4=x_5 \rangle;\\
&L_{5,8}^{6}=\langle x_1,\ldots,x_5\mid [x_1,x_2]=x_4,[x_1,x_3]=x_5, 
\res x_1=x_4 , \res x_2=x_5 \rangle;\\
&L_{5,8}^{7}=\langle x_1,\ldots,x_5\mid [x_1,x_2]=x_4,[x_1,x_3]=x_5, 
\res x_1=x_4, \res x_3=x_5 \rangle;\\
&L_{5,8}^{8}=\langle x_1,\ldots,x_5\mid [x_1,x_2]=x_4,[x_1,x_3]=x_5,
\res x_1=x_4, \res x_4=x_5 \rangle;\\
&L_{5,8}^{9}(\beta)=\langle x_1,\ldots,x_5\mid [x_1,x_2]=x_4,[x_1,x_3]=x_5, 
\res x_2=\beta x_5, \res x_3=x_4 \rangle;\\
&L_{5,8}^{10}=\langle x_1,\ldots,x_5\mid [x_1,x_2]=x_4,[x_1,x_3]=x_5,
\res x_3=x_4, \res x_4=x_5 \rangle
\end{align*}
where $\beta \in \F^*$.
\end{theorem}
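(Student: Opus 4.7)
The proof will proceed in two phases. The first phase is to show that the list of thirty-five algebras $K_8^1,\ldots,K_8^{14}$ produced in the earlier sections exhausts the isomorphism types, which is already essentially done: Lemma~\ref{Lemma58} reduces the problem to central extensions with cocycle $(\Delta_{13},\omega)$, and the classification of $p$-maps on $L=L_{5,8}/\langle x_5\rangle\cong L_{4,2}$ (together with the swap of $x_4$ and $x_5$ automorphism of $L_{5,8}$ used at the start of the chapter) covers every case. Then, applying the automorphisms of $L_{5,8}$ exhibited in Section~\ref{iso-L58} --- the one swapping $(x_2,x_4)\leftrightarrow(x_3,x_5)$ and the Jordan-like map yielding $K_8^8\cong K_8^{10}$ --- collapses the list to the ten algebras $L_{5,8}^1,\ldots,L_{5,8}^{10}$. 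So the real content of the theorem is pairwise non-isomorphism, which is what I will focus on.

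The plan is to separate the ten algebras as much as possible by the $p$-power invariants $\dim L^{[p]}$ and $\dim L^{[p]^2}$. A direct inspection gives $\dim (L_{5,8}^1)^{[p]}=0$; $\dim (L_{5,8}^i)^{[p]}=1$ for $i=2,3,4,5$; and $\dim (L_{5,8}^i)^{[p]}=2$ for $i=6,\ldots,10$. Among the second group, $(L_{5,8}^8)^{[p]^2}$ and $(L_{5,8}^{10})^{[p]^2}$ are nonzero while the others vanish. This partitions the algebras into singletons plus the clusters $\{L_{5,8}^2,L_{5,8}^3,L_{5,8}^4,L_{5,8}^5\}$, $\{L_{5,8}^6,L_{5,8}^7,L_{5,8}^9\}$ and $\{L_{5,8}^8,L_{5,8}^{10}\}$, and only within these clusters does work remain.

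Inside each cluster I will use the standard argument that has been used throughout the chapter: assume an isomorphism $A:L_{5,8}^i\to L_{5,8}^j$ exists, write $A$ in the general form
\[
A=\begin{pmatrix} a_{11} & 0 & 0 & 0 & 0 \\ a_{21} & a_{22} & a_{23}& 0 & 0\\ a_{31} & a_{32} & a_{33} & 0 & 0 \\ a_{41} & a_{42} & a_{43}& a_{11}a_{22} & a_{11}a_{23} \\ a_{51} & a_{52} & a_{53}& a_{11}a_{32} & a_{11}a_{33}\end{pmatrix}
\]
dictated by $\Aut(L_{5,8})$, and evaluate $A(x_k^{[p]})=(Ax_k)^{[p]}$ on each basis element $x_k$. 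Matching coordinates in $\F x_4\oplus\F x_5$ on both sides will yield linear equations in the $a_{ij}$'s that force $a_{11}=0$ or $a_{22}a_{33}-a_{23}a_{32}=0$, contradicting invertibility of $A$. For example, to separate $L_{5,8}^2$ from $L_{5,8}^3$ one applies $A(\res x_2)=\res{A(x_2)}$ with $\res x_2=0$ in $L_{5,8}^2$ to force $a_{11}^p\alpha+\cdots=0$ picking up $a_{11}=0$; the same technique handles the distinction between the restrictions landing in different coordinates of $Z(L_{5,8})=\la x_4,x_5\ra$, and the distinction of $L_{5,8}^9$ from the others in its cluster exploits the fact that $\res x_3\ne 0$ combined with $[x_1,x_3]=x_5$ gives rigidity on $a_{32},a_{33}$. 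The case $L_{5,8}^8$ versus $L_{5,8}^{10}$ is settled by the same device applied to $\res x_3$: in $L_{5,8}^8$ we have $\res x_3=0$ but $A(x_3)$ is a combination whose $p$-th power in $L_{5,8}^{10}$ involves $x_4$, again forcing $a_{22}a_{33}-a_{23}a_{32}=0$.

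The main obstacle I anticipate is the cluster $\{L_{5,8}^6,L_{5,8}^7,L_{5,8}^9\}$: these three algebras all have $\dim L^{[p]}=2$ and $(L^{[p]})^{[p]}=0$, so the coarse invariants are insufficient and one must be careful about which basis element's image to feed into $A(\res x_i)=\res{A(x_i)}$ to get a linear system whose only solution violates $\det A\ne 0$. In particular for $L_{5,8}^6$ versus $L_{5,8}^7$, where the only difference is whether the second restriction target is $x_5=[x_1,x_3]$ or matches $\res x_2$, I expect to need to consider several equations $A(\res x_1)$, $A(\res x_2)$, $A(\res x_3)$ simultaneously and use the block structure of $\Aut(L_{5,8})$ (the fact that the $\la x_4,x_5\ra$-block is determined by the $\la x_2,x_3\ra$-block) to close the argument.
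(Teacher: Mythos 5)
Your proposal is correct and follows essentially the same route as the paper: reduction to the fourteen $K_8^i$ via Lemma~\ref{Lemma58}, collapsing by the explicit automorphisms of Section~\ref{iso-L58}, and then pairwise non-isomorphism by combining the invariants $\dim L^{[p]}$, $\dim L^{[p]^2}$ with the equations $A(x_k^{[p]})=(A x_k)^{[p]}$ for $A\in\Aut(L_{5,8})$ forcing $a_{11}=0$ or $a_{22}a_{33}-a_{23}a_{32}=0$. Your up-front partition into clusters by the two dimension invariants is a slightly cleaner organization of the same case analysis the paper performs pair by pair.
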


In the remaining of this section we establish that the algebras given in Theorem \ref{thmL58} are pairwise non-isomorphic, thereby
completing the proof of Theorem \ref{thmL58}. 

It is clear that $L_{5,8}^1$ is not isomorphic to the other restricted Lie algebras.

We claim that $L_{5,8}^2$ and $L_{5,8}^3$ are not isomorphic. Suppose to the contrary that there exists an isomorphism $A: L_{5,8}^3\to L_{5,8}^2$. Then 
\begin{align*}
&A(\res x_1)=\res {A(x_1)}\\
&0=\res {(a_{11}x_1+a_{21}x_2+a_{31}x_3+a_{41}x_4+a_{51}x_5)}\\
&0=a_{11}^px_5.
\end{align*}
Therefore, $a_{11}=0$ which is a contradiction.

Next, we claim that $L_{5,8}^2$ and $L_{5,8}^4$ are not isomorphic. Suppose to the contrary that there exists an isomorphism $A: L_{5,8}^4\to L_{5,8}^2$. Then 
\begin{align*}
&A(\res x_1)=\res {A(x_1)}\\
&0=\res {(a_{11}x_1+a_{21}x_2+a_{31}x_3+a_{41}x_4+a_{51}x_5)}
&0=a_{11}^px_5.
\end{align*}
Therefore, $a_{11}=0$ which is a contradiction.

Next, we claim that $L_{5,8}^2$ and $L_{5,8}^5$ are not isomorphic. Suppose to the contrary that there exists an isomorphism $A: L_{5,8}^5\to L_{5,8}^2$. Then 
\begin{align*}
&A(\res x_1)=\res {A(x_1)}\\
&0=\res {(a_{11}x_1+a_{21}x_2+a_{31}x_3+a_{41}x_4+a_{51}x_5)}\\
&0=a_{11}^px_5.
\end{align*}
Therefore, $a_{11}=0$ which is a contradiction.

It is clear that $L_{5,8}^2$ is not isomorphic to the other restricted Lie algebras.

Next, we claim that $L_{5,8}^3$ and $L_{5,8}^4$ are not isomorphic. Suppose to the contrary that there exists an isomorphism $A: L_{5,8}^4\to L_{5,8}^3$. Then 
\begin{align*}
&A(\res x_3)=\res {A(x_3)}\\
&A(x_5)=\res {(a_{23}x_2+a_{33}x_3+a_{43}x_4+a_{53}x_5)}\\
&a_{11}a_{23}x_4+a_{11}a_{33}x_5=a_{23}^px_5,
\end{align*}
which implies that $a_{11}a_{23}=0$ and $a_{11}a_{33}=a_{23}^p$.  Therefore, $a_{11}=0$ or $a_{23}=0$. First, if $a_{11}=0$, we have a contradiction. Next, if $a_{23}=0$ then $a_{11}a_{33}=0$ and $a_{11}a_{23}=0$ which is a contradiction.

Next, we claim that $L_{5,8}^3$ and $L_{5,8}^5$ are not isomorphic. Suppose to the contrary that there exists an isomorphism $A: L_{5,8}^5\to L_{5,8}^3$. Then 
\begin{align*}
&A(\res x_4)=\res {A(x_4)}\\
&A(x_5)=\res {(a_{11}a_{22}x_4+a_{11}a_{32}x_5)}\\
&a_{11}a_{23}x_4+a_{11}a_{33}x_5=0.
\end{align*}
Therefore, $a_{11}a_{23}=0$ and $a_{11}a_{33}=0$ which is a contradiction.

It is clear that $L_{5,8}^3$ is not isomorphic to the other restricted Lie algebras.

Next, we claim that $L_{5,8}^4$ and $L_{5,8}^5$ are not isomorphic. Suppose to the contrary that there exists an isomorphism $A: L_{5,8}^5\to L_{5,8}^4$. Then 
\begin{align*}
&A(\res x_4)=\res {A(x_4)}\\
&A(x_5)=\res {(a_{11}a_{22}x_4+a_{11}a_{32}x_5)}\\
&a_{11}a_{23}x_4+a_{11}a_{33}x_5=0.
\end{align*}
Therefore, $a_{11}a_{23}=0$ and $a_{11}a_{33}=0$ which is a contradiction.

It is clear that $L_{5,8}^4$ and $L_{5,8}^5$ are not isomorphic to the other restricted Lie algebras.

Next, we claim that $L_{5,8}^6$ and $L_{5,8}^7$ are not isomorphic. Suppose to the contrary that there exists an isomorphism $A: L_{5,8}^7\to L_{5,8}^6$. Then 
\begin{align*}
&A(\res x_3)=\res {A(x_3)}\\
&A(x_5)=\res {(a_{23}x_2+a_{33}x_3+a_{43}x_4+a_{53}x_5)}\\
&a_{11}a_{23}x_4+a_{11}a_{33}x_5=a_{23}^px_5,
\end{align*}
which implies that $a_{11}a_{23}=0$ and $a_{11}a_{33}=a_{23}^p$.  Therefore, $a_{11}=0$ or $a_{23}=0$. First, if $a_{11}=0$, we have a contradiction. Next, if $a_{23}=0$ then $a_{11}a_{33}=0$ and $a_{11}a_{23}=0$ which is a contradiction.

Note  that $L_{5,8}^6$ is not isomorphic to any of $L_{5,8}^{8}$, $L_{5,8}^{10}$
 because 
$(L_{5,8}^6)^{[p]^2}=0$  but $(L_{5,8}^8)^{[p]^2}\neq 0$, $(L_{5,8}^{10})^{[p]^2}\neq 0$.

Next, we claim that $L_{5,8}^6$ and $L_{5,8}^{9}(\beta)$ are not isomorphic. Suppose to the contrary that there exists an isomorphism $A: L_{5,8}^{9}(\beta)\to L_{5,8}^6$. Then 
\begin{align*}
&A(\res x_1)=\res {A(x_1)}\\
&0=\res {(a_{11}x_1+a_{21}x_2+a_{31}x_3+a_{41}x_4+a_{51}x_5)}\\
&0=a_{11}^px_4+a_{21}^px_5.
\end{align*}
Therefore, $a_{11}=0$ which is a contradiction.

Note  that $L_{5,8}^7$ is not isomorphic to any of $L_{5,8}^{8}$, $L_{5,8}^{10}$
 because 
$(L_{5,8}^7)^{[p]^2}=0$  but $(L_{5,8}^8)^{[p]^2}\neq 0$, $(L_{5,8}^{10})^{[p]^2}\neq 0$.

Next, we claim that $L_{5,8}^7$ and $L_{5,8}^{9}(\beta)$ are not isomorphic. Suppose to the contrary that there exists an isomorphism $A: L_{5,8}^{9}(\beta)\to L_{5,8}^7$. Then 
\begin{align*}
&A(\res x_1)=\res {A(x_1)}\\
&0=\res {(a_{11}x_1+a_{21}x_2+a_{31}x_3+a_{41}x_4+a_{51}x_5)}\\
&0=a_{11}^px_4+a_{31}^px_5.
\end{align*}
Therefore, $a_{11}=0$ which is a contradiction.

Note  that $L_{5,8}^8$ is not isomorphic to $L_{5,8}^{9}(\beta)$,
 because 
$(L_{5,8}^{9}(\beta))^{[p]^2}=0$ but $(L_{5,8}^8)^{[p]^2}\neq 0$.

Next, we claim that $L_{5,8}^8$ and $L_{5,8}^{10}$ are not isomorphic. Suppose to the contrary that there exists an isomorphism $A: L_{5,8}^{10}\to L_{5,8}^8$. Then 
\begin{align*}
&A(\res x_1)=\res {A(x_1)}\\
&0=\res {(a_{11}x_1+a_{21}x_2+a_{31}x_3+a_{41}x_4+a_{51}x_5)}\\
&0=a_{11}^px_4+a_{41}^px_5.
\end{align*}
Therefore, $a_{11}=0$ which is a contradiction.

Note  that $L_{5,8}^{9}(\beta)$ is not isomorphic to $L_{5,8}^{10}$,
 because 
$(L_{5,8}^{9}(\beta))^{[p]^2}=0$ but $(L_{5,8}^{10})^{[p]^2}\neq 0$.\\

\chapter{Restriction maps on $L_{5,9}$}
Let 
$$ K_9=L_{5,9} = \langle x_1, \ldots, x_5\mid  [x_1,x_2]=x_3 , [x_1,x_3]=x_4 , [x_2,x_3]=x_5\rangle. $$

Then $Z({K_9})=\la x_4,x_5\ra_{\F} $ and  the group $\Aut(L_{5,9})$ consists of invertible matrices of the form 
\begin{align}
\begin{pmatrix} 
a_{11} & a_{12} & 0 & 0 & 0 \\
a_{21} & a_{22} & 0 & 0 & 0\\
a_{31} & a_{32} & u & 0 & 0 \\
a_{41} & a_{42} & a_{11}a_{32}-a_{12}a_{31} & a_{11}u
& a_{12}u \\
a_{51} & a_{52} & a_{21}a_{32}-a_{22}a_{31} & a_{21}u
& a_{22}u
\end{pmatrix},
\label{aut-59}
\end{align}
where $u = a_{11}a_{22}-a_{12}a_{21}\neq 0$.
Note that there exists an element $\alpha x_4+\beta x_5 \in Z(L_{5,9})$ such that 
 $\res {(\alpha x_4+\beta x_5)}=0,$ for some $\alpha ,\beta \in \F$. 
If $\alpha \neq 0$ then consider 
$$ K=\langle x_1',\ldots,x_5'\mid [x_1',x_2']=x_3', [x_1',x_3']=x_4', [x_2',x_3']=x_5'\rangle,$$
 where $x_1'=\alpha x_1+\beta x_2, x_2'=\alpha^{-1}x_2, x_3'=x_3, x_4'=\alpha x_4+\beta x_5, x_5'=\alpha^{-1}x_5$.
 Let $\phi :K_9\to K$ given by  $x_i\mapsto x_i'$, for $1\leq i\leq 5$. It is easy to see that $\phi$ is an isomorphism. Therefore, in this case we can suppose that $\res x_4=0$.  
 If $\alpha=0$ then $\beta\neq 0$ and we rescale $x_5$ so that $\res x_5=0$.
Hence we can assume either $\res x_4=0$ or $\res x_5=0$. 
Furthermore, we claim that it is enough to consider the case where $\res x_5=0$. Indeed, suppose that there exists a 
$p$-map of $L_{5,9}$ such that $\res x_4=0$. Then the image of the $x_i$'s under the following  automorphism of $L_{5,9}$   yields  another basis $y_i$ of $L_{5,9}$ such that $y_5^{p]}=0$:
\[\begin{pmatrix} 
0 & 1 & 0 & 0 & 0 \\
1 & 0 & 0 & 0 & 0\\
0 & 0 & -1 & 0 & 0 \\
0 & 0 & 0 & 0& -1 \\
0 & 0 & 0 & -1
& 0
\end{pmatrix}.\]

Now we  let 
$$
L=\frac {L_{5,9}}{\la x_5\ra_{\F}}\cong L_{4,3},
$$ 
where $L_{4,3} = \la x_1,\ldots,x_4 \mid [x_1 , x_2]=x_3 , [x_1 , x_3]=x_4\ra $. 
The group $\Aut(L)$ consists of invertible matrices of the form
\[\begin{pmatrix} a_{11}  & 0 & 0 & 0 \\
a_{21} & a_{22} & 0 & 0\\
a_{31} & a_{32} & d& 0 \\
a_{41} & a_{42} & a_{11}a_{32}& a_{11}d
\end{pmatrix},\]
where $d= a_{11}a_{22}$. 
\begin{lemma}\label{LemmaL59}
Let $K=L_{5,9}$ and $[p]:K\to K$ be a $p$-map on $K$ such that $\res x_5=0$ and let $L=\frac{K}{M}$ where $M=\la x_5\ra_{\F}$. Then $K\cong L_{\theta}$ where $\theta=(\Delta_{23},\omega)\in Z^2(L,\F)$.
\end{lemma}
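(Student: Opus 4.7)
The proof will follow the same template as the analogous lemmas established earlier in the paper (for instance Lemma \ref{LemmaL52}, Lemma \ref{LemmaL522}, Lemma \ref{cocycle-L53}, and Lemma \ref{Lemma58}). The plan is to build an explicit section of the canonical projection and read off the cocycle $\phi$ from the bracket discrepancies on basis elements, then invoke Lemma \ref{Lemma 1} to conclude.

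More precisely, I would first set up the short exact sequence
\[ 0 \rightarrow M \rightarrow K \rightarrow L \rightarrow 0 \]
and define the linear section $\sigma: L\to K$ by $\sigma(x_i)=x_i$ for $1\le i\le 4$, noting that $\pi\sigma = 1_L$ is immediate. Then define $\phi: L\times L\to M$ by $\phi(x_i,x_j)=[\sigma(x_i),\sigma(x_j)]-\sigma([x_i,x_j])$ and $\omega(x)=\res{\sigma(x)}-\sigma(\res x)$, which lands in $M=\la x_5\ra_{\F}$ because $x_5$ is central with $\res x_5=0$.

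Next I would compute $\phi$ on each pair of basis elements. Since $[x_1,x_2]=x_3$ in both $K$ and $L$, we have $\phi(x_1,x_2)=x_3-\sigma(x_3)=0$; similarly $\phi(x_1,x_3)=x_4-\sigma(x_4)=0$. The key entry is $\phi(x_2,x_3)$: in $K$ the bracket $[x_2,x_3]$ equals $x_5$, while in the quotient $L$ the image of $x_5$ is zero, so $\phi(x_2,x_3)=x_5-\sigma(0)=x_5$. All remaining pairs $(x_1,x_4)$, $(x_2,x_4)$, $(x_3,x_4)$ give $0$ because the corresponding brackets vanish in both $K$ and $L$. This shows $\phi=\Delta_{23}$.

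The final step is simply to invoke Lemma \ref{Lemma 1}, which guarantees that the pair $\theta=(\phi,\omega)$ lies in $Z^2(L,\F)$ and that $K\cong L_\theta$. There is no real obstacle here: the argument is completely parallel to the previous central-extension identifications in the paper, and the only substantive check is the bracket computation showing that $\phi=\Delta_{23}$ (rather than, say, $\Delta_{13}$ as in $L_{5,7}$ or $\Delta_{13}+\Delta_{24}$ as in $L_{5,5}$), which reflects the defining relation $[x_2,x_3]=x_5$ of $L_{5,9}$.
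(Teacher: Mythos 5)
Your proposal is correct and follows essentially the same route as the paper: the same section $\sigma(x_i)=x_i$ for $1\le i\le 4$, the same computation of the bracket discrepancies showing $\phi(x_2,x_3)=x_5$ while all other pairs vanish (so $\phi=\Delta_{23}$), and the same appeal to Lemma \ref{Lemma 1} to conclude $\theta\in Z^2(L,\F)$ and $K\cong L_\theta$. No issues.
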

\begin{proof}
 Let $\pi :K\rightarrow L$ be the projection map. We have the exact sequence 
$$ 0\rightarrow M\rightarrow K\rightarrow L\rightarrow 0.$$
Let $\sigma :L\rightarrow K$ such that $x_i\mapsto x_i$, $1\leq i\leq 4$. Then $\sigma$ is an injective linear map and $\pi \sigma =1_L$. Now, we define $\phi : L\times L\rightarrow M$ by $\phi (x_i,x_j)=[\sigma (x_i),\sigma (x_j)]-\sigma ([x_i,x_j])$, $1\leq i,j\leq 4$ and $\omega: L\rightarrow M$ by $\omega (x)=\res {\sigma (x)} -\sigma (\res x)$. Note that
\begin{align*}
&\phi(x_2,x_3)=[\sigma(x_2),\sigma(x_3)]-\sigma([x_2,x_3])=[x_2,x_3]=x_5;\\
&\phi(x_1,x_2)=[\sigma(x_1),\sigma(x_2)]-\sigma([x_1,x_2])=0.
\end{align*}
Similarly, we can show that $\phi(x_1,x_3)=\phi(x_1,x_4)=\phi(x_2,x_4)=\phi(x_3,x_4)=0$. Therefore, $\phi=\Delta_{23}$.
Now, by Lemma \ref{K=L-theta}, we have $\theta=(\Delta_{23},\omega)\in Z^2(L,\F)$ and $K\cong L_{\theta}$.
\end{proof}
We deduce that any $p$-map on $K$ can be obtained by an extension of $L$ via $\theta=(\Delta_{23},\omega)$, for some $\omega$.

Note that by Theorem \ref{4-dim},  there are four non-isomorphic restricted Lie algebra structures on $L_{4,3}$ given by the following $p$-maps:
\begin{enumerate}
\item[I.1] Trivial $p$-map;
\item[I.2] $\res x_1=x_4 $;
\item[I.3]  $\res x_2 =\xi x_4 $;
\item[I.4]  $ \res x_3 =x_4$.
\end{enumerate}

In the following subsections we consider each of the above cases and find all possible 
$[\theta]=[(\phi,\omega)]\in H^2(L,\F)$ and construct $L_{\theta}$. Note that by Lemma \ref{LemmaL59}, it suffices to assume $[\theta]=[(\Delta_{23},\omega)]$ and find all non-isomorphic restricted Lie algebra structures on $L_{5,9}$.

\section{Extensions of ($L$, trivial $p$-map)}
 First, we find a basis for $Z^2(L,\F)$. Let $(\phi,\omega)=(a\Delta _{12}+b\Delta_{13}+c\Delta_{14}+d\Delta_{23}+e\Delta_{24}+f\Delta_{34},\alpha f_1+\beta f_2+ \gamma f_3+\delta f_4)\in Z^2(L,\F)$. Then we must have $\delta^2\phi(x,y,z) =0$ and $\phi(x,\res y)=0$, for all $x,y,z \in L$. Therefore,
\begin{align*}
0=(\delta^2\phi)(x_1,x_2,x_3)&=\phi([x_1,x_2],x_3)+\phi([x_2,x_3],x_1)+\phi([x_3,x_1],x_2)=\phi(x_2,x_4);\\
0=(\delta^2\phi)(x_1,x_2,x_4)&=\phi([x_1,x_2],x_4)+\phi([x_2,x_4],x_1)+\phi([x_4,x_1],x_2)=\phi(x_3,x_4).
\end{align*}
Thus, we get $e=f=0$.
Since the $p$-map is trivial, $\phi(x,\res y)=\phi(x,0)=0$, for all $x,y\in L$. Therefore, a basis for $Z^2(L,\F)$ is as follows:
$$
 (\Delta_{12},0),(\Delta_{13},0),(\Delta_{14},0),(\Delta_{23},0),(0,f_1),
(0,f_2),(0,f_3),(0,f_4).
$$
Next, we find a basis for $B^2(L,\F)$. Let $(\phi,\omega)\in B^2(L,\F)$. Since $B^2(L,\F)\subseteq Z^2(L,\F)$, we have $(\phi,\omega)=(a\Delta _{12}+b\Delta_{13}+c\Delta_{14}+d\Delta_{23},\alpha f_1+\beta f_2+ \gamma f_3+\delta f_4)$. So, there exists a linear map $\psi:L\to \F$ such that $\delta^1\psi(x,y)=\phi(x,y)$ and $\tilde \psi(x)=\omega(x)$, for all $x,y \in L$. So, we have
\begin{align*}
c=\phi(x_1,x_4)=\delta^1\psi(x_1,x_4)=\psi([x_1,x_4])=0.
\end{align*}
Similarly, we can show that $d=0$. Also, we have
\begin{align*}
\alpha=\omega(x_1)=\tilde \psi(x_1)=\psi(\res x_1)=0.
\end{align*}
Similarly, we can show that $\beta=\gamma=\delta=0$. Therefore, $(\phi,\omega)=(a\Delta_{12}+b\Delta_{13},0)$ and hence
 $B^2(L,\F)=\la(\Delta_{12},0),(\Delta_{13},0)\ra_{\F}$. We deduce that a basis for $H^2(L,\F)$ is as follows:
$$
[(\Delta_{14},0)],[(\Delta_{23},0)],[(0,f_1)],
[(0,f_2)],[(0,f_3)],[(0,f_4)].
$$
Let $[(\phi,\omega)] \in H^2(L,\F)$. Then we have $\phi = a\Delta_{14} + b\Delta_{23}$, for some $a, b\in \F$. Suppose that $A\phi = a'\Delta_{14}+b'\Delta_{23}$, for some $a', b'\in \F$. We determine $a', b'$. 
Note that 
\begin{align*}
A\phi (x_1,x_4)=&\phi (Ax_1,Ax_4)=\phi (a_{11}x_1+a_{21}x_2+a_{31}x_3+a_{41}x_4,a_{11}dx_4)=a_{11}^2da; \\
A\phi (x_2,x_3)=&\phi (Ax_2,Ax_3)=\phi (a_{22}x_2+a_{32}x_3+a_{42}x_4,dx_3+a_{11}a_{32}x_4)=a_{22}db.
\end{align*}
In the matrix form we can write this as
\begin{align}\label{cocycle}
\begin{pmatrix}
a' \\ b'
\end{pmatrix}=
\begin{pmatrix}
a_{11}^2d & 0\\
0& a_{22}d
\end{pmatrix}
\begin{pmatrix}
a \\ b
\end{pmatrix}.
\end{align}
 The orbit with representative 
$\begin{pmatrix}
0 \\ 1
\end{pmatrix}$ of this action  gives us $L_{5,9}$.

Also we have $\omega=\alpha f_1+\beta f_2+\gamma f_3+\delta f_4$, for some $\alpha, \beta, \gamma, \delta\in \F$.
 Suppose that $A\omega = \alpha' f_1+\beta' f_2+\gamma' f_3+\delta' f_4$, for some $\alpha', \beta', \gamma', \delta'\in \F$. We have
\begin{align*}
A\omega (x_1) =& \omega (Ax_1)=\omega (a_{11}x_1+a_{21}x_2+a_{31}x_3+a_{41}x_4)=a_{11}^p \alpha+a_{21}^p \beta +a_{31}^p \gamma+a_{41}^p \delta; \\
A\omega (x_2)=& a_{22}^p \beta +a_{32}^p \gamma + a_{42}^p \delta;\\
A\omega (x_3)=& d^p \gamma + a_{11}^pa_{32}^p \delta;\\
A\omega(x_4)=&a_{11}^pd^p \delta.
\end{align*}
In the matrix form we can write this as
\begin{align}\label{omega}
\begin{pmatrix}
\alpha' \\ 
\beta'\\
\gamma'\\
\delta'
\end{pmatrix}
=
\begin{pmatrix}
a_{11}^p& a_{21}^p&a_{31}^p&a_{41}^p\\
0& a_{22}^p& a_{32}^p&a_{42}^p\\
0&0&d^p&a_{11}^pa_{32}^p\\
0&0&0&a_{11}^pd^p
\end{pmatrix}
\begin{pmatrix}
\alpha\\ 
\beta \\
\gamma \\
\delta
\end{pmatrix}.
\end{align}
Thus, we can write Equations \eqref{cocycle} and \eqref{omega} together as follows:
\begin{align*}
\bigg[
d\begin{pmatrix}
a_{11}^2& 0\\
0& a_{22}
\end{pmatrix},
\begin{pmatrix}
a_{11}^p& a_{21}^p&a_{31}^p&a_{41}^p\\
0& a_{22}^p& a_{32}^p&a_{42}^p\\
0&0&d^p&a_{11}^pa_{32}^p\\
0&0&0&a_{11}^pd^p
\end{pmatrix}
\bigg]
\bigg[
\begin{pmatrix}
a \\ b
\end{pmatrix},
\begin{pmatrix}
\alpha\\ 
\beta \\
\gamma \\
\delta
\end{pmatrix}
\bigg]
=
\bigg[
\begin{pmatrix}
a' \\ b'
\end{pmatrix},
\begin{pmatrix}
\alpha' \\ 
\beta'\\
\gamma'\\
\delta'
\end{pmatrix}
\bigg].
\end{align*}
Now we find the representatives of the orbits of the action of $\Aut_p (L)$ on the set of $\omega$'s such that 
the orbit represented by 
$\begin{pmatrix}
0 \\ 1
\end{pmatrix}$ is preserved under the action of $\Aut_p (L)$ on the set of $\phi$'s.

Let $\nu = \begin{pmatrix}
\alpha\\ 
\beta \\
\gamma \\
\delta
\end{pmatrix} \in \F^4$. If $\nu= \begin{pmatrix}
0\\ 
0 \\
0 \\
0
\end{pmatrix}$, then $\{\nu \}$ is clearly an $\Aut_p (L)$-orbit. Suppose that $\delta \neq 0$. Then 
\begin{align*}
\bigg[\begin{pmatrix}
1& 0\\
0& 1
\end{pmatrix},
\begin{pmatrix}
1& 0&0&-\alpha /\delta\\
0& 1& 0&-\beta /\delta\\
0&0&1&0\\
0&0&0&1
\end{pmatrix}\bigg]
\bigg[\begin{pmatrix}
0 \\ 1
\end{pmatrix},
\begin{pmatrix}
\alpha\\ 
\beta \\
\gamma \\
\delta
\end{pmatrix}\bigg]
=&
\bigg[\begin{pmatrix}
0\\1
\end{pmatrix},
\begin{pmatrix}
0 \\ 
0\\
\gamma\\
\delta
\end{pmatrix}\bigg], \text{ and }\\
\bigg[\begin{pmatrix}
1& 0\\
0& 1
\end{pmatrix},
\begin{pmatrix}
1& 0&0&0\\
0& 1& -\gamma /\delta&\gamma^2/\delta^2\\
0&0&1&-\gamma /\delta\\
0&0&0&1
\end{pmatrix}\bigg]
\bigg[\begin{pmatrix}
0 \\ 1
\end{pmatrix},
\begin{pmatrix}
0\\ 
0 \\
\gamma \\
\delta
\end{pmatrix}\bigg]
=&
\bigg[\begin{pmatrix}
0\\1
\end{pmatrix},
\begin{pmatrix}
0 \\ 
0\\
0\\
\delta
\end{pmatrix}\bigg].
\end{align*}

Hence the set of vectors $\begin{pmatrix}
\alpha\\ 
\beta \\
\gamma \\
\delta
\end{pmatrix}$ with $\delta \neq 0$ form an single orbit with orbit representative
$ \begin{pmatrix}
0 \\ 
0\\
0\\
\delta
\end{pmatrix}$. Next, if $\delta= 0$, but $\gamma \neq 0$, then
\begin{align*}
&\bigg[\begin{pmatrix}
1& 0\\
0& 1
\end{pmatrix},
\begin{pmatrix}
1& 0&-\alpha /\gamma&0\\
0& 1& -\beta /\gamma &0\\
0&0&1&-\beta /\gamma\\
0&0&0&1
\end{pmatrix}\bigg]
\bigg[\begin{pmatrix}
0 \\ 1
\end{pmatrix},
\begin{pmatrix}
\alpha\\ 
\beta\\
\gamma \\
0
\end{pmatrix}\bigg]=
\bigg[\begin{pmatrix}
0\\1
\end{pmatrix},
\begin{pmatrix}
0 \\ 
0\\
\gamma\\
0
\end{pmatrix}\bigg], 
\text{ and }\\
&\bigg[(1/\gamma)^{1/p}
\begin{pmatrix}
(1/\gamma)^{4/p}& 0\\
0& (1/\gamma)^{-1/p}
\end{pmatrix},
\begin{pmatrix}
\gamma^{-2}& 0&0&0\\
0& \gamma&0 &0\\
0&0&1/\gamma&0\\
0&0&0&\gamma^{-3}
\end{pmatrix}\bigg]
\bigg[\begin{pmatrix}
0 \\ 1
\end{pmatrix},
\begin{pmatrix}
0\\ 
0\\
\gamma \\
0
\end{pmatrix}\bigg]=
\bigg[\begin{pmatrix}
0\\1
\end{pmatrix},
\begin{pmatrix}
0 \\ 
0\\
1\\
0
\end{pmatrix}\bigg].
\end{align*}
Next, if $\gamma=\delta =0$, but $\beta \neq 0$, then 
\begin{align*}
&\bigg[\begin{pmatrix}
1& 0\\
0& 1
\end{pmatrix},
\begin{pmatrix}
1& -\alpha /\beta&0&0\\
0& 1& 0&0\\
0&0&1&0\\
0&0&0&1
\end{pmatrix}\bigg]
\bigg[\begin{pmatrix}
0 \\ 1
\end{pmatrix},
\begin{pmatrix}
\alpha\\ 
\beta\\
0 \\
0
\end{pmatrix}\bigg]=
\bigg[\begin{pmatrix}
0\\1
\end{pmatrix},
\begin{pmatrix}
0 \\ 
\beta\\
0\\
0
\end{pmatrix}\bigg], \text{ and }\\
& \bigg[\beta^{1/p}\begin{pmatrix}
\beta^{4/p}& 0\\
0& \beta^{-1/p}
\end{pmatrix},
\begin{pmatrix}
\beta^{2}& 0&0&0\\
0& 1/\beta&0 &0\\
0&0&\beta&0\\
0&0&0&\beta^{3}
\end{pmatrix}\bigg]
\bigg[\begin{pmatrix}
0 \\ 1
\end{pmatrix},
\begin{pmatrix}
0\\ 
\beta\\
0 \\
0
\end{pmatrix}\bigg]=
\bigg[\begin{pmatrix}
0\\1
\end{pmatrix},
\begin{pmatrix}
0 \\ 
1\\
0\\
0
\end{pmatrix}\bigg].
\end{align*}
Finally, if  $\beta=\gamma=\delta =0$, but $\alpha \neq 0$, then we have
$\bigg[\begin{pmatrix}
0\\1
\end{pmatrix},
\begin{pmatrix}
\alpha \\ 
0\\
0\\
0
\end{pmatrix}\bigg]$.

Thus the following elements are $\Aut_p (L)$-orbit representatives: 
\begin{align*}
\begin{pmatrix}
0\\ 
0 \\
0 \\
0
\end{pmatrix}, 
\begin{pmatrix}
\alpha \\ 
0\\
0\\
0
\end{pmatrix}, 
\begin{pmatrix}
0 \\ 
1\\
0\\
0
\end{pmatrix}, \begin{pmatrix}
0\\ 
0\\
1\\
0
\end{pmatrix}, \begin{pmatrix}
0 \\ 
0\\
0\\
\delta
\end{pmatrix}.
\end{align*}

\begin{lemma}\label{lemma-K9-5}
The vectors
$\begin{pmatrix}
0\\ 
0\\
0\\
\delta_1
\end{pmatrix}$ and
$\begin{pmatrix}
0 \\ 
0\\
0\\
\delta_2
\end{pmatrix}$
are in the same $\Aut_p (L)$-orbit if and only if $\delta_2\delta_1^{-1}\in (\F^*)^3$.
\end{lemma}
\begin{proof}
 First assume that 
$\begin{pmatrix}
0 \\ 
0\\
0\\
\delta_1
\end{pmatrix}$ and
$\begin{pmatrix}
0\\ 
0\\
0\\
\delta_2
\end{pmatrix}$
are in the same $\Aut_p (L)$-orbit. Then
\begin{align*}
\bigg[
d\begin{pmatrix}
a_{11}^2& 0\\
0& a_{22}
\end{pmatrix},
\begin{pmatrix}
a_{11}^p& a_{21}^p&a_{31}^p&a_{41}^p\\
0& a_{22}^p& a_{32}^p&a_{42}^p\\
0&0&d^p&a_{11}^pa_{32}^p\\
0&0&0&a_{11}^pd^p
\end{pmatrix}
\bigg]
\bigg[
\begin{pmatrix}
0 \\ 1
\end{pmatrix},
\begin{pmatrix}
0\\ 
0 \\
0 \\
\delta_1
\end{pmatrix}
\bigg]
=
\bigg[
\begin{pmatrix}
0\\1
\end{pmatrix},
\begin{pmatrix}
0 \\
 0\\
0\\
\delta_2
\end{pmatrix}
\bigg].
\end{align*}
From this we obtain that $\delta_2=a_{11}^pd^p\delta_1$ and that $da_{22}=1$ which gives that $\delta_2\delta_1^{-1}=a_{22}^{-3p}\in (\F^*)^3$. Assume next that $\delta_2\delta_1^{-1}=\epsilon^3$, with some $\epsilon \in \F^*$. Then
\begin{align*}
\bigg[\epsilon ^{1/p}\begin{pmatrix}
(\epsilon)^{4/p}& 0\\
0& (\epsilon)^{-1/p}
\end{pmatrix},
\begin{pmatrix}
\epsilon^2& 0&0&0\\
0& \epsilon^{-1}&0 &0\\
0&0&\epsilon&0\\
0&0&0&\epsilon^3
\end{pmatrix}\bigg]
\bigg[\begin{pmatrix}
0 \\ 1
\end{pmatrix},
\begin{pmatrix}
0\\ 
0\\
0 \\
\delta_1
\end{pmatrix}\bigg]=
\bigg[\begin{pmatrix}
0\\1
\end{pmatrix},
\begin{pmatrix}
0\\ 
0\\
0\\
\delta_2
\end{pmatrix}\bigg],
\end{align*}
as required.
\end{proof}

Hence, the corresponding restricted Lie algebra structures are as follows:
\begin{align*}
& K_9^1=\langle x_1,\ldots,x_5\mid [x_1,x_2]=x_3,[x_1,x_3]=x_4, [x_2,x_3]=x_5 \rangle;\\
& K_9^2(\alpha)=\langle x_1,\ldots,x_5\mid [x_1,x_2]=x_3,[x_1,x_3]=x_4, [x_2,x_3]=x_5 ,
\res x_1= \alpha x_5\rangle;\\
& K_9^3=\langle x_1,\ldots,x_5\mid [x_1,x_2]=x_3,[x_1,x_3]=x_4, [x_2,x_3]=x_5 ,
\res x_2=x_5\rangle;\\
& K_9^4=\langle x_1,\ldots,x_5\mid [x_1,x_2]=x_3,[x_1,x_3]=x_4, [x_2,x_3]=x_5 ,
\res x_3=x_5\rangle;\\
& K_9^5(\delta)=\langle x_1,\ldots,x_5\mid [x_1,x_2]=x_3,[x_1,x_3]=x_4, [x_2,x_3]=x_5 ,
\res x_4= \delta x_5\rangle
\end{align*}
where $\alpha ,\delta \in \F^*$. 

\begin{lemma}\label{lemma-K9-2}
We have $K_9^2(\alpha_1) \cong K_9^2(\alpha_2)$ if and only if $\alpha_2 \alpha _1^{-1} \in (\F^*)^2$.
\end{lemma}
\begin{proof}
Let $f=(a_{ij})\in \Aut(L_{5,9})$ and suppose that 
$f: K_9^2(\alpha_1)\to K_9^2(\alpha_2)$ is an isomorphism. 
Then $f(\res x_1)=f(x_1)^{[p]}$ which implies that 
$\alpha_1 a_{11}a_{22}^2 x_5=\alpha_2 a_{11}^p x_5$. Hence, $\alpha_1/\alpha_2=a_{11}^{p-1}a_{22}^{-2}\in (\F^*)^2$. To prove the converse, suppose that $\alpha_1/\alpha_2=\epsilon^2\in (\F^*)^2$. Then the following is an isomorphism from
 $K_9^2(\alpha_1)$ to $K_9^2(\alpha_2)$:
\begin{align*}
\begin{pmatrix}
\epsilon^{2/p}&0&0&0 & 0\\
0& \epsilon^{-1/p}& 0&0& 0\\
0&0&\epsilon^{1/p} &0& 0\\
0&0&0&\epsilon^{3/p} & 0\\
0&0&0&0& 1\\
\end{pmatrix}.
\end{align*}
\end{proof}

The conditions on $\delta_1$ and $\delta_2$ such that  $K_9^5(\delta_1) \cong K_9^5(\delta_2)$ 
reduces to the solutions of an algebraic equation over $\F$ as the following lemma shows. However,  by Lemma \ref{lemma-K9-5} if $\delta_1/\delta_2\in (\F^*)^3$ then $K_9^5(\delta_1) \cong K_9^5(\delta_2)$.
 
\begin{lemma}
We have $K_9^5(\delta_1) \cong K_9^5(\delta_2)$ if and only if 
the equation $\delta_1/\delta_2=x^{2p-1} y^{p-2}$ has a solution  in $\F^*$. 
\end{lemma}
\begin{proof}
Let $f=(a_{ij})\in \Aut(L_{5,9})$ and suppose that 
$f: K_9^5(\delta_1)\to K_9^5(\delta_2)$ is an isomorphism. 
Then $f(\res x_4)=f(x_4)^{[p]}$ which implies that 
$\delta_1 a_{11}a_{22}^2 x_5=\delta_{2} a_{11}^{2p} a_{22}^p x_5$. Hence, $\delta_1/\delta_2=a_{11}^{2p-1} a_{22}^{p-2}$. 
\end{proof} 

\begin{corollary}
If $\delta_1/\delta_2\in \F_p^*$ then  $K_9^5(\delta_1) \cong K_9^5(\delta_2)$.
\end{corollary}

\section{Extensions of ($L, \res x_1=x_4$)}
 First, we find a basis for $Z^2(L,\F)$. Let $(\phi,\omega)=(a\Delta _{12}+b\Delta_{13}+c\Delta_{14}+d\Delta_{23}+e\Delta_{24}+f\Delta_{34},\alpha f_1+\beta f_2+ \gamma f_3+\delta f_4)\in Z^2(L,\F)$. Then we must have $\delta^2\phi(x,y,z) =0$ and $\phi(x,\res y)=0$, for all $x,y,z \in L$. Therefore,
\begin{align*}
0=(\delta^2\phi)(x_1,x_2,x_3)&=\phi([x_1,x_2],x_3)+\phi([x_2,x_3],x_1)+\phi([x_3,x_1],x_2)=\phi(x_2,x_4);\\
0=(\delta^2\phi)(x_1,x_2,x_4)&=\phi([x_1,x_2],x_4)+\phi([x_2,x_4],x_1)+\phi([x_4,x_1],x_2)=\phi(x_3,x_4).
\end{align*}
Thus, we get $e=f=0$.
Also, we have $\phi(x,\res y)=0$. Therefore, $\phi(x,x_4)=0$, for all $x\in L$ and hence $\phi(x_1,x_4)=\phi(x_2,x_4)=\phi(x_3,x_4)=0$ which implies that $c=e=f=0$. Therefore, $Z^2(L,\F)$ has a basis consisting of:
$$ (\Delta_{12},0),(\Delta_{13},0),(\Delta_{23},0),(0,f_1),(0,f_2),(0,f_3),(0,f_4).$$
Next, we find a basis for $B^2(L,\F)$. Let $(\phi,\omega)\in B^2(L,\F)$. Since $B^2(L,\F)\subseteq Z^2(L,\F)$, we have $(\phi,\omega)=(a\Delta _{12}+b\Delta_{13}+c\Delta_{23},\alpha f_1+\beta f_2+ \gamma f_3+\delta f_4)$. Note that there exists a linear map $\psi:L\to \F$  such that $\delta^1\psi(x,y)=\phi(x,y)$ and $\tilde \psi(x)=\omega(x)$, for all $x,y \in L$. So, we have
\begin{align*}
&a=\phi(x_1,x_2)=\delta^1\psi(x_1,x_2)=\psi([x_1,x_2])=\psi(x_3), \text{ and }\\
&b=\phi(x_1,x_3)=\delta^1\psi(x_1,x_3)=\psi([x_1,x_3])=\psi(x_4), \text{ and }\\
&c=\phi(x_2,x_3)=\delta^1\psi(x_2,x_3)=\psi([x_2,x_3])=0.
\end{align*}
Also, we have
\begin{align*}
&\alpha=\omega(x_1)=\tilde \psi(x_1)=\psi(\res x_1)=\psi(x_4), \text{ and }\\
&\beta=\omega(x_2)=\tilde \psi(x_2)=\psi(\res x_2)=0.
\end{align*}
Similarly, we can show that $\gamma=\delta=0$.  Note that $\psi(x_4)=b=\alpha$. Therefore, $(\phi,\omega)=(a\Delta_{12}+b\Delta_{13},b f_1)$ and hence $B^2(L,\F)=\la(\Delta_{12},0),(\Delta_{13},f_1)\ra_{\F}$. Note that 
$$[(\Delta_{13},0)],[(\Delta_{23},0)],[0,f_1],[(0,f_2)],[(0,f_3)],[(0,f_4)]$$
spans $H^2(L,\F)$.
Since $[(\Delta_{13},0)]+[(0,f_1)]=[(\Delta_{13},f_1)]=[0]$, then $[(0,f_1)]$ is an scalar multiple of $[(\Delta_{13},0)]$
in $H^2(L,\F)$. Note that $\dim H^2=\dim Z^2-\dim B^2=5$. Therefore, 
$$[(\Delta_{13},0)],[(\Delta_{23},0)],[(0,f_2)],[(0,f_3)],[(0,f_4)]$$
forms a basis for $H^2(L,\F)$.

Note that the group $\Aut(L)$ in this case consists of invertible matrices of the form
\[\begin{pmatrix} a_{11}  & 0 & 0 & 0 \\
a_{21} & a_{22} & 0 & 0\\
a_{31} & a_{32} & d& 0 \\
a_{41} & a_{42} & a_{11}a_{32}& a_{11}d
\end{pmatrix},\]
where $d= a_{11}a_{22}$ and $a_{11}d=a_{11}^p$. 

Let $[(\phi,\omega)] \in H^2(L,\F)$. Then we have $\phi=a\Delta_{13}+b\Delta_{23}$, for some $a,b\in \F$. Suppose that $A\phi =a'\Delta_{13}+b'\Delta_{23}$ for some $a',b'\in \F$. We determine $a,b'$. Note that
\begin{align*}
&A\phi (x_1,x_3)=\phi (Ax_1,Ax_3)=\phi (a_{11}x_1+a_{21}x_2+a_{31}x_3+a_{41}x_4,dx_3+a_{11}a_{32}x_4)=a_{11}da+a_{21}db; \text{ and }\\
&A\phi (x_2,x_3)=\phi (Ax_2,Ax_3)=\phi (a_{22}x_2+a_{32}x_3+a_{42}x_4,dx_3+a_{11}a_{32}x_4)=a_{22}db.
\end{align*}
In the matrix form we can write this as
\begin{align}\label{a}
\begin{pmatrix}
a'\\b'
\end{pmatrix}=
\begin{pmatrix}
a_{11}d&a_{21}d\\
0&a_{22}d
\end{pmatrix}
\begin{pmatrix}
a\\b
\end{pmatrix}.
 \end{align} 
The orbit with representative 
$\begin{pmatrix}
0\\1
\end{pmatrix}$  of this action gives us $L_{5,9}$.

Also, we have $\omega=\beta f_2+\gamma f_3+\delta f_4$, for some $ \beta, \gamma, \delta\in \F$.
 Suppose that $A\omega = \beta' f_2+\gamma' f_3+\delta' f_4$, for some $\beta', \gamma', \delta'\in \F$. We have
\begin{align*}
A\omega (x_2)=& a_{22}^p \beta +a_{32}^p \gamma + a_{42}^p \delta;\\
A\omega (x_3)=& d^p \gamma + a_{11}^pa_{32}^p \delta;\\
A\omega(x_4)=&a_{11}^pd^p \delta.
\end{align*}
In the matrix form we can write this as
\begin{align}\label{b}
\begin{pmatrix}
\beta'\\
\gamma'\\
\delta'
\end{pmatrix}
=
\begin{pmatrix}
a_{22}^p& a_{32}^p&a_{42}^p\\
0&d^p&a_{11}^pa_{32}^p\\
0&0&a_{11}^pd^p
\end{pmatrix}
\begin{pmatrix}
\beta \\
\gamma \\
\delta
\end{pmatrix}.
\end{align}

Thus, we can write Equations \eqref{a} and \eqref{b} together as follows:
\begin{align*}
\bigg[\begin{pmatrix}
a_{11}d&a_{21}d\\
0&a_{22}d
\end{pmatrix} , 
\begin{pmatrix}
a_{22}^p& a_{32}^p&a_{42}^p\\
0&d^p&a_{11}^pa_{32}^p\\
0&0&a_{11}^pd^p
\end{pmatrix}\bigg]
\bigg[\begin{pmatrix}
a\\b
\end{pmatrix},
\begin{pmatrix} 
\beta \\
\gamma \\
\delta
\end{pmatrix}\bigg]=
\bigg[\begin{pmatrix}
a'\\b'
\end{pmatrix},
\begin{pmatrix} 
\beta' \\
\gamma' \\
\delta'
\end{pmatrix}\bigg].
\end{align*}
Now we find the representatives of the orbits of the action of $\Aut_p (L)$ on the set of $\omega$'s such that the orbit represented by
$\begin{pmatrix}
0\\1
\end{pmatrix}$ is preserved under the action of $\Aut_p (L)$ on the set of $\phi$'s.
Note that we need to have $A\omega(x_1)=0$ which implies that $a_{21}^p\beta+a_{31}^p\gamma+a_{41}^p\delta=0$.

Let $\nu = \begin{pmatrix}
\beta \\
\gamma \\
\delta
\end{pmatrix} \in \F^3$. If $\nu= \begin{pmatrix}
0\\ 
0 \\
0 
\end{pmatrix}$, then $\{\nu \}$ is clearly an $\Aut_p (L)$-orbit. Suppose that $\delta \neq 0$. Then 
\begin{align*}
&\bigg[
\begin{pmatrix}
1&0\\
0&1
\end{pmatrix},
\begin{pmatrix}
 1& 0&-\beta /\delta\\
0&1&0\\
0&0&1
\end{pmatrix}\bigg]
\bigg[
\begin{pmatrix}
0\\1
\end{pmatrix},
\begin{pmatrix}
\beta \\
\gamma \\
\delta
\end{pmatrix}\bigg]=
\bigg[\begin{pmatrix}
0\\1
\end{pmatrix},
\begin{pmatrix}
0\\
\gamma\\
\delta
\end{pmatrix}\bigg], \text {and }\\
&\bigg[\begin{pmatrix}
1&0\\
0&1
\end{pmatrix},
\begin{pmatrix}
 1& -\gamma /\delta&\gamma^2 /\delta^2\\
0&1&-\gamma /\delta\\
0&0&1
\end{pmatrix}\bigg]
\bigg[\begin{pmatrix}
0\\1
\end{pmatrix},
\begin{pmatrix}
0\\
\gamma \\
\delta
\end{pmatrix}\bigg]=
\bigg[\begin{pmatrix}
0\\1
\end{pmatrix},
\begin{pmatrix}
0\\
0\\
\delta
\end{pmatrix}\bigg].
\end{align*}
Next, if $\delta =0$, but $\gamma \neq 0$, then
\begin{align*}
&\bigg[\begin{pmatrix}
1&0\\
0&1
\end{pmatrix},
\begin{pmatrix}
 1& -\beta /\gamma&0\\
0&1&-\beta /\gamma\\
0&0&1
\end{pmatrix}\bigg]
\bigg[\begin{pmatrix}
0\\1
\end{pmatrix},
\begin{pmatrix}
\beta\\
\gamma \\
0
\end{pmatrix}\bigg]=
\bigg[\begin{pmatrix}
0\\1
\end{pmatrix},
\begin{pmatrix}
0\\
\gamma\\
0
\end{pmatrix}\bigg].
\end{align*}

Finally, if $\gamma=\delta =0$, but $\beta \neq 0$, then we have 
$
\begin{pmatrix}
\beta\\
0 \\
0
\end{pmatrix}.
$
Thus the following elements are $\Aut_p (L)$-orbit representatives:
 
$\begin{pmatrix}
0\\ 
0\\
0
\end{pmatrix}$, $\begin{pmatrix} 
\beta\\
0\\
0
\end{pmatrix}$, $\begin{pmatrix}
0\\
\gamma\\
0
\end{pmatrix}$, $\begin{pmatrix}
0\\
0\\
\delta
\end{pmatrix}$.

\begin{lemma}\label{lemma-K9-7}
The vectors
$\begin{pmatrix}
\beta_1\\
0\\
0
\end{pmatrix}$ and
$\begin{pmatrix}
\beta_2\\
0\\
0
\end{pmatrix}$
are in the same $\Aut_p (L)$-orbit if and only if $\beta_2/\beta_1=\epsilon,$  
where $\epsilon^{2p-3}=1$. 
\end{lemma}
\begin{proof}
Let 
$\theta_1=\bigg[\begin{pmatrix}
0\\1
\end{pmatrix},
\begin{pmatrix}
\beta_1 \\
0 \\
0
\end{pmatrix}
\bigg]
$ 
and 
$\theta_2=\bigg[\begin{pmatrix}
0\\1
\end{pmatrix},
\begin{pmatrix}
\beta_2 \\
0 \\
0
\end{pmatrix}
\bigg]
$.
Then $\theta_1$ and $\theta_2$ are in the same $\Aut_p (L)$-orbit if and only if 
there exists $A=(a_{ij})\in \Aut_p (L)$ such that 
$
A\cdot 
\theta_1
=
\theta_2
$.
Equivalently, $\theta_1$ and $\theta_2$ are in the same $\Aut_p (L)$-orbit if and only if 
there exists $A=(a_{ij})\in \Aut_p (L)$ such that 
 $a_{11}d =a_{11}^p$ and
\begin{align*}
\bigg[
\begin{pmatrix}
a_{11}d&a_{21}d\\
0&a_{22}d
\end{pmatrix},
\begin{pmatrix}
a_{22}^p& a_{32}^p&a_{42}^p\\
0&d^p&a_{11}^pa_{32}^p\\
0&0&a_{11}^pd^p
\end{pmatrix}
\bigg]
\bigg[\begin{pmatrix}
0\\1
\end{pmatrix},
\begin{pmatrix}
\beta_1 \\
0 \\
0
\end{pmatrix}
\bigg]
=
\bigg[\begin{pmatrix}
0\\1
\end{pmatrix},
\begin{pmatrix}
 \beta_2\\
0\\
0
\end{pmatrix}
\bigg].
\end{align*}
Hence, $\theta_1$ and $\theta_2$ are in the same $\Aut_p (L)$-orbit if and only if 
\begin{align}
 \beta_2=a_{22}^p\beta_1 \label{59-b1}\\
 da_{22}=1 \Leftrightarrow a_{11}= a_{22}^{-2} \label{59-b2} \\
 a_{11}d =a_{11}^p\label{59-b3}\\
 a_{21}=0. 
\end{align} 

First, suppose that $\theta_1$ and $\theta_2$ are in the same $\Aut_p (L)$-orbit and set 
$\epsilon=a_{22}^{p}$. 
Then, by Equations \eqref{59-b1} and  \eqref{59-b3}, we have 
$\beta_2/\beta_1=a_{22}^p=\epsilon \in \F^*$.
Furthermore, by Equations \eqref{59-b2} and  \eqref{59-b3}, $a_{22}^{2p-3}=1$ and so $\epsilon^{2p-3}=1$.
Conversely, suppose that 
$\beta_2/\beta_1= \epsilon\in \F^*$, where $\epsilon^{2p-3}=1$. We let 
 $a_{22}=\epsilon^{1/p}, a_{11}= a_{22}^{-2}$, and $a_{21}=0$.
Then we have 
\begin{align*}
a_{11}d =a_{11}^p\Leftrightarrow a_{11}^2 a_{22}=a_{11}^p \Leftrightarrow  a_{22}^{-3}=a_{22}^{-2p}
\Leftrightarrow a_{22}^{2p-3}=1 \Leftrightarrow \epsilon^{2p-3}=1.
\end{align*}
It remains to verify that Equation \eqref{59-b1} is satisfied. We have:
\begin{align*}
\beta_2/\beta_1= \epsilon = a_{22}^{p},
\end{align*}
as required. 
\end{proof}

\begin{lemma}\label{lemma-K9-8}
The vectors
$\begin{pmatrix}
0\\
\gamma_1\\
0
\end{pmatrix}$ and
$\begin{pmatrix}
0\\
\gamma_2\\
0
\end{pmatrix}$
are in the same $\Aut_p (L)$-orbit if and only if 
$\gamma_2/\gamma_1=\epsilon^{2},$  where $\epsilon^{2p-3}=1$. 
\end{lemma}
\begin{proof}
Let 
$\theta_1=\bigg[\begin{pmatrix}
0\\1
\end{pmatrix},
\begin{pmatrix}
0\\
\gamma_1\\
0
\end{pmatrix}
\bigg]
$ 
and 
$\theta_2=\bigg[\begin{pmatrix}
0\\1
\end{pmatrix},
\begin{pmatrix}
0\\
\gamma_2\\
0
\end{pmatrix}
\bigg]
$.
Then $\theta_1$ and $\theta_2$ are in the same $\Aut_p (L)$-orbit if and only if 
there exists $A=(a_{ij})\in \Aut_p (L)$ such that 
$
A\cdot 
\theta_1
=
\theta_2
$.
Equivalently, $\theta_1$ and $\theta_2$ are in the same $\Aut_p (L)$-orbit if and only if 
there exists $A=(a_{ij})\in \Aut_p (L)$ such that 
\begin{align*}
\bigg[
\begin{pmatrix}
a_{11}d&a_{21}d\\
0&a_{22}d
\end{pmatrix},
\begin{pmatrix}
a_{22}^p& a_{32}^p&a_{42}^p\\
0&d^p&a_{11}^pa_{32}^p\\
0&0&a_{11}^pd^p
\end{pmatrix}
\bigg]
\bigg[\begin{pmatrix}
0\\1
\end{pmatrix},
\begin{pmatrix}
0\\
\gamma_1\\
0
\end{pmatrix}
\bigg]
=
\bigg[\begin{pmatrix}
0\\1
\end{pmatrix},
\begin{pmatrix}
0\\
\gamma_2\\
0
\end{pmatrix}
\bigg].
\end{align*}
Hence, $\theta_1$ and $\theta_2$ are in the same $\Aut_p (L)$-orbit if and only if 
\begin{align}
 \gamma_2=d^p\gamma_1 \label{59-g1}\\
 da_{22}=1 \Leftrightarrow a_{11}= a_{22}^{-2} \label{59-g2} \\
 a_{11}d =a_{11}^p\label{59-g3}\\
 a_{21}=a_{32}=0.
\end{align} 
First, suppose that $\theta_1$ and $\theta_2$ are in the same $\Aut_p (L)$-orbit and set 
$\epsilon=a_{11}^{(p-1)/2}$. 
Then, by Equations \eqref{59-g1} and  \eqref{59-g3}, we have 
$ (\gamma_2/\gamma_1)^{1/p}= d=a_{11}^{p-1}=\epsilon^2\in (\F^*)^2$. 
Furthermore, by Equations \eqref{59-g2} and  \eqref{59-g3}, $a_{22}^{2p-3}=1$ and so $\epsilon^{2p-3}=a_{11}^{(p-1)(2p-3)/2}=a_{22}^{(1-p)(2p-3)}=1$.
Conversely, suppose that 
$\gamma_2/\gamma_1= \epsilon^2\in (\F^*)^2$. We let 
 $a_{22}=\epsilon^{-2/p}, a_{11}= a_{22}^{-2}$, and $a_{21}=a_{32}=0$.
Then we have 
\begin{align*}
a_{11}d =a_{11}^p\Leftrightarrow a_{11}^2 a_{22}=a_{11}^p \Leftrightarrow  a_{22}^{-3}=a_{22}^{-2p}
\Leftrightarrow a_{22}^{2p-3}=1 \Leftrightarrow \epsilon^{2p-3}=1.
\end{align*}
It remains to verify that Equation \eqref{59-g1} is satisfied. We have:
\begin{align*}
\gamma_2/\gamma_1= \epsilon^{2} = a_{22}^{-p} =(a_{11}a_{22})^{p}=d^p,
\end{align*}
as required. 
\end{proof}

\begin{lemma}\label{lemma-K9-9}
The vectors
$\begin{pmatrix}
0\\
0\\
\delta_1
\end{pmatrix}$ and
$\begin{pmatrix}
0\\
0\\
\delta_2
\end{pmatrix}$
are in the same $\Aut_p (L)$-orbit if and only if $\delta_2/\delta_1=\epsilon^{3},$  
where $\epsilon^{2p-3}=1$. 
\end{lemma}
\begin{proof}
Let 
$\theta_1=\bigg[\begin{pmatrix}
0\\1
\end{pmatrix},
\begin{pmatrix}
0 \\
0 \\
\delta_1
\end{pmatrix}
\bigg]
$ 
and 
$\theta_2=\bigg[\begin{pmatrix}
0\\1
\end{pmatrix},
\begin{pmatrix}
0 \\
0 \\
\delta_2
\end{pmatrix}
\bigg]
$.
Then $\theta_1$ and $\theta_2$ are in the same $\Aut_p (L)$-orbit if and only if 
there exists $A=(a_{ij})\in \Aut_p (L)$ such that 
$
A\cdot 
\theta_1
=
\theta_2
$.
Equivalently, $\theta_1$ and $\theta_2$ are in the same $\Aut_p (L)$-orbit if and only if 
there exists $A=(a_{ij})\in \Aut_p (L)$ such that 
\begin{align*}
\bigg[
\begin{pmatrix}
a_{11}d&a_{21}d\\
0&a_{22}d
\end{pmatrix},
\begin{pmatrix}
a_{22}^p& a_{32}^p&a_{42}^p\\
0&d^p&a_{11}^pa_{32}^p\\
0&0&a_{11}^pd^p
\end{pmatrix}
\bigg]
\bigg[\begin{pmatrix}
0\\1
\end{pmatrix},
\begin{pmatrix}
0 \\
0 \\
\delta_1
\end{pmatrix}
\bigg]
=
\bigg[\begin{pmatrix}
0\\1
\end{pmatrix},
\begin{pmatrix}
 0\\
0\\
\delta_2
\end{pmatrix}
\bigg].
\end{align*}
Hence, $\theta_1$ and $\theta_2$ are in the same $\Aut_p (L)$-orbit if and only if 
\begin{align}
 \delta_2=a_{11}^pd^p\delta_1 \label{59-d1}\\
 da_{22}=1 \Leftrightarrow a_{11}= a_{22}^{-2} \label{59-d2} \\
 a_{11}d =a_{11}^p\label{59-d3}\\
 a_{21}=a_{32}=a_{42}=0.
\end{align} 
First, suppose that $\theta_1$ and $\theta_2$ are in the same $\Aut_p (L)$-orbit. 
Then, using Equation \eqref{59-d2}, we get that
$ a_{11}^p= a_{22}^{-2p}$ which using \eqref{59-d2} and \eqref{59-d3}  implies that 
$a_{22}^{2p-3}=1$. Now, by \eqref{59-d1} and \eqref{59-d2} , we have 
\begin{align*}
\delta_2/\delta_1=(a_{11}d)^{p}= (a_{11}^2a_{22})^{p}=a_{22}^{-3p}.
\end{align*}

To prove the sufficiency, we let $a_{22}=\epsilon^{-1/p}, a_{11}= a_{22}^{-2}$, and $a_{21}=a_{32}=a_{42}=0$.
Then we have 
\begin{align*}
a_{11}d =a_{11}^p\Leftrightarrow a_{11}^2 a_{22}=a_{11}^p \Leftrightarrow  a_{22}^{-3}=a_{22}^{-2p}
\Leftrightarrow a_{22}^{2p-3}=1 \Leftrightarrow \epsilon^{2p-3}=1.
\end{align*}
It remains to verify that Equation \eqref{59-d1} is satisfied. We have:
\begin{align*}
\delta_2/\delta_1= \epsilon^{3} = a_{22}^{-3p} =(a_{11}d)^{p},
\end{align*}
as required. 
\end{proof}

Hence, the corresponding restricted Lie algebra structurs are as follows:
\begin{align*}
& K_9^6=\langle x_1,\ldots,x_5\mid [x_1,x_2]=x_3,[x_1,x_3]=x_4, [x_2,x_3]=x_5 ,
\res x_1=x_4 \rangle;\\
& K_9^7(\beta)=\langle x_1,\ldots,x_5\mid [x_1,x_2]=x_3,[x_1,x_3]=x_4, [x_2,x_3]=x_5 ,
\res x_1=x_4, \res x_2=\beta x_5 \rangle; \\
&K_9^8(\gamma)=\langle x_1,\ldots,x_5\mid [x_1,x_2]=x_3,[x_1,x_3]=x_4, [x_2,x_3]=x_5 ,
\res x_1=x_4, \res x_3=\gamma x_5 \rangle;\\
&K_9^9(\delta)=\langle x_1,\ldots,x_5\mid [x_1,x_2]=x_3,[x_1,x_3]=x_4, [x_2,x_3]=x_5 ,
\res x_1=x_4, \res x_4=\delta x_5 \rangle
\end{align*}
where $\beta, \gamma, \delta \in \F^*$.

Lemma \ref{lemma-K9-7} provides us with some sufficient conditions as to when $K_9^7(\beta_1) \cong K_9^7(\beta_2)$. Over the prime field $\F_p$ we can say the following:

\begin{lemma}
Let $\beta_1, \beta_2\in \F_p^*$. Then    $K_9^7(\beta_1) \cong K_9^7(\beta_2)$ over $\F_p$  if and only if $\beta_1=\beta_2$.
\end{lemma}
\begin{proof}
Let $f=(a_{ij})\in \Aut(L_{5,9})$, where $a_{ij}\in \F_p$.  Then 
$f: K_9^7(\beta_1)\to K_9^7(\beta_2)$ is an isomorphism
if and only if 
\begin{align*}
u=a_{11}^{p-1}, \quad \beta_1/\beta_2=a_{22}^{p-1}u^{-1}\in (\F^*)^2. 
\end{align*}
Thus, $K_9^7(\beta_1) \cong K_9^7(\beta_2)$  if and only if $\beta_1=\beta_2$.

\end{proof}

Lemma \ref{lemma-K9-8} provides us with some sufficient conditions as to when 
$K_9^8(\gamma_1) \cong K_9^8(\gamma_2)$ and a characterization is given below:

\begin{lemma}
We have $K_9^8(\gamma_1) \cong K_9^8(\gamma_2)$ if and only if  $\gamma_1/\gamma_2=\epsilon^{p^2-3p+3}$, for some $\epsilon\in \F^*$.
\end{lemma}
\begin{proof}
Let $f=(a_{ij})\in \Aut(L_{5,9})$. Then 
$f: K_9^8(\gamma_1)\to K_9^8(\gamma_2)$ is an isomorphism 
 if and only if 
\begin{align*}
a_{12}=a_{21}=0, \quad u=a_{11}^{p-1}, \quad \gamma_1/\gamma_2 = u^{p-1}a_{22}^{-1}, 
\end{align*}
which is equivallent to saying that $\gamma_1/\gamma_2=a_{11}^{p^2-3p+3}$.

\end{proof}

Lemma \ref{lemma-K9-9} provides us with some sufficient conditions as to when 
$K_9^9(\delta_1) \cong K_9^9(\delta_2)$ and a characterization is given below:

\begin{lemma}
We have $K_9^9(\delta_1) \cong K_9^9(\delta_2)$ if and only if  $\delta_1/\delta_2=\epsilon^{p^2-2p+3}$, for some $\epsilon\in \F^*$.
\end{lemma}
\begin{proof}
Let $f=(a_{ij})\in \Aut(L_{5,9})$. Then 
$f: K_9^9(\delta_1)\to K_9^9(\delta_2)$ is an isomorphism 
 if and only if 
\begin{align*}
a_{12}=a_{21}=0, \quad u=a_{11}^{p-1}, \quad \delta_1/\delta_2 = \frac{(a_{11}u)^p}{a_{22}u},
\end{align*}
which is equivalent to saying that $\delta_1/\delta_2=a_{11}^{p^2-2p+3}$.

\end{proof}

\section{Extensions of ($L, \res x_2 =\xi x_4$)}
 First, we find a basis for $Z^2(L,\F)$. Let $(\phi,\omega)=(a\Delta _{12}+b\Delta_{13}+c\Delta_{14}+d\Delta_{23}+e\Delta_{24}+f\Delta_{34},\alpha f_1+\beta f_2+ \gamma f_3+\delta f_4)\in Z^2(L,\F)$. Then we must have $\delta^2\phi(x,y,z) =0$ and $\phi(x,\res y)=0$, for all $x,y,z \in L$. Therefore,
\begin{align*}
0=(\delta^2\phi)(x_1,x_2,x_3)&=\phi([x_1,x_2],x_3)+\phi([x_2,x_3],x_1)+\phi([x_3,x_1],x_2)=\phi(x_2,x_4);\\
0=(\delta^2\phi)(x_1,x_2,x_4)&=\phi([x_1,x_2],x_4)+\phi([x_2,x_4],x_1)+\phi([x_4,x_1],x_2)=\phi(x_3,x_4).
\end{align*}
Thus, we get $e=f=0$.
Also, we have $\phi(x,\res y)=0$. Therefore, $\phi(x,x_4)=0$, for all $x\in L$ and hence $\phi(x_1,x_4)=\phi(x_2,x_4)=\phi(x_3,x_4)=0$ which implies that $c=e=f=0$. Therefore, $Z^2(L,\F)$ has a basis consisting of:
$$ (\Delta_{12},0),(\Delta_{13},0),(\Delta_{23},0),(0,f_1),(0,f_2),(0,f_3),(0,f_4).$$
Next, we find a basis for $B^2(L,\F)$. Let $(\phi,\omega)\in B^2(L,\F)$. Since $B^2(L,\F)\subseteq Z^2(L,\F)$, we have $(\phi,\omega)=(a\Delta _{12}+b\Delta_{13}+c\Delta_{23},\alpha f_1+\beta f_2+ \gamma f_3+\delta f_4)$. Note that there exists a linear map $\psi:L\to \F$  such that $\delta^1\psi(x,y)=\phi(x,y)$ and $\tilde \psi(x)=\omega(x)$, for all $x,y \in L$. So, we have
\begin{align*}
&a=\phi(x_1,x_2)=\delta^1\psi(x_1,x_2)=\psi([x_1,x_2])=\psi(x_3), \text{ and }\\
&b=\phi(x_1,x_3)=\delta^1\psi(x_1,x_3)=\psi([x_1,x_3])=\psi(x_4), \text{ and }\\
&c=\phi(x_1,x_4)=\delta^1\psi(x_1,x_4)=\psi([x_1,x_4])=0.
\end{align*}
Also, we have
\begin{align*}
&\beta=\omega(x_2)=\tilde \psi(x_2)=\psi(\res x_2)=\xi\psi(x_4), \text{ and }\\
&\alpha=\omega(x_1)=\tilde \psi(x_1)=\psi(\res x_1)=0.
\end{align*}
Similarly, we can show that $\gamma=\delta=0$.  Note that $\psi(x_4)=b=\beta\xi^{-1}$. Therefore, $(\phi,\omega)=(a\Delta_{12}+b\Delta_{13},b\xi f_2)$ and hence $B^2(L,\F)=\la(\Delta_{12},0),(\Delta_{13},\xi f_2)\ra_{\F}$. Note that 
$$[(\Delta_{13},0)],[(\Delta_{23},0)],[0,f_1],[(0,f_2)],[(0,f_3)],[(0,f_4)]$$
spans $H^2(L,\F)$.
Since $[(\Delta_{13},0)]+\xi [(0,f_2)]=[(\Delta_{13},\xi f_2)]=[0]$, then $[(0,f_2)]$ is an scalar multiple of $[(\Delta_{13},0)]$
in $H^2(L,\F)$. Note that $\dim H^2=\dim Z^2-\dim B^2=5$. Therefore, 
$$[(\Delta_{13},0)],[(\Delta_{23},0)],[(0,f_1)],[(0,f_3)],[(0,f_4)]$$
forms a basis for $H^2(L,\F)$.

The group $\Aut(L)$ consists of invertible matrices of the form
\[\begin{pmatrix} a_{11}  & 0 & 0 & 0 \\
a_{21} & a_{22} & 0 & 0\\
a_{31} & a_{32} & d& 0 \\
a_{41} & a_{42} & a_{11}a_{32}& a_{11}d
\end{pmatrix},\]
where $d= a_{11}a_{22}$, $a_{21}=0$, and $a_{11}d=a_{22}^p$.

Let $[(\phi,\omega)] \in H^2(L,\F)$. Then we have $\phi=a\Delta_{13}+b\Delta_{23}$, for some $a,b\in \F$. Suppose that $A\phi =a'\Delta_{13}+b'\Delta_{23}$ for some $a',b'\in \F$.
Then we have
\begin{align}\label{c}
\begin{pmatrix}
a'\\b'
\end{pmatrix}=
\begin{pmatrix}
a_{11}d&a_{21}d\\
0&a_{22}d
\end{pmatrix}
\begin{pmatrix}
a\\b
\end{pmatrix}.
 \end{align} 
The orbit with representative 
$\begin{pmatrix}
0\\1
\end{pmatrix}$  of this action gives us $L_{5,9}$.

Also, we have $\omega=\alpha f_1+\gamma f_3+\delta f_4 $, for some $\alpha, \gamma, \delta \in \F$. Suppose that $A\omega =\alpha' f_1+\gamma' f_3+\delta' f_4$, for some $\alpha', \gamma',\delta' \in \F$. We have
\begin{align*}
&A\omega (x_1)=a_{11}^p \alpha + a_{31}^p \gamma + a_{41}^p \delta;\\ 
&A\omega (x_3)= d^p \gamma + a_{11}^pa_{32}^p \delta;\\
&A\omega(x_4)=a_{11}^pd^p \delta.
\end{align*}
In the matrix form we can write this as
\begin{align}\label{d}
\begin{pmatrix}
\alpha'\\
\gamma'\\
\delta'
\end{pmatrix}=
\begin{pmatrix}
a_{11}^p& a_{31}^p&a_{41}^p\\
0&d^p&a_{11}^pa_{32}^p\\
0&0&a_{11}^pd^p
\end{pmatrix}
\begin{pmatrix} 
\alpha \\
\gamma \\
\delta
\end{pmatrix}.
\end{align}

Thus, we can write Equations \eqref{c} and \eqref{d} together as follows:
\begin{align}
\bigg[\begin{pmatrix}
a_{11}d&a_{21}d\\
0&a_{22}d
\end{pmatrix}, 
\begin{pmatrix}
a_{11}^p& a_{31}^p&a_{41}^p\\
0&d^p&a_{11}^pa_{32}^p\\
0&0&a_{11}^pd^p
\end{pmatrix}\bigg]
\bigg[\begin{pmatrix}
a\\b
\end{pmatrix},
\begin{pmatrix} 
\alpha \\
\gamma \\
\delta
\end{pmatrix}\bigg]=
\bigg[\begin{pmatrix}
a'\\b'
\end{pmatrix},
\begin{pmatrix} 
\alpha' \\
\gamma' \\
\delta'
\end{pmatrix}\bigg]\label{59-action}
\end{align}
Now we find the representatives of the orbits of the action of $\Aut(L)$ on the set of $\omega$'s such that the orbit represented by 
$\begin{pmatrix}
0\\1
\end{pmatrix}$ is preserved under the action of $\Aut(L)$ on the set of $\phi$'s. Note that we need to have 
$A\omega(x_2)=0$ which implies that $a_{32}^p\gamma+a_{42}^p\delta=0$.

Let $\nu = \begin{pmatrix}
\alpha \\
\gamma \\
\delta
\end{pmatrix} \in \F^3$. If $\nu= \begin{pmatrix}
0\\ 
0 \\
0 
\end{pmatrix}$, then $\{\nu \}$ is clearly an $\Aut(L)$-orbit. Suppose that $\delta \neq 0$. Then
\begin{align*}
& \bigg[\begin{pmatrix}
1&0\\
0&1
\end{pmatrix},
\begin{pmatrix}
 1& 0&-\alpha /\delta\\
0&1&0\\
0&0&1
\end{pmatrix}\bigg]
\bigg[\begin{pmatrix}
0\\1
\end{pmatrix},
\begin{pmatrix}
\alpha \\
\gamma \\
\delta
\end{pmatrix}\bigg]=
\bigg[\begin{pmatrix}
0\\1
\end{pmatrix},
\begin{pmatrix}
0\\
\gamma\\
\delta
\end{pmatrix}\bigg], \text{ and }\\
&\bigg[\begin{pmatrix}
1&0\\
0&1
\end{pmatrix},
\begin{pmatrix}
 1& 0&0\\
0&1&-\gamma /\delta\\
0&0&1
\end{pmatrix}\bigg]
\bigg[\begin{pmatrix}
0\\1
\end{pmatrix},
\begin{pmatrix}
0\\
\gamma \\
\delta
\end{pmatrix}\bigg]=
\bigg[\begin{pmatrix}
0\\1
\end{pmatrix},
\begin{pmatrix}
0\\
0\\
\delta
\end{pmatrix}\bigg].
\end{align*}

Next, if $\delta =0$, but $\gamma \neq 0$, then
\begin{align*}
&\bigg[\begin{pmatrix}
1&0\\
0&1
\end{pmatrix},
\begin{pmatrix}
 1& -\alpha /\gamma&0\\
0&1&0\\
0&0&1
\end{pmatrix}\bigg]
\bigg[\begin{pmatrix}
0\\1
\end{pmatrix},
\begin{pmatrix}
\alpha\\
\gamma \\
0
\end{pmatrix}\bigg]=
\bigg[\begin{pmatrix}
0\\1
\end{pmatrix},
\begin{pmatrix}
0\\
\gamma\\
0
\end{pmatrix}\bigg]\\
&\bigg[\begin{pmatrix}
1&0\\
0&1
\end{pmatrix},
\begin{pmatrix}
 1& -\alpha /\gamma&0\\
0&1&0\\
0&0&1
\end{pmatrix}\bigg]
\bigg[\begin{pmatrix}
0\\1
\end{pmatrix},
\begin{pmatrix}
\alpha\\
\gamma \\
0
\end{pmatrix}\bigg]=
\bigg[\begin{pmatrix}
0\\1
\end{pmatrix},
\begin{pmatrix}
0\\
\gamma\\
0
\end{pmatrix}\bigg]
\end{align*}

Finally, if $\gamma=\delta =0$, but $\alpha \neq 0$, then we get
$\begin{pmatrix}
\alpha\\
0\\
0
\end{pmatrix}.$\\
Thus the following elements are $\Aut(L)$-orbit representatives: 
\begin{align*}
\begin{pmatrix}
0\\ 
0\\
0
\end{pmatrix}, 
\begin{pmatrix} 
\alpha\\
0\\
0
\end{pmatrix},
\begin{pmatrix}
0\\
\gamma\\
0
\end{pmatrix},
\begin{pmatrix}
0\\
0\\
\delta
\end{pmatrix}.
\end{align*}

Therefore, the corresponding restricted Lie algebra structures are as follows:
\begin{align*}
&K_9^{10}(\xi)=\langle x_1,\ldots,x_5\mid [x_1,x_2]=x_3,[x_1,x_3]=x_4, [x_2,x_3]=x_5 ,
 \res x_2=\xi x_4 \rangle;\\
&K_9^{11}(\xi, \alpha)=\langle x_1,\ldots,x_5\mid [x_1,x_2]=x_3,[x_1,x_3]=x_4, [x_2,x_3]=x_5 ,
\res x_1=\alpha x_5, \res x_2=\xi x_4\rangle;\\
&K_9^{12}(\xi, \gamma)=\langle x_1,\ldots,x_5\mid [x_1,x_2]=x_3,[x_1,x_3]=x_4, [x_2,x_3]=x_5 ,
\res x_2=\xi x_4, \res x_3=\gamma x_5\rangle;\\
&K_9^{13}(\xi, \delta)=\langle x_1,\ldots,x_5\mid [x_1,x_2]=x_3,[x_1,x_3]=x_4, [x_2,x_3]=x_5 ,
\res x_2=\xi  x_4, \res x_4=\delta x_5\rangle,
\end{align*}
where $\alpha, \gamma, \delta, \xi \in \F^*$.

\begin{lemma}\label{solutions-Fp}
Let $a, b\in \F_p$ and suppose  $a\neq 0$. Then the equation $x^2-ay^2=b$ has a solution in  $\F_p$.
\end{lemma}
\begin{proof}
Consider the two sets $A=\{x^2 \mid x\in \F_p\}$ and $B=\{ ay^2+b \mid y\in \F_p\}$. Since $a\neq 0$, both sets have $\frac{p+1}{2}$ elements. Indeed, let $\{a_1, \ldots, a_{p-1}\}$ be the nonzero elements of $F_p$. Then $a_i^2\equiv (p-a_i)^2$
 $\mod p$, for every$1\leq i\leq p-1$. Therefore, all quadratic residues must be among
$$a_1^2, \ldots, a_{\frac{p-1}{2}}^2.$$ Now, we prove that the above elements are distinct. For this, suppose that $a_i^2\equiv a_j^2$ $\mod p$, for $1\leq i,j \leq \frac{p-1}{2}$. Then, we have $(a_i-a_j)(a_i+a_j)\equiv 0$ $\mod p$, which implies that either $a_i-a_j\equiv 0$ $\mod p$, or $a_i+a_j\equiv 0$ $\mod p$. Note that if $a_i+a_j\equiv 0$ $\mod p$, then $p-a_i\equiv a_i\equiv -a_j$ $\mod p$, and hence $a_i-a_j\equiv p\equiv 0$ $\mod p$, which lead us to $a_i\equiv a_j\equiv 0$ $\mod p$, a contradiction. Therefore, $a_i-a_j\equiv 0$ $\mod p$, and so $a_i\equiv a_j$ $\mod p$. So, the number of elements of $A$ is $\frac{p+1}{2}$. Note that we can easily define a bijection from set $A$ to set $B$ and hence the number of elements of $B$ is also $\frac{p+1}{2}$. So, the sets $A$ and $B$ must overlap and hence a solution exists.
\end{proof} 

\begin{lemma}\label{lemma-K9-11}
Let $\xi_1, \gamma_1, \xi_2, \gamma_2\in \F_p$. Then  $K_9^{11}(\xi_1, \alpha_1)$ and $K_9^{11}(\xi_2, \alpha_2)$ are isomorphic over $\F_p$
if and only if  $\frac{\xi_2}{\xi_1}\frac{\alpha_1}{\alpha_2}\in (\F_p^*)^2$
\end{lemma}
\begin{proof}
Let $f=(a_{ij})\in \Aut(L_{5,9})$, where $a_{ij}\in\F_p$. Then 
$f: K_9^{11}(\delta_1)\to K_9^{11}(\delta_2)$ is an isomorphism 
 if and only if 
\begin{align}
&\quad a_{11}u\xi_1=\xi_2a_{22};\label{1}\\
&\quad a_{12}u\alpha_1=\xi_2a_{21};\label{2}\\
&\quad a_{21}u\xi_1=\alpha_2a_{12};\label{3}\\
&\quad a_{22}u\alpha_1=\alpha_2a_{11}.\label{4}
\end{align}
Hence, if $K_9^{11}(\xi_1, \alpha_1)\cong K_9^{11}(\xi_2, \alpha_2)$ then 
$\frac{\xi_2}{\xi_1}\frac{\alpha_1}{\alpha_2}=(\frac{a_{11}}{a_{22}})^2\in (\F_p^*)^2$. To prove the converse, suppose that $\frac{\xi_2}{\xi_1}\frac{\alpha_1}{\alpha_2}=\epsilon^2$, for some 
$\epsilon\in \F_p^*$.
Note that by Lemma \ref{solutions-Fp},  the equation 
\begin{align}
x^2-\frac{\xi_1}{\alpha_1}y^2=\frac{\alpha_2}{\alpha_1}
\end{align}
has a solution in $\F_p$. 
Now, we  take $a_{11}=\epsilon x$, $a_{21}=y$, and
set \begin{align}
a_{22}=\frac{a_{11}}{\epsilon}, \quad a_{12}=\frac{\xi_1}{\alpha_1}\epsilon a_{21}.\label{1-2-3-4}
 \end{align}
Then, $u=a_{11}a_{22}-a_{12}a_{21}=\epsilon\frac{\alpha_2}{\alpha_1}$ and  we can verify that Equations \eqref{1}-\eqref{4} are satisfied. Thus, $K_9^{11}(\xi_1, \alpha_1)\cong K_9^{11}(\xi_2, \alpha_2)$. The proof is complete.

\end{proof}

\begin{lemma}\label{K_9^{12}}
Let $\xi_1, \gamma_1, \xi_2, \gamma_2\in \F_p$. Then,   $K_9^{12}(\xi_1, \gamma_1)$ and 
$ K_9^{12}(\xi_2, \gamma_2)$ are isomorphic over $\F_p$  if and only if $\xi_1/\xi_2=\epsilon^2$, for some 
 $\epsilon\in  \F_p^*$.
\end{lemma}
\begin{proof}
Let $f=(a_{ij})\in \Aut(L_{5,9})$. Then 
$f: K_9^{12}(\xi_1, \gamma_1)\to K_9^{12}(\xi_2, \gamma_2)$ is an isomorphism 
 if and only if 
\begin{align}
&a_{12}=a_{21}=0\nonumber\\
&a_{11}u\xi_1=\xi_2a_{22}\label{9-12-1} \\
&a_{22}u\gamma_1=\gamma_2 u.\label{9-12-2}
\end{align}
Hence,  $\xi_1/\xi_2=\frac{1}{ a_{11}^2}$
and $\gamma_1/\gamma_2=\frac{1}{a_{22}}$.
The necessity is now clear. For sufficiency, we note that $ a_{11}$ 
is determined by the ratio $\xi_1/\xi_2$ and $a_{22}$ is determined from the equation
$\gamma_1/\gamma_2=\frac{1}{a_{22}}$.
\end{proof}

\begin{lemma}\label{K_9^{13}}
Let $\xi_1, \gamma_1, \xi_2, \gamma_2\in \F_p$. Then, 
 $K_9^{13}(\xi_1, \gamma_1)$ and  $K_9^{13}(\xi_2, \gamma_2)$ are isomorphic over $\F_p$  if and only if $\xi_1/\xi_2=\epsilon^2$,  for some
 $\epsilon \in  \F_p^*$.
\end{lemma}
\begin{proof}
Let $f=(a_{ij})\in \Aut(L_{5,9})$. Then 
$f: K_9^{13}(\xi_1, \delta_1)\to K_9^{13}(\xi_2, \delta_2)$ is an isomorphism 
 if and only if 
\begin{align*}
&a_{12}=a_{21}=0\\
&a_{11}u\xi_1=\xi_2a_{22} \\
&a_{22}u\delta_1=\delta_2 a_{11}u.
\end{align*}
Hence,  $\xi_1/\xi_2=\frac{1}{ a_{11}^2}$
and $\delta_1/\delta_2=\frac{a_{11}}{a_{22}}$.
The necessity is now clear. For sufficiency, we note that $ a_{11}$ 
is determined by the ratio $\xi_1/\xi_2$ and then $a_{22}$ is determined from the equation
$\delta_1/\delta_2=\frac{a_{11}}{a_{22}}$.
 
\end{proof}

\section{Extensions of ($L, \res x_3=x_4$)}
 First, we find a basis for $Z^2(L,\F)$. Let $(\phi,\omega)=(a\Delta _{12}+b\Delta_{13}+c\Delta_{14}+d\Delta_{23}+e\Delta_{24}+f\Delta_{34},\alpha f_1+\beta f_2+ \gamma f_3+\delta f_4)\in Z^2(L,\F)$. Then we must have $\delta^2\phi(x,y,z) =0$ and $\phi(x,\res y)=0$, for all $x,y,z \in L$. Therefore,
\begin{align*}
0=(\delta^2\phi)(x_1,x_2,x_3)&=\phi([x_1,x_2],x_3)+\phi([x_2,x_3],x_1)+\phi([x_3,x_1],x_2)=\phi(x_2,x_4);\\
0=(\delta^2\phi)(x_1,x_2,x_4)&=\phi([x_1,x_2],x_4)+\phi([x_2,x_4],x_1)+\phi([x_4,x_1],x_2)=\phi(x_3,x_4).
\end{align*}
Thus, we get $e=f=0$.
Also, we have $\phi(x,\res y)=0$. Therefore, $\phi(x,x_4)=0$, for all $x\in L$ and hence $\phi(x_1,x_4)=\phi(x_2,x_4)=\phi(x_3,x_4)=0$ which implies that $c=e=f=0$. Therefore, $Z^2(L,\F)$ has a basis consisting of:
$$ (\Delta_{12},0),(\Delta_{13},0),(\Delta_{23},0),(0,f_1),(0,f_2),(0,f_3),(0,f_4).$$
Next, we find a basis for $B^2(L,\F)$. Let $(\phi,\omega)\in B^2(L,\F)$. Since $B^2(L,\F)\subseteq Z^2(L,\F)$, we have $(\phi,\omega)=(a\Delta _{12}+b\Delta_{13}+c\Delta_{23},\alpha f_1+\beta f_2+ \gamma f_3+\delta f_4)$. Note that there exists a linear map $\psi:L\to \F$  such that $\delta^1\psi(x,y)=\phi(x,y)$ and $\tilde \psi(x)=\omega(x)$, for all $x,y \in L$. So, we have
\begin{align*}
&a=\phi(x_1,x_2)=\delta^1\psi(x_1,x_2)=\psi([x_1,x_2])=\psi(x_3), \text{ and }\\
&b=\phi(x_1,x_3)=\delta^1\psi(x_1,x_3)=\psi([x_1,x_3])=\psi(x_4), \text{ and }\\
&c=\phi(x_1,x_4)=\delta^1\psi(x_1,x_4)=\psi([x_1,x_4])=0.
\end{align*}
Also, we have
\begin{align*}
&\gamma=\omega(x_3)=\tilde \psi(x_3)=\psi(\res x_3)=\psi(x_4), \text{ and }\\
&\alpha=\omega(x_1)=\tilde \psi(x_1)=\psi(\res x_1)=0.
\end{align*}
Similarly, we can show that $\beta=\delta=0$.  Note that $\psi(x_4)=b=\gamma$. Therefore, $(\phi,\omega)=(a\Delta_{12}+b\Delta_{13},b f_3)$ and hence $B^2(L,\F)=\la(\Delta_{12},0),(\Delta_{13},f_3)\ra_{\F}$. Note that 
$$[(\Delta_{13},0)],[(\Delta_{23},0)],[0,f_1],[(0,f_2)],[(0,f_3)],[(0,f_4)]$$
spans $H^2(L,\F)$.
Since $[(\Delta_{13},0)]+[(0,f_3)]=[(\Delta_{13},f_3)]=[0]$, then $[(0,f_3)]$ is an scalar multiple of $[(\Delta_{13},0)]$
in $H^2(L,\F)$. Note that $\dim H^2=\dim Z^2-\dim B^2=5$. Therefore, 
$$[(\Delta_{13},0)],[(\Delta_{23},0)],[(0,f_1)],[(0,f_2)],[(0,f_4)]$$
forms a basis for $H^2(L,\F)$.

The group $\Aut(L)$ in this case consists of invertible matrices of the form
\[\begin{pmatrix} a_{11}  & 0 & 0 & 0 \\
a_{21} & a_{22} & 0 & 0\\
a_{31} & a_{32} & d& 0 \\
a_{41} & a_{42} & a_{11}a_{32}& a_{11}d
\end{pmatrix},\]
where $d= a_{11}a_{22}$, $a_{11}d=d^p$, and $a_{31}=a_{32}=0$.

Let $[(\phi,\omega)] \in H^2(L,\F)$. Then we have $\phi=a\Delta_{13}+b\Delta_{23}$, for some $a,b\in \F$. Suppose that $A\phi =a'\Delta_{13}+b'\Delta_{23}$ for some $a',b'\in \F$. We determine $a,b'$. Note that
\begin{align*}
&A\phi (x_1,x_3)=\phi (Ax_1,Ax_3)=\phi (a_{11}x_1+a_{21}x_2+a_{31}x_3+a_{41}x_4,dx_3+a_{11}a_{32}x_4)=a_{11}da+a_{21}db; \text{ and }\\
&A\phi (x_2,x_3)=\phi (Ax_2,Ax_3)=\phi (a_{22}x_2+a_{32}x_3+a_{42}x_4,dx_3+a_{11}a_{32}x_4)=a_{22}db.
\end{align*}
In the matrix form we can write this as
\begin{align}\label{e}
\begin{pmatrix}
a'\\b'
\end{pmatrix}=
\begin{pmatrix}
a_{11}d&a_{21}d\\
0&a_{22}d
\end{pmatrix}
\begin{pmatrix}
a\\b
\end{pmatrix}.
 \end{align} 
The orbit with representative 
$\begin{pmatrix}
0\\1
\end{pmatrix}$  of this action gives us $L_{5,9}$.

Also, we have  $\omega=\alpha f_1+\beta f_2+\delta f_4 $ for some $\alpha, \beta, \delta \in \F$. Suppose that $A\omega =\alpha' f_1+\beta' f_2+\delta' f_4$, for some $\alpha', \beta',\delta' \in \F$. We have
\begin{align*}
&A\omega (x_1)=a_{11}^p \alpha + a_{21}^p \beta+ a_{41}^p \delta;\\
&A\omega (x_2)=a_{22}^p\beta + a_{42}^p\delta;\\
&A\omega(x_4)=a_{11}^pd^p \delta.
\end{align*}
In the matrix form we can write this as
\begin{align}\label{f}
\begin{pmatrix}
\alpha'\\
\beta'\\
\delta'
\end{pmatrix}=
\begin{pmatrix}
a_{11}^p& a_{21}^p&a_{41}^p\\
0&a_{22}^p&a_{42}^p\\
0&0&a_{11}^pd^p
\end{pmatrix}
\begin{pmatrix} 
\alpha \\
\beta \\
\delta
\end{pmatrix}.
\end{align}

Thus, we can write Equations \eqref{e}  and \eqref{f} together as follows:
\begin{align*}
\bigg[\begin{pmatrix}
a_{11}d&a_{21}d\\
0&a_{22}d
\end{pmatrix}, 
\begin{pmatrix}
a_{11}^p& a_{21}^p&a_{41}^p\\
0&a_{22}^p&a_{42}^p\\
0&0&a_{11}^pd^p
\end{pmatrix}\bigg]
\bigg[\begin{pmatrix}
a\\b
\end{pmatrix},
\begin{pmatrix} 
\alpha \\
\beta\\
\delta
\end{pmatrix}\bigg]=
\bigg[\begin{pmatrix}
a'\\b'
\end{pmatrix},
\begin{pmatrix} 
\alpha' \\
\beta' \\
\delta'
\end{pmatrix}\bigg].
\end{align*}
Now we find the representatives of the orbits of the action of $\Aut(L)$ on the set of $\omega$'s such that the orbit represented by 
$\begin{pmatrix}
0\\1
\end{pmatrix}$ is preserved under the action of $\Aut(L)$ on the set of $\phi$'s.
Note that we need to have $A\omega(x_3)=0$ which implies that $a_{11}^pa_{32}^p\delta=0$.

Let $\nu = \begin{pmatrix}
\alpha \\
\beta\\
\delta
\end{pmatrix} \in \F^3$. If $\nu= \begin{pmatrix}
0\\ 
0 \\
0 
\end{pmatrix}$, then $\{\nu \}$ is clearly an $\Aut(L)$-orbit. Suppose that $\delta \neq 0$. Then
\begin{align*}
&\bigg[
\begin{pmatrix}
1&0\\
0&1
\end{pmatrix},
\begin{pmatrix}
 1& 0&-\alpha /\delta\\
0&1&0\\
0&0&1
\end{pmatrix}\bigg]
\bigg[\begin{pmatrix}
0\\1
\end{pmatrix},
\begin{pmatrix}
\alpha \\
\beta\\
\delta
\end{pmatrix}\bigg]=
\bigg[\begin{pmatrix}
0\\1
\end{pmatrix},
\begin{pmatrix}
0\\
\beta\\
\delta
\end{pmatrix}\bigg], \text{ and }\\
&\bigg[
\begin{pmatrix}
1&0\\
0&1
\end{pmatrix},
\begin{pmatrix}
 1& 0&0\\
0&1&-\beta /\delta\\
0&0&1
\end{pmatrix}\bigg]
\bigg[\begin{pmatrix}
0\\1
\end{pmatrix},
\begin{pmatrix}
0\\
\beta \\
\delta
\end{pmatrix}\bigg]=
\bigg[
\begin{pmatrix}
0\\1
\end{pmatrix},
\begin{pmatrix}
0\\
0\\
\delta
\end{pmatrix}\bigg].
\end{align*}

Next, if $\delta =0$ but $\beta \neq 0$, then we have 
\begin{align*}
&\bigg[
\begin{pmatrix}
1&0\\
0&1
\end{pmatrix},
\begin{pmatrix}
 1& 0&0\\
0&1&-\alpha /\beta\\
0&0&1
\end{pmatrix}\bigg]
\bigg[\begin{pmatrix}
0\\1
\end{pmatrix},
\begin{pmatrix}
\alpha \\
\beta\\
0
\end{pmatrix}\bigg]=
\bigg[\begin{pmatrix}
0\\1
\end{pmatrix},
\begin{pmatrix}
\alpha\\
0\\
0
\end{pmatrix}\bigg], 
\end{align*}

Finally, if  $\beta=\delta =0$, but $\alpha \neq 0$, then we have
$\begin{pmatrix}
\alpha\\
0\\
0
\end{pmatrix}$.\\
Thus the following elements are $\Aut_p(L)$-orbit representatives: 
\begin{align*}
\begin{pmatrix}
0\\ 
0\\
0
\end{pmatrix}, 
\begin{pmatrix} 
\alpha\\
0\\
0
\end{pmatrix},
\begin{pmatrix}
0\\
0\\
\delta
\end{pmatrix}.
\end{align*}

Therefore, the corresponding restricted Lie algebra structures are as follows:
\begin{align*}
&K_9^{14}=\langle x_1,\ldots,x_5\mid [x_1,x_2]=x_3,[x_1,x_3]=x_4, [x_2,x_3]=x_5 ,
\res x_3=x_4 \rangle;\\
&K_9^{15}(\alpha)=\langle x_1,\ldots,x_5\mid [x_1,x_2]=x_3,[x_1,x_3]=x_4, [x_2,x_3]=x_5 ,
\res x_1=\alpha x_5, \res x_3= x_4 \rangle;\\
&K_9^{16}(\delta)=\langle x_1,\ldots,x_5\mid [x_1,x_2]=x_3,[x_1,x_3]=x_4, [x_2,x_3]=x_5 ,
\res x_3=x_4, \res x_4=\delta x_5 \rangle.
\end{align*}
where $\alpha , \delta \in \F^*$.

\begin{lemma}\label{K_9^{15}}
Let $\alpha_1, \alpha_2 \in \F_p$. Then, 
 $K_9^{15}(\alpha_1)$ and  $K_9^{15}(\alpha_2)$ are isomorphic over $\F_p$  if and only if $\alpha_1/\alpha_2=\epsilon^2$,  for some
 $\epsilon \in  \F_p^*$.
\end{lemma}
\begin{proof}
Let $f=(a_{ij})\in \Aut(L_{5,9})$. Then 
$f: K_9^{15}(\alpha_1) \to K_9^{15}(\alpha_2)$ is an isomorphism 
 if and only if 
\begin{align*}
&a_{12}=a_{21}=0\\
&a_{22}u\alpha_1=\alpha_2a_{11} \\
&a_{11}u=u.
\end{align*}
Hence,  $a_{11}=1$ and 
$\alpha_1/\alpha_2=a_{22}^{-2}\in ( \F_p^*)^2$. 
To prove the converse, suppose that $\alpha_1/\alpha_2=\epsilon^2$. We take 
$a_{11}=1$ and $a_{22}=\epsilon^{-1}$.
\end{proof}

\begin{lemma}\label{K_9^{16}}
Let $\delta \in \F_p$. Then, 
 $K_9^{16}(\delta)\cong K_9^{16}(1)$.
\end{lemma}
\begin{proof}
We can take the following maps that yields the required automorphism 
$K_9^{16}(\delta)\to  K_9^{16}(1 )$:
$$
\begin{pmatrix}
1 & 0& 0 & 0 &0\\
0 & 1/\delta & 0 & 0 &0\\
0  & 0 & 1/\delta  & 0 &0\\
0 &  0 & 0 &1/\delta & 0\\
0 &  0 & 0 &0& 1/\delta^2 \\
\end{pmatrix}
 $$
\end{proof}

 \section{Detecting isomorphisms}\label{iso-L59}
 The following is the list of all  restricted Lie algebra structures on $L_{5,9}$ and yet as we shall see below we prove that some of them are isomorphic.
\begin{align*}
& K_9^1=\langle x_1,\ldots,x_5\mid [x_1,x_2]=x_3,[x_1,x_3]=x_4, [x_2,x_3]=x_5 \rangle;\\
& K_9^2(\alpha)=\langle x_1,\ldots,x_5\mid [x_1,x_2]=x_3,[x_1,x_3]=x_4, [x_2,x_3]=x_5 ,
\res x_1= \alpha x_5 \rangle;\\
& K_9^3=\langle x_1,\ldots,x_5\mid [x_1,x_2]=x_3,[x_1,x_3]=x_4, [x_2,x_3]=x_5 ,
\res x_2=x_5 \rangle;\\
& K_9^4=\langle x_1,\ldots,x_5\mid [x_1,x_2]=x_3,[x_1,x_3]=x_4, [x_2,x_3]=x_5 ,
\res x_3=x_5 \rangle;\\
& K_9^5(\delta)=\langle x_1,\ldots,x_5\mid [x_1,x_2]=x_3,[x_1,x_3]=x_4, [x_2,x_3]=x_5 ,
\res x_4= \delta x_5 \rangle;\\
& K_9^6=\langle x_1,\ldots,x_5\mid [x_1,x_2]=x_3,[x_1,x_3]=x_4, [x_2,x_3]=x_5 ,
\res x_1=x_4 \rangle;\\
& K_9^7(\beta)=\langle x_1,\ldots,x_5\mid [x_1,x_2]=x_3,[x_1,x_3]=x_4, [x_2,x_3]=x_5 ,
\res x_1=x_4, \res x_2=\beta x_5 \rangle; \\
&K_9^8(\gamma)=\langle x_1,\ldots,x_5\mid [x_1,x_2]=x_3,[x_1,x_3]=x_4, [x_2,x_3]=x_5 ,
\res x_1=x_4, \res x_3=\gamma x_5 \rangle;\\
&K_9^9(\delta)=\langle x_1,\ldots,x_5\mid [x_1,x_2]=x_3,[x_1,x_3]=x_4, [x_2,x_3]=x_5 ,
\res x_1=x_4, \res x_4=\delta x_5 \rangle;\\
&K_9^{10}(\xi)=\langle x_1,\ldots,x_5\mid [x_1,x_2]=x_3,[x_1,x_3]=x_4, [x_2,x_3]=x_5 ,
 \res x_2=\xi x_4 \rangle;\\
&K_9^{11}(\alpha ,\xi)=\langle x_1,\ldots,x_5\mid [x_1,x_2]=x_3,[x_1,x_3]=x_4, [x_2,x_3]=x_5 ,
\res x_1=\alpha x_5, \res x_2=\xi x_4 \rangle;\\
&K_9^{12}(\xi,\gamma)=\langle x_1,\ldots,x_5\mid [x_1,x_2]=x_3,[x_1,x_3]=x_4, [x_2,x_3]=x_5 ,
\res x_2=\xi x_4, \res x_3=\gamma x_5 \rangle;\\
&K_9^{13}(\xi ,\delta)=\langle x_1,\ldots,x_5\mid [x_1,x_2]=x_3,[x_1,x_3]=x_4, [x_2,x_3]=x_5 ,
\res x_2=\xi  x_4, \res x_4=\delta x_5 \rangle;\\
&K_9^{14}=\langle x_1,\ldots,x_5\mid [x_1,x_2]=x_3,[x_1,x_3]=x_4, [x_2,x_3]=x_5 ,
\res x_3=x_4 \rangle;\\
&K_9^{15}(\alpha)=\langle x_1,\ldots,x_5\mid [x_1,x_2]=x_3,[x_1,x_3]=x_4, [x_2,x_3]=x_5 ,
\res x_1=\alpha x_5, \res x_3= x_4 \rangle;\\
&K_9^{16}(\delta)=\langle x_1,\ldots,x_5\mid [x_1,x_2]=x_3,[x_1,x_3]=x_4, [x_2,x_3]=x_5 ,
\res x_3=x_4, \res x_4=\delta x_5 \rangle;
\end{align*}
Note that we have the  following isomorphisms:
$$\begin{pmatrix}
0&1&0&0&0\\
-1&0&0&0&0\\
0&0&1&0&0\\
0&0&0&0&1\\
0&0&0&-1&0
\end{pmatrix}: \quad K_9^3 \to K_9^6,\quad  \quad K_9^{4} \to  K_9^{14},
$$
 and 
$$\begin{pmatrix}
0&1&0&0&0\\
1&0&0&0&0\\
0&0&1&0&0\\
0&0&0&0&1\\
0&0&0&-1&0
\end{pmatrix}:\quad   K_9^{2}(\alpha)\to K_9^{10}(\alpha), \quad  K_9^{15}(\alpha) \to K_9^{12}(\alpha, 1).
$$

\begin{theorem}\label{thm L59}
The list of all restricted Lie algebra structures on $L_{5,9}$, up to isomorphism, is as follows:
\begin{align*}
& L_{5,9}^1=\langle x_1,\ldots,x_5\mid [x_1,x_2]=x_3,[x_1,x_3]=x_4, [x_2,x_3]=x_5 \rangle;\\
& L_{5,9}^2(\alpha)=\langle x_1,\ldots,x_5\mid [x_1,x_2]=x_3,[x_1,x_3]=x_4, [x_2,x_3]=x_5 ,
\res x_1= \alpha x_5 \rangle;\\
& L_{5,9}^3=\langle x_1,\ldots,x_5\mid [x_1,x_2]=x_3,[x_1,x_3]=x_4, [x_2,x_3]=x_5 ,
\res x_2=x_5 \rangle;\\
& L_{5,9}^4=\langle x_1,\ldots,x_5\mid [x_1,x_2]=x_3,[x_1,x_3]=x_4, [x_2,x_3]=x_5 ,
\res x_3=x_5 \rangle;\\
& L_{5,9}^5(\delta)=\langle x_1,\ldots,x_5\mid [x_1,x_2]=x_3,[x_1,x_3]=x_4, [x_2,x_3]=x_5 ,
\res x_4= \delta x_5 \rangle;\\
& L_{5,9}^6(\beta)=\langle x_1,\ldots,x_5\mid [x_1,x_2]=x_3,[x_1,x_3]=x_4, [x_2,x_3]=x_5 ,
\res x_1=x_4, \res x_2=\beta x_5 \rangle; \\
&L_{5,9}^7(\gamma)=\langle x_1,\ldots,x_5\mid [x_1,x_2]=x_3,[x_1,x_3]=x_4, [x_2,x_3]=x_5 ,
\res x_1=x_4, \res x_3=\gamma x_5 \rangle;\\
&L_{5,9}^8(\delta)=\langle x_1,\ldots,x_5\mid [x_1,x_2]=x_3,[x_1,x_3]=x_4, [x_2,x_3]=x_5 ,
\res x_1=x_4, \res x_4=\delta x_5 \rangle;\\
&L_{5,9}^{9}(\xi, \alpha)=\langle x_1,\ldots,x_5\mid [x_1,x_2]=x_3,[x_1,x_3]=x_4, 
[x_2,x_3]=x_5, \res x_1=\alpha x_5, \res x_2=\xi x_4 \rangle;\\
&L_{5,9}^{10}(\xi,\gamma)=\langle x_1,\ldots,x_5\mid [x_1,x_2]=x_3,[x_1,x_3]=x_4, [x_2,x_3]=x_5 ,
\res x_2=\xi x_4, \res x_3=\gamma x_5 \rangle;\\
&L_{5,9}^{11}(\xi ,\delta)=\langle x_1,\ldots,x_5\mid [x_1,x_2]=x_3,[x_1,x_3]=x_4, [x_2,x_3]=x_5 ,
\res x_2=\xi  x_4, \res x_4=\delta x_5 \rangle;\\
&L_{5,9}^{12}(\delta)=\langle x_1,\ldots,x_5\mid [x_1,x_2]=x_3,[x_1,x_3]=x_4, [x_2,x_3]=x_5 ,
\res x_3=x_4, \res x_4=\delta x_5 \rangle.
\end{align*}
\end{theorem}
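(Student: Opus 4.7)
The list has already been shown to be exhaustive by the orbit computations in the preceding subsections, so what remains is pairwise non-isomorphism, together with the parameter-equivalence statements for the families $L_{5,9}^2(\alpha)$, $L_{5,9}^5(\delta)$, $L_{5,9}^8(\delta)$, $L_{5,9}^9(\xi,\alpha)$, $L_{5,9}^{10}(\xi)$, $L_{5,9}^{11}(\xi,\delta)$ and $L_{5,9}^{12}(\delta)$. My plan is to separate the algebras into clusters by cheap numerical invariants, then attack the remaining coincidences by direct computation using the shape of $\Aut(L_{5,9})$ given at the start of this chapter.

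The first tool will be the dimensions $\dim \lcs{L}{[p]}$, $\dim \lcs{L}{[p]^2}$ and $\dim \lcs{L}{[p]^3}$, together with $\dim L/\la L^{[p]}\ra_p$. These are isomorphism invariants by Lemma \ref{invariants}. A short bookkeeping table separates the algebras into small clusters: for instance $L_{5,9}^1$ is the only one with trivial $p$-map; $L_{5,9}^2(\alpha), L_{5,9}^3, L_{5,9}^4, L_{5,9}^5(\delta)$ have $\dim L^{[p]}=1$ and $L^{[p]^2}=0$; $L_{5,9}^6, L_{5,9}^7, L_{5,9}^8(\delta), L_{5,9}^9(\xi,\alpha)$ have $\dim L^{[p]}=2$; and so on. Where these invariants already differ, non-isomorphism is immediate.

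Within each cluster I would proceed as in Sections \ref{iso-L52} and \ref{iso-L58}: assume an isomorphism $A: L_{5,9}^i\to L_{5,9}^j$, write $A$ as a matrix in the group $\Aut(L_{5,9})$ with entries constrained as in the chapter's parametrization (so that $a_{11}a_{22}-a_{12}a_{21}=u\neq 0$ and the fourth and fifth rows are the explicit functions of $a_{ij}$ given there), and evaluate $A(\res x_k)=\res{A(x_k)}$ on each basis vector whose $p$-image distinguishes the two algebras. In typical cases this produces a linear equation in $a_{ij}$ of the form $u^sa_{11}^t=0$ or similar, forcing $u=0$, a contradiction. This is exactly the mechanism used repeatedly in Sections \ref{iso-L52} and \ref{iso-L58}, and the structure of $\Aut(L_{5,9})$ is favourable because the $(4,\cdot)$ and $(5,\cdot)$ rows are determined by the $(1,\cdot)$ and $(2,\cdot)$ rows.

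For the parametric statements I would invoke the dedicated lemmas already proved in this chapter: Lemma \ref{lemma-K9-3} (and the remark following it) handle $L_{5,9}^2(\alpha)$ and $L_{5,9}^5(\delta)$; the analogue for $K_9^9(\delta)$ handles $L_{5,9}^8(\delta)$; Lemmas \ref{lemma-K9-7}--\ref{lemma-K9-9} handle $L_{5,9}^{10}(\xi)$, $L_{5,9}^{11}(\xi,\delta)$ and $L_{5,9}^{12}(\delta)$; and the two-parameter family $L_{5,9}^{9}(\xi,\alpha)$ is treated by the isomorphism criterion promised in Section \ref{isomorphism}, i.e.\ $\frac{\xi_1}{\alpha_1}\frac{\xi_2}{\alpha_2}\in (\F^*)^2$ together with solvability of $x^2-\frac{\xi_1}{\alpha_1}y^2=\frac{\alpha_2}{\alpha_1}$. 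The main obstacle is precisely this last case: unlike the one-parameter families, it cannot be reduced to an orbit on a single coordinate, and one must exploit the quadratic form structure arising from the $\Aut(L)$-action on the $(\alpha,\xi)$-plane. I expect that combining the matrix constraint $f(\res x_1)=\res{f(x_1)}$ with $f(\res x_2)=\res{f(x_2)}$ will yield the quadratic equation stated, and that the converse is produced by exhibiting an explicit automorphism parametrized by a solution $(x,y)$.
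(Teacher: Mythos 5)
Your route is the same as the paper's: exhaustiveness is inherited from the orbit computations and the explicit identifications made before the theorem, non-isomorphism is obtained by mixing the $p$-power invariants (Lemma \ref{invariants}) with computations of $A(\res x_k)=\res{A(x_k)}$ inside the explicit $\Aut(L_{5,9})$, and the parameter equivalences are delegated to the chapter's lemmas. Only your bookkeeping of which lemma covers which family is slightly off: $L_{5,9}^{10}(\xi)$ and $L_{5,9}^{11}(\xi,\delta)$ are Lemmas \ref{lemma-K9-8} and \ref{lemma-K9-9} respectively, while $L_{5,9}^{12}(\delta)$ and $L_{5,9}^{8}(\delta)$ rest on the $K_9^{17}$ and $K_9^{9}$ analogues of Lemma \ref{lemma-K9-3}; this is harmless.

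The one genuine gap is the pair $L_{5,9}^6$ versus $L_{5,9}^9(\xi,\alpha)$. They lie in the same invariant cluster ($\dim L^{[p]}=2$, $L^{[p]^2}=0$), and here your generic mechanism does not produce ``a linear equation forcing $u=0$'': writing $A(\res x_1)=\res{A(x_1)}$ and $A(\res x_2)=\res{A(x_2)}$ for a hypothetical isomorphism $A\colon L_{5,9}^6\to L_{5,9}^9(\xi,\alpha)$ gives the coupled semilinear system $a_{11}u=a_{21}^p\xi$, $a_{21}u=a_{11}^p\alpha$, $a_{12}u=a_{22}^p\xi$, $a_{22}u=a_{12}^p\alpha$, from which no immediate contradiction with $u=a_{11}a_{22}-a_{12}a_{21}\neq 0$ falls out; ruling out solutions over an arbitrary perfect field needs a further argument. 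The paper treats exactly this pair by a different device in Section \ref{isomorphism}: it quotients both algebras by the two-dimensional ideal $\la x_4,x_5\ra$, views them as central extensions of $L_{3,2}$ by a two-dimensional module, and applies the span criterion of Lemma \ref{lemma} to the cocycle pairs $(\Delta_{13},f_1),(\Delta_{23},f_2)$ and $(\Delta_{13},\xi f_2),(\Delta_{23},\alpha f_1)$, which forces $a_{11}a_{22}=a_{12}a_{21}$, a contradiction. Your sketch cites Section \ref{isomorphism} only for the internal isomorphism criterion of the family $L_{5,9}^9(\xi,\alpha)$ (where your outline does match the paper: the quadratic condition $x^2-\frac{\xi_1}{\alpha_1}y^2=\frac{\alpha_2}{\alpha_1}$ appears, and by Lemma \ref{equation} it is automatically solvable, so only $\frac{\xi_1}{\alpha_1}\frac{\xi_2}{\alpha_2}\in(\F^*)^2$ survives). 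You must add the cross-comparison of $L_{5,9}^6$ with $L_{5,9}^9(\xi,\alpha)$ explicitly, either by the paper's cohomological span argument or by a finer analysis of the semilinear system above; as written, that case is not covered by your ``typical'' mechanism.
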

In the remaining of this section we establish that the algebras given in Theorem \ref{thm L59} are pairwise non-isomorphic, thereby
completing the proof of Theorem \ref{thm L59}. 

It is clear that $L_{5,9}^1$ is not isomorphic to the other restricted Lie algebras.
We claim that $L_{5,9}^2(\alpha)$ and $L_{5,9}^3$ are not isomorphic. Suppose to the contrary that there exists an isomorphism $A: L_{5,9}^2(\alpha)\to L_{5,9}^3$.  Then 
$A(\res x_1)=\res {A(x_1)}$. So, 
\begin{align*}
&A(\alpha x_5)=\res {(a_{11}x_1+a_{21}x_2+a_{31}x_3+a_{41}x_4+a_{51}x_5)}\\ 
&\alpha a_{12}ux_4+\alpha a_{22}ux_5=a_{11}^p\res x_1 + a_{21}^p\res x_2+a_{31}^p\res x_3+a_{41}^p\res x_4+a_{51}^p\res x_5\\
&\alpha a_{12}ux_4+\alpha a_{22}ux_5=a_{21}^px_5,
\end{align*}
which implies that $\alpha a_{12}u=0$. Since $u\neq 0, \alpha \neq 0$, we have $a_{12}=0$. Thus, $u=a_{11}a_{22}$.
Also, we have 
\begin{align*}
A(\res x_2)=&\res {A(x_2)}\\
0=&\res {(a_{12}x_1+a_{22}x_2+a_{32}x_3+a_{42}x_4+a_{52}x_5)}\\
0=&a_{22}^p x_5,
\end{align*}
which implies that   $a_{22}=0$. Therefore $u=0$, which is a contradiction.\\

Next, We claim that $L_{5,9}^2(\alpha)$ and $L_{5,9}^4$ are not isomorphic. Suppose to the contrary that there exists an isomorphism $A: L_{5,9}^2(\alpha)\to L_{5,9}^4$.  Then 
\begin{align*}
A(\res x_3)=&\res {A(x_3)}\\
0=&\res {(ux_3+(a_{11}a_{32}-a_{31}a_{12})x_4+(a_{21}a_{32}-a_{31}a_{22}x_5)}\\
0=&u^p x_5.
\end{align*}
Therefore, $u=0$, which is a contradiction.\\

Next, we claim that $L_{5,9}^2(\alpha)$ and $L_{5,9}^5(\delta)$ are not isomorphic. Suppose to the contrary that there exists an isomorphism $A: L_{5,9}^2(\alpha)\to L_{5,9}^5(\delta)$.  Then 
$A(\res x_4)=\res {A(x_4)}$. So, 
\begin{align*}
0=&\res {(a_{11}ux_4+a_{21}ux_5)}\\ 
0=&a_{11}^pu^p\delta x_5,\\
\end{align*}
which implies that $\a_{11}^pu^p\delta =0$. Since $u\neq 0, \delta \neq 0$, we have $a_{11}=0$. Thus, $u=-a_{12}a_{21}$.
Also, we have 
\begin{align*}
A(\res x_5)=&\res {A(x_5)}\\
0=&\res {(a_{12}ux_4+a_{22}ux_5)}\\
0=&a_{12}^pu^p\delta x_5,
\end{align*}
which implies that   $a_{12}^pu^p\delta=0$. Since $u\neq 0, \delta \neq 0$, we have $a_{12}=0$. Therefore $u=0$, which is a contradiction.\\

It is clear that $L_{5,9}^2(\alpha)$ is not isomorphic to the other restricted Lie algebras.\\

Next, We claim that $L_{5,9}^3$ and $L_{5,9}^4$ are not isomorphic. Suppose to the contrary that there exists an isomorphism $A: L_{5,9}^3\to L_{5,9}^4$.  Then 
\begin{align*}
A(\res x_3)=&\res {A(x_3)}\\
0=&\res {(ux_3+(a_{11}a_{32}-a_{31}a_{12})x_4+(a_{21}a_{32}-a_{31}a_{22}x_5)}\\
0=&u^p x_5.
\end{align*}
Therefore, $u=0$, which is a contradiction.\\

Next, we claim that $L_{5,9}^3$ and $L_{5,9}^5(\delta)$ are not isomorphic. Suppose to the contrary that there exists an isomorphism $A: L_{5,9}^3\to L_{5,9}^5(\delta)$.  Then 
$A(\res x_4)=\res {A(x_4)}$. So, 
\begin{align*}
0=&\res {(a_{11}ux_4+a_{21}ux_5)}\\ 
0=&a_{11}^pu^p\delta x_5,\\
\end{align*}
which implies that $\a_{11}^pu^p\delta =0$. Since $u\neq 0, \delta \neq 0$, we have $a_{11}=0$. Thus, $u=-a_{12}a_{21}$.
Also, we have 
\begin{align*}
A(\res x_5)=&\res {A(x_5)}\\
0=&\res {(a_{12}ux_4+a_{22}ux_5)}\\
0=&a_{12}^pu^p\delta x_5,
\end{align*}
which implies that   $a_{12}^pu^p\delta=0$. Since $u\neq 0, \delta \neq 0$, we have $a_{12}=0$. Therefore $u=0$, which is a contradiction.\\

It is clear that $L_{5,9}^3$ is not isomorphic to the other restricted Lie algebras.\\

Next, we claim that $L_{5,9}^4$ and $L_{5,9}^5(\delta)$ are not isomorphic. Suppose to the contrary that there exists an isomorphism $A: L_{5,9}^4\to L_{5,9}^5(\delta)$.  Then 
$A(\res x_4)=\res {A(x_4)}$. So, 
\begin{align*}
0=&\res {(a_{11}ux_4+a_{21}ux_5)}\\ 
0=&a_{11}^pu^p\delta x_5,\\
\end{align*}
which implies that $\a_{11}^pu^p\delta =0$. Since $u\neq 0, \delta \neq 0$, we have $a_{11}=0$. Thus, $u=-a_{12}a_{21}$.
Also, we have 
\begin{align*}
A(\res x_5)=&\res {A(x_5)}\\
0=&\res {(a_{12}ux_4+a_{22}ux_5)}\\
0=&a_{12}^pu^p\delta x_5,
\end{align*}
which implies that   $a_{12}^pu^p\delta=0$. Since $u\neq 0, \delta \neq 0$, we have $a_{12}=0$. Therefore $u=0$, which is a contradiction.\\

It is clear that $L_{5,9}^4$ and $L_{5,9}^5(\delta)$ are not isomorphic to the other restricted Lie algebras.\\

Next, We claim that $L_{5,9}^6(\beta)$ and $L_{5,9}^7(\gamma)$ are not isomorphic. Suppose to the contrary that there exists an isomorphism $A: L_{5,9}^6(\beta)\to L_{5,9}^7(\gamma)$.  Then 
\begin{align*}
A(\res x_3)=&\res {A(x_3)}\\
0=&\res {(ux_3+(a_{11}a_{32}-a_{31}a_{12})x_4+(a_{21}a_{32}-a_{31}a_{22}x_5)}\\
0=&u^p\gamma x_5.
\end{align*}
Therefore, $u=0$, which is a contradiction.\\

Note that $L_{5,9}^6(\beta)$ is not isomorphic to any of $L_{5,9}^{8}(\delta)$, $L_{5,9}^{11}(\xi,\delta)$, $L_{5,9}^{12}(\delta)$
 because 
$(L_{5,9}^6(\beta))^{[p]^2}=0$  but $(L_{5,9}^8(\delta))^{[p]^2}\neq 0$, $(L_{5,9}^{11}(\xi,\delta))^{[p]^2}\neq 0$, 
$(L_{5,9}^{12}(\delta))^{[p]^2}\neq 0$.\\

We shall  compare  $L_{5,9}^6(\beta)$ and $L_{5,9}^9(\alpha,\xi)$ in Lemma \ref{L6-59}.

Next, We claim that $L_{5,9}^6(\beta)$ and $L_{5,9}^{10}(\xi,\gamma)$ are not isomorphic. Suppose to the contrary that there exists an isomorphism $A: L_{5,9}^6(\beta)\to L_{5,9}^{10}(\xi,\gamma)$.  Then 
\begin{align*}
A(\res x_3)=&\res {A(x_3)}\\
0=&\res {(ux_3+(a_{11}a_{32}-a_{31}a_{12})x_4+(a_{21}a_{32}-a_{31}a_{22}x_5)}\\
0=&u^p\gamma x_5.
\end{align*}
Therefore, $u=0$, which is a contradiction.\\

Note that $L_{5,9}^7(\gamma)$ is not isomorphic to any of $L_{5,9}^{8}(\delta)$, $L_{5,9}^{11}(\xi,\delta)$, $L_{5,9}^{12}(\delta)$
 because 
$(L_{5,9}^7(\gamma))^{[p]^2}=0$  but $(L_{5,9}^8(\delta))^{[p]^2}\neq 0$, $(L_{5,9}^{11}(\xi,\delta))^{[p]^2}\neq 0$, 
$(L_{5,9}^{12}(\delta))^{[p]^2}\neq 0$.\\

Next, We claim that $L_{5,9}^7(\gamma)$ and $L_{5,9}^{9}(\alpha,\xi)$ are not isomorphic. Suppose to the contrary that there exists an isomorphism $A: L_{5,9}^9(\alpha,\xi)\to L_{5,9}^{7}(\gamma)$.  Then 
\begin{align*}
A(\res x_3)=&\res {A(x_3)}\\
0=&\res {(ux_3+(a_{11}a_{32}-a_{31}a_{12})x_4+(a_{21}a_{32}-a_{31}a_{22}x_5)}\\
0=&u^p\gamma x_5.
\end{align*}
Therefore, $u=0$, which is a contradiction.\\

Next, We claim that $L_{5,9}^7(\gamma)$ and $L_{5,9}^{10}(\xi,\gamma)$ are not isomorphic. Suppose to the contrary that there exists an isomorphism $A: L_{5,9}^7(\gamma)\to L_{5,9}^{10}(\xi,\gamma)$.  Then 
\begin{align*}
A(\res x_2)=&\res {A(x_2)}\\
0=&\res{(a_{12}x_1+a_{22}x_2+a_{32}x_3+a_{42}x_4+a_{52}x_5)}\\
0=&a_{22}^p\xi x_4+a_{32}^p\gamma x_5,
\end{align*}
which implies that $a_{22}^p\xi =0$. Since $\xi \neq 0$, we have $a_{22}=0.$
Thus, $u=-a_{12}a_{21}$. Also, we have
\begin{align*}
A(\res x_3)=&\res {A(x_3)}\\
A(\gamma x_5)=&\res {(ux_3+(a_{11}a_{32}-a_{31}a_{12})x_4+(a_{21}a_{32}-a_{31}a_{22}x_5)}\\
\gamma a_{12}ux_4+\gamma a_{22}ux_5=&u^p\gamma x_5,
\end{align*}
which imples that $a_{12}u=0$. Since $u\neq 0$, we have $a_{12}=0$. Therefore, $u=0$, which is a contradiction.\\

Note that $L_{5,9}^8(\delta)$ is not isomorphic to any of $L_{5,9}^{9}(\alpha,\xi)$, $L_{5,9}^{10}(\xi,\gamma)$ 
 because 
$(L_{5,9}^9(\alpha,\xi))^{[p]^2}=(L_{5,9}^{10}(\xi,\gamma))^{[p]^2}=0$  but $(L_{5,9}^8(\delta))^{[p]^2}\neq 0$.\\

Next, We claim that $L_{5,9}^8(\delta)$ and $L_{5,9}^{11}(\xi,\delta)$ are not isomorphic. Suppose to the contrary that there exists an isomorphism $A: L_{5,9}^8(\delta)\to L_{5,9}^{11}(\xi,\delta)$.  Then 
\begin{align*}
A(\res x_2)=&\res {A(x_2)}\\
0=&\res{(a_{12}x_1+a_{22}x_2+a_{32}x_3+a_{42}x_4+a_{52}x_5)}\\
0=&a_{22}^p\xi x_4+a_{42}^p\delta x_5,
\end{align*}
which implies that $a_{22}^p\xi =0$. Since $\xi \neq 0$, we have $a_{22}=0.$
Thus, $u=-a_{12}a_{21}$. Also, we have
\begin{align*}
A(\res x_5)=&\res {A(x_5)}\\
0=&\res {(a_{12}ux_4+a_{22}ux_5)}\\
0=&a_{12}^pu^p\delta x_5,
\end{align*}
which implies that   $a_{12}^pu^p\delta=0$. Since $u\neq 0, \delta \neq 0$, we have $a_{12}=0$. Therefore, $u=0$, which is a contradiction.\\

Next, We claim that $L_{5,9}^8(\delta)$ and $L_{5,9}^{12}(\delta)$ are not isomorphic. Suppose to the contrary that there exists an isomorphism $A: L_{5,9}^{8}(\delta)\to L_{5,9}^{12}(\delta)$.  Then 
\begin{align*}
A(\res x_3)=&\res {A(x_3)}\\
0=&\res {(ux_3+(a_{11}a_{32}-a_{31}a_{12})x_4+(a_{21}a_{32}-a_{31}a_{22}x_5)}\\
0=&u^p x_4+(a_{11}a_{32}-a_{31}a_{12})^p\delta x_5.
\end{align*}
Therefore, $u=0$, which is a contradiction.\\
 
Next, We claim that $L_{5,9}^9(\alpha,\xi)$ and $L_{5,9}^{10}(\xi,\gamma)$ are not isomorphic. Suppose to the contrary that there exists an isomorphism $A: L_{5,9}^{9}(\alpha,\xi)\to L_{5,9}^{10}(\xi,\gamma)$.  Then 
\begin{align*}
A(\res x_3)=&\res {A(x_3)}\\
0=&\res {(ux_3+(a_{11}a_{32}-a_{31}a_{12})x_4+(a_{21}a_{32}-a_{31}a_{22}x_5)}\\
0=&u^p\gamma x_5.
\end{align*}
Therefore, $u=0$, which is a contradiction.\\
 
Note that $L_{5,9}^9(\alpha,\xi)$ is not isomorphic to any of $L_{5,9}^{11}(\xi,\delta)$, $L_{5,9}^{12}(\delta)$
 because 
$(L_{5,9}^9(\alpha,\xi))^{[p]^2}=0$  but $(L_{5,9}^{11}(\xi,\delta))^{[p]^2}\neq 0$, 
$(L_{5,9}^{12}(\delta))^{[p]^2}\neq 0$.\\

 Note that $L_{5,9}^{10}(\xi,\gamma)$ is not isomorphic to any of $L_{5,9}^{11}(\xi,\delta)$, $L_{5,9}^{12}(\delta)$
 because 
$(L_{5,9}^{10}(\xi,\gamma))^{[p]^2}=0$  but $(L_{5,9}^{11}(\xi,\delta))^{[p]^2}\neq 0$, 
$(L_{5,9}^{12}(\delta))^{[p]^2}\neq 0$.\\

Finally, We claim that $L_{5,9}^{11}(\xi,\delta)$ and $L_{5,9}^{12}(\delta)$ are not isomorphic. Suppose to the contrary that there exists an isomorphism $A: L_{5,9}^{11}(\xi,\delta)\to L_{5,9}^{12}(\delta)$.  Then 
\begin{align*}
A(\res x_3)=&\res {A(x_3)}\\
0=&\res {(ux_3+(a_{11}a_{32}-a_{31}a_{12})x_4+(a_{21}a_{32}-a_{31}a_{22}x_5)}\\
0=&u^p x_4+(a_{11}a_{32}-a_{31}a_{12})^p\delta x_5.
\end{align*}
Therefore, $u=0$, which is a contradiction.\\

\begin{lemma}\label{L6-59}
Let $\beta, \alpha, \xi\in \F_p^*$. Then  $L_{5,9}^6(\beta)\cong L_{5,9}^9(\alpha,\xi)$ over $\F_p$  if and 
only if $\beta=-1$ and $\xi/\alpha\in (\F^*)^2$.
\end{lemma}
\begin{proof}
Let $f=(a_{ij})\in \Aut(L_{5,9})$, where $a_{ij}\in \F_p$. Then  $f: L_{5,9}^6(\beta)\to  L_{5,9}^9(\alpha,\xi)$ is an isomorphism  if and only if

\begin{align}
& a_{11}u=\xi a_{21};\label{6-9-1}\\
 &\beta a_{12}u=\xi a_{22};\label{6-9-2}\\
 &a_{21}u=\alpha a_{11};\label{6-9-3}\\
 &\beta a_{22}u=\alpha a_{12}.\label{6-9-4}
 \end{align}
 First suppose that  $L_{5,9}^6(\beta)\cong L_{5,9}^9(\alpha,\xi)$. Then,
 Equations \eqref{6-9-1} and \eqref{6-9-2} imply that
 $a_{11}a_{22}=a_{12}a_{21} \beta$ whereas Equations \eqref{6-9-3} and \eqref{6-9-4} imply that
 $a_{11}a_{22}\beta=a_{12}a_{21}$.
Hence, $\beta^2=1$. Thus,  $(a_{11}a_{22})^2=(a_{12}a_{21})^2$ and since 
 $a_{11}a_{22}\neq a_{12}a_{21}$, we deduce that  $a_{11}a_{22}=- a_{12}a_{21}$.
 So, $\beta=-1$. Also, from Equations 
\eqref{6-9-1} and \eqref{6-9-3} we deduce that $\xi/\alpha=(\frac{a_{11}}{a_{21}})^2$.

To prove the converse, suppose that $\xi/\alpha=\epsilon^2$ and set 
$a_{11}=-\xi/(2\epsilon), a_{12}=\epsilon, a_{21}=\alpha/2, a_{22}=1$. Then, we can see that all the  Equations 
\eqref{6-9-1}-\eqref{6-9-4} are satisfied.

\end{proof}



\end{document}